\numberwithin{equation}{section}
\def\Fe{ \mathsf{F}}
\def\Re{ \mathsf{Re}}
\def\Im{ \mathsf{Im}}
\def\i{ \mathsf{i}}
\newcommand{\eps}{\varepsilon}
\newcommand{\darkbluevertex}{\raisebox{-1pt}{\begin{tikzpicture}[scale=1.2]
		\filldraw[fill=black, draw=black] (0,0) circle(0.1);
		\end{tikzpicture}}}
\newcommand{\lightbluevertex}{\raisebox{-1pt}{\begin{tikzpicture}[scale=1.2]
		\filldraw[fill=white, draw=black, thick] (0,0) circle(0.1);
		\end{tikzpicture}}}
\newcommand{\I}{\mathbf{i}}
\renewcommand{\email}[2][]{%
  \ifx\emails\@empty\relax\else{\g@addto@macro\emails{,\space}}\fi%
  \@ifnotempty{#1}{\g@addto@macro\emails{\textrm{(#1)}\space}}%
  \g@addto@macro\emails{#2}%
}
\newtheorem{theorem}{Theorem}[section]
\newtheorem{lemma}[theorem]{Lemma}
\newtheorem{proposition}[theorem]{Proposition}
\newtheorem{corollary}[theorem]{Corollary}
{ \theoremstyle{definition}
\newtheorem{definition}[theorem]{Definition}}
{ \theoremstyle{remark}
\newtheorem{remark}[theorem]{Remark}}
\newcommand{\N}{\mathbb{N}}
\newcommand{\Z}{\mathbb{Z}}
\newcommand{\R}{\mathbb{R}}
\renewcommand{\P}{\mathbb{P}}
\newcommand{\MY}{\mathsf{Y}}
\newcommand{\PM}{\mathbb{P}^{w}_N}
\newcommand{\ZM}{Z^{w}_N}
\newcommand{\XS}{X^s}
\newcommand{\YS}{Y^s}
\newcommand{\XE}{X^e}
\newcommand{\YE}{Y^e}
\newcommand{\LP}{L}
\newcommand{\SP}{S}
\title{Maximal free energy of the log-gamma polymer}
\author[G. Barraquand]{Guillaume Barraquand}
\address{G. Barraquand,
Laboratoire de physique de l'{\'e}cole normale sup\'erieure, ENS, Universit{\'e} PSL, CNRS, Sorbonne Universit{\'e}, Universit{\'e} de Paris, Paris, France}
\email{guillaume.barraquand@ens.fr}
\author[I. Corwin]{Ivan Corwin}
\address{I. Corwin, Department of Mathematics, Columbia University, New York, NY 10027, USA} \email{ivan.corwin@gmail.com}
\author[E. Dimitrov]{Evgeni Dimitrov}
\address{E. Dimitrov, Department of Mathematics, University of Southern California, Los Angeles, CA 90089, USA} \email{edimitro52gmail.edu}
\begin{document}

\begin{abstract}
We prove a phase transition for the law of large numbers and fluctuations of $\mathsf F_N$, the maximum  of the free energy of the  log-gamma directed polymer with parameter $\theta$, maximized over all possible starting and ending points in an $N\times N$ square. In particular, we find an explicit critical value $\theta_c=2\Psi^{-1}(0)>0$ ($\Psi$ is the digamma function)
such that:
\begin{itemize}
\item For $\theta<\theta_c$, $\mathsf F_N+2\Psi(\theta/2)N$ has order $N^{1/3}$ GUE Tracy-Widom fluctuations;
\item For $\theta=\theta_c$, $\mathsf F_N= \Theta(N^{1/3}(\log N)^{2/3})$;
\item For $\theta>\theta_c$, $\mathsf F_N=\Theta(\log N)$.
\end{itemize}
Using the same techniques for analyzing $\mathsf F_N$, we also show that an analogous phase transition occurs in a certain  free start/end-point polymer measure with inverse gamma weights. By exploiting a connection between the log-gamma polymer and a certain random operator on the honeycomb lattice, recently found by Kotowski and Vir\'ag (Comm. Math. Phys. 370, 2019), we also deduce a similar phase transition for the asymptotic behavior of the smallest positive eigenvalue of the  aforementioned random operator.
\end{abstract}

\maketitle

\tableofcontents

%

\section{Introduction and main results} \label{Section1}

%
\subsection{Maximal free energy of the log-gamma polymer}\label{Section1.1}

The log-gamma polymer is an exactly solvable model for a directed polymer on the $\Z^2$ lattice introduced in \cite{Sep12}. A variety of  probabilistic tools and exact formulas have been developed to study this model, allowing one to obtain very precise information about the asymptotic behavior of its free energy and the geometry of paths under the polymer measure. In this paper we consider $\mathsf{F}_N$,  the maximum of the point-to-point free energies of the log-gamma polymer as the starting and ending point vary inside an $N\times N$ square in the $\Z^2$ lattice. The log-gamma polymer depends on $\theta>0$,  the shape parameter of the inverse-gamma  distributed weights used to define the model. We prove that the asymptotic behavior of the maximal free energy obeys a sharp transition depending on the value of $\theta$.

In the remainder of this section we define the polymer model and state our main results about $\mathsf{F}_N$. In Section \ref{SectionS} below we provide two applications of our work, which we believe to be of separate interest. In Section \ref{SectionS.1} we define a certain free start/end-point polymer measure in the $N \times N$ square in the $\Z^2$ lattice with inverse gamma weights. We call this the {\em log-gamma polymer measure} and show that it mirrors the phase transition of $\mathsf{F}_N$; for example, we prove a sharp transition in the polymer length as one varies $\theta$. Our second application comes from a connection established in \cite{kotowski2019tracy} between $\mathsf{F}_N$ and the smallest positive eigenvalue of a certain random operator defined on a finite honeycomb lattice. Based on this connection, one can directly apply our results about $\mathsf{F}_N$ to obtain an analogous description of this smallest positive eigenvalue. We will discuss this further in Section \ref{sec:randomoperator}.\\

\begin{definition}\label{Def1} A continuous random variable $X$ is said to have the inverse-gamma distribution with parameter $\theta > 0 $ if its density is given by
\begin{equation}\label{S1invGammaDens}
f_\theta(x) = \frac{{\bf 1} \{ x > 0 \} }{\Gamma(\theta)} \cdot x^{-\theta - 1} \cdot \exp( - x^{-1}),
\end{equation}
where $\Gamma$ is the Euler gamma function.
We let $w = \left( w_{i,j} :  i, j \in \mathbb{Z} \right)$ denote the random matrix such that $w_{i,j}$ are i.i.d. with density $f_{\theta}$ as in (\ref{S1invGammaDens}). We denote by $(\Omega, \mathcal{F}, \mathbb{P})$ the probability space on which $w$ is defined. A {\em directed lattice path} is a sequence of vertices $(x_1, y_1), \dots, (x_k, y_k) \in \mathbb{Z}^2$ such that $x_1 \leq x_2 \leq \cdots \leq x_k$, $y_1 \leq y_2 \leq \cdots \leq y_k$ and $(x_i - x_{i-1}) + (y_i - y_{i-1}) = 1$ for $i = 2, \dots, k$. In words, a directed lattice path is an up-right path on $\mathbb{Z}^2$, which makes unit steps in the coordinate directions (see Figure \ref{fig:loggammapolymer}).

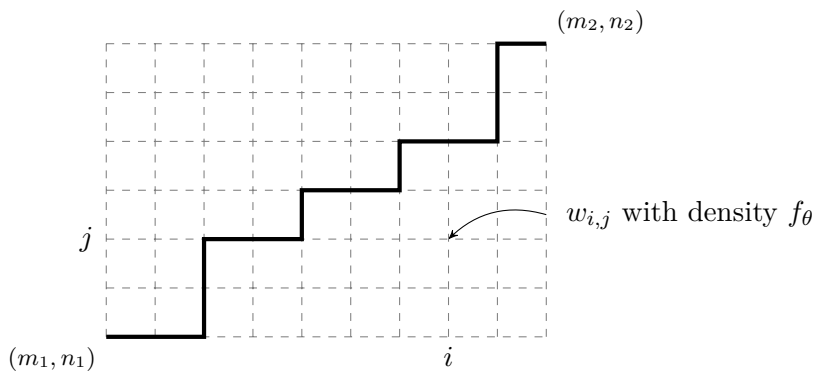
\begin{figure}[h]
 \captionsetup{width=\linewidth}
\begin{center}
\begin{tikzpicture}[scale=0.65]
\begin{scope}
\draw (0,0) node[anchor = north east]{{\footnotesize $(m_1,n_1)$}};
\draw (9,6) node[anchor=south west]{{\footnotesize $(m_2,n_2)$}};
\draw[->, >=stealth'] (9,2.5) node[anchor=west]{{ $w_{i,j}$ with density $f_{\theta}$}} to[bend right] (7,2);
\draw[dashed, gray] (0,0) grid (9,6);
\draw[ultra thick] (0,0) -- (1,0) -- (2,0) -- (2,1) -- (2,2) -- (3,2) -- (4,2) -- (4,3) -- (5,3) -- (6,3) -- (6,4) -- (7,4) -- (8,4) -- (8,6) -- (9,6);
\draw (7, -0.4) node{$i$};
\draw (-0.4,2) node{$j$};
\end{scope}
\end{tikzpicture}
\end{center}
\caption{A directed lattice path $\pi\in \Pi(m_1,n_1;m_2,n_2)$ for the log-gamma polymer. }
\label{fig:loggammapolymer}
\end{figure}

For any $m_1, n_1, m_2, n_2 \in \mathbb{Z}$ we let $\Pi(m_1,n_1;m_2,n_2)$ denote the set of directed paths $\pi$ in $\mathbb{Z}^2$ from $(m_1,n_1)$ to $(m_2,n_2)$. Observe that $\Pi(m_1,n_1;m_2,n_2)$ is non-empty if and only if $m_1 \leq m_2$ and $n_1 \leq n_2$ (which we write as $(m_1, n_1) \leq (m_2, n_2)$). Given a directed path $\pi$ we define its {\em weight}
\begin{equation}\label{PathWeight}
w(\pi) = \prod\nolimits_{(i,j) \in \pi} w_{i,j}.
\end{equation}
For $m_1, n_1, m_2, n_2 \in \mathbb{Z}$ with $ (m_1, n_1) \leq (m_2, n_2) $ we define the point-to-point {\em partition function} as
\begin{equation}\label{PartitionFunct}
Z(m_1,n_1;m_2,n_2) = \sum\nolimits_{ \pi \in\Pi(m_1,n_1;m_2,n_2)} w(\pi).
\end{equation}
For each $p, q \in \mathbb{Z}$ with $p\leq q$ we write $\llbracket p, q \rrbracket = \{p, p+1, \dots, q\}$. For each $N \in \mathbb{N}$ we define
\begin{equation}\label{Fmax}
\Fe_N = \max_{\substack{(m_1, n_1) \leq (m_2,n_2)\\  m_1, n_1, m_2, n_2 \in \llbracket 1, N \rrbracket}} \log Z(m_1,n_1;m_2,n_2),
\end{equation}
the maximal point-to-point free energy of the log-gamma polymer in a square of side length $N$.
\label{def:loggammapolymer}
\end{definition}

In this paper we are interested in understanding the asymptotic behavior of $\Fe_N$ as $N$ tends to infinity. Before we state our main result we introduce a bit of notation.

Let $\Psi(x)$ denote the digamma function, i.e.
\begin{equation}\label{digammaS1}
\Psi(z) = \frac{\Gamma'(z)}{\Gamma(z)} = - \gamma_{E} + \sum_{n = 0}^\infty \left[\frac{1}{n + 1} - \frac{1}{n+z} \right],
\end{equation}
where  $\gamma_{E}$ is the Euler constant.
From the formula
$$\Psi'(z) = \sum_{n = 0}^\infty \frac{1}{(n+z)^2},$$
we know that $\Psi$ is strictly increasing on $(0, \infty)$. Moreover, $\lim_{z \rightarrow 0} \Psi(z) = - \infty$ and $\lim_{z \rightarrow \infty} \Psi(z) = \infty$ (see \cite[(8), pp. 33]{Luke}). This means that there is a unique point $\gamma_* \in (0, \infty)$ such that $\Psi(\gamma_*) = 0$, and we let $\theta_c := 2 \gamma_*$ so that
\begin{equation}\label{DefThetaC}
\Psi(\theta/2)  < 0 \mbox{ for $\theta \in (0,\theta_c)$},\quad \Psi(\theta/2)  = 0 \mbox{ for $\theta = \theta_c$,}\quad \mbox{and }\Psi(\theta/2)  >  0\mbox{ for $\theta > \theta_c$}.
\end{equation}

Finally, for any $\theta > 0$ we define
\begin{equation}\label{DefSigma}
\sigma_{\theta} := [- \Psi''(\theta/2)]^{1/3}  = \left[2 \sum_{n = 0}^\infty \frac{1}{(n+ \theta/2)^3}  \right]^{1/3}.
\end{equation}

With the above notation we are ready to state our main results (see also Figure \ref{fig:PhaseDiagram}). It should be noted that the limit theorem in the subcritical case (Theorem \ref{PM1}) is the same one that is satisfied when $\Fe_N$ is replaced by  $\log Z(1,1;N,N)$ (i.e., the maximal separation of endpoints). That result has been previously shown in the works \cite{BCR, KQ, BCDA} and is recalled and used here as Proposition \ref{LGPCT}. We also mention that while the results below are formulated for the square domain $N \times N$, one could adapt the techniques of this paper to more general rectangular domaints $N \times M$ and obtain analogous results for those. We will not pursue this generalization in the present paper and work exclusively with square domains.

\begin{theorem}\label{PM1}{ \bf [Subcritical case]} Suppose that $\theta \in (0, \theta_c)$, where $\theta_c$ is as in (\ref{DefThetaC}). For all $y\in \R$
\begin{equation}\label{HLConv}
\lim_{N \rightarrow \infty} \mathbb{P} \left( \frac{\Fe_N + 2 N \Psi(\theta/2)}{\sigma_{\theta} N^{1/3}} \leq y\right) = F_{\rm GUE}(y),
\end{equation}
where $F_{\rm GUE}$ is the GUE Tracy-Widom distribution \cite{TWPaper}.
\end{theorem}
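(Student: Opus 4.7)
The lower bound is immediate: since $(1,1) \le (N,N)$ is among the pairs over which the maximum in \eqref{Fmax} is taken, $\Fe_N \ge \log Z(1,1;N,N)$, and the known one-point GUE Tracy-Widom convergence for the point-to-point log-gamma polymer free energy yields the matching lower-tail bound on the rescaled $\Fe_N$.

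My plan for the harder upper bound is to show that $\Fe_N-\log Z(1,1;N,N)=o(N^{1/3})$ in probability. I would split the $O(N^4)$ pairs appearing in \eqref{Fmax} according to the \emph{defect} $D(m_1,n_1,m_2,n_2):=(m_1-1)+(n_1-1)+(N-m_2)+(N-n_2)$ measuring distance to the extremal configuration $(1,1,N,N)$, using a threshold $K_N$ of order $N^{1/3}(\log N)^{2/3}$.

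For pairs with $D\ge K_N$, the path length $L=(m_2-m_1)+(n_2-n_1)$ is bounded by $2N-D$, and the Sepp\"al\"ainen variational LLN combined with the fact that $\phi(\alpha):=\inf_{s\in(0,\theta)}\bigl[\alpha\Psi(\theta-s)+(1-\alpha)\Psi(s)\bigr]$ is uniquely minimized at $\alpha=1/2$ with $\phi(1/2)=\Psi(\theta/2)<0$, forces $\E\log Z(m_1,n_1;m_2,n_2)\le -2N\Psi(\theta/2)-c_\theta K_N$ for some $c_\theta>0$ (using $\Psi(\theta/2)<0$). Applying the stretched-exponential upper-tail estimate $\P(\log Z>\E\log Z+s)\le Ce^{-cs^{3/2}/\sqrt L}$ (available from the integrable structure of the log-gamma polymer) and the union bound over $O(N^4)$ such pairs shows that none of them exceeds $-2N\Psi(\theta/2)+\sigma_\theta N^{1/3} y$ with probability tending to one.

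For pairs with $D<K_N$ (near-corner pairs, of which there are $O(K_N^4)=o(N^2)$), I would invoke a Brownian/H\"older-$1/2$ transversal regularity estimate for the log-gamma free energy, namely
\[
\max_{D(m_1,n_1,m_2,n_2)\le K_N}\bigl|\log Z(m_1,n_1;m_2,n_2)-\log Z(1,1;N,N)\bigr|\le C\sqrt{K_N}\,(\log N)^{O(1)},
\]
which for our choice of $K_N$ is $O(N^{1/6}(\log N)^{O(1)})=o(N^{1/3})$ and hence absorbed in the fluctuation scale. The main obstacle is precisely this near-corner transversal regularity: one must control fluctuations of $\log Z$ when both endpoints move on a scale much smaller than the KPZ transversal scale $N^{2/3}$, uniformly over the polynomially many near-corner pairs in the union bound. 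It can be approached via Busemann-function / stationary-polymer methods in the spirit of Sepp\"al\"ainen, or via integrability-based moment bounds coming from the Whittaker measure formalism.
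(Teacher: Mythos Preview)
Your lower bound and your far-from-corner localization (pairs with $D\ge K_N$) are essentially the paper's Lemma~\ref{LCorners}: the drift penalty $|\Psi(\theta/2)|\cdot D$ combined with the upper-tail moderate deviation (Proposition~\ref{S1LDE1} here) beats the $N^4$ union bound once $D\gtrsim N^{1/3+\varepsilon}$. That part is fine.

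The genuine gap is your treatment of the near-corner pairs. The estimate you want,
\[
\max_{D\le K_N}\bigl[\log Z(m_1,n_1;m_2,n_2)-\log Z(1,1;N,N)\bigr]\le C\sqrt{K_N}\,(\log N)^{O(1)},
\]
is a \emph{two-free-endpoint} modulus-of-continuity bound, and this is precisely what is \emph{not} currently available for the log-gamma polymer. The Gibbsian line ensemble / Busemann techniques you allude to give tightness (or Brownian comparison) of the free energy profile as a function of the endpoint \emph{for a fixed starting point}; they do not give quantitative tail or moment bounds that survive a union bound over the $K_N\gg 1$ possible starting points. The route via the Brownian Gibbs property that would yield such bounds (as in \cite{CGH19} for the KPZ equation) requires a lower-tail input that is still missing for this model. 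A minor additional issue: your displayed inequality is stated as a two-sided bound, but moving an endpoint by $K_N$ produces a deterministic drift of order $K_N$, not $\sqrt{K_N}$; fortunately the drift has the favorable sign, so only the one-sided bound is actually needed.

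The paper sidesteps the missing two-endpoint regularity with a sandwiching trick you do not have: after reducing to the frames $\partial\mathsf{SW}_N^\delta$, $\partial\mathsf{NE}_N^\delta$ of corner windows of size $k\sim N^{1/2}$, it enlarges the box to $[-K,N+K]^2$ with $K\sim N^{4/5}$ chosen so that $k\ll K^{2/3}\ll N^{2/3}$. Because $(-K,-K)$ and $(N+K,N+K)$ are fixed, the one-endpoint tightness (Proposition~\ref{ThmTight}) controls $\log Z(-K,-K;\cdot)$ and $\log Z(\cdot;N+K,N+K)$ uniformly over the frames, and the super-additivity inequality $Z(-K,-K;N+K,N+K)\ge Z(-K,-K;p)\,Z(p;q)\,Z(q;N+K,N+K)$ then upper-bounds the frame-to-frame maximum by $\log Z(-K,-K;N+K,N+K)$ plus terms of order $K^{1/3}=o(N^{1/3})$. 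This converts the unavailable two-endpoint estimate into two available one-endpoint estimates; your proposal would need either to supply the missing two-endpoint bound or to adopt an analogous device.
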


\begin{theorem}\label{PM2}{ \bf [Critical case]} Suppose that $\theta  = \theta_c$, where $\theta_c$ is as in (\ref{DefThetaC}). There are positive constants $c_2>  c_1 > 0$ such that
	\begin{equation}\label{PM2E1}
	\lim_{N \rightarrow \infty} \mathbb{P} \left(c_1 N^{1/3} (\log N)^{2/3} \leq \Fe_N \leq c_2 N^{1/3} (\log N)^{2/3} \right) = 1.
	\end{equation}
\end{theorem}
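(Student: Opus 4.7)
The plan is to prove matching upper and lower bounds, both of order $N^{1/3}(\log N)^{2/3}$. The guiding heuristic is that there are $\Theta(N^4)$ ordered quadruples $((m_1,n_1),(m_2,n_2))$ in $\llbracket 1,N\rrbracket^2$, and at $\theta=\theta_c$ each associated free energy $\log Z(m_1,n_1;m_2,n_2)$ has vanishing LLN centering (since $\Psi(\theta_c/2)=0$) together with Tracy--Widom-type fluctuations on scale $L^{1/3}$, where $L=(m_2-m_1)+(n_2-n_1)+1\asymp N$. The maximum of $\Theta(N^4)$ independent Tracy--Widom variables on scale $N^{1/3}$ lands at order $N^{1/3}(\log N^4)^{2/3}\asymp N^{1/3}(\log N)^{2/3}$, matching \eqref{PM2E1}.

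For the upper bound I would combine a sharp one-point right-tail estimate
\begin{equation*}
\P\bigl(\log Z(m_1,n_1;m_2,n_2) \geq y\,L^{1/3}\bigr) \leq C\exp(-cy^{3/2}),
\end{equation*}
valid at $\theta=\theta_c$ uniformly over all rectangles inside $\llbracket 1,N\rrbracket^2$ and all $y$ in a polynomial range, with a union bound over the $O(N^4)$ choices of endpoints. Taking $y=c_2(\log N)^{2/3}$ bounds each summand by $O(N^{-cc_2^{3/2}})$, so $\P(\Fe_N\geq c_2 N^{1/3}(\log N)^{2/3})=O(N^{4-cc_2^{3/2}})=o(1)$ once $c_2$ is chosen large enough. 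Such a one-point tail is of KPZ-universality type and should be extractable from existing integrable tools for the log-gamma polymer, such as moment formulas for $Z$ or Fredholm determinant representations.

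For the lower bound I would select a family $R_1,\dots,R_K$ of $n\times n$ rectangles in $\llbracket 1,N\rrbracket^2$ with $n=\lfloor N/3\rfloor$, whose start and end corners lie on sublattices of spacing $\sim n^{2/3}$, the transversal decorrelation scale of the KPZ class. Heuristically this yields $K\asymp(N/n^{2/3})^4\asymp N^{4/3}$ approximately independent free energies $\log Z(R_i)$, each obeying a matching one-point lower tail $\P(\log Z(R_i)\geq c_1 N^{1/3}(\log N)^{2/3})\geq c'\,N^{-\alpha}$ with $\alpha<4/3$, provided $c_1$ is chosen small. Approximate independence then gives
\begin{equation*}
\P\bigl(\max_i\log Z(R_i)<c_1 N^{1/3}(\log N)^{2/3}\bigr)\lesssim(1-c'N^{-\alpha})^K\to 0,
\end{equation*}
and $\Fe_N\geq\max_i\log Z(R_i)$ yields the lower bound.

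The main obstacle is making this decorrelation of the $\log Z(R_i)$ genuinely quantitative: if one insists on literal disjointness of the rectangles, then there are only $O((N/n)^2)$ of them, and a direct optimization over $n$ yields merely $\Fe_N\gtrsim N^{1/3}$, losing the crucial $(\log N)^{2/3}$ factor. I would attack this by either (i) a second moment / Paley--Zygmund computation on $\sum_i\mathbf{1}\{\log Z(R_i)\geq c_1 N^{1/3}(\log N)^{2/3}\}$, controlling pairwise covariances via Cauchy--Schwarz and overlap estimates for the inverse-gamma weights in $R_i\cap R_j$, or (ii) a direct KPZ-type decorrelation argument based on the $n^{2/3}$ transversal fluctuation scale. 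A secondary difficulty is establishing a sharp Tracy--Widom lower tail at the moderate deviation scale $(\log N)^{2/3}$ at criticality, which lies at the edge of the standard moderate-deviation regime and will likely require asymptotics of the exact integrable formulas specific to the log-gamma model.
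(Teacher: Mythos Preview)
Your upper bound is on target and matches the paper: Lemma~\ref{S2S2L2} (which packages the moderate-deviation tail from Proposition~\ref{S1LDE1}) gives exactly the uniform one-point bound you describe, and the proof of Theorem~\ref{PM2} in Section~\ref{Section4.1} finishes with the union bound over $N^4$ endpoints.

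Your lower bound has a genuine gap, and in fact the obstacle you identify dissolves under a trick you have not considered. You correctly observe that packing disjoint $n\times n$ \emph{squares} into $\llbracket 1,N\rrbracket^2$ cannot produce the $(\log N)^{2/3}$ gain. But the paper does not use disjoint squares. It uses $k=\lfloor N^{1/8}\rfloor$ parallel diagonal \emph{strips} $H_N^1,\dots,H_N^k$ of length $M\sim N/2$ and width $K=\lfloor M^{3/4}\rfloor$, and defines the \emph{modified} partition function $Z_N^i$ that sums only over paths staying inside $H_N^i$ (Section~\ref{Section4.2}, Definition~\ref{DefHex}). Because the strips are disjoint, the $Z_N^i$ are \emph{exactly} i.i.d., so no decorrelation or second-moment argument is needed at all. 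The point is that the strip width $M^{3/4}$ exceeds the transversal fluctuation scale $M^{2/3}$, so restricting to the strip barely changes the partition function: Lemma~\ref{CloseZ} shows $Z(P_N^i;Q_N^i;K-1)$ is exponentially close to the unrestricted $Z(P_N^i;Q_N^i)$. Combined with the lower-tail input Lemma~\ref{CloseTW} (which comes from the quantitative Tracy--Widom comparison of Proposition~\ref{S2LBProp}), each $Z_N^i$ satisfies $\mathbb{P}(\log Z_N^i \geq c\,N^{1/3}(\log N)^{2/3})\geq M^{-1/10}$, and the maximum of $N^{1/8}\gg M^{1/10}$ i.i.d.\ such variables exceeds this threshold with probability tending to $1$.

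So your proposed routes (i) and (ii) are unnecessary detours: the paper achieves exact independence while keeping the path length of order $N$, by restricting paths rather than endpoints. The only analytic input that requires real work is the quantitative lower bound on the upper tail at scale $(\log M)^{2/3}$, for which the paper proves a rate-of-convergence estimate to $F_{\rm GUE}$ (Proposition~\ref{S2LBProp}) via the Fredholm determinant formula; this is the ``secondary difficulty'' you anticipated, and it is indeed handled through exact formulas.
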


\begin{theorem}\label{PM3}{ \bf [Supercritical case]} Suppose that $\theta  > \theta_c$, where $\theta_c$ is as in (\ref{DefThetaC}). There exist constants $c_2 > c_1 > 0$ (depending on $\theta$) such that
	\begin{equation}\label{PM3E1}
	\lim_{N \rightarrow \infty} \mathbb{P} \left(c_1 \log N \leq \Fe_N \leq c_2 \log N \right) = 1 .
	\end{equation}
\end{theorem}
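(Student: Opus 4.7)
The proof of \eqref{PM3E1} splits into the lower and upper bounds, which we treat separately.

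\emph{Lower bound.} Restricting the maximum in \eqref{Fmax} to single-vertex ``paths'' (with $(m_1, n_1) = (m_2, n_2) = (i, j)$, so that $Z(i, j; i, j) = w_{i, j}$) gives
\[
\Fe_N \geq \max_{(i, j) \in \llbracket 1, N \rrbracket^2} \log w_{i, j}.
\]
Since $f_\theta(x) \sim \Gamma(\theta)^{-1} x^{-\theta - 1}$ as $x \to \infty$, the random variable $\log w_{i, j}$ has an exponential right tail with rate $\theta$. Standard extreme value theory for $N^2$ i.i.d.\ copies then yields $\max_{(i, j)} \log w_{i, j} = (2/\theta) \log N + O_{\mathbb{P}}(1)$, whence $\mathbb{P}(\Fe_N \geq c_1 \log N) \to 1$ for any $c_1 < 2/\theta$.

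\emph{Upper bound.} We split the maximum by path length $\ell(a, b) := (m_2 - m_1) + (n_2 - n_1) + 1$, treating short paths ($\ell \leq C_1 \log N$) and long paths ($\ell > C_1 \log N$) separately, with $C_1 > 0$ a large constant to be chosen. For short paths, bound $\log Z(a; b) \leq \log |\Pi(a; b)| + \max_{\pi \in \Pi(a; b)} \log w(\pi) \leq \ell \log 2 + \max_\pi \log w(\pi)$. For each fixed $\pi$ of length $\ell$, Chernoff's inequality applied to the product $w(\pi) = \prod_{(i, j) \in \pi} w_{i, j}$, together with the moment identity $\mathbb{E}[w_{i, j}^s] = \Gamma(\theta - s)/\Gamma(\theta)$ for $s \in (0, \theta)$ (obtained by substituting $u = 1/x$ in the integral against $f_\theta$), gives $\mathbb{P}(\log w(\pi) > t) \leq e^{-s t}[\Gamma(\theta - s)/\Gamma(\theta)]^\ell$. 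A union bound over the $\leq N^2 \cdot 2^\ell$ paths in $\llbracket 1, N \rrbracket^2$ of each length $\ell \leq C_1 \log N$ controls $\mathbb{P}(\max_{\ell(a,b) \leq C_1 \log N} \log Z(a; b) > c_2 \log N)$ by an expression of order $N^{2 - s c_2 + C_1 \log(2^{s+1} \Gamma(\theta - s)/\Gamma(\theta))}$, which is $o(1)$ for $c_2$ chosen sufficiently large (depending on $\theta$, $s$, $C_1$).

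For long paths we invoke the LLN for the log-gamma polymer from \cite{Sep12}: $\log Z(a; b)/\ell \to -\mathcal{F}(x, y)$ almost surely along directions $(x, y) = (b - a)/\ell$ with $x + y = 1$, where
\[
\mathcal{F}(x, y) := \max_{\alpha \in (0, \theta)}\bigl[x \Psi(\alpha) + y \Psi(\theta - \alpha)\bigr].
\]
Taking $\alpha = \theta/2$ as a trial point gives $\mathcal{F}(x, y) \geq (x + y)\Psi(\theta/2) = \Psi(\theta/2) > 0$ uniformly in direction, since $\theta > \theta_c$. Combined with an exponential-type upper-tail concentration inequality for the log-gamma polymer (of the schematic form $\mathbb{P}(\log Z(a; b) \geq \mathbb{E}[\log Z(a; b)] + t) \leq e^{-c' t^{3/2}/\sqrt{\ell}}$, valid in an appropriate range of $t$), together with $\mathbb{E}[\log Z(a; b)] \leq -\Psi(\theta/2) \ell / 2$ for $\ell$ large, this yields $\mathbb{P}(\log Z(a; b) > 0) \leq C e^{-c \ell}$ for some $c = c(\theta) > 0$, uniformly in $(a, b)$ with $\ell(a, b) \geq C_1 \log N$. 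A union bound over the $O(N^4)$ pairs and $\ell > C_1 \log N$ gives $O(N^{4 - c C_1}) = o(1)$ for $C_1$ sufficiently large.

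\emph{Main obstacle.} The hardest ingredient is the uniform exponential-tail concentration of $\log Z(a; b)$ around its mean for long paths. Since $\log Z$ is not a Lipschitz function of the unbounded weights $\{w_{i, j}\}$, standard concentration inequalities do not apply directly; one must instead use model-specific tools such as Burke-type symmetries for the log-gamma polymer, Sepp\"al\"ainen's variance bounds from \cite{Sep12}, or moment and tail estimates obtained from the Macdonald (or Whittaker) process framework. Verifying both the uniform LLN lower bound $\mathcal{F}(x, y) \geq \Psi(\theta/2)$ and the concentration estimate, uniformly over all $(a, b)$ in $\llbracket 1, N \rrbracket^2$, is the technical heart of the argument.
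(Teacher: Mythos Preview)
Your lower bound matches the paper's Lemma~\ref{S5LB} exactly: restrict to single-vertex paths and use the inverse-gamma tail to get any $c_1<2/\theta$.

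Your upper bound follows the same short/long dichotomy as the paper's Lemma~\ref{S2S2L2} (which is then plugged into Lemma~\ref{S5UB} via a union bound over $O(N^4)$ endpoint pairs), with two differences in execution. For short paths the paper uses the cruder first-moment bound $\mathbb{E}[Z(1,1;m,n)]\leq (2/(\theta-1))^{m+n-1}$ (finite since $\theta>\theta_c>2$) together with Markov's inequality, rather than your Chernoff bound on individual path weights plus a union over paths; both work, the first-moment route is shorter. For long paths, the concentration input you correctly flag as the crux is supplied in the paper by Proposition~\ref{S1LDE1} (\cite[Theorem~1.7]{BCDA}), which gives $\mathbb{P}(\mathcal{F}(M,N)\geq x)\leq C_1 e^{-c_1 M}+C_2 e^{-c_2 x^{3/2}}$ --- essentially your schematic inequality with the LLN value $-Mh_\theta(N/M)$ in place of $\mathbb{E}[\log Z]$. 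One point you glide over: that proposition requires the aspect ratio to lie in a compact window $[\delta,\delta^{-1}]$, so thin rectangles need separate treatment. The paper does this in Lemma~\ref{S2S2L1} by thickening the rectangle to a fixed aspect ratio and controlling the appended one-dimensional strip by a random-walk (KMT) argument. Your KPZ-type form $e^{-c' t^{3/2}/\sqrt{\ell}}$ does not literally hold in the nearly one-dimensional regime (concentration there is Gaussian), though the desired conclusion $\mathbb{P}(\log Z(a;b)>0)\leq Ce^{-c\ell}$ still follows either way.
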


\begin{figure}[h]
\scalebox{0.45}{\includegraphics{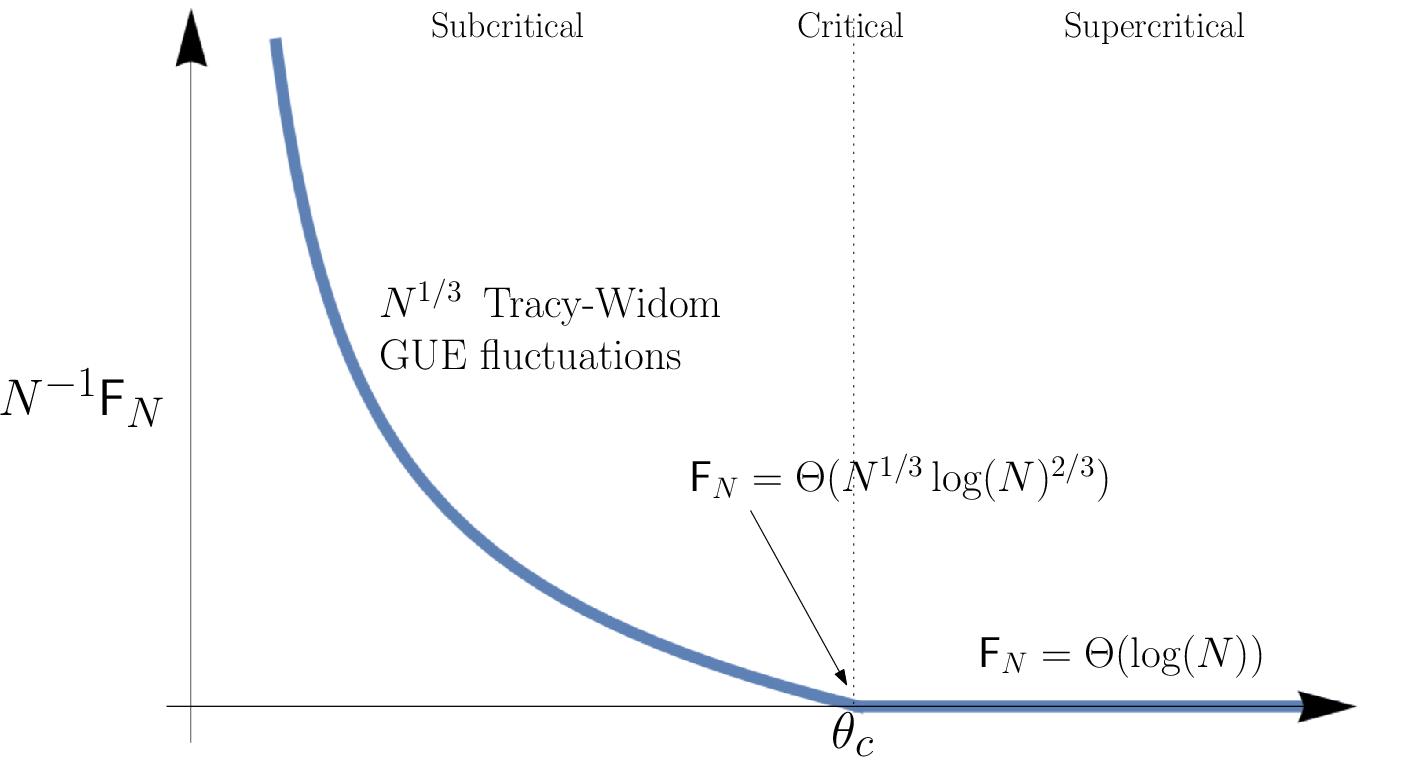}}
\captionsetup{width=\linewidth}
 \caption{The phase diagram proved in Theorems \ref{PM1}, \ref{PM2} and \ref{PM3}.}
\label{fig:PhaseDiagram}
\end{figure}


\cite[Theorem 1.1]{kotowski2019tracy} proved \eqref{HLConv} in the case that $\theta \in (0,\theta_c/2)$. The limitation of their result to half of the subcritical range can be understood in terms of the energy / entropy balance. We call the energy of a single path the logarithm of its weight, or equivalently the sum of $\log w_{i,j}$ along the path. The free energy $\log Z(m_1,n_1;m_2,n_2)$ will exceed the single path energy from $(m_1,n_1)$ to $(m_2,n_2)$. This is due to entropy -- the fact that there are exponentially many different paths with the same starting and ending points. The balance of energy and entropy is quite non-trivial and accounts for the complicated nature of the free energy. The range $\theta\in (0,\theta_c/2)$ is exactly when $\mathbb E[\log w_{i,j}]>0$, and hence the single path energy is strictly positive. To maximize the single path energy in that case, one maximizes the length of the path. Entropy only further encourages longer path lengths and hence, for $\theta\in (0,\theta_c/2)$, both energy and entropy push towards having longer paths. The specifics of the proof of \cite[Theorem 1.1]{kotowski2019tracy} rely on the strict positivity $\mathbb E[\log w_{i,j}]>0$ and this logic. Hence the argument does not extend to $\theta\geq \theta_c/2$.

For $\theta>\theta_c/2$, the single path energy becomes negative. However, up to $\theta_c$, the free energy remains positive since the number of paths overwhelms the average negativity of each one. Beyond $\theta_c$, the balance shifts and the entropy is not strong enough of a force to keep the free energy positive. This represents the transition to the supercritical phase. In order to address the full range of $\theta$ (in particular, $\theta\geq \theta_c/2$) we develop a  different approach than used in \cite{kotowski2019tracy}, one which relies on the free energy directly and not the single path energy.


The following heuristic shows how to use the free energy to argue that the critical value should be $\theta_c$ (as we show) and not $\theta_c/2$.
For points $(s_1,s_2)\leq (t_1,t_2)$ in the unit square $[0,1]^2$, the free energy $N^{-1} \log Z(\lfloor s_1N \rfloor ,\lfloor s_2N \rfloor;\lfloor t_1N \rfloor ,\lfloor t_2 N \rfloor)$ converges to an explicit function $g_{\theta}(s_1,s_2;t_1,t_2)$ (we do not define it here), which is determined by the law of large numbers proved in \cite[Theorem 2.4]{Sep12}. One expects that $N^{-1}\mathsf{F}_N$ behaves like the maximum of $g_{\theta}(s_1,s_2;t_1,t_2)$ over endpoints varying in the unit square. Due to convexity of $g_{\theta}$ (a general property of such limit shape functions), the maximizing endpoints will lie on a diagonal. For $(s_1,s_2)=(a,a)$ and $(t_1,t_2)=(b,b)$, $g_{\theta}(a,a;b,b) = -2\Psi(\theta/2)(b-a)$. If $-2\Psi(\theta/2)>0$, this will be maximized for $a=0$ and $b=1$ and take value $-2\Psi(\theta/2)$; otherwise it will be maximized for any choice of $a=b$ and take value $0$. One easily checks that $\theta_c$ is the value for $\theta$ which divides between these two cases. For $\theta\in (0,\theta_c)$, the maximizer will be $a=0$ and $b=1$ whereas for $\theta>\theta_c$, it will be $a=b$. This explains the limit for $N^{-1}\mathsf{F}_N$. We mention here that near the end of \cite[Section 1]{kotowski2019tracy} there is a related (abstract) discussion for the existence of a critical value and a phase transition. This phase transition is not formulated in terms of the polymer model, but through a certain related random operator (we discuss this relation in detail in Section \ref{sec:randomoperator} below). Our results above can be viewed as a confirmation of the existence of such a phase transition with an explicit formula for the critical value where it occurs.

Let us turn now to the higher resolution information contained in our results, beyond the limit of $N^{-1}\mathsf{F}_N$. In the subcritical regime, $\theta\in (0,\theta_c)$, we show that $\mathsf{F}_N$ and $\log Z(1,1;N,N)$ behave the same down to order $N^{1/3}$, and thus conclude our theorem by known asymptotic results for the later random variable \cite{BCR, KQ, BCDA}. In the supercritical regime, $\theta>\theta_c$ one might hope that the maximizing points will be order one apart, and hence the $\log(N)$ behavior for $\mathsf{F}_N$ makes sense from standard extreme value theory. In the critical case, $\theta=\theta_c$, the $N^{1/3}\log(N)^{2/3}$ behavior is a bit more subtle and comes from a competition between many roughly independent possible maximizers on different scales. The $\log(N)^{2/3}$ factor is related to the upper tail $3/2$ exponent for the point-to-point free energy.

\subsection{Ideas in the proof}
In this section we explain what are the main challenges and ideas needed to overcome them in the proofs of our theorems. We will focus on the subcritical case since its proof is the most involved and makes use of all of the tools employed in the other proofs.
We will also explain the key roles of our recent works \cite{BCDA} and \cite{BCDB} in the proof. These works are both based upon special integrable structure (see, e.g. \cite{COSZ,BorCor}) enjoyed by the log-gamma polymer -- the first relies on a Fredholm determinant formula for the Laplace transform of the polymer partition function while the second relies on the Gibbsian line ensemble (or equivalently, the Whittaker process) into which the polymer free energy embeds. Let us assume below that $\theta\in (0,\theta_c)$, which is the subcritical case.

The heuristic law of large numbers argument presented above suggests that the maximizing points defining $\mathsf{F}_N$ should lie within a $o(N)$ window of $(1,1)$ and $(N,N)$. By taking into account the KPZ scaling which describes the fluctuations around the law of large numbers, we can see that the maximizing points should really lie in windows of side length $k=N^{1/3+\epsilon}$ (for any positive $\epsilon$) touching $(1,1)$ and $(N,N)$. This can be seen from the strict convexity of the law of large numbers which suggests that even on a more local scale, there is a linear drift which pushes the maximizer towards the extreme points $(1,1)$ and $(N,N)$. This drift is balanced once the fluctuations of the free energy are commensurable with it, which happens in a scale of order $N^{1/3}$ from the extreme points. \cite[Theorem 1.2]{BCDA} (quoted here as Proposition \ref{LGPCT}) implies the $N^{1/3}$ fluctuations (see also earlier work of \cite{BCR, KQ}). In order to discount all other starting and ending points besides those in the $(k\times k)$-windows around the extreme points, we use \cite[Theorem 1.7]{BCDA} (quoted here as Proposition \ref{S1LDE1}) which provides a moderate deviation upper bound on the upper-tail of the free energy fluctuations. This upper bound enables us to take a union bound over all starting and ending points outside the windows.

Under suitable centering and KPZ scaling, the free energy profile for varying starting and ending points is believed to converge to the directed landscape (constructed in \cite{DOV18}). If we had such a result at our disposal, then estimates on the local modulus of continuity for the directed landscape \cite[Proposition 1.6]{DOV18} would imply that the maximum over the $(k\times k)$-windows is, in the $N^{1/3}$ scale, the same as $\log Z(1,1;N,N)$. Such convergence to the directed landscape has, however, only been established for zero-temperature models \cite{DOV18,dauvergne2021scaling}.

For the log-gamma polymer we can control a much weaker type of local modulus of continuity. Namely, if we fix the starting point and then vary the ending point along a single down-right path, then we can show that the free energy profile (properly centered and scaled) looks similar (up to a Radon-Nikodym derivative) to a centered random walk with log-gamma distributed jump distribution. This result is shown in \cite{BCDB} as a consequence of the fact that this free energy profile can be embedded as the lowest labeled curve of a Gibbsian line ensemble (see also \cite{COSZ,JonstonOConnell,Wu19}). Specifically, \cite[Theorem 1.10]{BCDB}, quoted here as Proposition \ref{ThmTight}, shows tightness of the free energy profile.

The problem for us is that both the starting and ending points vary in their respective $(k\times k)$-windows. With a bit of work, we can restrict to only consider varying the starting and ending points along the one-dimensional down-right path on the boundary of the $(k\times k)$-windows (we call these frames). But still, even if we know that for each given starting point, the free energy profile is tight as the endpoint varies, this does not imply that the tightness holds simultaneously as both vary. This is because there are order $k\gg 1$ different starting points. Without more quantitative control on the modulus of continuity, such as moment or tail bounds, this approach is not feasible. The proof of \cite[Theorem 1.3]{CGH19} provides a route (in the case of the KPZ equation) for using the Brownian Gibbs property to establish such moment / tail bounds on the modulus of continuity. Almost all of the pieces used there are present for the log-gamma polymer, except for a lower tail bound on the free energy fluctuations. For the KPZ equation, that piece is supplied by \cite{10.1215/00127094-2019-0079}. Unfortunately, such a lower tail bound for the log-gamma polymer is currently unavailable (and the approach of \cite{10.1215/00127094-2019-0079} does not readily apply). As such, we do not currently know how to establish modulus of continuity bounds when we simultaneously vary the starting and ending points. (The key identity \cite[Proposition 4.1]{DOV18} in the  construction of the directed landscape in \cite{DOV18} is generalized  in \cite{corwin2020invariance} to the log-gamma polymer -- a glimmer of hope that the methods of \cite{DOV18} may eventually be lifted to polymers.)

So, the remaining challenge in our work is how to only use the one-free endpoint tightness result of Proposition \ref{ThmTight} to deduce that varying both starting and ending points along the frames of the $(k\times k)$-windows cannot result in $\mathsf{F}_N$ exceeding $\log Z(1,1;N,N)$ on the $N^{1/3}$ scale. This is achieved by expanding to a larger window and considering $\log Z(-K,-K;N+K,N+K)$ for $K$ such that $K\ll N$ and $K^{2/3} \gg k$. We can lower bound $Z(-K,-K;N+K,N+K)$ by a summation indexed by points on the two frames, with summands given by the product of three partition functions -- one from $(-K,-K)$ to the point on the frame near $(1,1)$, one from that point to the point on the frame near $(N,N)$, and one from that frame point to $(N+K,N+K)$. Since $(-K,-K)$ and $(N+K,N+K)$ are fixed,  the two outer partition functions in this summand involve a fixed starting point and varying ending point. Thus, we can use Proposition \ref{ThmTight} to control how much these can vary as the points on the frame vary. Here we use that $k\ll K^{2/3}$ which means that on the frame we are zooming into the local modulus of continuity is as discussed above. Using this, we can reverse the inequality to upper bound the maximal frame-to-frame partition function by $Z(-K,-K;N+K,N+K)$ minus well-controlled point-to-frame partition functions. Since $K\ll N$, the contribution from the point-to-frame partition functions ends up being negligible (the free energy fluctuations are order $K^{1/3}\ll N^{1/3}$). Similarly, since $K\ll N$, the free energy fluctuations of  $\log Z(-K,-K;N+K,N+K)$ on the scale $N^{1/3}$ have the same law as those of $Z(1,1;N,N)$. This completes the sketch of the argument used to prove Theorem \ref{PM1}.

\subsection{Further questions}
\label{sec:ideasloggamma}

We have not attempted to optimize the values of the constants $c_1,c_2$ in Theorems \ref{PM2} and \ref{PM3} or determine the scale and distribution of fluctuations of $\Fe_N$. For instance, our results do not rule out the possibility that $\Fe_N$ actually fluctuates on a smaller scale than the window specified by $c_1$ and $c_2$. We can easily deduce some numerical bounds on $c_1$ and $c_2$ from our proofs. In the critical case, we show (see Section \ref{Section4.1}) that one may chose $c_1= 20^{-1} \sigma_{\theta_c}$, where $\sigma_{\theta}$ is as in \eqref{DefSigma}. The value of $c_2$ that we may choose depends on the value of the constant $D_2$ in Lemma \ref{S2S2L2}, for which  we do not have a numerical estimate. In the supercritical case, the proof of Lemma \ref{S5LB} shows that one may take any $c_1< 2/\theta$. In the case $\theta>3$ (note that $\theta_c<3$), we may choose any constant $c_2>2$ (see  Remark \ref{rem:betterc2} for more details). When $\theta_c < \theta\leq 3$ the constant $c_2$ depends again on the value of the constant $D_2$ in Lemma \ref{S2S2L2}, as in the critical case.

Another natural problem concerns the location of the maximizing starting and ending points. While the proof of Theorem \ref{PM1} tells us that the maximum free energy is attained for very long polymers with starting and ending points located near the corners of the $N\times N$ square (see for instance Lemma \ref{LCorners} below), the location of maximizing starting and ending points in the critical and supercritical is less clear. In the supercritical case $\theta>\theta_c$ we expect that the maximizing starting and ending points are close to each other. Note that, even for a large (but fixed) $\theta$, the probability that the starting point and ending point of the maximal partition function are not the same does not go to zero as $N$ goes to infinity.  Indeed, let us call $v \in \llbracket 1,N\rrbracket^2$ the vertex where the  maximal weight is attained. Consider the event that on one of the vertices adjacent to $v$, one of the weights is larger than $1$, and call $w$ this weight. The probability of this event does not vanish as $N$ goes to infinity. Moreover, on this event, we have $\Fe_N\geq \log w^{\max}_N +\log w$ and the maximal path hence cannot be reduced to a single point.

\subsubsection*{Outline of the paper} In Section \ref{SectionS} we introduce the log-gamma polymer measure and the random operator from \cite{kotowski2019tracy}, and state our results about these models. In Section \ref{Section2} we recall some preliminary results from our recent papers \cite{BCDA} and \cite{BCDB}, and establish some estimates on the distribution of $\log Z (1,1; M,N)$, in particular the important tail estimate from Lemma \ref{S2S2L2},  that we will use in the following sections. Section \ref{Section3} is devoted to the proof of Theorem \ref{PM1} (subcritical case). Section \ref{Section4} is devoted to the proof of Theorem \ref{PM2} (the critical case) and Section \ref{Section5} is devoted to the proof of Theorem \ref{PM3} (supercritical case). Section \ref{Section7} contains the proofs of our results about the log-gamma polymer measure from Section \ref{SectionS}. In Appendix \ref{Section6} we prove Proposition \ref{S2LBProp}, an estimate for the speed of convergence of $\log Z(1,1;M,N)$ to the Tracy-Widom distribution that we use in Section \ref{Section4}. 

\subsubsection*{Acknowledgments}

We thank Marcin Kotowski and B\'alint Vir\'ag for useful conversations about their work \cite{kotowski2019tracy}. G.B. also thanks Herbert Spohn for illuminating explanations about the relations between random operators and directed polymers. I.C. is partially supported by the NSF grants DMS:1811143 and DMS:1664650 as well as a Packard Foundation Fellowship for Science and Engineering. G.B. was partially supported by NSF grant DMS:1664650 as well. E.D. is partially supported by the Minerva Foundation Fellowship and NSF Grant DMS:2054703.

%
\section{Applications}\label{SectionS}  In Section \ref{SectionS.1} we consider a polymer measure that naturally arises in the log-gamma polymer model and we investigate the asymptotic behavior of paths sampled according to this measure. Specifically, we show that as one varies the $\theta$ parameter of the log-gamma polymer the polymer measure behaves quite differently depending on whether $\theta \in (0, \theta_c)$, $\theta = \theta_c$ or $\theta \in (\theta_c, \infty)$, where $\theta_c$ is as in (\ref{DefThetaC}). In Section \ref{sec:randomoperator} we recall the random operator from \cite{kotowski2019tracy} and show how our results from Section \ref{Section1.1} can be directly applied to study the asymptotic behavior of the smallest positive eigenvalue of this operator. Throughout this section we use freely notation from Section \ref{Section1.1}.

%
\subsection{The log-gamma polymer measure}\label{SectionS.1} We proceed to define our main object of study in this section.
\begin{definition}\label{Def2} Fix $\theta > 0$, $N \in \N$ and continue with the same notation as in Definition \ref{Def1}. Denote $\Pi_N = \cup_{m_1, n_1, m_2, n_2  \in \llbracket 1 ,N \rrbracket} \Pi(m_1, n_1; m_2, n_2)$, which is the collection of all directed lattice paths, which start and end in the square $\llbracket 1, N \rrbracket \times \llbracket 1, N \rrbracket$. We define the {\em log-gamma polymer measure} to be the (depending on $w$ and hence random) probability measure on $\Pi_N$ defined by
\begin{equation}\label{PolMeasEqn}
\PM (\pi) = \left(\ZM \right)^{-1} \cdot w(\pi), \mbox{ where } \ZM = \sum_{\pi \in \Pi_N} w(\pi) = \sum_{\substack{(m_1, n_1) \leq (m_2,n_2)\\  m_1, n_1, m_2, n_2 \in \llbracket 1, N \rrbracket}} Z(m_1,n_1;m_2,n_2).
\end{equation}
By possibly enlarging the probability space $(\Omega, \mathcal{F}, \mathbb{P})$, we may assume that in addition to $w$ it supports, for each $N \in \mathbb{N}$, a random $\pi_N$ in $\Pi_N$, whose conditional distribution given $w$ is $\PM(\cdot)$. We continue to call this new space $(\Omega, \mathcal{F}, \mathbb{P})$.

If $\pi$ is given by the sequence $(x_1, y_1), \dots, (x_k, y_k)$ as in Definition \ref{Def1}, we denote its {\em starting point} by $(\XS(\pi), \YS(\pi)) = (x_1, y_1)$ and its {\em ending point} by $(\XE(\pi), \YE(\pi)) = (x_k, y_k)$. We also denote the {\em length} of the path by $\LP(\pi) = k-1 = (x_k - x_1) + (y_k - y_1)$. Finally, we define the {\em slope} of the path by $\SP(\pi) = \frac{y_k - y_1 + 1}{x_k - x_1 + 1} \in (0, \infty)$. Using the random up-right path $\pi_N$, we can now define the following random variables on $(\Omega, \mathcal{F}, \mathbb{P})$
\begin{equation}\label{MainRV1}
\XS_N = \XS(\pi_N), \hspace{1mm} \YS_N = \YS(\pi_N),  \hspace{1mm} \XE_N = \XE(\pi_N), \hspace{1mm} \YE_N = \YE(\pi_N), \hspace{1mm} \LP_N = \LP(\pi_N) \mbox{, } \SP_N = \SP(\pi_N).
\end{equation}
In view of Definition \ref{Def1}, we have that $\XS_N,\YS_N  , \XE_N  ,\YE_N$ are random variables in $\llbracket 1, N \rrbracket$, $\LP_N $ is a random variable in $\llbracket 0 , 2N - 2\rrbracket$ and $\SP_N$ is a random variable in $[1/N, N]$. 
\end{definition}

We are interested in describing the asymptotic behavior of the variables in (\ref{MainRV1}) as $N \rightarrow \infty$. We summarize our results in the next sequence of propositions, whose proofs are given in Section \ref{Section7}.

\begin{proposition}\label{PMP1}{ \bf [Subcritical case]}  Suppose that $\theta \in (0, \theta_c)$, where $\theta_c$ is as in (\ref{DefThetaC}). For any $\epsilon > 0$ we have that the following sequences 
\begin{equation}\label{SubCritAll}
\begin{split}
\frac{\XS_N }{N^{1/3 + \epsilon}}, \hspace{1mm} \frac{\YS_N }{N^{1/3 + \epsilon}}, \frac{\XE_N - N }{N^{1/3 + \epsilon}}, \hspace{1mm} \frac{\YE_N - N }{N^{1/3 + \epsilon}}, \hspace{1mm} \frac{\LP_N - 2N }{N^{1/3 + \epsilon}}, \hspace{1mm} \frac{N (\SP_N - 1)}{ N^{1/3 + \epsilon}}
\end{split}
\end{equation}
all converge to $0$ in probability as $N \rightarrow \infty$.
\end{proposition}

\begin{proposition}\label{PMP2}{ \bf [Critical case]}  Suppose that $\theta  = \theta_c$, where $\theta_c$ is as in (\ref{DefThetaC}). There exists a constant $c_1 \in (0,1)$ such that 
 \begin{equation}\label{CritLength}
\begin{split}
\lim_{N \rightarrow \infty} \mathbb{P}(  c_1 N \leq \LP_N \leq 2N ) = 1.
\end{split}
\end{equation}
In addition, for each $\epsilon > 0$ we have that $\frac{N (\SP_N - 1)}{ N^{2/3 + \epsilon}}$ converges to $0$ in probability as $N \rightarrow \infty$.
\end{proposition}

\begin{proposition}\label{PMP3}{ \bf [Supercritical case]}  Suppose that $\theta \in (\theta_c, \infty)$, where $\theta_c$ is as in (\ref{DefThetaC}). There exists a constant $c_2 > 0$ (depending on $\theta$) such that 
 \begin{equation}\label{SuperCritLength}
\begin{split}
\lim_{N \rightarrow \infty} \mathbb{P}(  \LP_N \leq c_2 \log N ) = 1.
\end{split}
\end{equation}
If $\theta \in (3, \infty)$, then the sequence $\LP_N$ is tight. In addition, for any $\theta \in (\theta_c, \infty)$ the sequence of random vectors $(N^{-1} \XS_N, N^{-1} \YS_N)$ converges weakly to the uniform measure on $(0,1)^2$.
\end{proposition}
\begin{remark} From (\ref{CritLength}) and (\ref{SuperCritLength}) we see that the behavior of $L_N$ in the critical case $\theta = \theta_c$ is clearly different from the supercritical case $\theta > \theta_c$. However, our methods do not provide enough information in the critical case to distinguish it from the subcritical case $\theta \in (0, \theta_c)$. We also mention that while $S_N \rightarrow 1$ both in the critical and subcritical case, the speed at which this convergence (provably) occurs in Proposition \ref{PMP1} is {\em faster} than in Proposition \ref{PMP2}.
\end{remark}
\begin{remark} From \cite[page 259]{AS65} we have that $\theta_c \approx 2.92326$. Consequently, Proposition \ref{PMP3} states that for $\theta \in (\theta_c, 3]$ the path length $\LP_N$ is with high probability $O(\log N)$, while for $\theta > 3$ it is with high probability $O(1)$. It would be very interesting to see if there is a range of parameters in $(\theta_c, 3]$ where the length $\LP_N$ is indeed of logarithmic order, or if for any $\theta > \theta_c$ the length is of unit order. At this point our methods appear to be insufficient to address this question as they can only provide an {\em upper bound} for the range $\theta \in (\theta_c, 3]$.
\end{remark}
\begin{remark} 
A great variety of polymer models, directed or not, have been considered in polymer physics \cite{de1979scaling}. Directed polymer models with variable length may arise for instance in the context of polymer melts \cite{le1991statistical}. The log-gamma polymer measure that we consider is hardly a model of polymer melt, since we are considering a single polymer chain. However, the weights in our model can be thought of as the exponential of some monomer-monomer bond energy, and we prove the existence of a sharp transition in polymer length, as the temperature varies, in presence of quenched disorder.  
\end{remark}

%
\subsection{A random operator on the honeycomb lattice}
\label{sec:randomoperator}
Part of our motivation for studying the asymptotic behavior of $\Fe_N$ comes from a random operator, introduced by Kotowski and Vir\'ag  in \cite{kotowski2019tracy}. The authors showed that the smallest positive eigenvalue of this operator is related to the random variable $\Fe_N$ for the log-gamma polymer model, and this allowed them to deduce Tracy-Widom fluctuations for the logarithm of the smallest positive eigenvalue in a certain range of parameters. Before explaining the exact relation between the random operator and the log-gamma polymer (see Section \ref{sec:relation}), let us first define the model and state our results.

\begin{definition} \label{def:randomoperator}
For all $N\geq 1$, we define a planar graph  $G_N$, depicted in Figure \ref{fig:lattice}, as follows. Consider a tiling of the Euclidean  plane by regular hexagons of sidelength $1$. To identify this tiling among its translations and rotations, assume that three hexagons meet at the origin, and their edges form angles $0, 2\pi/3, 4\pi/3$. A graph $G=(V,E)$ is associated  to this tiling, where the vertex set $V$ consists of all points in the plane where three hexagons meet, and the edge set $E$ consists of pairs of vertices connected by the common edge of two adjacent hexagons. We define $G_N=(V_N, E_N)$ as the subgraph of $G$ consisting of vertices (we identify the plane with  $\mathbb{C}$)
$$\darkbluevertex_{x,y}:= x(1+e^{-\I\pi/3})+y(1+e^{\I\pi/3})\quad \text{ and }\quad \lightbluevertex_{x,y}:=1+\darkbluevertex_{x,y},$$
for all $x,y\in \llbracket 1,N\rrbracket$. As can be seen in Figure \ref{fig:lattice}, $G_N$ is a portion of a honeycomb lattice. The edge set $E_N$ consists of edges in $E$ connecting two vertices in $V_N$.
\begin{figure}
	 \captionsetup{width=\linewidth}
	\begin{center}
		\begin{tikzpicture}[scale=0.6]
		\foreach \a  in {0, ..., 3}
		\foreach \b in {0, ..., 3}
		{\draw[line width=2pt, blue!70!white] ({3/2*(\a+\b)},{sqrt(3)/2*(\b-\a)}) -- ({3/2*(\a+\b)+1},{sqrt(3)/2*(\b-\a)});}
		\foreach \a  in {0, ..., 3}
		\foreach \b in {0, ..., 2}
		\draw[thick, red] ({3/2*(\a+\b)+1},{sqrt(3)/2*(\b-\a)})-- ({3/2*(\a+\b+1)},{sqrt(3)/2*(1+\b-\a)});
		\foreach \a  in {0, ..., 2}
		\foreach \b in {0, ..., 3}
		\draw[thick, red] ({3/2*(\a+\b)+1},{sqrt(3)/2*(\b-\a)})-- ({3/2*(\a+\b+1)},{sqrt(3)/2*(-1+\b-\a)});
		\foreach \a  in {0, ..., 3}
		\foreach \b in {0, ..., 3}
		{	\filldraw[fill=black, draw=black] ({3/2*(\a+\b)},{sqrt(3)/2*(\b-\a)}) circle(0.12);
			\filldraw[fill=white, draw=black, thick] ({3/2*(\a+\b)+1},{sqrt(3)/2*(\b-\a)}) circle(0.12);}
		\draw[thick, gray, ->] (-3/2,0) -- (4.5,{-4*sqrt(3)/2}) node[anchor=west] {$x$};
		\draw[thick, gray, ->] (-3/2,0) -- (4.5,{4*sqrt(3)/2}) node[anchor=west] {$y$};
		
		
		\end{tikzpicture}
	\end{center}
	\caption{The planar graph $G_N$ for $N=4$.}
	\label{fig:lattice}
\end{figure}

The graph $G_N$ is bipartite, as depicted in Figure \ref{fig:lattice} where black vertices (of the form $\darkbluevertex_{x,y}$) are connected to white vertices (of the form $\lightbluevertex_{x,y}$). We associate weights to the edges of $G_N$, in such a way that for each (blue) horizontal edge $e=e_{x,y}$ in $E_N$ connecting $\darkbluevertex_{x,y}$ to $\lightbluevertex_{x,y}$, its weight $w(e)=1/w_{x,y}$ is a gamma random variable with shape parameter $\theta$, that is with density
$ \frac{1}{\Gamma(\theta)}x^{\theta-1}e^{-x} \mathbf{1}_{x>0},$
 and all gamma  random variables are independent. All other edges (the red ones) have weight $1$ .

Because $G_N$ is bipartite, its weights adjacency matrix $A_N$ can be written in the form
\begin{equation}
A_N = \begin{pmatrix}
0 & \widetilde{A}  \\
\widetilde{A}^t & 0
\end{pmatrix}
\label{eq:defAN}
\end{equation}
where $\widetilde A$ is an $N^2$ by $N^2$ matrix whose rows are indexed by black vertices and columns are indexed by white vertices. We may consider that rows and columns are both indexed by points $(x,y) \in \llbracket 1,N\rrbracket^2$. The matrix $A_N$ may then be thought of as an operator acting on functions $\llbracket 1,N\rrbracket^2 \to \mathbb C^2$. The matrix $\widetilde A$ acts on functions  $f:  \llbracket 1,N\rrbracket^2 \to \mathbb C$ as
\begin{equation}\widetilde A = \nabla_x^{-} +\nabla_y^{-} + w\label{eq:Aoperator}
\end{equation}
where $w$ acts by pointwise multiplication $wf(x,y) = \frac{1}{w_{x,y}}f(x,y)$, and the $\nabla^-$ are shift operators
\begin{equation}
\nabla_x^{-} f (x,y) = \begin{cases} f(x-1,y) \text{ for }x>0, \\ 0 \text{ for }x=0, \end{cases} \text{ and }\;\;\nabla_y^{-} f (x,y) = \begin{cases}f(x,y-1)  \text{ for }y>0, \\ 0 \text{ for }y=0. \end{cases}\label{eq:defnabla}
\end{equation}
\end{definition}

$A_N$ is almost surely invertible \cite[Lemma 2.1]{kotowski2019tracy} with spectrum consisting of pairs of eigenvalues  $\pm \lambda_1, \dots, \pm \lambda_{N^2}$, $0< \lambda_1 < \dots <\lambda_{N^2}$. It was shown in \cite{kotowski2019tracy} that when $\theta \in (0, \theta_c/2)$,  the smallest positive eigenvalue $\lambda_1$ is exponentially small, with GUE Tracy-Widom distributed fluctuations.   Our results above about the asymptotics of $\Fe_N$ allow us to extend this result and prove the following immediate corollaries (of our results from Section \ref{Section1.1}) about the asymptotic behavior of  $\lambda_{1}$,  the smallest positive eigenvalue of $A_N$. For $\theta \in (0, \theta_c/2)$, the result of Corollary \ref{cor:subcritical} below is already present in \cite[Theorem 1.1]{kotowski2019tracy}. Our results provide a complete phase diagram and show that $\theta_c$ is the critical point dividing exponential decay from polynomial decay of $\lambda_1$ with $N$.
\begin{corollary}{\bf [Subcritical case]} \label{cor:subcritical}
Suppose that $\theta \in (0, \theta_c)$, where $\theta_c$ is as in (\ref{DefThetaC}). For all $r\in \R$,
\begin{equation*}
\lim_{N\to\infty} \mathbb P\left( \frac{-\log \lambda_{1} + 2 N \Psi(\theta/2)}{\sigma_{\theta} N^{1/3}} \leq r \right) = F_{\rm GUE}(r).
\end{equation*}
\label{cor:PM1}
\end{corollary}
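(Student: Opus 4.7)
The plan is to combine Theorem~\ref{PM1} with the polymer--operator dictionary supplied by Section~\ref{sec:relation}. The crux is the deterministic identity $\|\widetilde A^{-1}\|_{\max} = e^{\Fe_N}$ together with the tautology $\|\widetilde A^{-1}\|_{\mathrm{op}} = 1/\lambda_1$ (the latter following from the block structure \eqref{eq:defAN} which makes $\pm \lambda_i$ the signed singular values of $\widetilde A$). Since the max-entry and operator norms of any $N^2 \times N^2$ matrix satisfy $\|M\|_{\max} \leq \|M\|_{\mathrm{op}} \leq N^2 \|M\|_{\max}$, these two facts immediately yield the deterministic two-sided bound
\begin{equation*}
\Fe_N \;\leq\; -\log \lambda_1 \;\leq\; \Fe_N + 2\log N,
\end{equation*}
so that the corollary reduces to Theorem~\ref{PM1}.

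To justify the first identity I would observe that on any total order refining the coordinatewise partial order on $\llbracket 1, N\rrbracket^2$, the operator $\widetilde A = D^{-1} + S$ is lower triangular, with $D^{-1}$ the diagonal multiplication by $w_{x,y}^{-1}$ and $S = \nabla_x^{-} + \nabla_y^{-}$ strictly lower triangular and nilpotent. The Neumann expansion $\widetilde A^{-1} = \sum_{k \geq 0} (-1)^k (DS)^k D$, with $D$ multiplication by $w_{x,y}$, has only finitely many nonzero terms per matrix element, and an induction on $k$ identifies $(DS)^k D \, \delta_{(m_1,n_1)}$ with a sum over directed up-right paths of length $k$ emanating from $(m_1,n_1)$, each weighted by the product $\prod_{(i,j) \in \pi} w_{i,j}$. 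Since every directed path from $(m_1,n_1)$ to $(m_2,n_2)$ has the common length $(m_2-m_1) + (n_2-n_1)$, no cancellation between paths occurs, and one obtains
\begin{equation*}
(\widetilde A^{-1})_{(m_2,n_2),(m_1,n_1)} = (-1)^{(m_2-m_1)+(n_2-n_1)}\, Z(m_1,n_1;m_2,n_2).
\end{equation*}
Taking absolute values and maximizing over $(m_1,n_1) \leq (m_2,n_2)$ in $\llbracket 1, N\rrbracket^2$ gives $\|\widetilde A^{-1}\|_{\max} = e^{\Fe_N}$.

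Once the deterministic comparison is in hand, the conclusion is automatic: the error $2\log N$ is $o(N^{1/3})$ and hence negligible on the Tracy--Widom scale $\sigma_{\theta} N^{1/3}$. For fixed $r \in \R$ and $\eta > 0$, the two-sided bound sandwiches the event $\{ (-\log \lambda_1 + 2N\Psi(\theta/2))/(\sigma_{\theta} N^{1/3}) \leq r \}$ between the analogous events for $\Fe_N$ with $r$ replaced by $r - \eta$ and $r$ respectively, valid once $N$ is large enough that $2\log N \leq \eta \sigma_{\theta} N^{1/3}$. Applying Theorem~\ref{PM1} on both sides and sending $\eta \to 0$ using continuity of $F_{\rm GUE}$ closes the argument. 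I expect no serious obstacle; the only substantive step is the polymer--partition--function interpretation of $\widetilde A^{-1}$, which is the raison d'\^etre of Section~\ref{sec:relation} (and already implicit in \cite{kotowski2019tracy}), after which the corollary really is an immediate consequence of Theorem~\ref{PM1}.
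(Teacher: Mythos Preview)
Your proposal is correct and follows essentially the same route as the paper: both deduce the corollary from Theorem~\ref{PM1} via the sandwich $\Fe_N \leq -\log\lambda_1 \leq \Fe_N + O(\log N)$, which in the paper is simply quoted as Proposition~\ref{theo:KotowskiVirag} (from \cite{kotowski2019tracy}), whereas you re-derive it directly from the matrix--norm comparison $\|\widetilde A^{-1}\|_{\max}\le\|\widetilde A^{-1}\|_{\mathrm{op}}\le N^2\|\widetilde A^{-1}\|_{\max}$ together with the partition-function interpretation of the entries of $\widetilde A^{-1}$ (the paper's Proposition~\ref{prop:inverseelements}). Your constant $2\log N$ is in fact sharper than the paper's $4\log N$ (which comes from the $k=1$ case of the general bound \eqref{eq:ksingularvalues} with $\binom{N^2}{1}^2=N^4$), but this makes no difference on the $N^{1/3}$ scale.
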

\begin{corollary}{\bf[Critical case]}\label{cor:critical}
Suppose that $\theta  = \theta_c$,  where $\theta_c$ is as in (\ref{DefThetaC}). With the same constants $c_2 > c_1 > 0$ as in Theorem \ref{PM2}, we have
\begin{equation*}
\lim_{N \rightarrow \infty} \mathbb{P} \left(c_1 N^{1/3}(\log N)^{2/3} \leq -\log \lambda_1 \leq c_2 N^{1/3}(\log N)^{2/3} +4\log N   \right) = 1.
\end{equation*}
\label{cor:PM2}
\end{corollary}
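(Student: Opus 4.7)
The plan is to deduce the corollary by combining Theorem \ref{PM2} with the deterministic comparison between $-\log \lambda_1$ and $\mathsf{F}_N$ to be established in Section \ref{sec:relation}. Concretely, I expect that section to yield an inequality of the shape
\[ \mathsf{F}_N \;\leq\; -\log \lambda_1 \;\leq\; \mathsf{F}_N + 4\log N \qquad \text{almost surely.} \]
Granting this, the corollary is immediate: on the event $\{c_1 N^{1/3}(\log N)^{2/3} \leq \mathsf{F}_N \leq c_2 N^{1/3}(\log N)^{2/3}\}$, whose probability tends to $1$ by Theorem \ref{PM2}, one has $c_1 N^{1/3}(\log N)^{2/3} \leq -\log \lambda_1 \leq c_2 N^{1/3}(\log N)^{2/3} + 4\log N$, which is exactly the desired event, so its probability tends to $1$ as well.

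The substantive ingredient is the deterministic inequality itself. It rests on the Kotowski-Vir\'ag identity $1/\lambda_1 = \|\widetilde{A}^{-1}\|_{op}$ combined with an explicit identification of the entries of $\widetilde{A}^{-1}$ with point-to-point partition functions. Expanding $\widetilde{A}^{-1} = \sum_{k \geq 0} (-1)^k (D^{-1}L)^k D^{-1}$, where $D$ is the diagonal part with entries $1/w_{x,y}$ and $L = \nabla_x^- + \nabla_y^-$ is the step operator, one sees that $(\widetilde{A}^{-1})_{(m_2,n_2),(m_1,n_1)}$ equals, up to an overall sign depending only on the parity of $(m_2-m_1)+(n_2-n_1)$, the partition function $Z(m_1,n_1;m_2,n_2)$; this sign factors out because all up-right paths between a fixed pair of endpoints have the same length. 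Taking absolute values then yields $\|\widetilde{A}^{-1}\|_{\max} = \exp(\mathsf{F}_N)$. The lower bound $-\log \lambda_1 \geq \mathsf{F}_N$ follows immediately from $\|\widetilde{A}^{-1}\|_{op} \geq \|\widetilde{A}^{-1}\|_{\max}$, while for the upper bound I would apply the standard estimate $\|B\|_{op} \leq \sqrt{\|B\|_1 \|B\|_\infty} \leq N^2 \|B\|_{\max}$ valid for any $N^2 \times N^2$ matrix, giving $-\log \lambda_1 \leq \mathsf{F}_N + 2\log N$. The $4\log N$ appearing in the statement provides extra slack to accommodate whichever precise form of the operator-norm comparison is carried out in Section \ref{sec:relation}.

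There is no genuine obstacle beyond this: all of the analytic work has already been done in proving Theorem \ref{PM2} and in setting up the deterministic comparison. The only point to keep in mind is that the additive $4\log N$ discrepancy is negligible compared to the $N^{1/3}(\log N)^{2/3}$ scale of the critical polymer fluctuations, so it does not interfere with the leading-order window; this is clear since $\log N \ll N^{1/3}(\log N)^{2/3}$ as $N \to \infty$.
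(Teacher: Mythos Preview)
Your proposal is correct and matches the paper's approach: the paper deduces the corollary immediately from Theorem~\ref{PM2} together with the deterministic sandwich $\mathsf{F}_N \leq -\log\lambda_1 \leq \mathsf{F}_N + 4\log N$, which it quotes from \cite{kotowski2019tracy} as Proposition~\ref{theo:KotowskiVirag}. Your sketch of that inequality via $\|\widetilde{A}^{-1}\|_{\max} \leq \|\widetilde{A}^{-1}\|_{op} \leq N^2\|\widetilde{A}^{-1}\|_{\max}$ is in fact slightly sharper (yielding $2\log N$ rather than $4\log N$), the looser constant in the paper coming from the general $k$-path bound $\binom{N^2}{k}^2$ specialized to $k=1$.
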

\begin{corollary}{\bf[Supercritical case]}\label{cor:supercritical}
Suppose that $\theta >  \theta_c$,  where $\theta_c$ is as in (\ref{DefThetaC}). With the same constants $c_2 > c_1 > 0$ as in Theorem \ref{PM3}, we have
\begin{equation*}
\lim_{N \rightarrow \infty} \mathbb{P} \left(c_1\log N  \leq  -\log \lambda_1 \leq (4+c_2) \log N\right) = 1.
\end{equation*}
\label{cor:PM3}
\end{corollary}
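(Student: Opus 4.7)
The plan is to deduce Corollary \ref{cor:supercritical} as an immediate consequence of Theorem \ref{PM3} combined with a deterministic comparison between $-\log \lambda_1$ and $\mathsf{F}_N$ established in Section \ref{sec:relation}. The key ingredient, going back to \cite{kotowski2019tracy}, is a closed form for $\widetilde A^{-1}$. Factoring $\widetilde A = w \bigl( I + w^{-1}(\nabla_x^- + \nabla_y^-) \bigr)$ and observing that $w^{-1}(\nabla_x^- + \nabla_y^-)$ is nilpotent on the box $\llbracket 1, N \rrbracket^2$ (each application strictly decreases $x+y$), the Neumann series terminates:
$$\widetilde A^{-1} \;=\; \sum_{k \geq 0} (-1)^k \bigl( w^{-1}(\nabla_x^- + \nabla_y^-) \bigr)^k w^{-1}.$$
Reading off each coefficient as a sum over directed lattice paths would then identify
$$\bigl(\widetilde A^{-1}\bigr)_{(m_2,n_2),(m_1,n_1)} \;=\; (-1)^{(m_2 - m_1) + (n_2 - n_1)} \, Z(m_1, n_1; m_2, n_2),$$
so in particular $\max_{v,w} \bigl|(\widetilde A^{-1})_{v,w}\bigr| = e^{\mathsf{F}_N}$.

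Since the spectrum of the block matrix $A_N$ is $\pm \sigma_i(\widetilde A)$, one has $\lambda_1^{-1} = \|\widetilde A^{-1}\|_{\mathrm{op}}$. The standard sandwich between the operator norm and the entrywise max for the $N^2 \times N^2$ matrix $\widetilde A^{-1}$, namely
$$\max_{v,w} \bigl|(\widetilde A^{-1})_{v,w}\bigr| \;\leq\; \|\widetilde A^{-1}\|_{\mathrm{op}} \;\leq\; \|\widetilde A^{-1}\|_F \;\leq\; N^2 \max_{v,w} \bigl|(\widetilde A^{-1})_{v,w}\bigr|,$$
followed by taking $-\log$, would then yield the deterministic comparison
$$\mathsf{F}_N \;\leq\; -\log \lambda_1 \;\leq\; \mathsf{F}_N + 4\log N,$$
where the constant $4$ is a (safe) overestimate absorbing any subleading factors from the precise form of the operator-norm bound used in Section \ref{sec:relation}.

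With this comparison in hand, Theorem \ref{PM3} supplies the event $\mathcal{E}_N := \{c_1 \log N \leq \mathsf{F}_N \leq c_2 \log N\}$ with $\mathbb{P}(\mathcal{E}_N) \to 1$, on which the deterministic sandwich immediately gives
$$c_1 \log N \;\leq\; \mathsf{F}_N \;\leq\; -\log \lambda_1 \;\leq\; \mathsf{F}_N + 4\log N \;\leq\; (c_2 + 4)\log N,$$
which is precisely the conclusion of Corollary \ref{cor:supercritical}. The only nontrivial input is the path expansion of $\widetilde A^{-1}$ and its identification with log-gamma partition functions, which is the content of Section \ref{sec:relation}; the operator-norm sandwich and the final invocation of Theorem \ref{PM3} are routine. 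Accordingly I do not foresee a substantive obstacle, and the same template (with the appropriate polymer result swapped in) will simultaneously deliver Corollaries \ref{cor:subcritical} and \ref{cor:critical}.
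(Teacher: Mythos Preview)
Your proposal is correct and follows exactly the route the paper takes: it invokes the deterministic sandwich $\mathsf{F}_N \leq -\log\lambda_1 \leq \mathsf{F}_N + 4\log N$ from Proposition~\ref{theo:KotowskiVirag} (whose proof via the path expansion of $\widetilde A^{-1}$ and the max-entry/operator-norm comparison you correctly sketch) and then applies Theorem~\ref{PM3}. Your Frobenius-norm argument in fact yields the sharper constant $2\log N$, so your ``safe overestimate'' to $4\log N$ is harmless and matches the paper's stated bound.
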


The $4\log(N)$ which arises in the critical and supercritical case follows from
Proposition \ref{propktspecial}, which itself comes from the $k=1$ case of (\ref{eq:ksingularvalues}). For the subcritical case, this factor of $4\log(N)$ is still present but can be absorbed into the $N^{1/3}$ fluctuation scale. In Section \ref{sec:relation} we explain how these corollaries follow from Theorems \ref{PM1}, \ref{PM2} and \ref{PM3}.

%
\subsubsection{Relation between the log-gamma polymer and the random operator}
\label{sec:relation}

Corollaries \ref{cor:subcritical}, \ref{cor:critical} and \ref{cor:supercritical} are deduced respectively from Theorems \ref{PM1}, \ref{PM2} and \ref{PM3}, using an exact algebraic relation between the log-gamma polymer from Definition \ref{def:loggammapolymer} and the random operator $A_N$ from Definition \ref{def:randomoperator}. This relation was discovered in \cite{kotowski2019tracy} and is stated in the following proposition.
\begin{proposition}[{Equivalent to \cite[Proposition 2.3]{kotowski2019tracy}}]
The inverse of the matrix  $\widetilde A$, introduced in \eqref{eq:defAN}, satisfies for all $S,T\in \llbracket 1, N\rrbracket^2$,
\begin{equation}
\label{eq:inverseelements}
\big(\widetilde A^{-1}\big)_{T,S} = Z(S; T)(-1)^{\Vert T-S\Vert_1},
\end{equation}
where for $S=(s_1,s_2)$, $T=(t_1,t_2)$ we define $\Vert T-S\Vert_1=\vert t_1-s_1\vert +\vert t_2-s_2\vert $ and  we recall from \eqref{PartitionFunct} that $Z(S;T)=Z(s_1,s_2;t_1,t_2)$ is the partition function of the log-gamma polymer with weights $w_{i,j}= 1/w(e_{i,j})$ as defined in Definition \ref{def:loggammapolymer}.
\label{prop:inverseelements}
\end{proposition}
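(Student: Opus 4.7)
My plan is to verify directly that the matrix $M$ with entries $M_{T,S}:=(-1)^{\Vert T-S\Vert_1} Z(S;T)$ is the inverse of $\widetilde A$. Since $\widetilde A$ is invertible on the finite-dimensional space of functions $\llbracket 1,N\rrbracket^2\to \mathbb{C}$ (by \cite[Lemma 2.1]{kotowski2019tracy}), it suffices to show that $\widetilde A M=I$, i.e.\ that each column of $M$ is a Green's function for $\widetilde A$ with source at the corresponding point.

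From the operator formulas \eqref{eq:Aoperator}--\eqref{eq:defnabla}, viewing the $S$-th column of $M$ as a function $T\mapsto M(T,S)$ on $\llbracket 1,N\rrbracket^2$ (extended by $0$ outside), the identity $\widetilde A M = I$ reduces to the pointwise check
\begin{equation*}
M(T-e_1,S) + M(T-e_2,S) + \tfrac{1}{w_T}M(T,S) = \mathbf{1}\{T=S\}\qquad \text{for all } T,S\in\llbracket 1,N\rrbracket^2,
\end{equation*}
where $e_1=(1,0)$ and $e_2=(0,1)$. The key ingredient is the standard partition-function recursion $Z(S;S)=w_S$ and $Z(S;T)=w_T[Z(S;T-e_1)+Z(S;T-e_2)]$ for $T\geq S$ with $T\neq S$, together with the convention $Z(S;T)=0$ whenever $T\not\geq S$ or $T$ has a coordinate outside $\llbracket 1,N\rrbracket$.

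The verification then splits into three routine cases. If $T=S$, the shift terms vanish since $Z(S;S-e_i)=0$, while the diagonal term contributes $\tfrac{1}{w_S}\cdot w_S=1$. If $T\geq S$ with $T\neq S$, then setting $k=\Vert T-S\Vert_1\geq 1$ and using $\Vert T-e_i-S\Vert_1=k-1$ whenever $T-e_i\geq S$, the left-hand side equals
\begin{equation*}
(-1)^{k-1}\bigl[Z(S;T-e_1)+Z(S;T-e_2)\bigr]+\tfrac{(-1)^k}{w_T}Z(S;T)=\tfrac{(-1)^k}{w_T}\Bigl(Z(S;T)-w_T\bigl[Z(S;T-e_1)+Z(S;T-e_2)\bigr]\Bigr)=0
\end{equation*}
by the recursion. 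Finally if $T\not\geq S$, then $T-e_1$ and $T-e_2$ also fail to dominate $S$, so every term vanishes trivially.

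I do not anticipate any serious obstacle: the factor $(-1)^{\Vert T-S\Vert_1}$ is designed precisely so that the two-term up-right recursion for $Z$ cancels against the shift part of $\widetilde A$, leaving only a delta at $T=S$. An equivalent derivation is via Neumann series: writing $\widetilde A=D+L$ with $D$ the diagonal operator $f\mapsto f/w_{x,y}$ and $L=\nabla_x^-+\nabla_y^-$, the nilpotency of $D^{-1}L$ on the finite domain $\llbracket 1,N\rrbracket^2$ yields $\widetilde A^{-1}=\sum_{k\geq 0}(-1)^k (D^{-1}L)^k D^{-1}$, and the term of order $k$ at entry $(T,S)$ is exactly the sum over down-left paths of $k$ steps from $T$ to $S$ weighted by $\prod_{(i,j)\in\pi}w_{i,j}$, namely $Z(S;T)\,\mathbf{1}\{\Vert T-S\Vert_1=k\}$.
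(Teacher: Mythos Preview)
Your proof is correct and follows essentially the same approach as the paper. The paper does not give a full proof but refers to \cite{kotowski2019tracy} and notes that \eqref{eq:inverseelements} is equivalent to the partition-function recurrence $\tfrac{1}{w_{x,y}}Z(S;T)=\nabla_x^-Z(S;T)+\nabla_y^-Z(S;T)$ for $S\leq T$, $S\neq T$; your direct verification of $\widetilde A M=I$ via the three cases is exactly a careful fleshing-out of this observation, and your Neumann series remark is a standard equivalent formulation.
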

The statement of Proposition \ref{prop:inverseelements} differs slightly from \cite[Proposition 2.3]{kotowski2019tracy}. In particular, \cite{kotowski2019tracy} considers a more general directed polymer model (see Section \ref{sec:othereigenvalues} below), which in the case of present interest would be a variant of the log-gamma polymer model with edges weighted by $-1$. This is why we included the factor $(-1)^{\Vert T-S\Vert_1}$ in \eqref{eq:inverseelements}. Although the proof of Proposition  \ref{prop:inverseelements} can be found in \cite{kotowski2019tracy}, let us explain where it comes from. Using \eqref{eq:Aoperator}, one can see that \eqref{eq:inverseelements} is equivalent to the recurrence relation of the partition function, which reads
$$ \frac{1}{w_{x,y}} Z(S;T) = \nabla^-_x Z(S;T)+  \nabla^-_y Z(S;T), \quad \textrm{for }  S\leq T \textrm{ with }S\neq T$$
where $\nabla^-_x, \nabla^-_y$ act on the $T$ variable and have been defined in \eqref{eq:defnabla}.

Proposition \ref{prop:inverseelements} allows us to estimate the smallest positive eigenvalue of the adjacency matrix $A_N$ in terms of the partition function of the log-gamma polymer.
\begin{proposition}[{Special case of \cite[Theorem 2.5]{kotowski2019tracy}}]\label{propktspecial}
	Fix $N\in \mathbb{N}$ and recall that $\lambda_{1}$ is the smallest positive eigenvalue of $A_N$. We have
	$ \Fe_N   \leq -\log \lambda_1 \leq \Fe_N + 4\log N.$
	\label{theo:KotowskiVirag}
\end{proposition}
Note that Proposition \ref{theo:KotowskiVirag}  may appear different from the statement of \cite[Theorem 2.5]{kotowski2019tracy}. First,  \cite[Theorem 2.5]{kotowski2019tracy} is more general than Proposition \ref{theo:KotowskiVirag} (see more details in Section \ref{sec:othereigenvalues} below), and second, it provides an estimate for the largest singular value $\sigma_1$ of $\widetilde A^{-1}$, but this singular value is exactly equal to $1/\lambda_1$ hence the statement of Proposition \ref{theo:KotowskiVirag}. \\

In the remainder of this section we use  Proposition \ref{theo:KotowskiVirag} to deduce Corollaries \ref{cor:subcritical}, \ref{cor:critical} and \ref{cor:supercritical} from Theorems \ref{PM1}, \ref{PM2} and \ref{PM3}, respectively.

\begin{proof}[Proof of Corollary \ref{cor:subcritical}]
From Proposition \ref{theo:KotowskiVirag} we have for each $r \in \mathbb{R}$
$$ \mathbb P\left( \frac{\mathsf{F}_N + 2 N \Psi(\theta/2) + 4 \log N }{\sigma_{\theta} N^{1/3}} \leq r \right) \leq \mathbb P\left( \frac{-\log \lambda_{1} + 2 N \Psi(\theta/2)}{\sigma_{\theta} N^{1/3}} \leq r \right)  \leq \mathbb P\left( \frac{\mathsf{F}_N + 2 N \Psi(\theta/2) }{\sigma_{\theta} N^{1/3}} \leq r \right).$$
From Theorem \ref{PM1} and the continuity of $F_{\rm GUE}(x)$ we see that the first and third term above both converge to $F_{\rm GUE}(r)$ as $N \rightarrow \infty$. By the squeeze theorem the same is true for the middle term.
\end{proof}

\begin{proof}[Proof of Corollary \ref{cor:critical}] From Proposition \ref{theo:KotowskiVirag} we have the inclusion of events
\begin{equation*}
\begin{split}
&\left \{ c_1 N^{1/3}(\log N)^{2/3} \leq \mathsf{F}_N \leq c_2 N^{1/3}(\log N)^{2/3}  \right\} \subseteq \\ 
&\left \{ c_1 N^{1/3}(\log N)^{2/3} \leq -\log \lambda_1 \leq c_2 N^{1/3}(\log N)^{2/3} +4\log N \right\}.
\end{split}
\end{equation*}
From Theorem \ref{PM2} the probabilities of the events on the first line converge to $1$ as $N \rightarrow \infty$, and so the same is true for the probabilities of the events of the second line.
\end{proof}

\begin{proof}[Proof of Corollary \ref{cor:supercritical}]  From Proposition \ref{theo:KotowskiVirag} we have the inclusion of events
\begin{equation*}
\begin{split}
&\left \{ c_1\log N  \leq  \mathsf{F}_N \leq c_2 \log N  \right\} \subseteq \left \{ c_1\log N  \leq  -\log \lambda_1 \leq (4+c_2) \log N \right\}.
\end{split}
\end{equation*}
From Theorem \ref{PM3} the probabilities of the left events converge to $1$ as $N \rightarrow \infty$, and so the same is true for the probabilities of the events on the right.
\end{proof}

%

%

%

\subsubsection{Possible extensions}
\label{sec:moregeneralweights}

	Instead of assuming that the weights of the log-gamma  polymer (resp. of the operator $A_N$) are distributed as inverse-gamma random variables (resp. gamma random variables) of shape parameter $\theta$, we may assume that they are distributed as inverse-gamma  random variables (resp. gamma random variables) with shape parameter $\theta$ and scale parameter $\beta$. This extension was considered in \cite{kotowski2019tracy}. The hypothesis used in \cite[Theorem 1.1]{kotowski2019tracy} remains that $ \mathbb E \left[ \log w_{i,j} \right] >0,$ so that the condition on the parameter $\theta$ becomes $\psi(\theta) -\log \beta <0$.
	In presence of such a scale parameter $\beta$, we expect that the optimal condition for the statements  of Theorem \ref{PM1} and Corollary \ref{cor:PM1} to be valid  is $\psi(\theta/2) -\log \beta <0,$
that is when $\lim_{N\to\infty} \frac{\log Z(1,1;N,N)}{N} >0$.

\bigskip

	\begin{figure}[h]
	\captionsetup{width=\linewidth}
	\begin{center}
		\begin{tikzpicture}[scale=0.5]
		\foreach \a  in {0, ..., 3}
		\foreach \b in {0, ..., 3}
		{\draw[line width=2.5pt, blue!70!white] ({3/2*(\a+\b)},{sqrt(3)/2*(\b-\a)}) -- ({3/2*(\a+\b)+1},{sqrt(3)/2*(\b-\a)});}
		\foreach \a  in {0, ..., 3}
		\foreach \b in {0, ..., 2}
		\draw[thick, red] ({3/2*(\a+\b)+1},{sqrt(3)/2*(\b-\a)})-- ({3/2*(\a+\b+1)},{sqrt(3)/2*(1+\b-\a)});
		\foreach \a  in {0, ..., 2}
		\foreach \b in {0, ..., 3}
		\draw[thick, red] ({3/2*(\a+\b)+1},{sqrt(3)/2*(\b-\a)})-- ({3/2*(\a+\b+1)},{sqrt(3)/2*(-1+\b-\a)});
		\foreach \a  in {0, ..., 3}
		\foreach \b in {0, ..., 3}
		{	\filldraw[fill=black, draw=black] ({3/2*(\a+\b)},{sqrt(3)/2*(\b-\a)}) circle(0.12);
			\filldraw[fill=white, draw=black] ({3/2*(\a+\b)+1},{sqrt(3)/2*(\b-\a)}) circle(0.12);}
		\draw[thick, gray, ->] (-3/2,0) -- (4.5,{-4*sqrt(3)/2}) node[anchor=west] {$x$};
		\draw[thick, gray, ->] (-3/2,0) -- (4.5,{4*sqrt(3)/2}) node[anchor=west] {$y$};
		
		\begin{scope}[xshift=16cm, yshift=-3.5cm, scale=1.6]
		\draw[thick, red] (1,1) grid (4,4);
		\foreach \a in {1, ...,4}
		\foreach \b in {1, ..., 4}
		\fill[blue!70!white] (\a,\b) circle(0.08);
		\draw[thick, gray, ->] (0,0) -- (4,0) node[anchor=south] {$x$};
		\draw[thick, gray, ->] (0,0) -- (0,4) node[anchor=west] {$y$};
		\end{scope}
		
		\end{tikzpicture}
	\end{center}
	\caption{We associate to the planar graph $G_N$ (left) a subset of the $\mathbb Z^2$ lattice (right) as follows. The horizontal blue edges of $G_N$ are mapped to vertices in $\Z^2$, while the red oblique edges of $G_N$, which always join two horizontal blue edges, are mapped to the edges of $\Z^2$. For arbitrary weights, the adjacency matrix on $G_N$ is related to a polymer model on $\Z^2$, see Section \ref{sec:moregeneralweights} and \cite{kotowski2019tracy}.}
	\label{fig:latticecorrespondance}
\end{figure}
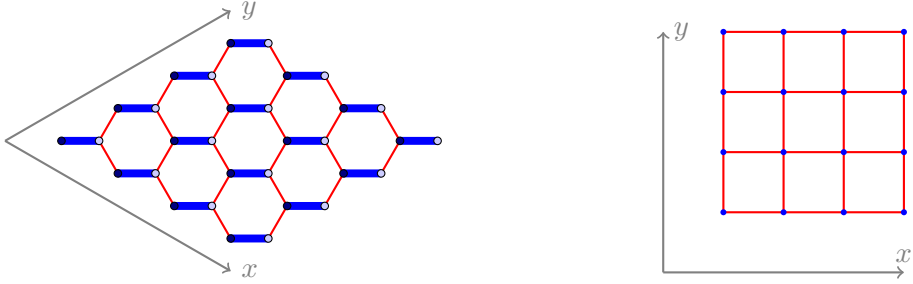

	The statement given in \cite[Proposition 2.3]{kotowski2019tracy} is more general than Proposition \ref{prop:inverseelements}. For any choice of weights $w(e) $ on the edges of $G_N$, \cite[Proposition 2.3]{kotowski2019tracy} shows that
	$ \widetilde A^{-1}_{T,S} = Z(S;T)(-1)^{\Vert T-S\Vert_1},$
	where the partition function $Z(S;T)$ corresponds to a directed polymer model on the $\Z^2$ lattice with arbitrary weights on both vertices and edges. The correspondence between the edges of $G_N$ and the vertices and edges of the $\Z^2$ lattice (on which the polymer model is defined) is illustrated in Figure \ref{fig:latticecorrespondance}. Horizontal edges of $G_N$ are mapped to vertices in $\mathbb Z^2$ while oblique edges in $G_N$ (the red ones) are mapped to edges of $\mathbb Z^2$.  The weights on vertices of $\Z^2$ are the inverses of weights on horizontal edges in $G_N$, as in the statement of Proposition \ref{prop:inverseelements}. However, the polymer weights on edges of $\Z^2$ are the opposite of weights on the red edges of $G_N$.  The result can be further generalized to more general bipartite graphs $G_N$ endowed with a perfect matching, when the matrix $\widetilde A$ defines a directed graph which must contain no cycles except for loops at every vertex (see \cite{kotowski2019tracy} for more details).
	
	This correspondence between directed polymer models and eigenvalues of random operators on the honeycomb lattice could also be applicable  to other integrable polymer models such as the strict-weak polymer \cite{o2014tracy, corwin2014strict} and the beta polymer \cite{barraquand2017random}, as this was already mentioned in \cite{kotowski2019tracy}. In the case of the strict weak polymer, only half of the oblique vertices in $G_N$ would be weighted by independent gamma random variables -- say up-right red edges linking $\lightbluevertex_{x,y}$ to  $\darkbluevertex_{x,y+1}$ -- while all other edges would have weight one. In the case of the beta polymer, horizontal edges would have weight $1$ while the oblique edges would carry  beta  distributed weights: up-right red edges linking $\lightbluevertex_{x,y}$ to  $\darkbluevertex_{x,y+1}$ would carry a beta distributed weight $B_{x,y}$ and down-right red edges linking $\lightbluevertex_{x,y}$ to  $\darkbluevertex_{x+1,y}$ would carry the weight  $1-B_{x,y}$, the collection of $B_{x,y}$ being independent.

\subsubsection{Other eigenvalues}
\label{sec:othereigenvalues}

The statement given  in \cite[Theorem 2.5]{kotowski2019tracy} is more general than Proposition \ref{theo:KotowskiVirag}.  It estimates  the product of the first $k$ largest singular values of $\widetilde A^{-1}$ (i.e. the first $k$ smallest positive eigenvalues of $A_N$) using partition functions for $k$ non-intersecting paths in the log-gamma  polymer maximized over $k$-tuples of starting and ending points. More precisely we have  from \cite[Theorem 2.5]{kotowski2019tracy} that for all $1\leq k\leq N^2$,
\begin{equation} \max_{\mathbf S,\mathbf T} \vert Z(\mathbf S;\mathbf T) \vert  \leq \prod_{i=1}^k \sigma_i\left( \widetilde A^{-1} \right)  = \prod_{i=1}^k \frac{1}{\lambda_i}\leq \binom{N^2}{k}^2 \max_{\mathbf S,\mathbf T} \vert Z(\mathbf S;\mathbf T) \vert,
\label{eq:ksingularvalues}
\end{equation}
where now the maximum runs over pairs of $k$-tuples of distinct points $\mathbf S=(S_1, \dots, S_k), \mathbf T=(T_1, \dots, T_k)$ where   $S_i, T_j\in \llbracket 1,  N\rrbracket^2$, $Z(\mathbf S;\mathbf T)$ is the sum of weights of $k$-tuples of non-intersecting paths with starting and ending points $S_i,T_i$, and $\sigma_i(\cdot)$ denote the singular values of a matrix in decreasing order.
Note that we consider absolute values of partition functions $\vert Z(\mathbf S; \mathbf T)\vert$ in \eqref{eq:ksingularvalues} because the result hold for arbitrary -- not necessarily positive -- weights,  as in Section \ref{sec:moregeneralweights}. We refer to \cite{kotowski2019tracy} for details.
The case $k=1$ of \eqref{eq:ksingularvalues} corresponds to Proposition \ref{theo:KotowskiVirag}.
	
\medskip

When $\theta=0$, that is, if we replace all gamma distributed weights by $0$, then $0$ is an eigenvalue of the matrix $A_N$ with multiplicity $2N$. Indeed, $A_N$ can be decomposed into $2N$ blocks corresponding to connected components in the graph $G_N$ and each of these blocks has corank $1$, so that $0$ is an eigenvalue of $A_N$ with multiplicity $2N$. As $\theta$ becomes positive, but smaller than $\theta_c$, we expect that some of the eigenvalues that are the closest to zero will remain exponentially small. The precise asymptotics of $\lambda_k$, for a fixed $k$, would be controlled by the asymptotics of the maximal partition function for $k$ non-intersecting paths in the log-gamma polymer model, which we expect to grow linearly as $-2\Psi(\theta/2)N$ with fluctuations distributed as the $k$-th eigenvalue in the Airy point process. When $\theta$ becomes larger than $\theta_c$ however, the smallest eigenvalues are no longer exponentially small, as proved in Corollary \ref{cor:PM3}. It would be interesting to determine more precise asymptotics when $\theta=\theta_c$ in order to better understand the nature of this transition.

\subsubsection{Eigenvectors and relation to the literature on random operators}

In the context of random operators, an important question is the localization or delocalization of eigenvectors. In our case, the eigenvectors of $A_N$ are related to the singular vectors of $\widetilde A$, which are in turn related to the singular value decomposition of the matrix of partition functions $(Z(S;T))_{S,T\in \llbracket 1,N\rrbracket^2} $ via Proposition \ref{prop:inverseelements}. It would be very interesting to determine if not only the behavior of the smallest positive eigenvalues obeys a transition at $\theta_c$ but also the localization properties of the associated eigenvectors. However, we emphasize that in the model from Definition \ref{def:randomoperator}, disorder is carried by edges of the lattice and not by vertices, so we may have a different behavior than for the more commonly studied models of the type $\Delta +V$ where $\Delta$ is a discrete or continuous Laplacian and  $V$ is a noise potential on vertices. The latter is often referred to as Anderson model or random Schr\"odinger operator.

\medskip

For operators of the type Laplacian plus noise on a two-dimensional lattice, connections to  directed polymers models have been established numerically  in the physics literature, see \cite{prior2005conductance, somoza2007universal, somoza2015unbinding} and the  references therein. The physics literature predicts that  matrix elements of the resolvant of the operator, which  are considered to be a good probe for the conductance of the physical model described by the operator, can be expanded as a series of products \cite{anderson1958absence}, see also the lecture notes \cite{hundertmark2008short} or the book \cite[Chapter 6]{aizenman2015random}. When the disorder is  strong enough, in the localized phase, this series is expected to be well-approximated by the partition function of a directed polymer model (though the  weights in the auxiliary polymer model are not necessarily positive unlike usual polymer models), and hence have the same  fluctuations as for models in the Kardar-Parisi-Zhang universality class, see e.g. the physics lecture notes \cite{scardicchio2017perturbation}.

%
\section{Preliminary results}\label{Section2}
We present various technical results to be used in the proofs of Theorems \ref{PM1}, \ref{PM2} and \ref{PM3}.

%
\subsection{Summary of results from \cite{BCDA} and \cite{BCDB}}\label{Section2.1} We recall some key results from \cite{BCDA} and \cite{BCDB}.
Define the function
\begin{equation}\label{DefLittleg}
g_{\theta}(z) = \frac{\sum_{n =0}^\infty \frac{1}{(n+\theta - z)^2}}{ \sum_{n = 0}^\infty \frac{1}{(n+z)^2}},
\end{equation}
and observe that $g_{\theta}$ is a smooth, increasing bijection from $(0, \theta)$ to $(0, \infty)$. The inverse function $g_{\theta}^{-1}: (0, \infty) \rightarrow (0,\theta)$ is likewise a strictly increasing smooth bijection. For $x \in (0,\infty)$, define
\begin{equation}\label{HDefLLN}
\begin{split}
h_{\theta}(x) = x \cdot  \Psi(g_{\theta}^{-1}(x)) + \Psi( \theta - g^{-1}_{\theta}(x)),
\end{split}
\end{equation}
with  $\Psi(x)$ from \eqref{digammaS1}. Clearly $h_\theta$ is smooth on $(0, \infty)$.
For $x \in (0,\infty)$, define
\begin{equation}\label{DefSigmaS2}
\sigma_{\theta}( x) := \left( \sum_{n = 0}^\infty \frac{x}{(n+g_{\theta}^{-1}(x))^3} +  \sum_{n = 0}^\infty \frac{1}{(n+\theta - g_{\theta}^{-1}(x))^3} \right)^{1/3}.
\end{equation}

Finally, for all $N,M\geq 1$, we define the rescaled free energy
\begin{equation}\label{eq:rescaledpartitionfunction}
\mathcal{F}(M,N) = \frac{\log Z(1,1;M,N) + M h_{\theta}(N/M)}{M^{1/3}\sigma_{\theta}(N/M)}.
\end{equation}

With the above notation in place we can state the two results we require from \cite{BCDA}.
\begin{proposition}\cite[Theorem 1.2]{BCDA}  \label{LGPCT} Let $\theta > 0$, $\delta \in (0,1)$ and $\mathcal F(M,N)$ be as in \eqref{eq:rescaledpartitionfunction}. Assume that we let $M,N$ go to infinity in such a way that the sequence $N/M \in [\delta, \delta^{-1}]$ for all sufficiently large $M$. Then for all $y\in \R$ we have
\begin{equation}\label{S2HLConv}
\lim_{M, N \rightarrow \infty} \mathbb{P} \left( \mathcal F(M,N)  \leq y\right) = F_{\rm GUE}(y).
\end{equation}
\end{proposition}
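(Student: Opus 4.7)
The strategy will be the now-standard saddle point / steepest descent analysis of a Fredholm determinant representation of the Laplace transform of the point-to-point log-gamma partition function, followed by a Laplace-to-CDF conversion. First, I would invoke the Fredholm determinant formula for the Laplace transform of $Z(1,1;M,N)$ coming from the Whittaker / Macdonald process machinery of Corwin--O'Connell--Sepp\"al\"ainen--Zygouras and Borodin--Corwin--Remenik: for any $u > 0$,
\[
\mathbb{E}\left[e^{-u Z(1,1;M,N)}\right] \; = \; \det(I - K_u)_{L^2(\mathcal{C})},
\]
where $K_u$ is an explicit integrable kernel built from a double contour integral whose integrand is a ratio of Gamma functions together with a factor $u^z$. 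The contour $\mathcal{C}$ is a vertical line with suitably chosen real part, and the inner variable sits on another small contour encircling $\{0,1,2,\ldots\}$.

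Next, I would set $u = \exp\bigl(M h_{\theta}(N/M) - \sigma_{\theta}(N/M)\, M^{1/3} y\bigr)$ and carry out steepest descent on the kernel. Writing the exponential part of the integrand as $e^{M\,\Phi(z;\,r)}$ with $r = N/M$ and
\[
\Phi(z;r) \;=\; r\log\Gamma(z) \;-\; \log\Gamma(\theta-z) \;-\; z\, h_{\theta}(r),
\]
the critical point equation $\partial_z \Phi = 0$ reads $r\,\Psi(z) + \Psi(\theta-z) = h_{\theta}(r)$, and substituting the defining relations \eqref{DefLittleg}--\eqref{HDefLLN} one checks that the unique critical point in $(0,\theta)$ is $z_c(r) = g_{\theta}^{-1}(r)$. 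A direct computation of $\partial_z^3 \Phi$ at $z_c$ produces, via the series \eqref{DefSigmaS2}, exactly the cubic coefficient $\tfrac{1}{3}\sigma_{\theta}(r)^3$, so that under the local rescaling $z = z_c + \xi/\bigl(M^{1/3}\sigma_{\theta}(r)\bigr)$ the exponential factor converges to $e^{\xi^3/3 - y\xi}$, the Airy scaling. Coupling this with the linear part of $\Phi$, the rescaled kernel converges pointwise to the Airy kernel $K_{\rm Airy}$ on $L^2([y,\infty))$, and Hadamard-type bounds along the deformed contour upgrade this to convergence of the Fredholm determinants:
\[
\lim_{M,N\to\infty}\mathbb{E}\left[\exp\!\left(- e^{-\sigma_{\theta}(N/M)\,M^{1/3}(y - \mathcal{F}(M,N))}\right)\right] \;=\; F_{\rm GUE}(y).
\]

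Finally, since $t \mapsto \exp(-e^{-t})$ is a smoothed Heaviside function and Tracy-Widom is a continuous distribution, a soft argument (used e.g.\ in Borodin--Corwin to pass from $q$-Laplace transforms to CDFs, or here simply the Portmanteau lemma applied together with a standard tightness check) converts the above Laplace-transform limit into the distributional convergence $\mathcal{F}(M,N) \Rightarrow \chi_{\rm GUE}$ claimed in \eqref{S2HLConv}.

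\textbf{Main obstacle.} The genuinely delicate step is maintaining \emph{uniformity in $r = N/M$} across the compact interval $[\delta,\delta^{-1}]$. The steepest descent contour must be threaded through the moving critical point $z_c(r) = g_{\theta}^{-1}(r)$, and one must exhibit, for all such $r$ simultaneously, contours on which $\mathrm{Re}\,\Phi$ decays at a rate controlled independently of $r$, together with integrable dominating functions for the Fredholm series. Smoothness of $g_{\theta}^{-1}$ and compactness of $[\delta,\delta^{-1}]$ make such a continuous choice of contour possible, but verifying that the tail decay constants and the constants in the Hadamard bound can be chosen uniformly in $r$ is the technical heart of the argument, and is what distinguishes this statement from the on-diagonal ($M = N$) Tracy--Widom limit of Borodin--Corwin--Remenik and Krishnan--Quastel.
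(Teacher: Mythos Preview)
The paper does not supply a proof of this proposition: it is quoted verbatim as \cite[Theorem 1.2]{BCDA} and used as input. So strictly speaking there is nothing in the present paper to compare your proposal against.

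That said, your outline is the correct and standard route, and it is precisely the machinery that the paper itself deploys in Appendix~\ref{Section6} to prove the closely related quantitative statement Proposition~\ref{S2LBProp}. There the Laplace transform identity is Proposition~\ref{LGPT1}, the critical point is denoted $z_c$ (with $W_{M,N}(z_c)$ playing the role of your $M h_\theta(r)$), the cubic coefficient is $\tilde\sigma_\alpha$, and the steepest descent with uniform-in-$r$ contour control is packaged into Lemma~\ref{analGS3} and Proposition~\ref{S6Main}. The Laplace-to-CDF conversion you describe is exactly the sandwich argument at the end of Section~\ref{Section6.1}. Your identification of the uniformity in $r\in[\delta,\delta^{-1}]$ as the main technical point is accurate; in the paper's appendix this is handled by the $\delta$-dependent constants in Lemma~\ref{analGS3}.
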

\begin{proposition}\label{S1LDE1} \cite[Theorem 1.7]{BCDA} Let $\theta > 0$, $\delta \in (0,1)$ and $\mathcal{F}(M,N)$ be as in \eqref{eq:rescaledpartitionfunction}. There exist constants $C_1, C_2,c_1, c_2 > 0$ (all depending on $\theta, \delta$) such that the following holds for all $M, N \in \mathbb{N}$ with $N/M \in [\delta, \delta^{-1}]$ and $x \geq 0$
\begin{equation}\label{S1LDEeq}
\mathbb{P} \left( \mathcal F(M,N)  \geq x\right) \leq C_1 e^{-c_1 M} + C_2 e^{-c_2 x^{3/2}}.
\end{equation}
\end{proposition}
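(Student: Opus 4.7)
The plan is to derive Proposition \ref{S1LDE1} from a Fredholm determinant representation for $\mathbb{P}(\log Z(1,1;M,N) \leq r)$ combined with a steepest descent analysis that is uniform in the tail parameter $x$. Such determinantal formulas for the log-gamma polymer are available through the Whittaker measure framework pioneered by Borodin--Corwin--Remenik: one can write
$$\mathbb{P}(\log Z(1,1;M,N) \leq r) = \det(I - K_r)_{L^2(\mathcal C)},$$
where $K_r$ is an explicit integral kernel built from ratios of Gamma functions raised to powers proportional to $M$ and $N$, and $\mathcal C$ is a contour that can be freely deformed away from the poles of the integrand.

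Using the change of variable $r = r(x) := -M h_\theta(N/M) + M^{1/3} \sigma_\theta(N/M) \cdot x$, the event $\{\mathcal F(M,N) \geq x\}$ becomes $\{\log Z(1,1;M,N) \geq r(x)\}$, so that by the standard Fredholm bound
$$\mathbb{P}(\mathcal F(M,N) \geq x) \;=\; 1 - \det(I - K_{r(x)})_{L^2(\mathcal C)} \;\leq\; \|K_{r(x)}\|_1 \exp\bigl(\|K_{r(x)}\|_1 + 1\bigr),$$
the task reduces to controlling the trace-norm of $K_{r(x)}$ uniformly in $x$. A steepest descent analysis at the critical point $z_c = z_c(N/M)$ of the exponent of $K_r$---chosen so that it reproduces the LLN constant $h_\theta(N/M)$ from \eqref{HDefLLN} and the fluctuation scale $\sigma_\theta(N/M)$ from \eqref{DefSigmaS2}---on a contour crossing $z_c$ shifted by a distance of order $M^{-1/3} x^{1/2}$ causes the exponent to acquire the Airy cubic $-\tfrac{2}{3} x^{3/2}$ as its leading behavior. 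Integrating the resulting pointwise kernel bound over $\mathcal C$ yields $\|K_{r(x)}\|_1 \leq C e^{-c x^{3/2}}$, which produces the main term $C_2 e^{-c_2 x^{3/2}}$ of the proposition.

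The additive $C_1 e^{-c_1 M}$ term absorbs the regime where this moderate deviation steepest descent is not sharp: either $M$ is small, or $x$ exits the moderate deviation window $x \lesssim M^{2/3}$. In the latter situation $x^{3/2} \gtrsim M$, so the required $e^{-c_2 x^{3/2}}$ decay follows from any crude large-deviations bound giving exponential-in-$M$ decay---for instance, using the deterministic estimate $\log Z(1,1;M,N) \leq (M+N-1) \log \max_{i,j \in \llbracket 1, \max(M,N)\rrbracket} w_{i,j}$ combined with the tail of the maximum of $O(MN)$ inverse-gamma random variables, one gets an $e^{-c M}$ bound that is absorbed into the $C_1 e^{-c_1 M}$ term. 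The main obstacle I foresee is making the steepest descent fully uniform in both $x$ and the anisotropy ratio $N/M \in [\delta, \delta^{-1}]$: one has to construct the deformed contour continuously as a function of $N/M$, verify that the critical point and scaling constants match $h_\theta(N/M)$ and $\sigma_\theta(N/M)$ over this compact parameter range, and check that the subleading error terms are uniformly dominated by the leading Airy cubic across the full moderate deviation window, with all constants depending only on $\theta$ and $\delta$.
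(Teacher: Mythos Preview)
The paper does not prove this proposition; it is quoted from \cite[Theorem 1.7]{BCDA} and used as a black box. Your outline --- Fredholm determinant plus steepest descent, with the Airy cubic producing the $e^{-c_2 x^{3/2}}$ decay --- is indeed the backbone of the argument in \cite{BCDA}, and the appendix of the present paper (proof of Proposition~\ref{S6Main}) carries out a closely related computation. Two points in your sketch need correction, however.

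First, there is no identity of the form $\mathbb{P}(\log Z(1,1;M,N) \leq r) = \det(I - K_r)$ for the log-gamma polymer. What the Whittaker/Macdonald framework yields is a Laplace transform formula, Proposition~\ref{LGPT1}: $\mathbb{E}[e^{-uZ}] = \det(I + K_u)$. One then chooses $u = e^{-r}$ so that $e^{-uZ} = \exp(-e^{\log Z - r})$ approximates the indicator $\mathbf{1}\{\log Z \leq r\}$, and controls the slack separately (as in the proof of Proposition~\ref{S2LBProp}). This is a routine patch, but your trace-norm inequality is being applied to the wrong object.

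Second, and more seriously, your handling of the regime $x \gtrsim M^{2/3}$ does not work. The bound $\log Z \leq (M+N-1)\log \max_{i,j} w_{i,j}$ is correct, but when $x$ is of order $M^{2/3}$ the threshold $r(x)$ is of order $M$, so after dividing by $M+N-1$ you are asking for $\mathbb{P}\bigl(\log \max_{i,j} w_{i,j} \geq c\bigr)$ with $c$ a fixed constant. Since the maximum of $MN$ i.i.d.\ inverse-gammas diverges, this probability tends to $1$, not to something exponentially small in $M$; your crude estimate gives nothing. In the actual proof (visible in Step~1 of the proof of Proposition~\ref{S6Main}, equations \eqref{ST1}--\eqref{ST2}) the $C_1 e^{-c_1 M}$ term is not produced by a separate large-deviations argument at all: it is the cost of truncating the Fredholm contours to a neighborhood of the critical point $z_c$, and arises uniformly in $x$ directly from the steepest-descent analysis.
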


In addition to Propositions \ref{LGPCT} and \ref{S1LDE1} we will require the following quantitative and uniform comparison of the upper tails of $\mathcal{F}(M,N)$ and $F_{\rm GUE}$. Its proof is postponed until Appendix \ref{Section6.1} and relies on exact formulas from \cite{BCDA}.
\begin{proposition}\label{S2LBProp} Let $\theta > 0$, $\delta \in (0, 1]$, $\epsilon_0 \in (0, 1/3)$. There exist positive constants $M_0, C_0$ (depending on $\theta, \delta$) such that for all $M, N \in \mathbb{N}$ with $M \geq N \geq \delta M$, $M \geq M_0$
\begin{equation}\label{S2LowerBound}
\sup_{x \in [1, \infty)} \left| \mathbb{P} \left( \mathcal{F}(M,N) \leq x \right) - F_{\rm GUE}(x)\right| \leq C_0 M^{-\epsilon_0}.
\end{equation}
\end{proposition}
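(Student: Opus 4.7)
The plan is to combine a quantitative steepest-descent analysis of the exact Fredholm determinant representation of $\mathbb{P}(\mathcal{F}(M,N)\le x)$ established in \cite{BCDA} with a tail truncation using the upper-tail bound of Proposition \ref{S1LDE1}.

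\textbf{Step 1 (tail truncation).} Fix $\epsilon_0\in(0,1/3)$ and choose a slowly growing cutoff $A_M:=(c\log M)^{2/3}$ with $c$ large enough that the upper-tail inequality of Proposition \ref{S1LDE1} yields $\mathbb{P}(\mathcal{F}(M,N)\ge A_M)\le M^{-\epsilon_0}$, and the classical GUE tail bound $1-F_{\mathrm{GUE}}(x)\le Ce^{-(4/3)x^{3/2}}$ yields $1-F_{\mathrm{GUE}}(A_M)\le M^{-\epsilon_0}$. For $x\ge A_M$ the triangle inequality then gives the desired bound directly, and it remains to control the supremum over the compact-up-to-log range $x\in[1,A_M]$.

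\textbf{Step 2 (exact formula and steepest descent).} For $x$ in that range, I would start from the Fredholm determinant / double contour integral representation of $\mathbb{P}(\log Z(1,1;M,N)\le r)$ already used in \cite{BCDA} to prove Proposition \ref{LGPCT}, written in the rescaled variable $r = -Mh_\theta(N/M) + x M^{1/3}\sigma_\theta(N/M)$. I would then carry out the same contour deformation onto steepest-descent contours through the double critical point of the phase function, but now track explicit remainder terms. Expanding the phase to cubic order around the saddle and changing variables on the $M^{-1/3}$ scale produces the Airy kernel plus an error kernel of trace norm $O(M^{-1/3+o(1)})$, uniformly for $x\in[1,A_M]$; the uniformity costs at most polylogarithmic factors in $M$ because $A_M$ grows only as $(\log M)^{2/3}$, and these are absorbed in $M^{-\epsilon_0}$ for any $\epsilon_0<1/3$. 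Hadamard's inequality
\[
\bigl|\det(I+K)-\det(I+K')\bigr| \le \|K-K'\|_{1}\exp\bigl(1+\|K\|_{1}+\|K'\|_{1}\bigr)
\]
then transfers this into the determinantal bound $|\mathbb{P}(\mathcal{F}(M,N)\le x)-F_{\mathrm{GUE}}(x)|\le C_0 M^{-\epsilon_0}$ on $[1,A_M]$, and combining with Step 1 completes the proof.

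\textbf{Main obstacle.} The delicate point is Step 2: upgrading the qualitative saddle-point argument behind Proposition \ref{LGPCT} into a genuinely uniform-in-$x$ quantitative one. The technical burden is threefold: (i) choosing a single admissible pair of contours that works for the entire range $x\in[1,A_M]$ so the deformation need not be redone for every $x$; (ii) controlling the exponential decay of the kernel on the tails of these contours with decay rates that are independent of $x$, in order to truncate the contour integrals to a neighbourhood of the saddle of size $O(M^{-1/3}\log M)$; and (iii) truncating the Fredholm series at $k_0=O(\log M)$ terms via Hadamard's bound before performing the expansion, so that the overall error remains a fixed polynomial in $M$. Each of these steps essentially mirrors a qualitative step already taken in \cite{BCDA}, so the work is really one of retaining explicit remainders throughout rather than inventing a new strategy.
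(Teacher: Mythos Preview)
Your overall strategy---quantitative steepest descent with explicit remainders, truncation of the Fredholm series at $O(\log M)$ terms, Hadamard's inequality---matches the paper's approach and is the right idea. However, there is a genuine gap in your starting point. The exact formula available from \cite{BCDA} (quoted here as Proposition~\ref{LGPT1}) is \emph{not} a Fredholm determinant for the cumulative distribution function $\mathbb{P}(\mathcal{F}(M,N)\le x)$; it is a Fredholm determinant for the \emph{Laplace transform} $\mathbb{E}[e^{-uZ(1,1;M,N)}]$. These differ, and bridging the gap requires a separate argument.

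The paper handles this as follows. The quantitative steepest-descent analysis you describe is carried out on the Laplace transform side and packaged as Proposition~\ref{S6Main}, which says that $\det(I+K_u)_{L^2(C_{z_c,3\pi/4})}$ is within $M^{-\epsilon_0}$ of $F_{\mathrm{GUE}}(x)$ uniformly for $x\ge 0$. To pass from the Laplace transform to the CDF, the paper then uses a smoothing argument: writing $f_M(z)=\exp(-e^{\tilde\sigma_\alpha M^{1/3}z})$, one has $\mathbb{E}[f_M(\mathcal{F}(M,N)-y)]=\det(I+K_{u(y)})$, and since $f_M$ approximates ${\bf 1}\{z\le 0\}$ sharply, evaluating at $y=x^{\pm}:=x\pm M^{-1/3}(\log M)^2$ sandwiches $\mathbb{P}(\mathcal{F}(M,N)\le x)$ between $F_{\mathrm{GUE}}(x^-)-2M^{-\epsilon_0}$ and $F_{\mathrm{GUE}}(x^+)+2M^{-\epsilon_0}$. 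The smoothness of $F_{\mathrm{GUE}}$ on $[-1,\infty)$ then absorbs the shift by $M^{-1/3}(\log M)^2$. A pleasant consequence of this route is that your Step~1 tail truncation becomes unnecessary: the smoothing errors are uniform in $x\ge 1$ directly, so Proposition~\ref{S1LDE1} is not invoked at all in the paper's proof.
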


We next recall the result we require from \cite{BCDB}, for which we need the following definition.
\begin{definition}\label{RandomCurve} Fix any  $T >0$, $\theta > 0$,  $r \in (0,\infty)$, and assume $N$ is large enough so $rN \geq 2 + TN^{2/3}$. For each $x \in [-T-N^{-2/3}, T + N^{-2/3}]$ with $xN^{2/3}\in\mathbb{Z}$, define $n = \lfloor rN \rfloor + xN^{2/3}$ and
\begin{equation}\label{LGCurve}
f_N^{LG}(x)=N^{-1/3}\Big(\log  Z(1,1;n,N)+ h_{\theta}(r)N + h_\theta'(r) x N^{2/3}\Big),
\end{equation}
and then extend $f_N^{LG}$ to $x \in [-T, T]$, by linear interpolation.
Clearly $f_N^{LG}$ is a random continuous curve in $(C[-T,T], \mathcal{C})$ -- the space of continuous functions on $[-T, T]$ with the uniform topology and Borel $\sigma$-algebra $\mathcal{C}$ (see e.g. Chapter 7 in \cite{Bill}) -- and we denote its law by $\mathbb{P}_N$.
\end{definition}
\begin{remark}\label{RemarkFNLG} One way to interpret the function $f_N^{LG}(x)$ is that it is a rescaled and perturbed version of $\mathcal{F}( rN + xN^{2/3}, N)$ that has been shifted by a parabola as we explain here. Using the fact that $g_{\theta}^{-1}(1/x)=\theta-g_{\theta}^{-1}(x)$ we see that $Mh_{\theta}(N/M) = N h_{\theta}(M/N)$ and $M^{1/3} \sigma_{\theta}(N/M) = N^{1/3} \sigma_{\theta}(M/N)$. To get to $f_N^{LG}$ from $\mathcal{F}( rN + xN^{2/3}, N)$ as in (\ref{eq:rescaledpartitionfunction}) one needs to do the following:
\begin{enumerate}[leftmargin=*]
\item Replace $Mh_{\theta}(N/M)$ with $N h_{\theta}(M/N)$ and $M^{1/3} \sigma_{\theta}(N/M)$ with $N^{1/3} \sigma_{\theta}(M/N)$.
\item Set $M = rN  + xN^{2/3}$ and Taylor expand $h_{\theta}(M/N)$ and $\sigma_{\theta}(M/N)$ to see that
$$\mathcal{F}(rN + xN^{2/3}, N) =  \frac{\log Z(1,1; rN \hspace{-0.5mm}+ \hspace{-0.5mm}xN^{2/3} ,N) \hspace{-0.5mm}+\hspace{-0.5mm} N h_{\theta}(r) \hspace{-0.5mm}+\hspace{-0.5mm}  h_{\theta}'(r)x N^{2/3} \hspace{-0.5mm}+ \hspace{-0.5mm} (1/2)h_{\theta}''(r)x^2 N^{1/3} \hspace{-0.5mm}+\hspace{-0.5mm}O(1)}{N^{1/3}\sigma_{\theta}(r)} .$$
\item Subtracting the parabola $ \frac{h_{\theta}''(r)x^2}{2 \sigma_{\theta}(r)}$ from the last expression and multiplying the result by $\sigma_{\theta}(r)$ brings us to $f_N^{LG}(x)$ up to an $O(N^{-1/3})$ error.
\end{enumerate}
Since $f_N^{LG}(x)$ and $\mathcal{F}( rN + xN^{2/3}, N)$ are up to a deterministic shift and scalar multiplication $O(N^{-1/3})$ away from each other any asymptotic statement for $f_N^{LG}(x)$ can be rephrased in terms of $\mathcal{F}( rN + xN^{2/3}, N)$. The next result,  from \cite{BCDB}, is phrased in terms of $f_N^{LG}(x)$ instead of $\mathcal{F}( rN + xN^{2/3}, N)$, hence the need to give this definition.
\end{remark}

\begin{proposition}\label{ThmTight}\cite[Theorem 1.10]{BCDB}
Fix any $T, \theta, r >0$. Then the laws $\P_N$ of $f_N^{LG}$, as in Definition \ref{RandomCurve}, form a tight sequence in $N$.
\end{proposition}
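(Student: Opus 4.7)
The plan is to establish tightness of $\{\mathbb{P}_N\}_{N \geq 1}$ in $(C[-T,T], \mathcal{C})$ by verifying the standard criterion (Theorem 7.3 in \cite{Bill}), which reduces matters to (i) tightness of the real random variables $\{f_N^{LG}(0)\}_{N\geq 1}$, and (ii) the modulus of continuity estimate
\begin{equation*}
\lim_{\delta \downarrow 0} \limsup_{N \to \infty} \mathbb{P}\Bigl( \sup_{\substack{x,y \in [-T,T]\\ |x-y| \leq \delta}} \bigl|f_N^{LG}(x) - f_N^{LG}(y)\bigr| \geq \epsilon \Bigr) = 0 \quad \text{for every } \epsilon > 0.
\end{equation*}

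Point (i) is essentially immediate. By Remark \ref{RemarkFNLG}, $f_N^{LG}(0)$ agrees with $\sigma_{\theta}(r)\, \mathcal{F}(\lfloor rN\rfloor, N)$ up to a deterministic error of size $O(N^{-1/3})$, so Proposition \ref{LGPCT} yields the weak convergence of $f_N^{LG}(0)$ to $\sigma_{\theta}(r)\cdot F_{\rm GUE}$, which settles its tightness.

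The substance of the argument lies in (ii). My approach would use the embedding of $f_N^{LG}$ as the lowest labeled curve of a Gibbsian line ensemble produced by the geometric RSK correspondence \cite{COSZ, JonstonOConnell, Wu19}. This ensemble carries an integrable resampling property: conditional on the values of $f_N^{LG}$ at the endpoints of any sub-interval $[a,b]$ and on the second labeled curve over $[a,b]$, the law of $f_N^{LG}$ on $[a,b]$ is absolutely continuous with respect to a log-gamma random walk bridge, with a multiplicative Radon--Nikodym derivative that depends only on the vertical gap to the second curve. On a high-probability event where this Radon--Nikodym derivative is bounded above and below by positive constants, the modulus of continuity bound for $f_N^{LG}$ is inherited from the corresponding bound for the log-gamma random walk bridge, which is itself obtainable via Kolmogorov's criterion; the parabolic drift subtracted in the definition of $f_N^{LG}$ ensures that the reference bridge has the correct order-one diffusive scaling.

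The main obstacle is constructing such a high-probability event. Doing so requires simultaneous upper-tail control on $f_N^{LG}$, to prevent it from spiking up toward the second curve, and complementary control on the second curve, to prevent it from pressing down onto the first. The upper-tail piece is supplied by Proposition \ref{S1LDE1}. Control on the second curve can be produced by bootstrapping: one iteratively uses the Gibbs property together with the one-point weak limit of Proposition \ref{LGPCT} to propagate fluctuation estimates between adjacent curves and along the interval $[-T,T]$. This bootstrap, whose details form the technical heart of \cite[Theorem 1.10]{BCDB}, is where the argument becomes delicate; once it is in place, the modulus of continuity follows without further difficulty.
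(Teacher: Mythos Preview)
The paper does not give its own proof of this proposition: it is quoted verbatim as \cite[Theorem 1.10]{BCDB}, and the present paper only uses it as a black box. Your sketch is consistent with how the paper describes the content of \cite{BCDB} in the introduction --- namely, that $f_N^{LG}$ embeds as the bottom curve of a Gibbsian line ensemble built via geometric RSK \cite{COSZ,JonstonOConnell,Wu19}, and that the Gibbs resampling property yields absolute continuity with respect to a log-gamma random walk bridge, from which tightness follows.

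One clarification: in your point (i) you appeal to Proposition \ref{LGPCT} to get weak convergence of $f_N^{LG}(0)$, but strictly speaking that proposition is also imported from \cite{BCDA} and is logically independent of what \cite{BCDB} needs for tightness. In \cite{BCDB} the one-point input used is precisely the upper-tail moderate deviation bound (here Proposition \ref{S1LDE1}) together with a lower-tail input obtained internally from the line ensemble structure; full weak convergence to $F_{\rm GUE}$ is stronger than what is needed for tightness of the one-point marginal. Also, your description of the bootstrap as controlling the second curve ``from pressing down'' is the right picture, but be aware that the actual mechanism in \cite{BCDB} is slightly different from the Brownian Gibbs argument of \cite{CGH19}: the log-gamma Gibbs weight is a soft (not hard) repulsion, so the Radon--Nikodym derivative is never zero and the issue is showing it is bounded above, not bounded away from zero. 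Your outline is otherwise an accurate high-level summary of the method of \cite{BCDB}, and since the paper under review offers no proof of its own there is nothing further to compare.
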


We end this section by summarizing various properties of the functions $h_{\theta}$ and $\sigma_{\theta}$ from (\ref{HDefLLN}) and (\ref{DefSigmaS2}), which we use later in the text.
Note that for any $\theta > 0$, since $g_{\theta}$ is a smooth bijection from $(0, \theta)$ to $(0, \infty)$, it follows that $\sigma_{\theta}$ is a smooth positive function on $(0, \infty)$. For any $\theta > 0$, $g_{\theta}(x) = \frac{\Psi'(\theta -x)}{\Psi'(x)}$ and so $h'_{\theta}(x) = \Psi(g_{\theta}^{-1}(x))$. Since $g_{\theta}^{-1}$ and $\Psi$ are strictly increasing on $(0, \infty)$ the same is true for $h'_{\theta}(x) = \Psi(g_{\theta}^{-1}(x))$.  Also note that $ g_{\theta}^{-1}(1) = \theta/2$ and so $h_{\theta}'(1) = \Psi(\theta/2)$.

Observe that $f(y) = h_{\theta}(y^2)$ is differentiable on $(0, 1]$ with a derivative that is bounded by a constant that depends on $\theta$ alone. Indeed, by the smoothness of $g_{\theta}^{-1}$ on $(0, \infty)$ and $\Psi$ on $(0, \theta)$ the latter is clear for $y$ that are bounded away from $0$. On the other hand, when $y \rightarrow 0+$ we have
$g_{\theta}^{-1}(y^2) \sim y$ and $\Psi(y) \sim - y^{-1},$ so that
$h_{\theta}(y^2) - h_{\theta}(0) = y^2 \cdot  \Psi( g_{\theta}^{-1}(y^2)) + \Psi(\theta - g_{\theta}^{-1}(y^2)) - \Psi(\theta)  =O(y).$
 In particular, there is a constant $b_1 > 0$ (depending on $\theta$) such that for all $0 \leq x \leq y \leq 1$
\begin{equation}\label{DerHBound}
\left| h_{\theta}(x) - h_{\theta}( y ) \right| \leq b_1 \cdot y.
\end{equation}

We next note that there exists $\epsilon > 0$ (depending on $\theta$) so that for all $x \in (0, \epsilon]$ we have
\begin{equation}\label{S2H1}
- \Psi(\theta/2) \geq \epsilon  -h_{\theta}(x).
\end{equation}
To see why such a choice of $\epsilon$ is possible note that when $x \rightarrow 0+$ we have
$g_{\theta}^{-1}(x) \sim \sqrt{x}$ and $\Psi(x) \sim - x^{-1},$
so that
$h_{\theta}(x) = x \cdot  \Psi( g_{\theta}^{-1}(x)) + \Psi(\theta - g_{\theta}^{-1}(x))  =O(\sqrt{x}) + \Psi(\theta).$
The latter implies that as $\epsilon \rightarrow 0+$ and $x \in (0, \epsilon]$ we have
$\epsilon  -h_{\theta}(x)= O(\sqrt{\epsilon}) - \Psi(\theta).$
The last equation and the fact that $ \Psi(x)$ is strictly increasing together imply the existence of $\epsilon \in (0, 1]$ so that (\ref{S2H1}) holds.

When $\theta = \theta_c$ as in (\ref{DefThetaC}) we also have
\begin{equation}\label{S2SummH}
\begin{split}
&h_{\theta_c}'(1) = 0 = h_{\theta_c}(1) \mbox{ and } h_{\theta_c}(1 + x) = \frac{h_{\theta_c}''(1)}{2} \cdot x^2 + O(x^3), \mbox{ where } \\
&h_{\theta_c}''(1) = \Psi' \left( g_{\theta_c}^{-1}(1) \right) \cdot \frac{d}{dx} \left( g_{\theta_c}^{-1}(x) \right) \vert_{x= 1} = \Psi' \left( \theta_c/2 \right) \cdot  \frac{d}{dx} \left( g_{\theta_c}^{-1}(x) \right) \vert_{x= 1} > 0,
\end{split}
\end{equation}
and in the last inequality we used that $g_{\theta_c}^{-1}$ and $\Psi$ are strictly increasing on $(0, \infty)$.

As explained above, $h_{\theta_c}'$ is strictly increasing on $(0, \infty)$ and since $h_{\theta_c}'(1) = 0 $, see (\ref{S2SummH}), we get
\begin{equation}\label{S2SummH2}
\begin{split}
&h_{\theta_c}(x) \geq 0 \mbox{ for $x \in [0,1]$} \mbox{ and } h_{\theta_c}(x) \geq h_{\theta_c}(1/2) > 0 \mbox{ for $x \in [0, 1/2]$}.
\end{split}
\end{equation}

%
\subsection{Miscellaneous  probability results}\label{Section2.2} We formulate three results that will be useful later.
\begin{lemma}\label{TL1} For all $a, \theta > 0$ there is $C> 0$ (depending on $\theta$ and $a$), such that for any $n \in \mathbb{N}$
\begin{equation}\label{S2BoundMax}
\mathbb{P}\left(\max_{1 \leq i \leq n} \log X_i \geq \theta^{-1} (1 + a)  \log n\right) \leq C n^{-a},
\end{equation}
where $\{X_i\}_{i = 1}^n$ are i.i.d. random variables with density $f_{\theta}(x)$ as in (\ref{S1invGammaDens}).
\end{lemma}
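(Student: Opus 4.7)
The plan is a routine union bound combined with a sharp tail estimate for the inverse-gamma distribution. The key observation is that the exponential factor $\exp(-1/x)$ in the density $f_\theta$ is at most $1$, so the tail integral reduces to a pure power.

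First, I would fix $t > 0$ and write
\begin{equation*}
\mathbb{P}(X_1 \geq t) \;=\; \int_t^\infty \frac{x^{-\theta-1} e^{-1/x}}{\Gamma(\theta)}\, dx \;\leq\; \frac{1}{\Gamma(\theta)} \int_t^\infty x^{-\theta-1}\, dx \;=\; \frac{t^{-\theta}}{\theta\, \Gamma(\theta)}.
\end{equation*}
This uses only that $e^{-1/x} \leq 1$ for $x > 0$, and the integrability of $x^{-\theta-1}$ at infinity (which holds since $\theta > 0$).

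Next I would apply the bound with $t = n^{(1+a)/\theta}$, which is precisely the threshold corresponding to $\log X_i \geq \theta^{-1}(1+a)\log n$. Substituting yields
\begin{equation*}
\mathbb{P}\bigl( \log X_1 \geq \theta^{-1}(1+a) \log n \bigr) \;\leq\; \frac{n^{-(1+a)}}{\theta\, \Gamma(\theta)}.
\end{equation*}

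Finally, a straightforward union bound over $i = 1, \dots, n$ using independence (in fact, only the marginal bound is needed) gives
\begin{equation*}
\mathbb{P}\Bigl( \max_{1\leq i \leq n} \log X_i \geq \theta^{-1}(1+a) \log n \Bigr) \;\leq\; n \cdot \frac{n^{-(1+a)}}{\theta\, \Gamma(\theta)} \;=\; \frac{1}{\theta\, \Gamma(\theta)} \cdot n^{-a},
\end{equation*}
so that (\ref{S2BoundMax}) holds with $C = 1/(\theta\, \Gamma(\theta))$, which in fact depends only on $\theta$ (not on $a$). There is no genuine obstacle here; the only mild subtlety is noting that $e^{-1/x}$ can be dropped without loss of the correct polynomial decay rate, so that the heavy-tail exponent $\theta$ of the inverse-gamma law is captured exactly.
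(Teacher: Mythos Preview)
Your proof is correct and uses the same tail estimate as the paper, namely $\mathbb{P}(X_1 \geq t) \leq t^{-\theta}/(\theta\,\Gamma(\theta))$ obtained by dropping $e^{-1/x}\leq 1$. The difference is only in how the maximum is handled: the paper writes the exact identity $\mathbb{P}(\max_i \log X_i \geq f(a)\log n) = 1 - \mathbb{P}(X_1 < n^{f(a)})^n$ and then bounds $1-(1-p)^n$ via the pair of elementary inequalities $\log(1-x)\geq -2x$ and $1-e^{-x}\leq 2x$, which costs a factor of~$4$ in the constant and forces a separate treatment of small~$n$. Your direct union bound $\mathbb{P}(\max_i X_i \geq t)\leq n\,\mathbb{P}(X_1\geq t)$ is cleaner: it yields the sharper constant $C=1/(\theta\,\Gamma(\theta))$ uniformly in $n$ and avoids any case analysis.
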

\begin{proof} Put $f(a) = \theta^{-1} (1 + a) $. Notice that
$$\mathbb{P}\left(\max_{1 \leq i \leq n} \log X_i \geq f(a) \log n\right) = 1 - \mathbb{P}\left(  X_1 < n^{f(a)} \right)^n.$$
In addition, we have
$$ \mathbb{P}\left(  X_1 < n^{f(a)} \right) = 1 - \int_{n^{f(a)}}^{\infty}  \frac{x^{-\theta - 1}}{\Gamma(\theta)} \cdot \exp( - x^{-1})dx \geq 1 -\int_{n^{f(a)}}^{\infty}   \frac{x^{-\theta - 1}}{\Gamma(\theta)}  dx = 1 - \frac{1}{ \theta \Gamma(\theta) n^{\theta f(a)}} .$$
Using that $\log (1- x) \geq -2x$ for all $x \in [0, 1/4]$ we get
$$\exp \left(n \cdot \log \left( 1 - \frac{1}{ \theta \Gamma(\theta) n^{\theta f(a)}} \right) \right) \geq \exp \left(-\frac{2n}{\theta \Gamma(\theta) n^{\theta f(a)}} \right),$$
provided $ \frac{1}{ \theta \Gamma(\theta) n^{\theta f(a)}} \in [0, 1/4]$. Under this condition, the last three inequalities combine to
$$\mathbb{P}\left(\max_{1 \leq i \leq n} \log X_i \geq f(a) \log n\right) \leq 1 -  \exp \left(-\frac{2n}{\theta \Gamma(\theta) n^{\theta f(a)}} \right).$$
Using that $1 - e^{-x} \leq 2x$ for all $x \in [0, \infty)$ and $f(a) = \theta^{-1} (1 + a)$ the last inequality implies
$$\mathbb{P}\left(\max_{1 \leq i \leq n} \log X_i \geq f(a) \log n\right) \leq \frac{4}{\theta \Gamma(\theta) n^{a}}.$$
This implies (\ref{S2BoundMax}) with $C = \frac{4}{\theta \Gamma(\theta) }$ as long as $ \frac{4}{ \theta \Gamma(\theta) } \leq n^{1+a}$.
Setting
$C := \frac{4}{\theta \Gamma(\theta) } + \left( \frac{4}{ \theta \Gamma(\theta) } \right)^{a/(1+a)},$
we see that  (\ref{S2BoundMax}) holds for $\frac{4}{ \theta \Gamma(\theta) } \leq n^{1+a}$ by the previous argument and for $n^{1+a} \leq \frac{4}{ \theta \Gamma(\theta) } $ it holds as the right side of (\ref{S2BoundMax}) is bigger than $1$. Thus (\ref{S2BoundMax})  holds for all $n \geq 1$ with this choice of $C$.
\end{proof}

\begin{proposition}\cite[Theorem 1]{KMT2} \label{KMT} Let $F$ be a cumulative distribution function on $\mathbb{R}$ and $\lambda > 0$ be given. Suppose that for some $t_0 > 0$ and all $|t| \leq t_0$ we have
$$\int_{\mathbb{R}} xF(dx) = 0, \hspace{5mm}  \int_{\mathbb{R}} x^2F(dx) = 1, \hspace{5mm} \int_{\mathbb{R}} e^{tx} F(dx) < \infty.$$
There exist constants $K,C > 0$, depending on $F$ and $\lambda$, a probability space $(\Omega, \mathcal{F}, \mathbb{P})$ and random variables $\{X_i \}_{i = 1}^\infty, \{Y_i \}_{i= 1}^\infty$ defined on this space that satisfy the following properties.
\begin{itemize}
\item Under $\mathbb{P}$ the random variables $\{X_i \}_{i = 1}^\infty$ are i.i.d. with distribution $F$.
\item Under $\mathbb{P}$ the random variables $\{Y_i \}_{i= 1}^\infty$ are i.i.d. with a standard normal distribution.
\item If $S_k = X_1 + \cdots + X_k$ and $T_k = Y_1 + \cdots + Y_k$ for $k \in \mathbb{N}$ then for any $n \in \mathbb{N}$ and $x > 0$
\begin{equation}
\mathbb{P}\left( \max_{1 \leq k \leq n} |S_k - T_k| > C \log n + x  \right) < K e^{ - \lambda x}.
\end{equation}
\end{itemize}
\end{proposition}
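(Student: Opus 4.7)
The plan is to follow the classical dyadic (Hungarian) construction of Koml\'os, Major and Tusn\'ady, so I will only sketch the main steps. I would enlarge the probability space and, after reducing to $n = 2^m$ by padding, build the coupling top-down on a dyadic tree of blocks of lengths $2^m, 2^{m-1}, \dots, 1$. At the root, couple $S_n$ with $T_n$ via the quantile transform of their one-dimensional marginals. Recursively, on a block $[a,b]$ for which the joint law of $(S_b-S_a,\, T_b-T_a)$ has already been specified, couple the midpoint increment $S_{(a+b)/2}-S_a$ to $T_{(a+b)/2}-T_a$ using the conditional quantile transform of each given the corresponding block sum. Iterating down to singleton blocks defines a joint law of $(X_i,Y_i)_{i=1}^n$ with the prescribed marginals.

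The analytic heart of the argument is a uniform comparison between the conditional density of the midpoint of an $F$-distributed walk block of length $2^k$ given its endpoint, and the analogous conditional Gaussian density. Under the exponential moment hypothesis on $F$, such a comparison follows from an Edgeworth expansion for densities of normalized partial sums that is valid uniformly in a window of size of order $\sqrt{k}$ around the mean, combined with exponential Chebyshev bounds outside this window. Inverting this Edgeworth comparison through the conditional quantile coupling shows that each block at dyadic level $k$ contributes to $\max_{j\leq n}|S_j-T_j|$ a deterministic error of size $O(k/\sqrt{2^k})$ plus a random term with sub-exponential tails uniformly in $k$.

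Summing the deterministic contributions across the $m = \log_2 n$ dyadic levels yields the $C\log n$ term, while a union and Chernoff bound applied to the sum of the random block-level contributions (using the finite MGF assumption for both the $F$-walk and its Gaussian counterpart) produces the tail $K e^{-\lambda x}$, with $K$ and $C$ absorbing the chosen $\lambda$. The hardest step is the uniform Edgeworth comparison itself: one needs simultaneous pointwise control of the $F$-block density and its Gaussian approximant on the full effective range of the midpoint, which requires either Cram\'er's non-lattice condition (automatic if $F$ has a density) or an auxiliary smoothing convolution. This is the technically most delicate point of \cite{KMT2}, and it is also what forces the rate to be $C\log n$ and the statement to be for the running maximum rather than for individual indices.
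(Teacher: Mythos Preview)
The paper does not prove this proposition at all: it is quoted verbatim as \cite[Theorem 1]{KMT2} and used as a black box (in Step 4 of the proof of Lemma \ref{S2S2L1}). There is therefore no ``paper's own proof'' to compare against.

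Your sketch is a reasonable outline of the actual Koml\'os--Major--Tusn\'ady argument, and the ingredients you name (dyadic/Hungarian construction, conditional quantile coupling at each level, Edgeworth-type density comparison under the exponential moment assumption, summing $O(1)$ deterministic errors over $\log_2 n$ levels, and a union/Chernoff bound for the random contributions) are the right ones. As you note, the genuinely hard part is the uniform local comparison of the conditional block densities, which is where \cite{KMT2} does the real work; your sketch correctly flags this but does not carry it out. For the purposes of this paper that is fine, since the intended ``proof'' here is simply a citation.
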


We denote by $\Phi(x)$ and $\phi(x)$ the cumulative distribution function and density of a standard normal random variable. The following lemma can be found in \cite[Section 4.2]{MZ}.
\begin{lemma}\label{LemmaI1}
There is a constant $c_0 > 1$ such that for all $x \geq 0$ we have
\begin{equation}\label{LI2}
 \frac{1}{c_0(1+x)} \leq \frac{1 - \Phi(x)}{\phi(x)} \leq \frac{c_0}{1 +x},
\end{equation}
\end{lemma}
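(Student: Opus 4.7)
The plan is to split the analysis into a tail regime and a compact regime, and to base the tail estimate on the classical Mills ratio bounds. More precisely, I would first show that for $x \geq 1$ the standard inequalities
\begin{equation*}
\left( \frac{1}{x} - \frac{1}{x^3} \right) \phi(x) \;\leq\; 1 - \Phi(x) \;\leq\; \frac{\phi(x)}{x}
\end{equation*}
hold. The upper bound follows from writing $1 - \Phi(x) = (2\pi)^{-1/2} \int_x^\infty e^{-t^2/2}\, dt$ and noting that $\frac{d}{dt}\bigl[- t^{-1} e^{-t^2/2}\bigr] = (1 + t^{-2}) e^{-t^2/2} \geq e^{-t^2/2}$, so integration from $x$ to $\infty$ yields $x^{-1} e^{-x^2/2} \geq \int_x^\infty e^{-t^2/2} dt$. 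The lower bound is analogous, using $\frac{d}{dt}\bigl[-(t^{-1} - t^{-3}) e^{-t^2/2}\bigr] = (1 - 3 t^{-4}) e^{-t^2/2} \leq e^{-t^2/2}$ and integrating from $x$ to $\infty$.

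Dividing through by $\phi(x)$, the displayed inequalities give, for $x \geq 2$,
\begin{equation*}
\frac{1 - \Phi(x)}{\phi(x)} \leq \frac{1}{x} \leq \frac{2}{1+x}, \qquad \frac{1 - \Phi(x)}{\phi(x)} \geq \frac{x^2 - 1}{x^3} = \frac{(x-1)(x+1)}{x^3} \geq \frac{1}{4(1+x)},
\end{equation*}
where the last inequality is a direct computation valid for $x \geq 2$ (using $x-1 \geq x/2$ and $(x+1)^2 \geq x^2$). Hence on $[2, \infty)$ we obtain the desired two-sided comparison with the explicit constant $4$.

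To finish, I would treat $x \in [0, 2]$ by compactness: the function $R(x) := (1 - \Phi(x))/\phi(x)$ is continuous and strictly positive on $[0, 2]$ (with $R(0) = \sqrt{\pi/2}$ and $R(2) > 0$), hence bounded above and below by positive constants, and $1/(1+x)$ is likewise pinched between $1/3$ and $1$ on this interval. Therefore there exists a finite constant $c'_0 > 1$ such that $\frac{1}{c'_0 (1+x)} \leq R(x) \leq \frac{c'_0}{1+x}$ for $x \in [0,2]$. Taking $c_0 := \max(c'_0, 4)$ gives a single constant that works for all $x \geq 0$. No real obstacles arise; the only point requiring care is to glue the tail and compact regimes using a single $c_0$, which is handled by the final maximum.
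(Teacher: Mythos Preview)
Your proof is correct. The paper does not actually prove this lemma; it simply cites \cite[Section 4.2]{MZ} as a reference, whereas you give a self-contained argument via the classical Mills ratio inequalities plus a compactness argument on $[0,2]$.
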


%
\subsection{Bounds on point-to-point free energies}\label{Section2.3} In this section we derive two estimates on $\log Z(1, 1; m,n)$. The first, given in Lemma \ref{S2S2L1}, provides estimates on this quantity that are uniform in the entries $(m,n)$ but are sub-optimal. We use this result to control $\log Z(1, 1; m,n)$ when either $m/n$ or $n/m$ are close to $0$. The second, given in Lemma \ref{S2S2L2}, provides useful bounds on $\log Z(1, 1; m,n)$ when $m, n \in \llbracket 1, N \rrbracket$ in terms of $N$. We mention that Lemma \ref{S2S2L1} is formulated in the case $m \geq n$ to ease the notation, which does not lead to any loss of generality by the distributional equality of $\log Z(1, 1; m,n)$ and $\log Z(1, 1; n,m)$.

\begin{lemma}\label{S2S2L1} Let $\theta > 0$ be given. For any $\epsilon \in (0, 1]$, there exist $B,b > 0$ (depending on $ \theta, \epsilon$) such that for $m,n \in \mathbb{N}$, $m \geq n$ one has
\begin{equation}\label{ThinUBPP}
\mathbb{P}\Big( \log Z(1,1; m,n) \geq m \cdot \epsilon - m \cdot h_{\theta}(n/m) \Big) \leq B \exp \left( -b m \right).
\end{equation}
\end{lemma}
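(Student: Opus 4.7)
The plan is to split the range $1 \leq n \leq m$ into two regimes based on a threshold $\delta_0 = \delta_0(\theta,\epsilon) \in (0,1)$ chosen small. For $n/m \in [\delta_0, 1]$ the bound follows directly from Proposition~\ref{S1LDE1}, while for $n/m < \delta_0$ (where the constants in Proposition~\ref{S1LDE1} degenerate) I use a direct union bound over paths combined with a Chernoff estimate for each path's weight.

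In the regime $n/m \in [\delta_0, 1]$ the event $\log Z(1,1;m,n) \geq m\epsilon - m h_{\theta}(n/m)$ translates via \eqref{eq:rescaledpartitionfunction} to $\mathcal{F}(m,n) \geq m^{2/3}\epsilon/\sigma_{\theta}(n/m)$. Since $g_{\theta}^{-1}$ maps $[\delta_0,1]$ into a compact subset of $(0,\theta)$, $\sigma_{\theta}$ is bounded above on $[\delta_0,1]$ by some $\sigma_+ = \sigma_+(\theta,\delta_0)$, and the event implies $\mathcal{F}(m,n) \geq c\, m^{2/3}$ with $c := \epsilon/\sigma_+ > 0$. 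Proposition~\ref{S1LDE1} applied with $\delta = \delta_0$ and $x = c\, m^{2/3}$ then bounds the probability by $C_1 e^{-c_1 m} + C_2 e^{-c_2 c^{3/2} m}$, which is exponential in $m$.

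In the regime $n/m < \delta_0$ I start from the crude bound
$$\log Z(1,1;m,n) \leq \log \binom{m+n-2}{n-1} + \max_{\pi \in \Pi(1,1;m,n)} \log w(\pi).$$
The combinatorial factor obeys $\log\binom{m+n-2}{n-1} \leq n\log(2em/n) \leq m\delta_0\log(2e/\delta_0)$, using monotonicity of $x \mapsto x\log(2e/x)$ on $[0,2]$. Each $\log w(\pi)$ is a sum of $m+n-1 \in [m,2m]$ i.i.d.\ log-inverse-gamma variables with mean $-\Psi(\theta)$, whose moment generating function $\mathbb{E}[w^s] = \Gamma(\theta-s)/\Gamma(\theta)$ is finite on $s \in (-\theta,\theta)$. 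The associated Chernoff rate function $I_\theta$ is smooth and convex with $I_\theta(0) = I_\theta'(0) = 0$ and $I_\theta''(0) = 1/\Psi'(\theta) > 0$, so $I_\theta(t) \geq c_\theta\, t^2$ on $[0,1]$ for some $c_\theta = c_\theta(\theta) > 0$. The Chernoff bound thus yields
$$\mathbb{P}\bigl(\log w(\pi) \geq -(m+n-1)\Psi(\theta) + m\epsilon/2\bigr) \leq \exp(-c_\theta m\epsilon^2/16),$$
and a union bound over the at-most $\exp(m\delta_0\log(2e/\delta_0))$ paths is exponentially small in $m$ provided I choose $\delta_0$ so that $\delta_0\log(2e/\delta_0) \leq \min\{\epsilon/4,\; c_\theta \epsilon^2/32\}$. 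With high probability,
$$\log Z(1,1;m,n) \leq -(m+n-1)\Psi(\theta) + 3m\epsilon/4.$$

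To convert this into the stated bound I use the elementary inequality $h_\theta(r) \leq (1+r)\Psi(\theta)$, valid for all $r \geq 0$. This follows directly from $h_\theta(r) = r\Psi(g_\theta^{-1}(r)) + \Psi(\theta - g_\theta^{-1}(r))$: for $r > 0$ one has $g_\theta^{-1}(r) \in (0,\theta)$, and strict monotonicity of $\Psi$ on $(0,\infty)$ makes both $r\bigl(\Psi(g_\theta^{-1}(r)) - \Psi(\theta)\bigr)$ and $\Psi(\theta - g_\theta^{-1}(r)) - \Psi(\theta)$ nonpositive. Hence $-(m+n-1)\Psi(\theta) \leq -m h_\theta(n/m) + |\Psi(\theta)|$, and absorbing the additive constant $|\Psi(\theta)|$ into $m\epsilon/4$ for $m \geq 4|\Psi(\theta)|/\epsilon$ (and finitely many small $m$ into a large choice of $B$) finishes the argument. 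The main obstacle is that the constants in Proposition~\ref{S1LDE1} degenerate as $n/m \to 0$; the Chernoff-plus-union argument bypasses this by trading the subexponential number of paths (for $n/m$ small) against the Gaussian-type concentration of each individual path's weight.
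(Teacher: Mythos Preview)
Your proof is correct. Both you and the paper split at a threshold on $n/m$ and invoke Proposition~\ref{S1LDE1} in the near-diagonal regime, but the treatments of the thin regime $n/m \ll 1$ are genuinely different. The paper enlarges the rectangle to width $N=\lceil m\epsilon_1\rceil$ via the deterministic inequality $Z(1,1;m,N)\geq Z(1,1;m,n)\prod_{j=n+1}^N w_{m,j}$, then controls $Z(1,1;m,N)$ by Proposition~\ref{S1LDE1} and the column product by a KMT coupling to a Gaussian random walk (Proposition~\ref{KMT}). Your route is more elementary: you exploit that for $n/m\leq\delta_0$ the number of paths is $\exp(o_{\delta_0}(1)\cdot m)$, so a Chernoff bound on each path's log-weight survives a union bound. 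This entirely avoids the KMT machinery and the auxiliary rectangle, and the comparison step is replaced by the clean inequality $h_\theta(r)\leq(1+r)\Psi(\theta)$. The paper's approach, on the other hand, never leaves the realm of partition-function estimates and would adapt more readily to settings where path-counting is less favorable. A minor remark: your constant in the binomial bound should read $4e$ rather than $2e$ (from $\binom{m+n-2}{n-1}\leq(e(m+n-2)/(n-1))^{n-1}$ and $n-1\geq n/2$), but this is immaterial since $\delta_0$ is chosen after the constant.
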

\begin{proof} For clarity we split the proof into four steps.

\smallskip
{\bf \raggedleft Step 1.} In this step we introduce a bit of notation that will be used throughout the proof.

We define $b_0 =  |\Psi(\theta)| + 2 + b_1,$ where $b_1$ depends on $\theta$ and is such that (\ref{DerHBound}) holds. We also set $\epsilon_1 = \epsilon/ b_0.$ In the steps below we will prove that there exist $B,b > 0$ (depending on $ \theta, \epsilon$) such that for $m,n \in \mathbb{N}$, $m \geq n$ one has
\begin{equation}\label{ThinUBPPV2}
\mathbb{P}\Big( \log Z(1,1; m,n) \geq m \cdot b_0 \cdot \epsilon_1 - m \cdot h_{\theta}(n/m) \Big) \leq B \exp \left(-bm \right).
\end{equation}
Since (\ref{ThinUBPPV2}) clearly establishes (\ref{ThinUBPP}) it suffices to prove (\ref{ThinUBPPV2}).

\smallskip
{\bf \raggedleft Step 2.} Put $N = \lceil m \epsilon_1 \rceil$. We claim that there exist constants $M_1, M_2 \in \mathbb{N}$, $D_1, D_2, d_1, d_2 > 0$ (depending on $\theta, \epsilon$) such that for all $m \geq M_1$ we have
\begin{equation}\label{S2S2R2}
\mathbb{P}\Big( \log Z(1,1; m, N)  + m  h_{\theta}(\epsilon_1)  \geq m \epsilon_1 \Big) \leq  D_1\exp \left( - d_1 m \right),
\end{equation}
and for all $m \geq M_2$, and $1 \leq n \leq N$ we have
\begin{equation}\label{S2S2R3}
\mathbb{P}\left( - \sum_{ j = n+1}^N \log w_{m,j} \geq m (b_0 - b_1 - 1)   \epsilon_1  \right) \leq D_2\exp \left( - d_2 m\right),
\end{equation}
where we recall that $w_{i,j}$ are i.i.d. random variables with density (\ref{S1invGammaDens}). We will prove equations (\ref{S2S2R2}) and (\ref{S2S2R3}) in the steps below. Here we will assume their validity and establish (\ref{ThinUBPPV2}) by considering the cases $n/m \in (0, \epsilon_1]$ and $n/m \in [\epsilon_1, 1]$ separately.

\smallskip
Using (\ref{PartitionFunct}) we have for all $m, n \in \mathbb{N}$ with $n/m \in  (0, \epsilon_1]$ that
$$Z(1,1, m, N) \geq Z(1,1,m, n) \cdot \prod_{ j = n+1}^N w_{m, j},$$
 and by taking logarithms on both sides we conclude that
$$\log Z(1,1; m, N) \geq \log Z(1,1; m, n)  + \sum_{ j = n+1}^N \log w_{m,j}.$$
The last equation implies that if $m \geq \max(M_1, M_2)$ and $n/m \in (0, \epsilon_1]$ then
\begin{equation*}
\begin{split}
&\mathbb{P}\Big(\log Z(1,1; m, n)  \geq m \cdot  b_0 \cdot  \epsilon_1 - m \cdot h_{\theta}(n/m) \Big) \leq \\
&\mathbb{P}\left( \log Z(1,1; m, N)   - \sum_{ j = n+1}^N \log w_{m,j}  \geq  m \cdot  b_0 \cdot  \epsilon_1 - m \cdot h_{\theta}(n/m) \right) \leq  \\
&\mathbb{P}\left( \log Z(1,1; m, N)   - \sum_{ j = n+1}^N \log w_{m,j}   \geq m \cdot  (b_0 - b_1) \cdot  \epsilon_1  - m \cdot h_{\theta}(\epsilon_1) \right) \leq  \\
&\mathbb{P}\Big( \log Z(1,1; m, N)   +  m  h_{\theta}(\epsilon_1)   \geq m  \epsilon_1 \Big) +\mathbb{P}\left( - \sum_{ j = n+1}^N \log w_{m,j} \geq m (b_0 - b_1 - 1)  \epsilon_1   \right)   \leq \\
& D_1\exp \left( - d_1 m \right) + D_2\exp \left( - d_2 m\right) ,
\end{split}
\end{equation*}
where in going from the second to the third line we used (\ref{DerHBound}) and in going from the third to the fourth line we used the triangle inequality. In the last inequality we used (\ref{S2S2R2}) and (\ref{S2S2R3}). The last inequality implies (\ref{ThinUBPPV2}) for all $n,m \in \mathbb{N},$ $n/m \in (0, \epsilon_1]$ and $m \geq \max(M_1, M_2)$. By increasing $B$ we can ensure that the RHS of (\ref{ThinUBPPV2}) is greater than or equal to $1$ when $m < \max(M_1, M_2)$, which concludes the proof in the case of $n/m \in (0, \epsilon_1]$.
%
%

\smallskip
If $n/m \in [\epsilon_1, 1]$ we have from Proposition \ref{S1LDE1} with $\delta = \epsilon_1$ that there exist constants $C_1, C_2, c_1, c_2$ (depending on $\theta$ and $\epsilon$) such that for all $n/m \in [\epsilon_1, 1]$ and $x \geq 0$ we have
$$ \mathbb{P} \left( \log Z(1,1; m, n) + m h_{\theta} (n/m) \geq m^{1/3} \sigma_{\theta}(n/m) x \right)  \leq C_1 e^{-c_1 m} + C_2 e^{-c_2 x^{3/2}}.$$
Since $\sigma_{\theta}(y)$ is bounded away from $0$ and $\infty$ for $y \in [\epsilon_1, 1]$, we see that the last inequality with $x = \frac{ \epsilon_1 m^{2/3} b_0 }{\sigma_{\theta}(n/m)} $ implies that there are positive constants $C, c > 0$ (depending on $\theta, \epsilon$) such that
$$ \mathbb{P} \Big( \log Z(1,1; m, n)  + m h_{\theta} (n/m) \geq m \cdot b_0 \cdot \epsilon_1 \Big) \leq C e^{-cm},$$
which implies (\ref{ThinUBPPV2}) in the case $n/m \in [\epsilon_1, 1]$.

\smallskip
{\bf \raggedleft Step 3.} In this step we establish (\ref{S2S2R2}). If $m \geq 2 \epsilon_1^{-1}$ note that $\epsilon_1^{-1} \geq N/m \geq \epsilon_1$. Consequently, by Proposition \ref{S1LDE1} (applied to $\delta = \epsilon_1$, $M = m$ and $N, \theta$ as above) we can find positive constants $C, c$ (depending on $\theta, \epsilon$) such that for $m \geq 2 \epsilon_1^{-1}$
$$\mathbb{P}\left( \log Z(1,1; m, N)   + m h_{\theta}(N/m ) \geq  m^{2/3} \sigma_{\theta}(N/m) \right) \leq C e^{-c m}.$$
We now observe that by definition $N/m = \lceil m \epsilon_1 \rceil /m = \epsilon_1 + O(1/m)$ and using the differentiability of $h_\theta$ and $\sigma_\theta$ we have $h_\theta(\epsilon_1) = h_{\theta}(N/m) + O(1/m)$, $\sigma_{\theta}(\epsilon_1) = \sigma_{\theta}(N/m) + O(1/m)$, where the constants in the big $O$ notations depend on $\theta$ and $\epsilon$. In particular, we conclude that there exists $M_1 \in \mathbb{N}$  (depending on $\theta$ and $\epsilon$) such that for $m \geq M_1$ we have
$$\mathbb{P}\left(\log Z(1,1; m, N)   + m  h_{\theta}(\epsilon_1)  \geq m \epsilon_1 \right)  \leq C e^{-c m},$$
which implies (\ref{S2S2R2}).

\smallskip
{\bf \raggedleft Step 4.} In this final step we prove (\ref{S2S2R3}). Since $w_{i,j}$ are i.i.d. random variables with density (\ref{S1invGammaDens}),  $- \log w_{i,j}$ are i.i.d. random variables with a density on $\mathbb{R}$, given by
$\rho_{\theta}(x) =\frac{ e^{\theta x} \exp( - e^x) }{\Gamma(\theta)}.$
If $X$ has density $\rho_{\theta}$ we see by a direct computation that its  moment generating function equals
$$M_X(t) := \mathbb{E} [ e^{tX} ] =  \int_{\mathbb{R}} \frac{ e^{tx} e^{\theta x} \exp( - e^x) }{\Gamma(\theta)}dx = \frac{\Gamma(\theta + t)}{\Gamma(\theta)},$$
whenever $t \in (-\theta, \infty)$. For $t \leq \theta$ we have $M_X(t) = \infty$. From the definition of $\Psi$ in \eqref{digammaS1}
$$\mathbb{E}[X] =  \frac{d M_X(t) }{dt} \Big \vert_{t = 0} = \Psi(\theta) \mbox{ and } \mathbb{E}[X^2] - \mathbb{E}[X]^2 =  \frac{d^2 M_X(t) }{dt^2} \Big \vert_{t = 0} - \Psi(\theta)^2 = \Psi'(\theta).$$

Let us denote $Z_i = (- \log w_{n+ i ,m} - \Psi(\theta) )/ \sqrt{\Psi'(\theta)}$ for $i = 1, \dots, N - n$. Then $Z_i$ are i.i.d., have mean $0$ and variance $1$ and the moment generating function $M_{Z_1}(t)$ is finite for $t \in (-\theta  \sqrt{\Psi'(\theta)}, \infty)$.

By Proposition \ref{KMT}, applied to $\lambda = 1$, there exist constants $C,K > 0$ (depending on $\theta$ alone) and a probability space $(\Omega_1, \mathcal{F}_1, \mathbb{P}_1)$  that supports random variables $\{X_i\}_{i = 1}^{N-n}$ and $\{Y_i \}_{i = 1}^{N-n}$ such that $(X_1, \dots, X_{N-n})$ has the same distribution as $(Z_1, \dots, Z_{N-n})$ and $Y_i$ are i.i.d. standard normals and also for all $x > 0$
\begin{equation}\label{S2S2R5}
\mathbb{P}_1\left( \left|\sum_{i = 1}^{N-n} X_i - \sum_{i = 1}^{N-n} Y_i \right| \geq C \log (N-n) + x  \right) < K e^{ -  x}.
\end{equation}
Furthermore, since $\sum_{i = 1}^{N-n} Y_i $ is a normal random variable with mean $0$ and variance $N-n$ we have
\begin{equation}\label{S2S2R6}
\mathbb{P}_1\left( \left| \sum_{i = 1}^{N-n} Y_i \right| > \frac{m \epsilon_1}{ 2\sqrt{\Psi'(\theta)}}  \right) = 2 \left[1 - \Phi\left(\frac{m \epsilon_1}{2 \sqrt{\Psi'(\theta)(N-n)}} \right)\right] \leq c_0 \exp \left( - \frac{m^2 \epsilon_1^2 }{8\Psi'(\theta)  (N-n)} \right),
\end{equation}
where in the last inequality we used Lemma \ref{LemmaI1} and that $2 \leq \sqrt{2\pi}$. Also $c_0$ is as in Lemma \ref{LemmaI1}.

By the triangle inequality, our definition $b_0 = |\Psi(\theta)| + 2 + b_1$ and the fact that $N - n \leq \epsilon_1 m$,
\begin{equation*}
\begin{split}
& \mathbb{P}\left( - \sum_{ j = n+1}^N \log w_{m,j} \geq m   (b_0 - b_1 -  1)  \epsilon_1  \right) =   \\
& = \mathbb{P}_1\left( \sum_{ i = 1}^{N-n} X_i  \geq \frac{m   (b_0 - b_1 -  1)  \epsilon_1   -  (N-n)\Psi(\theta)  }{\sqrt{\Psi'(\theta)}} \right) \leq \mathbb{P}_1\left( \left| \sum_{ i = 1}^{N-n} X_i \right|  \geq \frac{m  \epsilon_1 }{\sqrt{\Psi'(\theta)}} \right).
\end{split}
\end{equation*}
The last inequality and the triangle inequality imply that for $\frac{m  \epsilon_1 }{2\sqrt{\Psi'(\theta)}}> C \log (N- n) $ we have
\begin{equation*}
\begin{split}
& \mathbb{P}\left( - \sum_{ j = n+1}^N \log w_{m,j} \geq m   (b_0 - b_1 -  1)  \epsilon_1 \right) \leq  \mathbb{P}_1\left( \left|\sum_{i = 1}^{N-n} X_i - \sum_{i = 1}^{N-n} Y_i \right| \geq  \frac{m  \epsilon_1 }{2\sqrt{\Psi'(\theta)}} \right)  + \\
& \mathbb{P}_1\left( \left| \sum_{ i = 1}^{N-n} Y_i \right|  \geq \frac{m  \epsilon_1 }{2\sqrt{\Psi'(\theta)}} \right) \leq K \exp \left(- \frac{m  \epsilon_1 }{2\sqrt{\Psi'(\theta)}} +  C \log (N- n) \right)+  c_0 \exp \left( - \frac{m^2 \epsilon_1^2 }{8\Psi'(\theta)  (N-n)} \right),
\end{split}
\end{equation*}
where in the last inequality we used (\ref{S2S2R5}) and (\ref{S2S2R6}). Since $N- n \leq \epsilon_1 m$ we see that the last inequality implies (\ref{S2S2R3}) and hence the lemma.
\end{proof}

\begin{lemma}\label{S2S2L2} Let $\theta >0$ and $\theta_c$ be as in (\ref{DefThetaC}). If $\theta \in (0, \theta_c]$ and $a > 0$ there exist $M_1, D_1 > 0$ (depending on $\theta$ and $a$) such that for all $M \geq M_1$, $M \geq N \geq 1$ and $m \in \llbracket 1, M \rrbracket$, $n \in \llbracket 1, N \rrbracket$
\begin{equation}\label{UBPP1}
\mathbb{P} \left( \log Z(1,1; m, n) \geq -  \Psi(\theta/2) \cdot (M+N)  + D_1 M^{1/3} (\log M)^{2/3}\right) \leq M^{-a}.
\end{equation}
If $\theta > \theta_c$ and $a > 0$ there exist $M_2, D_2 > 0$ (depending on $\theta$ and $a$) such that for all $M \geq M_2$, $M \geq N \geq 1$ and $m \in \llbracket 1, M \rrbracket$, $n \in \llbracket 1, N \rrbracket$
\begin{equation}\label{UBPP2}
\mathbb{P} \Big( \log Z(1,1; m, n) \geq  D_2  \log M \Big) \leq M^{-a}.
\end{equation}
\end{lemma}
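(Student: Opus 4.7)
The plan is to reduce to the case $m\geq n$ via the distributional symmetry $\log Z(1,1;m,n)\overset{d}{=}\log Z(1,1;n,m)$, and then partition the range of $(m,n)$ into three regions in which different tools from Sections \ref{Section2.1} and \ref{Section2.2} apply. I fix a small $\delta\in(0,1)$ chosen so that \eqref{S2H1} is applicable on $(0,\delta]$ in the subcritical/critical case and so that $h_\theta(x)\geq c_\ast>0$ on $(0,\delta]$ in the supercritical case -- the latter being available since $h_\theta(0^+)=\Psi(\theta)>0$ for $\theta>\theta_c$. I also fix a large $C_0>0$ ensuring that the $C_1 e^{-c_1 m}$ error in Proposition \ref{S1LDE1} and the $Be^{-bm}$ error in Lemma \ref{S2S2L1} are both $\leq M^{-a}$ whenever $m\geq C_0\log M$. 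This gives three regions: (A) $m\geq C_0\log M$ and $n/m\in[\delta,1]$; (B) $m\geq C_0\log M$ and $n/m\in(0,\delta)$; (C) $m< C_0\log M$.

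In Region (A) I apply Proposition \ref{S1LDE1} with deviation $x=A(\log M)^{2/3}$, where $A$ satisfies $c_2 A^{3/2}\geq a$, obtaining
\[
\log Z(1,1;m,n)\;\leq\;-m\,h_\theta(n/m)+A\,\sigma_\theta(n/m)\,m^{1/3}(\log M)^{2/3}
\]
outside an event of probability $\leq 2M^{-a}$. For $\theta\in(0,\theta_c]$ I then use the tangent-line inequality $h_\theta(x)\geq \Psi(\theta/2)(1+x)$, a consequence of the strict convexity of $h_\theta$ (recorded in Section \ref{Section2.1} via $h_\theta'$ strictly increasing) together with $h_\theta(1)=2\Psi(\theta/2)$ and $h_\theta'(1)=\Psi(\theta/2)$; since $-\Psi(\theta/2)\geq 0$ and $m+n\leq M+N$, this yields $-mh_\theta(n/m)\leq -\Psi(\theta/2)(M+N)$, and continuity of $\sigma_\theta$ on $[\delta,1]$ combined with $m^{1/3}\leq M^{1/3}$ gives \eqref{UBPP1}. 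For $\theta>\theta_c$, $h_\theta$ has a strictly positive minimum $c'=\Psi(\theta-\theta_c/2)>0$ (attained where $h_\theta'=\Psi\circ g_\theta^{-1}$ vanishes, namely at $x^\ast=g_\theta(\theta_c/2)$), so $-mh_\theta(n/m)\leq -c'm$; the map $m\mapsto -c'm+\text{const}\cdot m^{1/3}(\log M)^{2/3}$ is maximized at $m^\ast\asymp\log M$ with maximum of order $\log M$, yielding \eqref{UBPP2} for $D_2$ large enough.

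In Region (B), Lemma \ref{S2S2L1} produces $\log Z(1,1;m,n)\leq m\epsilon-mh_\theta(n/m)$ outside an event of probability $\leq M^{-a}$. For $\theta\in(0,\theta_c]$ I pick $\epsilon$ matching the value in \eqref{S2H1}, so that the right-hand side is $\leq -m\Psi(\theta/2)\leq -\Psi(\theta/2)(M+N)$; for $\theta>\theta_c$ I pick $\epsilon<c_\ast/2$, so the right-hand side is $\leq 0\leq D_2\log M$. In Region (C), where $m+n=O(\log M)$, the $e^{-c_1 m}$-type errors above are not small enough and a different tool is required. For $\theta\in(0,\theta_c]$ the crude deterministic bound $\log Z(1,1;m,n)\leq (m+n-2)\log 2+(m+n-1)M_{\max}$ with $M_{\max}:=\max_{(i,j)\in\llbracket 1,M\rrbracket^2}\log w_{i,j}$ controlled by Lemma \ref{TL1} gives $\log Z=O((\log M)^2)$ with probability $\geq 1-M^{-a}$, and this is dominated by $-\Psi(\theta/2)(M+N)+D_1 M^{1/3}(\log M)^{2/3}$ for $M$ large in both the subcritical and critical regimes. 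For $\theta>\theta_c$, since $\theta>2$ one has $\mathbb E[w_{1,1}]=1/(\theta-1)<\infty$, and Markov's inequality applied to $\mathbb E[Z(1,1;m,n)]\leq 2^{m+n}(\theta-1)^{-(m+n)}=M^{O(1)}$ (valid because $m+n\leq 2C_0\log M$) yields $\log Z\leq D_2\log M$ with probability $\geq 1-M^{-a}$ for $D_2$ sufficiently large.

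The main obstacle is the supercritical Region (A): the raw output of Proposition \ref{S1LDE1} carries a fluctuation term $m^{1/3}(\log M)^{2/3}$ that can far exceed the target $D_2\log M$. The resolution is that this fluctuation competes against the strictly negative LLN mean $-c'm$, and optimizing their sum over $m$ places the worst case at $m\asymp\log M$ with value of order $\log M$, matching exactly the scale asserted in \eqref{UBPP2}.
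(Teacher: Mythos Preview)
Your proof is correct and follows essentially the same three--region decomposition as the paper: both split according to whether $\max(m,n)$ exceeds a multiple of $\log M$ and whether the aspect ratio is bounded away from $0$, and both invoke Proposition~\ref{S1LDE1}, Lemma~\ref{S2S2L1}, and the crude $\max\log w_{i,j}$ / first-moment bounds in the respective regions, together with the tangent-line inequality $h_\theta(x)\geq(1+x)\Psi(\theta/2)$.

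The one substantive difference is the supercritical Region~(A) argument. The paper chooses the deviation level adaptively, setting $y=\big((m+n)\Psi(\theta/2)+D_2\log M\big)/\max(m,n)^{1/3}$ so that the target threshold is exactly $D_2\log M$; then $y\geq \max(m,n)^{2/3}\Psi(\theta/2)$ gives $c_2y^{3/2}\gtrsim m\gtrsim\log M$ directly. You instead fix the deviation at $A(\log M)^{2/3}$, obtain the bound $-c'm+O(m^{1/3}(\log M)^{2/3})$ using $h_\theta\geq c'>0$, and then optimize this deterministic expression over $m$ to land at $O(\log M)$. Both routes are valid; the paper's adaptive choice avoids the extra optimization step, while yours makes the competition between the linear drift and the $m^{1/3}$ fluctuation more explicit.
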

\begin{proof}
For clarity we split the proof into four steps.

\smallskip
{\bf \raggedleft Step 1.} In this step we introduce a bit of notation that will be used throughout the proof and fix the constants $D_1, D_2$ in the statement of the lemma.

In the remainder of the proof we fix $\epsilon > 0$ to be sufficiently small (depending on $\theta$) so that (\ref{S2H1}) holds. Since $\sigma_{\theta}(x)$ is smooth on $(0, \infty)$ we see that there is a constant $d_{\epsilon} > 0$ such that
\begin{equation}\label{WER2V3}
\sigma_{\theta}(x) \leq d_{\epsilon} \mbox{ for all $x \in [\epsilon, 1]$. }
\end{equation}
By Proposition \ref{S1LDE1} (applied to $\theta$ as in the statement of the lemma and $\delta = \epsilon$ as above) we obtain for any $m,n \in \mathbb{N}$ with $ \frac{\min(m,n)}{\max(m,n)} \in [\epsilon, 1]$ and $y \geq 0$ that
\begin{equation}\label{S2H2}
\begin{split}
&\mathbb{P} \left( \log Z(1, 1; m, n) \geq - \max(m,n) h_{\theta} \left( \frac{\min(m,n)}{\max(m,n)} \right) + \max(m,n)^{1/3} \sigma_{\theta} \left( \frac{\min(m,n)}{\max(m,n)} \right) \cdot y \right) \\
& \leq C_1 e^{-c_1 \max(m,n) } + C_2 e^{-c_2 y^{3/2}},
\end{split}
\end{equation}
for some constants $C_1, C_2, c_1, c_2 > 0$. In deriving (\ref{S2H2}) we also used the distributional equality of $Z(1, 1; m, n)$ and $Z(1, 1; n, m)$.

We let $B, b > 0$ be as in Lemma \ref{S2S2L1} for the above choice of $\epsilon$ and $\theta$ as in the statement of the present lemma. We let $R_{\epsilon} > 0$ be sufficiently large (depending on $a, \theta, \epsilon$) so the following hold
\begin{equation}\label{Reps}
\begin{split}
&  b \cdot R_{\epsilon} \geq a + 1, \hspace{3mm}  c_1 \cdot R_{\epsilon} \geq  a + 1, \\
& \mbox{if $\theta > \theta_c$ we also assume $R_{\epsilon}$ is sufficiently large so that $c_2  d_{\epsilon}^{-3/2}  R_{\epsilon}  \Psi(\theta/2)^{3/2} \geq a + 1$.}
\end{split}
\end{equation}
We recall that $\Psi(\theta/2) > 0$ for $\theta > \theta_c$, cf. (\ref{DefThetaC}), so that the choice of $R_{\epsilon}$ in the second line of (\ref{Reps}) is indeed possible.

We finally specify the constants $D_1, D_2$ in the statement of the lemma. If $d_{\epsilon}$ is such that (\ref{WER2V3}) holds and $c_1, c_2$ are such that (\ref{S2H2}) holds we let $D_1, D_2 > 0$ be sufficiently large so that
\begin{equation}\label{Dconst}
c_2 D^{3/2}_1 d_{\epsilon}^{-3/2} \geq a + 1 \mbox{ if $\theta \in (0, \theta_c]$ and } D_2 - {\bf 1} \{ \theta \leq 3 \} \cdot 2 R_{\epsilon} \cdot \log \left( \frac{2}{\theta - 1} \right)  \geq a + 1 \mbox{ if $\theta > \theta_c$}.
\end{equation}

\smallskip
{\bf \raggedleft Step 2.} In this and the next step we prove (\ref{UBPP1}) and (\ref{UBPP2}) when $\max(m,n) \geq R_{\epsilon} \log M$. The case when $\max(m,n) \leq R_{\epsilon} \log M$ is handled in Step 4. We split the proof into the cases when $\frac{\min(m,n)}{\max(m,n)} \leq \epsilon$ and $\frac{\min(m,n)}{\max(m,n)} \in [\epsilon,1]$, with the former handled in this step and the latter in the next.

From the distributional equality of $Z(1, 1; m, n)$ and $Z(1, 1; n, m)$ and Lemma \ref{S2S2L1} we see that
$$\mathbb{P}\left( \log Z(1, 1; n, m) \geq \max(m,n) \cdot \epsilon  - \max(m,n) \cdot h_{\theta}\left( \frac{\min(m,n)}{\max(m,n)}\right) \right) \leq B \exp \left( - b \max(m,n) \right).$$
The latter inequality, (\ref{S2H1}) and $b \max(m,n) \geq b R_{\epsilon} \log M \geq (a+1) \log M$ (see (\ref{Reps})) imply
$$\mathbb{P}\Big( \log Z(1, 1; m, n) \geq  - \Psi(\theta/2) \max(m,n) \Big) \leq B M^{-a - 1}.$$
The latter inequality implies (\ref{UBPP1}) for any $D_1 > 0$ (and thus in particular $D_1$ as in (\ref{Dconst})) since when $\theta \in (0, \theta_c]$ we have $\Psi(\theta/2) \leq 0$, cf. (\ref{DefThetaC}), and $\max(m,n) \leq M + N$. The latter inequality also implies (\ref{UBPP2}) for any $D_2 > 0$ (and thus in particular $D_2$ as in (\ref{Dconst})) since when $\theta > \theta_c$ we have $\Psi(\theta/2) > 0$, cf. (\ref{DefThetaC}).

\smallskip
{\bf \raggedleft Step 3.} In this step we prove (\ref{UBPP1}) and (\ref{UBPP2}) when $\max(m,n) \geq R_{\epsilon} \log M$ and $\frac{\min(m,n)}{\max(m,n)} \in [\epsilon,1]$.
As shown in Section \ref{Section2.1}, $h'_{\theta}(x)$ is strictly increasing on $(0, 1]$, $g_{\theta}^{-1}(1) = \theta/2$ and $h_{\theta}'(1) = \Psi(\theta/2)$ so  $h'_{\theta}(x) \leq \Psi(\theta/2)$ for all $x \in [\epsilon,1]$. This and the mean value theorem imply that for some $c \in [\epsilon, 1]$
\begin{equation}\label{WER1V3}
- h_{\theta}(x) = -h_{\theta}(1) + (1- x) h_{\theta}'(c) \leq  -h_{\theta}(1) + (1- x)\Psi(\theta/2) = - (1+x)\Psi(\theta/2).
\end{equation}
Furthermore, combining (\ref{WER2V3}), (\ref{S2H2}) and (\ref{WER1V3}) we conclude that for all $y \geq 0$
\begin{equation}\label{S2H3}
\mathbb{P} \left( \log Z(1, 1; m, n) \geq - (m+n)\Psi(\theta/2)  + \max(m,n)^{1/3} d_{\epsilon} y \right) \leq C_1 e^{-c_1 \max(m,n)} + C_2 e^{-c_2 y^{3/2}}.
\end{equation}

We now use (\ref{S2H3}) to prove (\ref{UBPP1}) and (\ref{UBPP2}) by considering the cases $\theta \in (0, \theta_c]$ and $\theta > \theta_c$ separately. Suppose that $\theta \in (0, \theta_c]$ and put
\begin{equation}\label{S2H4}
y = \frac{ (m+ n - M - N )\Psi(\theta/2)  + D_1   M^{1/3} (\log M)^{2/3}  }{d_{\epsilon} \max(m,n)^{1/3} }.
\end{equation}
Note that if $\theta \in (0, \theta_c]$ we have $y \geq D_1 d_{\epsilon} ^{-1} (\log M)^{2/3}$ since $\max(m,n)^{1/3} \leq M^{1/3}$, $1 \leq m \leq M$, $1 \leq n \leq N$ and $\Psi(\theta/2) \leq 0$.
Plugging (\ref{S2H4}) into (\ref{S2H3}) and using the fact that $c_1 \max(m,n) \geq c_1 R_{\epsilon} \cdot \log M \geq (a+1) \log M$ (see (\ref{Reps})), while $y \geq  D_1 d_{\epsilon} ^{-1} (\log M)^{2/3}$ we conclude
$$\mathbb{P} \left( \log Z(1, 1; m, n) \geq   D_1 M^{1/3} (\log M)^{2/3} \right) \leq C_1 M^{-a-1} + C_2 e^{-c_2 D_1^{3/2} d_{\epsilon}^{-3/2} \log M},$$
which implies (\ref{UBPP1}), since by (\ref{Dconst}) we have $c_2 D_1^{3/2} d_{\epsilon}^{-3/2} \geq a + 1$.

\smallskip
Suppose now that $\theta > \theta_c$. Put
\begin{equation}\label{S2H5}
y = \frac{ (m+ n)\Psi(\theta/2)  + D_2 \log M  }{d_{\epsilon}\max(m,n)^{1/3}}.
\end{equation}
Note that if $\theta > \theta_c$ we have from  (\ref{DefThetaC}) that $\Psi(\theta/2)   \geq 0$ and so $y \geq d_{\epsilon}^{-1}\max(m,n)^{2/3} \Psi(\theta/2)$. Plugging (\ref{S2H5}) into (\ref{S2H3}) and using the fact that $c_1 \max(m,n) \geq c_1 R_{\epsilon} \cdot \log M \geq (a+1) \log M$ (see (\ref{Reps})), while $y \geq d_{\epsilon}^{-1} \max(m,n)^{2/3} \Psi(\theta/2) \geq d_{\epsilon}^{-1}R^{2/3}_{\epsilon} (\log M)^{2/3} \cdot  \Psi(\theta/2) $ we conclude
$$\mathbb{P} \left( \log Z(1, 1; m, n) \geq   D_2 \log M \right) \leq C_1 M^{-a - 1} + C_2 e^{-c_2 d_{\epsilon}^{-3/2} R_{\epsilon} \cdot \Psi(\theta/2)^{3/2} \cdot \log M}.$$
The latter implies (\ref{UBPP2}), since by (\ref{Reps}) we have $c_2 \cdot d_{\epsilon}^{-3/2} \cdot R_{\epsilon} \cdot \Psi(\theta/2)^{3/2} \geq a + 1 $.

\smallskip
{\bf \raggedleft Step 4.} In this final step we prove (\ref{UBPP1}) and (\ref{UBPP2}) when $\max(m,n) \leq R_{\epsilon} \log M$. As before we consider the cases $\theta \in (0, \theta_c]$ and $\theta > \theta_c$ separately.

\smallskip
Suppose that $\theta \in (0, \theta_c]$. From Lemma \ref{TL1}, applied to $n = M^2$, there exists $C_3 > 0$ such that
\begin{equation}\label{CM2}
\mathbb{P} \left( \max_{1 \leq i,j \leq M} \log w_{i,j} \geq   \theta^{-1} (3 + a)  \log M  \right) \leq C_3 M^{-a - 1},
\end{equation}
where $w_{i,j}$ are the i.i.d. random variables with density (\ref{S1invGammaDens}) underlying the polymer model.

Let $B_M = \{ \max_{1 \leq i,j \leq M} \log w_{i,j} <  \theta^{-1}(3+ a)\log M \}$. By the definition of $Z(1, 1; m, n)$, on $B_M$
\begin{equation*}
\begin{split}
&Z(1, 1; m, n) = \sum_{ \pi \in\Pi(1,1;m,n)} \hspace{-5mm} w(\pi) \leq \sum_{ \pi \in\Pi(1,1;m,n)} \hspace{-5mm}  \exp \left(   ( m + n - 1)  \theta^{-1} (3+a) \log M \right) \\
& \leq 2^{ m + n }\exp \left( ( m + n - 1 ) \cdot \theta^{-1} (3+a)   \log M \right) < \exp \left( M^{1/4} \right),
 \end{split}
\end{equation*}
where in going from the first to the second line we used that $|\Pi(1,1;m,n)| \leq  2^{ m + n }$, and the last inequality holds for all large enough $M$ (here we used our assumption $\max(m,n) \leq R_{\epsilon} \log M$).

Combining the last two equations we see that
$$\mathbb{P} \left( \log Z (1, 1; m, n)  \geq   M^{1/4} \right) \leq C_3 M^{-a - 1}.$$
The last inequality implies (\ref{UBPP1}) with any fixed $D_1 > 0$ (and thus in particular $D_1$ as in (\ref{Dconst})) since $\Psi(\theta/2) \leq 0$ when $\theta \in (0, \theta_c]$.

\smallskip
Suppose that $\theta > \theta_c$. From \cite[page 259]{AS65} we have that $\theta_c \approx 2.92326$. By the definition of $Z(1, 1; m, n)$ and the fact that $\mathbb{E}[w_{1,1}] = (\theta - 1)^{-1}$ we observe that
\begin{equation}\label{S2ExpBound}
\begin{split}
&\mathbb{E} \left[ Z(1, 1; m, n) \right] = \sum_{ \pi \in\Pi(1,1;m,n) } \prod_{(i,j) \in \pi}\mathbb{E} \left[  w_{i,j} \right] = \sum_{ \pi \in\Pi(1,1;m,n)} \frac{1}{(\theta - 1)^{m+n - 1}} \leq \\
& \left(\frac{2}{\theta - 1} \right)^{m+n - 1} \leq \exp \left( {\bf 1} \{\theta \leq 3 \} \cdot 2 R_{\epsilon} \log M \cdot \log \left(\frac{2}{\theta - 1} \right) \right) ,
 \end{split}
\end{equation}
where in going from the first to the second line we used that $|\Pi(1,1;m,n)| \leq  2^{ m + n-1 }$, and in the last inequality we used that $\max(m,n) \leq R_{\epsilon} \log M$.
This and Chebyshev's inequality imply
$$\mathbb{P} \left( \log Z(1, 1; m, n) \geq D_2 \log M \right) \leq \exp \left( {\bf 1} \{\theta \leq 3 \} \cdot 2 R_{\epsilon} \log M \cdot \log \left(\frac{2}{\theta - 1} \right) - D_2 \log M\right) \leq M^{-a-1},$$
where in the last inequality we used that $D_2 - {\bf 1} \{\theta \leq 3 \} \cdot 2 R_{\epsilon} \cdot \log \left(\frac{2}{\theta - 1} \right)  \geq a + 1$, cf. (\ref{Dconst}). The last inequality implies (\ref{UBPP2}). This concludes the proof of the lemma.
\end{proof}

%
\section{Proof of Theorem \ref{PM1}} \label{Section3} In this section we will assume $\theta \in (0, \theta_c)$, where $\theta_c$ is as in (\ref{DefThetaC}). For such values of $\theta$ we construct a sequence of random variables $\MY_N$, which dominate with high probability $\Fe_N$, as in (\ref{Fmax}), and converge to the GUE Tracy-Widom distribution under suitable shifts and scales. The precise result is given as Proposition \ref{MainSub} in Section \ref{Section3.1}. In the same section we  prove Theorem \ref{PM1} by combining Proposition \ref{MainSub} and the complementary bound $\Fe_N\geq \log Z(1,1;N,N)$ along with Proposition \ref{LGPCT}. The construction of $\MY_N$ is accomplished in several steps, which are the content of Sections \ref{Section3.2}, \ref{Section3.3} and \ref{Section3.4}. We continue with the same notation as in Sections \ref{Section1} and \ref{Section2}.

%
\subsection{Upper bounds for $\Fe_N$} \label{Section3.1} In this section we prove Theorem \ref{PM1} using the following proposition, whose proof is postponed until Section \ref{Section3.4}.

\begin{proposition}\label{MainSub} Suppose that $\theta \in (0, \theta_c)$, where $\theta_c$ is as in (\ref{DefThetaC}). There exists a sequence of random variables $\MY_N$ and events $E_N$ such that
\begin{equation}\label{S3MainEq}
\begin{split}
& \Fe_N \leq \MY_N \mbox{ a.s. on $E_N$ for all large enough $N$, $\lim_{N \rightarrow \infty} \mathbb{P}(E_N) = 1$,  and }\\
& \lim_{N \rightarrow \infty} \mathbb{P} \left( \frac{\MY_N + 2 N \Psi(\theta/2)}{\sigma_{\theta} N^{1/3}} \leq y\right) = F_{\rm GUE}(y) \mbox{ for any $y \in \mathbb{R}$}.
\end{split}
\end{equation}
\end{proposition}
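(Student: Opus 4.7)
The plan is to construct $\MY_N$ via a concatenation estimate involving partition functions at enlarged boundary points, following the strategy outlined in the introduction. Fix a small $\epsilon \in (0, 1/9)$ and two scales
\[k := \lfloor N^{1/3+\epsilon} \rfloor, \qquad K := \lfloor N^{2/3} \rfloor,\]
so that $k \ll K^{2/3} \ll K \ll N$. The first step reduces $\Fe_N$ to maximization over pairs $(s,t)$ localized in the corner windows $\mathcal{W}_1 := \llbracket 1,k \rrbracket^2$ and $\mathcal{W}_2 := \llbracket N-k+1, N \rrbracket^2$. The LLN centering $-(t_1-s_1) h_\theta((t_2-s_2)/(t_1-s_1))$ for $\log Z(s;t)$ equals $-2N\Psi(\theta/2) + O(1)$ at the corners and is smaller by at least $c\, k \gg N^{1/3}$ whenever any coordinate is displaced by $k$ or more from the corresponding corner, by strict convexity of $h_\theta$ together with $h'_\theta(1) = \Psi(\theta/2) < 0$. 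Applying Proposition \ref{S1LDE1} with deviation parameter $x \asymp k/N^{1/3} = N^\epsilon$ to each of the $O(N^4)$ ``far'' pairs, the tail $e^{-c_2 N^{3\epsilon/2}}$ defeats the union bound and defines an event $E_N^{(1)}$ with $\mathbb{P}(E_N^{(1)}) \to 1$ on which the maximizer $(s^\ast, t^\ast)$ lies in $\mathcal{W}_1 \times \mathcal{W}_2$.

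The central step is the concatenation bound: every directed path from $(-K,-K)$ to $(N+K,N+K)$ factoring through two intermediate points $s \leq t$ contributes $Z(-K,-K;s) Z(s;t) Z(t;N+K,N+K)/(w_s w_t)$ to $Z(-K,-K;N+K,N+K)$ (the $w_s w_t$ correct double-counting at the joins $s$ and $t$), so for any such $s, t$,
\[\log Z(s;t) \leq \log Z(-K,-K;N+K,N+K) + \log w_s + \log w_t - \log Z(-K,-K;s) - \log Z(t;N+K,N+K).\]
Taking the maximum over $(s,t) \in \mathcal{W}_1 \times \mathcal{W}_2$, which factors since $s$- and $t$-dependent terms decouple, motivates
\[\MY_N := \log Z(-K,-K;N+K,N+K) + \max_{s \in \mathcal{W}_1}\!\bigl[\log w_s - \log Z(-K,-K;s)\bigr] + \max_{t \in \mathcal{W}_2}\!\bigl[\log w_t - \log Z(t;N+K,N+K)\bigr],\]
so that $\Fe_N \leq \MY_N$ on $E_N^{(1)}$. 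Since $K/N \to 0$, Proposition \ref{LGPCT} applied to the $(N+2K)\times(N+2K)$ rectangle, together with smoothness of $h_\theta$ and $\sigma_\theta$ at $1$ (compare Remark \ref{RemarkFNLG}), gives $\log Z(-K,-K;N+K,N+K) + 2(N+2K)\Psi(\theta/2) = \sigma_\theta N^{1/3} W_N + o_{\mathbb{P}}(N^{1/3})$ with $W_N \Rightarrow F_{\rm GUE}$. Provided one shows that each corner maximum equals $2K\Psi(\theta/2) + o_{\mathbb{P}}(N^{1/3})$, the $\pm 2K\Psi(\theta/2)$ contributions cancel against the $-4K\Psi(\theta/2)$ portion of the enlarged centering, leaving $\MY_N + 2N\Psi(\theta/2) = \sigma_\theta N^{1/3} W_N + o_{\mathbb{P}}(N^{1/3})$, which is the desired convergence.

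The main obstacle is controlling the two-dimensional maximum $\max_{s \in \mathcal{W}_1}[\log w_s - \log Z(-K,-K;s)]$ using only the single-free-endpoint tightness of Proposition \ref{ThmTight}. My plan is to reduce to a one-dimensional down-right frame $\mathcal{F}_1 \subset \mathcal{W}_1$ (e.g.\ the axis-aligned ``L'' $\{(1,j):1\leq j\leq k\}\cup\{(i,1):1\leq i\leq k\}$) by invoking the sub-multiplicativity
\[\log Z(-K,-K;s) \geq \log Z(-K,-K;s') + \log Z(s';s) - \log w_{s'} \quad \text{for any } s' \leq s,\]
with $s'$ chosen as the frame point closest to $s$ from below. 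Along $\mathcal{F}_1$, which has length $O(k) \ll K^{2/3}$ and sits at distance $\asymp K$ from $(-K,-K)$, Proposition \ref{ThmTight} gives $\log Z(-K,-K;s') = -2K\Psi(\theta/2) + O_{\mathbb{P}}(K^{1/3})$ uniformly in $s' \in \mathcal{F}_1$. The residual local partition functions $\log Z(s';s)$ for $s,s' \in \mathcal{W}_1$ with $s' \leq s$ have LLN $\asymp k \cdot 2|\Psi(\theta/2)|$, and their upper-tail fluctuations are controlled by Proposition \ref{S1LDE1}; a union bound over $O(k^4)$ such pairs gives uniform $O(k^{1/3}\sqrt{\log N}) \ll N^{1/3}$ control. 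Lemma \ref{TL1} handles $\max_{s \in \mathcal{W}_1} \log w_s = O_{\mathbb{P}}(\log k) = o(N^{1/3})$. Matching the LLN correction $-(s_1+s_2)\Psi(\theta/2)$ from Taylor-expanding $-(K+s_1) h_\theta((K+s_2)/(K+s_1))$ against the corresponding $\log Z(s';s)$ term is the delicate step, but it is forced by the structure of $h_\theta$ near $1$ and produces the cancellations needed; once this reduction is complete the proof concludes along the lines indicated above.
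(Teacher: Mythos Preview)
Your overall architecture matches the paper's: localize to corner windows, then use the concatenation inequality with an enlarged partition function $Z(-K,-K;N+K,N+K)$ to manufacture an upper bound $\MY_N$ whose fluctuations are governed by a single point-to-point free energy. The gap is in the step where you try to prove $\max_{s\in\mathcal W_1}[\log w_s-\log Z(-K,-K;s)]=2K\Psi(\theta/2)+o_{\mathbb P}(N^{1/3})$. Your sub-multiplicativity reduction reads
\[
-\log Z(-K,-K;s)\ \le\ -\log Z(-K,-K;s')\ -\ \log Z(s';s)\ +\ \log w_{s'}\qquad(s'\le s,\ s'\in\mathcal F_1),
\]
so the right-hand side is small only if $\log Z(s';s)$ is bounded \emph{below}, uniformly over $O(k^2)$ pairs $(s',s)$. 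That is a \emph{lower-tail} statement, and Proposition~\ref{S1LDE1} gives only upper-tail bounds; the paper's introduction explicitly flags the absence of usable lower-tail bounds for the log-gamma polymer as the obstruction to controlling moduli of continuity with two free endpoints. So the sentence ``their upper-tail fluctuations are controlled by Proposition~\ref{S1LDE1}'' does not supply what you need, and the final ``matching the LLN correction'' paragraph cannot be completed as stated.

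The paper avoids this by reversing the order of the two reductions. It first passes from the full $k\times k$ corner to the \emph{outer} down-right frame $\partial\mathsf{SW}_N^\delta$ (Lemma~\ref{LFrames}); this direction of sub-multiplicativity requires only \emph{upper}-tail control of the inside-corner pieces $Z^{\max}((m_1,n_1)^{\swarrow})$, which Lemma~\ref{S2S2L2} provides. Only after the problem is already one-dimensional does the paper invoke the concatenation with $(-K,-K)$ and $(N+K,N+K)$, and then the residual term is a \emph{minimum} over the frame,
\[
\MY_N^{\mathsf{SW}}=\min_{(m,n)\in\partial\mathsf{SW}_N^\delta}\big[\log Z(-K,-K;m,n)+\Psi(\theta/2)(2K+m+n)\big],
\]
which Proposition~\ref{ThmTight} controls directly (tightness of a single curve bounds its minimum). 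In short, your $\MY_N$ contains a two-dimensional maximum whose analysis needs an unavailable lower-tail input; the paper's $\MY_N$ is built so that only one-dimensional tightness and upper-tail bounds are ever required.
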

The construction of suitable random variables $\MY_N$ and events $E_N$ is performed in three steps, which we briefly explain here.
\begin{figure}[h]
\scalebox{0.50}{\includegraphics{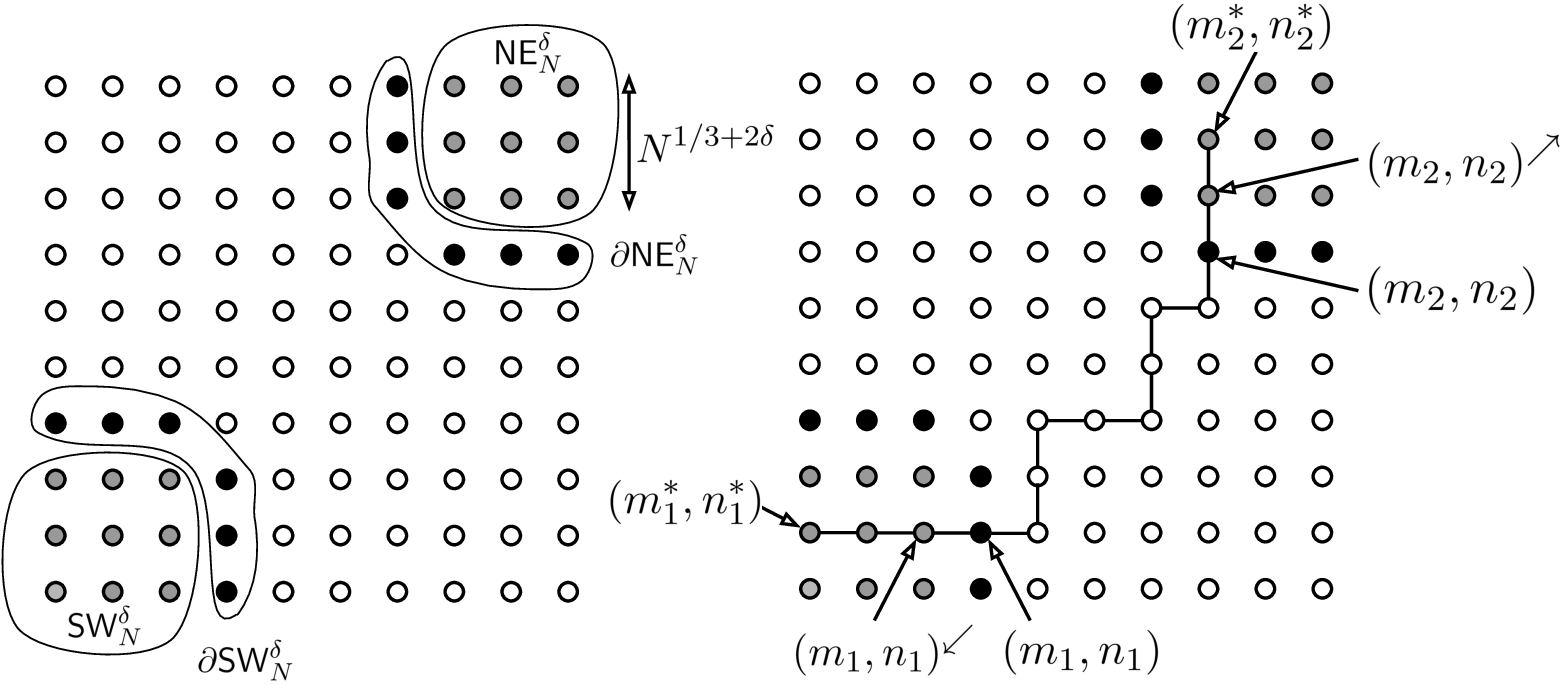}}
\captionsetup{width=\linewidth}
 \caption{The picture on the left depicts the sets $\mathsf{SW}_N^{\delta}$ and $\mathsf{NE}_N^{\delta}$ (these are the grey dots) and the sets $\partial \mathsf{SW}_N^{\delta}$ and $\partial \mathsf{NE}_N^{\delta}$ (these are the black dots). The picture on the right depicts the points $(m^*_1, n^*_1)$ and $(m^*_2, n_2^*)$ that maximize $\log Z(m_1, n_1; m_2,n_2)$ as well as an up-right path $\pi$ that connects these two points. If $(m_1^*, n_1^*) \in  \mathsf{SW}_N^{\delta}$ and $(m_2^*, n_2^*) \in  \mathsf{NE}_N^{\delta}$ the path $\pi$ necessarily passes through $\partial \mathsf{SW}_N^{\delta}$ and $\partial \mathsf{NE}_N^{\delta}$. The point $(m_1, n_1)$ is the first point in $\partial \mathsf{SW}_N^{\delta}$ visited by $\pi$ and $(m_1,n_1)^{\swarrow}$ is the closest point in $\mathsf{SW}_N^{\delta}$ to $(m_1,n_1)$ (it is also visited by $\pi$). The point $(m_2, n_2)$ is the last point in $\partial \mathsf{NE}_N^{\delta}$ visited by $\pi$ and $(m_2,n_2)^{\nearrow}$ is the closest point in $\mathsf{NE}_N^{\delta}$ to $(m_2,n_2)$ (it is also visited by $\pi$).}
\label{S3_1}
\end{figure}

The first step is to show that if $(m^*_1,n^*_1) \leq (m^*_2, n^*_2)$ are maximizers of $Z(m_1, n_1; m_2,n_2)$ then
with high probability $(m^*_1,n^*_1) \in \mathsf{SW}_N^{\delta} := \mathbb{N}^2 \cap[1, N^{1/3 + 2\delta}]^2 $ (the {\em south-west corner} of size $N^{1/3 + 2\delta}$) and $ (m^*_2, n^*_2) \in \mathsf{NE}_N^{\delta} :=  \mathbb{N}^2 \cap[N - N^{1/3 + 2\delta}, N]^2$ (the {\em north-east corner} of size $N^{1/3 + 2\delta}$) for any $\delta \in (0, 1/3)$, see Figure \ref{S3_1}. This step is accomplished in Section \ref{Section3.2}, see Lemma \ref{LCorners}.

The key idea in the proof is to use Lemma \ref{S2S2L2}, which shows that with high probability
$$\log Z(m_1, n_1; m_2,n_2) \leq -2N \Psi(\theta/2) - N^{1/3} \log N$$
when $(m_1, n_1) \not \in  \mathsf{SW}_N^{\delta}$ or $(m_2, n_2) \not \in  \mathsf{NE}_N^{\delta}$. By Proposition \ref{LGPCT} we have that with high probability
$$\Fe_N \geq \log Z(1, 1; N,N) =-2N \Psi(\theta/2) + O(N^{1/3}).$$
The latter two statements imply with high probability $\log Z(m_1, n_1; m_2,n_2)$ cannot compete with $\Fe_N$ for $(m_1, n_1), (m_2,n_2)$ away from the corners, and so $(m^*_1,n^*_1) \in \mathsf{SW}_N^{\delta}$, while $ (m^*_2, n^*_2) \in \mathsf{NE}_N^{\delta}$.

\smallskip
The second step is to show that with high probability
\begin{equation}\label{IS1}
\Fe_N \leq r_N + \max_{\substack{(m_1, n_1) \in \partial \mathsf{SW}_N^{\delta} \\ (m_2,n_2)\in \partial \mathsf{NE}_N^{\delta}}} \hspace{-5mm} \log Z(m_1, n_1; m_2,n_2) + \Psi(\theta/2) \cdot (-m_1 - n_1 + m_2 + n_2 - 2N),
\end{equation}
where $r_N= 2N^{1/9 + 2\delta /3 } \log(N)$ and $\delta \in (0,1/3)$ (all we really need is that $r_N= o(N^{1/3})$ and $r_N$ grows faster than $N^{1/9+ 2\delta /3}$, which is the $1/3$ power of the side-lengths of $\mathsf{NE}_N^{\delta}$ and $\mathsf{SW}_N^{\delta}$), and where the maximum is over $(m_1, n_1) \in \partial \mathsf{SW}_N^{\delta}$ and $(m_2, n_2) \in \partial \mathsf{NE}_N^{\delta}$, which we call the {\em frames} of the corners $\mathsf{SW}_N^{\delta}$ and $\mathsf{NE}_N^{\delta}$. This step is accomplished in Section \ref{Section3.3}, see Lemma \ref{LFrames}. The frames $\partial \mathsf{SW}_N^{\delta}, \partial \mathsf{NE}_N^{\delta}$ are defined in (\ref{DefStrips}), see also Figure \ref{S3_1}.

To prove (\ref{IS1}) we use our first step above (i.e. Lemma \ref{LCorners}), which says that if $(m^*_1,n^*_1) \leq (m^*_2, n^*_2)$ are the maximizers of $Z(m_1, n_1; m_2,n_2)$, then with high probability $(m^*_1,n^*_1) \in \mathsf{SW}_N^{\delta}$ and $ (m^*_2, n^*_2) \in \mathsf{NE}_N^{\delta}$. In addition, from the definition of $Z(m^*_1, n^*_1; m^*_2, n^*_2) $ we have
$$Z(m^*_1, n^*_1; m^*_2, n^*_2) = \sum\nolimits_{ \pi \in\Pi(m^*_1, n^*_1; m^*_2, n^*_2)} \prod_{(i,j) \in \pi} w_{i,j}.$$
If we split the above sum over the first site in $\partial \mathsf{SW}_N^{\delta}$ and the last site in $\partial \mathsf{NE}_N^{\delta}$ that the path $\pi$ passes through we obtain
\begin{equation}\label{IS2}
\begin{split}
Z(m^*_1, n^*_1; m^*_2, n^*_2) = \sum_{ \substack{(m_1, n_1) \in \partial \mathsf{SW}_N^{\delta} \\ (m_1^*, n_1^*) \leq (m_1, n_1)} } \sum_{ \substack{ (m_2, n_2) \in \partial \mathsf{NE}_N^{\delta} \\  (m_2, n_2) \leq (m_2^*, n_2^*) }} & Z\left(m^*_1, n^*_1;(m_1,n_1)^{\swarrow}\right) \times \\
&Z(m_1, n_1;m_2, n_2)Z\left((m_2, n_2)^{\nearrow} ;m_2^*, n_2^* \right),
\end{split}
\end{equation}
where $(m_1,n_1)^{\swarrow}$ is the closest vertex in $\mathsf{SW}_N^{\delta}$ to $(m_1, n_1) \in \partial \mathsf{SW}_N^{\delta}$, and $(m_2,n_2)^{\nearrow}$ is the closest vertex in $\mathsf{NE}_N^{\delta}$ to $(m_2, n_2) \in \partial \mathsf{NE}_N^{\delta}$. The points $(m_1,n_1)^{\swarrow}$, $(m_2,n_2)^{\nearrow}$ are depicted in Figure \ref{S3_1} and a precise definition can be found in equations (\ref{S3Closest1}) and (\ref{S3Closest2}). We have the following trivial upper bounds for the terms on the right side of (\ref{IS2})
\begin{equation}\label{IS3}
\begin{split}
&Z\left(m^*_1, n^*_1;(m_1,n_1)^{\swarrow}\right) \leq Z^{max}((m_1,n_1)^{\swarrow}) := \max_{(1,1) \leq (m,n) \leq (m_1,n_1)^{\swarrow}} Z(m,n; (m_1,n_1)^{\swarrow}), \\
&Z\left((m_2, n_2)^{\nearrow} ;m_2^*, n_2^* \right) \leq Z^{max}((m_2,n_2)^{\nearrow}) := \max_{ (m_2,n_2)^{\nearrow} \leq (m,n) \leq (N,N)} Z( (m_2,n_2)^{\nearrow}; m, n).
\end{split}
\end{equation}
In particular, from (\ref{IS2}) and (\ref{IS3}) we obtain
\begin{equation}\label{IS4}
\begin{split}
&Z(m^*_1, n^*_1; m^*_2, n^*_2) \leq | \partial \mathsf{SW}_N^{\delta}| \cdot | \partial \mathsf{NE}_N^{\delta}| \times \\
& \max_{\substack{ (m_1,n_1) \in \partial \mathsf{SW}_N^{\delta} \\ (m_2,n_2) \in \partial \mathsf{NE}_N^{\delta}}} Z^{max}((m_1,n_1)^{\swarrow})  \cdot Z(m_1, n_1;m_2, n_2) \cdot Z^{max}((m_2,n_2)^{\nearrow}).
\end{split}
\end{equation}
Once we have (\ref{IS4}) in place, obtaining (\ref{IS1}) is relatively straightforward, since we can control the right sides of (\ref{IS3}) using Lemma \ref{S2S2L2}. The terms $| \partial \mathsf{SW}_N^{\delta}|$ and $| \partial \mathsf{NE}_N^{\delta}|$ are benign, since upon taking logarithms they become $O(\log N)$ and get absorbed in the error term $2 N^{1/9 + 2\delta/3} \log N$ in (\ref{IS1}).

\smallskip
The benefit of the above two steps is that we have reduced our problem to finding a sequence $\MY_N$ that (under appropriate shifts and scales) converges to $F_{\rm  GUE}$, and such that $\MY_N$ dominates the right side of (\ref{IS1}) with high probability. The setup of having the arguments $(m_1, n_1)$, $(m_2, n_2)$ of $\log Z(m_1, n_1; m_2,n_2)$ vary over essentially $1$-dimensional segments is important for the application of our tightness result, Proposition \ref{ThmTight}, which is the crux of the third step of constructing $\MY_N$. While the first two steps are fairly intuitive, the third step is a bit involved and so we will postpone its discussion to the main argument in Section \ref{Section3.4} after we have introduced more notation.

\smallskip
In the remainder of this section we use Proposition \ref{MainSub} to prove Theorem \ref{PM1}.
\begin{proof}[Proof of Theorem \ref{PM1}] Let $E_N$ and $\MY_N$ be as in the statement of Proposition \ref{MainSub} and $y \in \mathbb{R}$. It follows from (\ref{S3MainEq}) that
\begin{equation*}
\begin{split}
&\liminf_{N \rightarrow \infty} \mathbb{P} \left( \frac{\Fe_N + 2 N \Psi(\theta/2)}{\sigma_{\theta} N^{1/3}} \leq y\right) \geq \liminf_{N \rightarrow \infty} \mathbb{P} \left( E_N \cap \left\{ \frac{\Fe_N + 2 N \Psi(\theta/2)}{\sigma_{\theta} N^{1/3}} \leq y \right\}\right) \geq \\
&\liminf_{N \rightarrow \infty} \mathbb{P} \left(  \hspace{-1mm} E_N \cap \left\{ \frac{\MY_N + 2 N \Psi(\theta/2)}{\sigma_{\theta} N^{1/3}} \leq y \right\}  \hspace{-1mm} \right) \geq \liminf_{N \rightarrow \infty} \mathbb{P} \left(   \frac{\MY_N + 2 N \Psi(\theta/2)}{\sigma_{\theta} N^{1/3}} \leq y \right) - \mathbb{P}(E_N) = F_{\rm GUE}(y).
\end{split}
\end{equation*}
On the other hand, we have almost surely $\Fe_N \geq \log Z(1,1; N, N)$. The latter combined with the fact that $g_{\theta}^{-1}(1) = \theta/2$ and Proposition \ref{LGPCT} imply
$$\limsup_{N \rightarrow \infty} \mathbb{P} \left( \frac{\Fe_N + 2 N \Psi(\theta/2)}{\sigma_{\theta} N^{1/3}} \leq y\right) \leq \limsup_{N \rightarrow \infty} \mathbb{P} \left( \frac{\log Z(1,1; N,N)+ 2 N \Psi(\theta/2)}{\sigma_{\theta} N^{1/3}} \leq y\right) = F_{\rm GUE}(y).$$
The last two equations imply the statement of the theorem.
\end{proof}

%
\subsection{Localization to corners} \label{Section3.2} For two finite sets $K_1, K_2 \subset \mathbb{N}^2$ we let
\begin{equation}\label{S3MaxF}
\Fe (K_1, K_2) := \max_{\substack{(m_1, n_1) \leq (m_2,n_2)\\  (m_1, n_1) \in K_1, (m_2, n_2) \in K_2}}  \log Z(m_1,n_1;m_2,n_2),
\end{equation}
where we set $\Fe (K_1, K_2) = -\infty$ if the set over which the maximum is taken is empty.

The goal of this section is to show that if $(m^*_1,n^*_1) \leq (m^*_2, n^*_2)$ are maximizers of $Z(m_1, n_1; m_2,n_2)$ then $(m^*_1, n^*_1) \in \mathsf{SW}_N^{\delta}$ and $(m^*_2, n^*_2) \in \mathsf{NE}_N^{\delta} $ for any $\delta \in (0, 1/3)$ with very high probability, where
\begin{equation}\label{DefCorners}
\mathsf{SW}_N^{\delta}: = \mathbb{N}^2 \cap[1, N^{1/3 + 2\delta}]^2 \mbox{ and }\mathsf{NE}_N^{\delta} :=  \mathbb{N}^2 \cap[N - N^{1/3 + 2\delta}, N]^2.
\end{equation}
The precise statement is detailed in the following lemma.
\begin{lemma}\label{LCorners}
 Suppose that $\theta \in (0, \theta_c)$, where $\theta_c$ is as in (\ref{DefThetaC}) and fix $\delta \in (0, 1/3)$. Then
\begin{equation}\label{S3CornEq}
\lim_{N \rightarrow \infty} \mathbb{P} \left( \Fe_N = \Fe (\mathsf{SW}_N^{\delta}, \mathsf{NE}_N^{\delta}) \right) = 1.
\end{equation}
\end{lemma}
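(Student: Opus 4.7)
\emph{Proof plan.} The strategy is to show that, with probability tending to $1$ as $N \to \infty$, every pair of points $(m_1,n_1) \leq (m_2,n_2)$ in $\llbracket 1,N\rrbracket^2$ for which $(m_1,n_1) \notin \mathsf{SW}_N^\delta$ or $(m_2,n_2) \notin \mathsf{NE}_N^\delta$ satisfies $\log Z(m_1,n_1;m_2,n_2) < \log Z(1,1;N,N)$. Since $(1,1) \in \mathsf{SW}_N^\delta$ and $(N,N) \in \mathsf{NE}_N^\delta$, this immediately forces $\Fe_N = \Fe(\mathsf{SW}_N^\delta, \mathsf{NE}_N^\delta)$ on the corresponding event.

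First I would record a high-probability lower bound for $\log Z(1,1;N,N)$. Since $g_\theta^{-1}(1) = \theta/2$, one has $h_\theta(1) = 2\Psi(\theta/2)$ and $\sigma_\theta(1) = \sigma_\theta$, so Proposition \ref{LGPCT} together with the tightness of the GUE Tracy--Widom law yields
\begin{equation*}
\lim_{N \to \infty} \mathbb P\bigl( \log Z(1,1;N,N) \geq -2N\Psi(\theta/2) - N^{1/3}\log N \bigr) = 1.
\end{equation*}

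For the matching upper bound I would apply Lemma \ref{S2S2L2} together with a union bound. By translation invariance of the i.i.d.\ weights, $\log Z(m_1,n_1;m_2,n_2)$ has the same distribution as $\log Z(1,1;M,N')$ with $M := m_2-m_1+1$ and $N' := n_2-n_1+1$. The key geometric observation is that if $(m_1,n_1) \notin \mathsf{SW}_N^\delta$ or $(m_2,n_2) \notin \mathsf{NE}_N^\delta$, then at least one endpoint contributes a deficit of at least $N^{1/3+2\delta}$, so
\begin{equation*}
M + N' = (m_2+n_2)-(m_1+n_1) + 2 \leq 2N - N^{1/3+2\delta} + 1.
\end{equation*}
Since $\Psi(\theta/2) < 0$ for $\theta \in (0,\theta_c)$, applying Lemma \ref{S2S2L2} with $a = 5$ (after swapping $M$ and $N'$ if needed so that $M \geq N'$, and provided $M \geq M_1$) gives, with probability at least $1 - N^{-5}$,
\begin{align*}
\log Z(m_1,n_1;m_2,n_2)
&\leq -\Psi(\theta/2)(M+N') + D_1 M^{1/3} (\log M)^{2/3} \\
&\leq -2N\Psi(\theta/2) + \tfrac12 \Psi(\theta/2)\, N^{1/3+2\delta} + D_1 N^{1/3}(\log N)^{2/3}.
\end{align*}
Because $\delta > 0$, the negative term $\tfrac12 \Psi(\theta/2)\, N^{1/3+2\delta}$ dominates the positive correction $D_1 N^{1/3}(\log N)^{2/3}$ for all large $N$, so the right-hand side is below $-2N\Psi(\theta/2) - N^{1/3}\log N - 1$. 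A union bound over the at most $N^4$ candidate pairs then gives this uniform upper bound with probability $\geq 1 - N^{-1}$.

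The residual case $\max(M,N') < M_1$ I would treat separately using the deterministic bound $Z(m_1,n_1;m_2,n_2) \leq 2^{M+N'}(\max_{1 \leq i,j \leq N} w_{i,j})^{M+N'-1}$ together with Lemma \ref{TL1}, which forces $\log Z = O(\log N)$ uniformly over such pairs with high probability, much smaller than the lower bound on $\log Z(1,1;N,N)$ above. The main obstacle is the bookkeeping in the dominant-term analysis: one must verify that the gain $\tfrac12 |\Psi(\theta/2)|\, N^{1/3+2\delta}$ extracted from the shifted mean in Lemma \ref{S2S2L2} strictly outpaces both the $D_1 N^{1/3}(\log N)^{2/3}$ correction in that tail estimate and the $N^{1/3}\log N$ slack in the lower bound on $\log Z(1,1;N,N)$. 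This is exactly where the hypothesis $\delta > 0$ is used.
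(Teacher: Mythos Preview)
Your overall strategy matches the paper's: use Proposition~\ref{LGPCT} to lower bound $\log Z(1,1;N,N)$, and Lemma~\ref{S2S2L2} together with a union bound to upper bound all ``bad'' partition functions. There is, however, a real gap in how you invoke Lemma~\ref{S2S2L2}. When you apply it with the lemma's outer parameter equal to your $M=\max(m_2-m_1+1,\,n_2-n_1+1)$, the resulting tail bound is $M^{-5}$, not $N^{-5}$. Since your residual cutoff is the fixed constant $M_1$, the ``main'' case still includes pairs with $M$ of order $M_1$, for which $M^{-5}$ is a constant bounded away from zero; the union bound over $N^4$ such pairs then diverges, so the claimed probability $\geq 1-N^{-1}$ does not follow.

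The paper avoids this by applying Lemma~\ref{S2S2L2} with the lemma's $M$ equal to the full box side $N$ and the lemma's $N$ equal to $N-k+1$, where $k=\lfloor N^{1/3+2\delta}\rfloor+1$. This yields a uniform tail probability $N^{-5}$ and a threshold $-\Psi(\theta/2)(2N-k+1)+D_1 N^{1/3}(\log N)^{2/3}$, which is below $-2N\Psi(\theta/2)-N^{1/3}\log N$ for large $N$. To make these lemma parameters legitimate one needs the observation that for any bad pair (after swapping so that $m_2-m_1\geq n_2-n_1$) one has $n_2-n_1+1\leq N-k+1$; this is a bit sharper than your bound on the sum $M+N'$ but comes from the same case analysis (whichever coordinate witnesses ``badness'' forces one of the two side lengths to be at most $N-k+1$, hence so is the minimum). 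With this application there is no residual case to treat, since Lemma~\ref{S2S2L2} already covers all $m\in\llbracket 1,N\rrbracket$, $n\in\llbracket 1,N-k+1\rrbracket$ once $N\geq M_1$.
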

\begin{proof} For clarity we split the proof into two steps.

\smallskip
{\bf \raggedleft Step 1.} We claim that there exists a constant $ N_0 > 0$ (depending on $\theta$ and $\delta$) such that for all $N \geq N_0$, $m_1, n_1, m_2, n_2   \in \llbracket 1, N \rrbracket$ with $(m_1, n_1) \leq (m_2, n_2)$ and $((m_1, n_1), (m_2, n_2)) \not \in \mathsf{SW}_N^{\delta} \times \mathsf{NE}_N^{\delta}$
\begin{equation}\label{LC1}
\mathbb{P} \left(\log Z (m_1, n_1; m_2, n_2) \geq -2N \Psi(\theta/2) -  N^{1/3} \log N \right) \leq N^{-5}.
\end{equation}
We will establish (\ref{LC1}) in Step 2 below. Here we assume it and conclude the proof of the lemma.

\smallskip
Let $A_N$ be the event that $\Fe_N = \log Z(m_1,n_1;m_2,n_2)$ for some $m_1,n_1,m_2,n_2 \in \llbracket 1, N \rrbracket$ such that $(m_1,n_1) \leq (m_2,n_2)$ and $((m_1,n_1), (m_2,n_2)) \not \in \mathsf{SW}_N^{\delta} \times \mathsf{NE}_N^{\delta}$. By taking union bound of (\ref{LC1}) (note that there are at most $N^4$ choices for the quadruple $m_1, n_1, m_2, n_2$) we conclude that
$$\mathbb{P} \left( A_N \cap \{ \Fe_N \geq - 2 N \Psi(\theta/2) - N^{1/3} \log N \}  \right) \leq N^{-1}.$$
In addition, we know that $\Fe_N \geq  \log Z(1,1; N,N)$. The latter, combined with the fact that $h_{\theta}(1) =2 \Psi(\theta/2)$ and Proposition \ref{LGPCT}, implies
\begin{equation*}
\lim_{N \rightarrow \infty} \mathbb{P} \left( \Fe_N \geq - 2 N \Psi(\theta/2) - N^{1/3} \log N\right) = 1.
\end{equation*}
The last two equations imply that $\lim_{N \rightarrow \infty} \mathbb{P}(A_N) = 0$, which in turn implies (\ref{S3CornEq}).

\smallskip
{\bf \raggedleft Step 2.} In this step we prove (\ref{LC1}). Since $ Z (m_1, n_1; m_2, n_2)$ has the same distribution as $Z (n_1, m_1; n_2, m_2)$ we may assume that $m_2 - m_1 \geq n_2 - n_1$. In addition, notice that $ Z (m_1 , n_1 ; m_2 , n_2 )$ has the same distribution as $Z (1, 1; m_2 - m_1 + 1, n_2 - n_1 + 1)$. Finally, observe that since we have assumed that $((m_1, n_1), (m_2, n_2)) \not \in \mathsf{SW}_N^{\delta} \times \mathsf{NE}_N^{\delta}$ we have $N \geq m_2 - m_1 + 1 \geq n_2 - n_1 + 1 \geq 1$, and $m_2 - m_1 + 1  \leq N- k + 1$ where $k = \lfloor N^{1/3 + 2 \delta} \rfloor + 1$.

The latter observations and Lemma \ref{S2S2L2} (applied to $a = 5$, $M = N$ and $N = N -k + 1$) together imply that for all large $N$
\begin{equation*}
\begin{split}
&\mathbb{P} \left(\log Z (m_1, n_1; m_2, n_2) \geq - \Psi (\theta/2) \cdot (2 N - k +1) + N^{1/3} \log N \right)  \leq  N^{-5}.
\end{split}
\end{equation*}
The latter inequality implies (\ref{LC1}) since $k = \lfloor N^{1/3 + 2 \delta} \rfloor + 1$ and $\Psi (\theta/2) < 0$.
\end{proof}

%
\subsection{Comparison with the maximum over frames} \label{Section3.3}
For a fixed $\delta \in (0, 1/3)$ and $N \in \mathbb{N}$ we let $\mathsf{SW}_N^{\delta}$ and $\mathsf{NE}_N^{\delta}$ be as in (\ref{DefCorners}). We define the {\em frames} of these two corners to be (see Figure \ref{S3_1})
\begin{equation}\label{DefStrips}
\begin{split}
&\partial \mathsf{SW}_N^{\delta}  := \{ (m,n) \in \mathbb{N}^2 \setminus \mathsf{SW}_N^{\delta} : (m-1,n) \in \mathsf{SW}_N^{\delta} \mbox{ or } (m,n-1) \in \mathsf{SW}_N^{\delta} \}, \\
&\partial \mathsf{NE}_N^{\delta}  := \{ (m,n) \in \mathbb{N}^2 \setminus \mathsf{NE}_N^{\delta} : (m+1,n) \in \mathsf{NE}_N^{\delta} \mbox{ or } (m,n+1) \in \mathsf{NE}_N^{\delta} \}.
\end{split}
\end{equation}
The main result of the section is the following lemma, which substantiates (\ref{IS1}) from Section \ref{Section3.1}.
\begin{lemma}\label{LFrames}
Under the same notation and assumptions as in Lemma \ref{LCorners} we have
\begin{equation}\label{S3FrameEq}
\lim_{N \rightarrow \infty} \mathbb{P} \left(  \Fe (\mathsf{SW}_N^{\delta}, \mathsf{NE}_N^{\delta}) \leq  r_N+ \hspace{-1mm} \max_{\substack{ (m_1,n_1) \in \partial \mathsf{SW}_N^{\delta} \\ (m_2,n_2) \in \partial \mathsf{NE}_N^{\delta}}} \hspace{-1mm} \log Z(m_1, n_1; m_2, n_2) + \Psi(\theta/2) \cdot D_N    \right) = 1,
\end{equation}
where $r_N = 2N^{1/9 + 2\delta /3 } \log(N)$ and $D_N(m_1, n_1; m_2,n_2) =  - m_1 - n_1 + m_2 + n_2 - 2N$.
\end{lemma}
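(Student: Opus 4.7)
My plan is to follow the three-step strategy sketched in Section~\ref{Section3.1}. I fix any $(m_1^*, n_1^*) \leq (m_2^*, n_2^*)$ with $(m_1^*, n_1^*) \in \mathsf{SW}_N^\delta$ and $(m_2^*, n_2^*) \in \mathsf{NE}_N^\delta$. Since these corners are disjoint for large $N$, every up-right path from $(m_1^*, n_1^*)$ to $(m_2^*, n_2^*)$ must exit $\mathsf{SW}_N^\delta$ through $\partial \mathsf{SW}_N^\delta$ and enter $\mathsf{NE}_N^\delta$ through $\partial \mathsf{NE}_N^\delta$. Grouping paths by their first crossing $(m_1, n_1) \in \partial \mathsf{SW}_N^\delta$ and last crossing $(m_2, n_2) \in \partial \mathsf{NE}_N^\delta$ factorizes $Z(m_1^*, n_1^*; m_2^*, n_2^*)$ as in \eqref{IS2}. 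Bounding the outer two factors by $Z^{max}((m_1, n_1)^{\swarrow})$ and $Z^{max}((m_2, n_2)^{\nearrow})$ as in \eqref{IS3}, and pulling the sum into a maximum at the cost of $|\partial \mathsf{SW}_N^\delta| \cdot |\partial \mathsf{NE}_N^\delta| = O(N^{2/3 + 4\delta})$, gives a deterministic inequality that reduces the lemma to uniformly upper bounding the corner maxima modulo $O(\log N)$ errors.

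The core step is to control $\log Z^{max}((m_1, n_1)^{\swarrow})$ and $\log Z^{max}((m_2, n_2)^{\nearrow})$ uniformly via Lemma~\ref{S2S2L2}. Write $K = \lfloor N^{1/3 + 2\delta}\rfloor$ and $(m_1', n_1') = (m_1, n_1)^{\swarrow}$; a direct look at $\partial \mathsf{SW}_N^\delta$ shows $\max(m_1', n_1') = K$ for every $(m_1, n_1) \in \partial \mathsf{SW}_N^\delta$. By translation invariance, each $Z((m, n); (m_1', n_1'))$ with $(m, n) \leq (m_1', n_1')$ is distributed as $Z(1, 1; m_1' - m + 1, n_1' - n + 1)$, with coordinates in $\llbracket 1, K \rrbracket$. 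Applying Lemma~\ref{S2S2L2} with $a = 5$ and $M = N = K$, and union bounding over the $O(K^3)$ relevant pairs $((m,n), (m_1, n_1))$ (with total failure probability $O(K^{3-a}) = O(K^{-2}) \to 0$), yields on a high-probability event
\begin{equation*}
\log Z^{max}((m_1, n_1)^{\swarrow}) \leq -\Psi(\theta/2)(m_1' + n_1') + D_1 K^{1/3}(\log K)^{2/3}
\end{equation*}
uniformly in $(m_1, n_1) \in \partial \mathsf{SW}_N^\delta$, together with the symmetric bound $\log Z^{max}((m_2, n_2)^{\nearrow}) \leq -\Psi(\theta/2)(2N - m_2' - n_2' + 2) + D_1 K^{1/3}(\log K)^{2/3}$ for the NE corner, where $(m_2', n_2') = (m_2, n_2)^{\nearrow}$.

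To assemble the final estimate, I would use the identities $m_1' + n_1' = m_1 + n_1 - 1$ and $m_2' + n_2' = m_2 + n_2 + 1$, which are immediate from the explicit description of $\partial \mathsf{SW}_N^\delta$ and $\partial \mathsf{NE}_N^\delta$ (each frame point differs from its nearest corner point by a unit coordinate step). These give
\begin{equation*}
-\Psi(\theta/2)\bigl[(m_1' + n_1') + (2N - m_2' - n_2' + 2)\bigr] = -\Psi(\theta/2)(m_1 + n_1 + 2N - m_2 - n_2) = \Psi(\theta/2)\, D_N(m_1, n_1; m_2, n_2),
\end{equation*}
which is exactly the corner contribution appearing on the right side of \eqref{S3FrameEq}. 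The remaining error is $O(\log N) + O(N^{1/9 + 2\delta/3}(\log N)^{2/3})$, which is dominated by $r_N = 2N^{1/9 + 2\delta/3}\log N$ for large $N$ thanks to the extra $(\log N)^{1/3}$ factor in $r_N$. The only genuine care point is calibrating the exponent $a$ in Lemma~\ref{S2S2L2} so that its polynomial tail beats the $O(K^3)$ union bound; the algebraic cancellation producing exactly $\Psi(\theta/2)\, D_N$ is forced by the geometry of the frames and explains the particular form of the right-hand side of \eqref{S3FrameEq}.
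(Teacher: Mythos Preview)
Your overall strategy matches the paper's exactly: decompose via \eqref{IS2}, bound the outer factors by $Z^{max}$, pull the sum to a max at the cost of $O(\log N)$, and control $Z^{max}$ uniformly via Lemma~\ref{S2S2L2}. The algebraic identities $m_1'+n_1'=m_1+n_1-1$ and $m_2'+n_2'=m_2+n_2+1$ and the final assembly are all correct.

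There is, however, a genuine gap in your application of Lemma~\ref{S2S2L2}. With the choice $M=N=K$ the lemma yields the bound $-\Psi(\theta/2)\cdot 2K + D_1 K^{1/3}(\log K)^{2/3}$, \emph{not} the frame-point-dependent bound $-\Psi(\theta/2)(m_1'+n_1') + D_1 K^{1/3}(\log K)^{2/3}$ that you claim and need. Since $-\Psi(\theta/2)>0$ and $m_1'+n_1'$ can be as small as $K+1$, the discrepancy is of order $|\Psi(\theta/2)|\cdot K \sim N^{1/3+2\delta}$, which is far larger than $r_N = 2N^{1/9+2\delta/3}\log N$ and therefore cannot be absorbed into the error term.

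The fix is simple and is implicit in your own observation that $\max(m_1',n_1')=K$ for every frame point: apply Lemma~\ref{S2S2L2} with the lemma's $M=K$ (fixed) and the lemma's $N=\min(m_1',n_1')$ (varying with the frame point). Since the constants $M_1,D_1$ in Lemma~\ref{S2S2L2} depend only on $\theta$ and $a$, this is legitimate, the failure probability is still $K^{-a}$ uniformly, and the resulting bound is exactly $-\Psi(\theta/2)(m_1'+n_1') + D_1 K^{1/3}(\log K)^{2/3}$. This is precisely what the paper does (with $a=7$ rather than your $a=5$, and with $k^{1/3}\log k$ in place of $D_1 K^{1/3}(\log K)^{2/3}$, both immaterial changes). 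With this correction your argument is complete.
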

\begin{proof}
For clarity we split the proof into two steps.

\smallskip
{\bf \raggedleft Step 1.} In this step we introduce some notation, which will be required for our arguments. For simplicity we set $k = \lfloor N^{1/3 + 2 \delta} \rfloor + 1$. Let us define the sets
\begin{equation}\label{DefStrips2}
\begin{split}
&\partial \mathsf{SW}_N^{\delta, right }  := \{ (m,n) \in \partial \mathsf{SW}^{\delta}_N : m = k \}, \hspace{2mm} \partial \mathsf{NE}_N^{\delta, left}  :=  \{ (m,n) \in \partial \mathsf{NE}^{\delta}_N : m = N - k  \}  , \\
&\partial \mathsf{SW}_N^{\delta, top}  := \{ (m,n) \in \partial \mathsf{SW}^{\delta}_N : n = k \}, \hspace{2mm} \partial \mathsf{NE}_N^{\delta, bot}  :=  \{ (m,n) \in \partial \mathsf{NE}^{\delta}_N : n = N - k  \},
\end{split}
\end{equation}
where $\partial \mathsf{SW}_N^{\delta}$, $\partial \mathsf{NE}_N^{\delta}$ are as in equation (\ref{DefStrips}). Note that $\partial \mathsf{SW}_N^{\delta} = \partial \mathsf{SW}_N^{\delta, right } \sqcup \partial \mathsf{SW}_N^{\delta, top}$ and $\partial \mathsf{NE}_N^{\delta} = \partial \mathsf{NE}_N^{\delta, left} \sqcup  \partial \mathsf{NE}_N^{\delta, bot}$.
Let $(m^*_1, n^*_1) \in \mathsf{SW}_N^{\delta}$, $(m^*_2, n^*_2) \in \mathsf{NE}_N^{\delta}$ be such that
$$ \Fe (\mathsf{SW}_N^{\delta}, \mathsf{NE}_N^{\delta}) = \log Z(m^*_1, n^*_1; m^*_2, n^*_2),$$
and recall from (\ref{PartitionFunct}) that we have
$$Z(m^*_1, n^*_1; m^*_2, n^*_2) = \sum\nolimits_{ \pi \in\Pi(m^*_1, n^*_1; m^*_2, n^*_2)} \prod_{(i,j) \in \pi} w_{i,j}.$$
Assuming that $N$ is sufficiently large so that $N \geq 2k + 1$ we have that $\mathsf{SW}_N^{\delta}$ and $\mathsf{NE}_N^{\delta}$ are well-separated and so we can split the above sum over the first site in $\partial \mathsf{SW}_N^{\delta}$ and the last site in $\partial \mathsf{NE}_N^{\delta}$ that the path $\pi$ passes through, see Figure \ref{S3_1}. Consequently, we obtain the formula (\ref{IS2}), where the notation $(m_1,n_1)^{\swarrow}$ and $(m_2, n_2)^{\nearrow}$ used therein is explicitly given by
\begin{equation}\label{S3Closest1}
(m_1,n_1)^{\swarrow} = \begin{cases} (m_1 - 1, n_1)  &\mbox{ if }(m_1,n_1) \in \partial \mathsf{SW}_N^{\delta, right },  \\
                                                                      (m_1 , n_1 - 1)  &\mbox{ if }(m_1,n_1) \in \partial \mathsf{SW}_N^{\delta, top }, \end{cases}
\end{equation}
and
\begin{equation}\label{S3Closest2}
(m_2,n_2)^{\nearrow} = \begin{cases} (m_2 + 1, n_2)  &\mbox{ if }(m_2,n_2 ) \in \partial \mathsf{NE}_N^{\delta, left },  \\
                                                                      (m_2 , n_2 + 1)  &\mbox{ if }(m_2,n_2) \in \partial \mathsf{NE}_N^{\delta, bot }. \end{cases}
\end{equation}
For  $\hspace{-0.25mm} (m_1, \hspace{-0.25mm} n_1)\hspace{-1mm} \in \hspace{-1mm}\partial \mathsf{SW}_N^{\delta}$\hspace{-0.25mm} and \hspace{-0.25mm}$(m_2, \hspace{-0.25mm}n_2) \hspace{-1mm} \in \hspace{-1mm}\partial \mathsf{NE}_N^{\delta}$, let $Z^{max}((m_1,n_1)^{\swarrow})$ and $Z^{max}((m_2,n_2)^{\nearrow})$ be as in (\ref{IS3}).

\smallskip
{\bf \raggedleft Step 2.} Note that $Z(m_1, n_1; m_2, n_2)$ has the same distribution as $Z(m_1 + a, n_1 + b; m_2 + a, n_2 + b)$ for any $a,b \in \mathbb{Z}$. The latter implies that for any $ (m_2, n_2) \in \partial \mathsf{NE}_N^{\delta,left}$ and $(m_2,n_2)^{\nearrow} \leq (m,n) \leq (N,N)$ we have that $Z( (m_2,n_2)^{\nearrow}; m, n)$ has the same law as $ Z(1,1; m - m_2, n - n_2 + 1).$
From Lemma \ref{S2S2L2} (applied to $M = k$, $N = N - n_2 + 1$ and $a = 6$) we conclude that for all large $N$,
\begin{equation}\label{S3Zmax0}
\mathbb{P} \left( Z( (m_2,n_2)^{\nearrow}; m, n) \geq  - \Psi (\theta/2) \cdot (k + N -n_2 + 1 ) + k^{1/3} \log k  \right) \leq k^{-6},
\end{equation}
provided that $(m_2, n_2) \in \partial \mathsf{NE}_N^{\delta,left}$ and $(N,N) \geq (m,n) \geq  (m_2,n_2)^{\nearrow}$.

Taking a union bound of (\ref{S3Zmax0}) over $(m,n)$ (note that there are at most $k^2$ possible choices) we get for all large $N$ and any  $(m_2, n_2) \in \partial \mathsf{NE}_N^{\delta,left}$ (notice that $k = N - m_2$)
\begin{equation}\label{S3Zmax1}
\mathbb{P} \left(\log Z^{max}((m_2,n_2)^{\nearrow}) \geq  - \Psi (\theta/2) \cdot (2N - n_2 - m_2 + 1) + k^{1/3} \log k  \right) \leq k^{-4}.
\end{equation}
Using the distributional equality of $Z(m_1, n_1; m_2, n_2)$ and $Z(n_1, m_1; n_2, m_2)$ we conclude that (\ref{S3Zmax1}) holds for $(m_2, n_2) \in \partial \mathsf{NE}_N^{\delta,bot}$ as well. An analogous argument allows us to conclude that for $(m_1, n_1) \in \partial \mathsf{SW}_N^{\delta}$
\begin{equation}\label{S3Zmax2}
\mathbb{P} \left(\log Z^{max}((m_1,n_1)^{\swarrow}) \geq  - \Psi (\theta/2) \cdot (m_1 + n_1 - 1) + k^{1/3} \log k  \right) \leq (k-1)^{-4}.
\end{equation}
We mention that we have $k^{-4}$ in (\ref{S3Zmax1}) and $(k-1)^{-4}$ in (\ref{S3Zmax2}), because $\mathsf{SW}_N^{\delta} = \llbracket 1, k-1\rrbracket \times \llbracket 1, k-1\rrbracket$ (and hence contains $(k-1)^2$ points), while $\mathsf{NE}_N^{\delta} = \llbracket N-k+1, N \rrbracket \times \llbracket N-k+1, N \rrbracket$ (and hence contains $k^2$ points). This discrepancy can be traced to the fact that we are working with the domain $\llbracket 1, N \rrbracket \times \llbracket 1, N \rrbracket$ as opposed to $\llbracket 0, N \rrbracket \times \llbracket 0, N \rrbracket$.

Let $A_N$ denote the event
\begin{equation*}
\begin{split}
A_N:= &\bigcap_{(m_2, n_2) \in \partial \mathsf{NE}_N^{\delta}}  \left\{ \log Z^{max}\left((m_2,n_2)^{\nearrow}\right) <  - \Psi (\theta/2) \cdot (2N - n_2 - m_2 + 1 ) + k^{1/3} \log k \right\} \bigcap  \\
&\bigcap_{(m_1, n_1) \in \partial \mathsf{SW}_N^{\delta}}  \left\{\log Z^{max}\left((m_1,n_1)^{\swarrow}\right) <  - \Psi (\theta/2) \cdot (m_1 + n_1 - 1) + k^{1/3} \log k \right\},
\end{split}
\end{equation*}
 and observe that by a union bound we have $1 - \mathbb{P}(A_N) = O(k^{-2})$ so that $\lim_{N \rightarrow \infty} \mathbb{P}(A_N) = 1$.

We see that on the event $A_N$ we have the following tower of inequalities
\begin{equation*}
\begin{split}
&Z(m^*_1, n^*_1; m^*_2, n^*_2) \leq \hspace{-3mm} \sum_{ \substack{(m_1, n_1) \in \partial \mathsf{SW}_N^{\delta}} }   \sum_{ \substack{ (m_2, n_2) \in \partial \mathsf{NE}_N^{\delta} }} \hspace{-8mm} Z^{max}\left((m_1,n_1)^{\swarrow}\right) Z(m_1, n_1;m_2, n_2)Z^{max}\left((m_2,n_2)^{\nearrow}\right) \leq \\
&4 k^2 \cdot \exp \left( 2k^{1/3} \log k \right) \cdot  \max_{\substack{ (m_1,n_1) \in \partial \mathsf{SW}_N^{\delta} \\ (m_2,n_2) \in \partial \mathsf{NE}_N^{\delta}}}  Z(m_1, n_1;m_2, n_2) \cdot \exp \left( - \Psi(\theta/2) [2N - n_2 - m_2 + m_1 + n_1 ] \right),
\end{split}
\end{equation*}
where in the first inequality we used (\ref{IS2}) and $Z^{max}\left((m_1,n_1)^{\swarrow}\right)  \geq Z\left(m^*_1, n^*_1;(m_1,n_1)^{\swarrow}\right) $, \\ $Z^{max}\left((m_2,n_2)^{\nearrow}\right) \geq Z\left((m_2, n_2)^{\nearrow} ;m_2^*, n_2^* \right)$;  in the second inequality we used the definition of $A_N$ and the fact that $|\partial \mathsf{SW}_N^{\delta}| = 2k-2$, $|\partial \mathsf{NE}_N^{\delta}| = 2k$. The last inequality, the fact that $k = \lfloor N^{1/3 + 2 \delta} \rfloor + 1$ and $\lim_{N \rightarrow \infty} \mathbb{P}(A_N) = 1$ together imply (\ref{S3FrameEq}).
\end{proof}

%
\subsection{Proof of Proposition \ref{MainSub}} \label{Section3.4} In this section we present the proof of Proposition \ref{MainSub}. For clarity we split the proof into three steps.

\smallskip
{\bf \raggedleft Step 1.} In this step we define $\MY_N$ and $E_N$ as in the statement of the proposition.

Set $\delta = 1/12$, $K = \lfloor N^{4/5} \rfloor$ and define the random variables
\begin{equation}\label{S3RVS}
\begin{split}
&\MY_N^{\mathsf{SW}} = \min_{(m,n) \in \partial \mathsf{SW}_N^{\delta}} \log Z(-K,-K; m, n) +  \Psi( \theta/2) (2K+m + n) ,\\
&\MY_N^{\mathsf{NE}} = \min_{(m,n) \in \partial \mathsf{NE}_N^{\delta}} \log Z(m,n; K+N, K+N) +  \Psi( \theta/2) (2K + 2N - m - n),
\end{split}
\end{equation}
where $\partial \mathsf{SW}_N^{\delta}$ and $\partial \mathsf{NE}_N^{\delta}$ are as in (\ref{DefStrips}). We then define
\begin{equation}\label{S3YDef}
\begin{split}
&\MY_N = 2N^{1/4} + \log Z(-K,-K; N+K, N+K) + 4 K \Psi(\theta/2) - \MY_N^{\mathsf{SW}}  -  \MY_N^{\mathsf{NE}} .
\end{split}
\end{equation}
This specifies the variables $\MY_N$. We mention that the $N^{1/4}$ in (\ref{S3YDef}) is a convenient choice -- for this term we could take any $N^{\rho}$ with $1/6 < \rho < 1/3$. The bound $\rho < 1/3$ is dictated from our desire to have this modification become irrelevant in the $N^{1/3}$ scale. The bound $\rho > 1/6$ is dictated from our desire to have $r_N \leq N^{\rho}$, where $r_N = 2N^{1/6} \log(N)$ is as in Lemma \ref{LFrames} for $\delta = 1/12$.

We next define the events $E_N=E_N^1 \cap E_N^2 \cap E_N^{3}$ where
\begin{equation}\label{Event2}
\begin{split}
&E_N^1 = \left\{ \Fe_N = \Fe (\mathsf{SW}_N^{\delta}, \mathsf{NE}_N^{\delta}) \right\}, \hspace{5mm} E_N^2 = \left\{ \max_{1 \leq i,j \leq N}\log w_{i,j}\leq 4 \theta^{-1} \log N  \right\},\\
&E_N^3 = \left\{ \Fe (\mathsf{SW}_N^{\delta}, \mathsf{NE}_N^{\delta}) \leq  N^{1/4} +  \max_{\substack{ (m_1,n_1) \in \partial \mathsf{SW}_N^{\delta} \\ (m_2,n_2) \in \partial \mathsf{NE}_N^{\delta}}} \log Z(m_1, n_1; m_2, n_2) + \Psi(\theta/2) \cdot D_N \right\},
\end{split}
\end{equation}
and we recall from Lemma \ref{LFrames} that $D_N(m_1, n_1; m_2,n_2) =  - m_1 - n_1 + m_2 + n_2 - 2N$.

\smallskip
{\bf \raggedleft Step 2.} Recall from the statement of the proposition that we seek to prove that
\begin{equation}\label{QW1}
\begin{split}
&(I) \hspace{2mm} \Fe_N \leq \MY_N \mbox{ a.s. on $E_N$ for all large enough $N$, }
\qquad (II) \hspace{2mm} \lim_{N \rightarrow \infty} \mathbb{P}(E_N) = 1, \\
&(III) \hspace{2mm} \lim_{N \rightarrow \infty} \mathbb{P} \left( \frac{\MY_N + 2 N \Psi(\theta/2)}{\sigma_{\theta} N^{1/3}} \leq y\right) = F_{\rm GUE}(y) \mbox{ for any $y \in \mathbb{R}$}.
\end{split}
\end{equation}
In this step we prove $(III)$.

From the definition of $\MY_N$ in (\ref{S3YDef}) we have
\begin{equation}\label{S3YNSE}
\begin{split}
\frac{\MY_N + 2 N \Psi(\theta/2)}{\sigma_{\theta} N^{1/3}}  = &\frac{\log Z(-K,-K; N+K, N+K) + (4 K + 2N) \Psi(\theta/2)}{\sigma_{\theta} (N+K)^{1/3}} \cdot \frac{(N+K)^{1/3}}{N^{1/3}} + \\
& \frac{2N^{1/4} - \MY_N^{\mathsf{SW}} - \MY_N^{\mathsf{NE}}}{\sigma_{\theta} N^{1/3}}.
\end{split}
\end{equation}
From Proposition \ref{LGPCT} we have that the first term on the right weakly converges to $F_{\rm GUE}$ as $N \rightarrow \infty$ (here we used that $K =  \lfloor N^{4/5} \rfloor$ so that $N^{-1/3} (N+K)^{1/3} \rightarrow 1$ as $N \rightarrow \infty$). Thus all we need to show is that $N^{-1/3} (\MY_N^{\mathsf{SW}} + \MY_N^{\mathsf{NE}})$ converges weakly to zero.

Focusing on $Y^{\mathsf{SW}}_N$, we note that $\MY^{\mathsf{SW}}_N = \min \left( \MY^{top, \mathsf{SW}}_N, \MY^{right, \mathsf{SW}}_N \right)$, where
$$\MY^{top, \mathsf{SW}}_N = \min_{(m,n) \in \partial \mathsf{SW}_N^{\delta, top }}  \log Z(-K,-K; m, n) +  \Psi( \theta/2) (2K+m + n),$$
$$\MY^{right, \mathsf{SW}}_N = \min_{(m,n) \in \partial \mathsf{SW}_N^{\delta, right}}  \log Z(-K,-K; m, n) +  \Psi( \theta/2) (2K+m + n),$$
and we recall that $\partial \mathsf{SW}_N^{\delta, top }, \partial \mathsf{SW}_N^{\delta, right}$ were defined in (\ref{DefStrips2}).

In words, $\MY^{top, \mathsf{SW}}_N$ is the (properly centered) minimal point-to-point free energy from the point $(-K, -K)$ to $\partial \mathsf{SW}_N^{\delta, top }$. This set of points is drawn on Figure \ref{S3_2} and is a horizontal segment connecting $(1,k)$ and $(k,k)$. Similarly, $\MY^{right, \mathsf{SW}}_N$ is the (properly centered) minimal point-to-point free energy from the point $(-K, -K)$ to $\partial \mathsf{SW}_N^{\delta, right }$.
\begin{figure}[h]
\scalebox{0.5}{\includegraphics{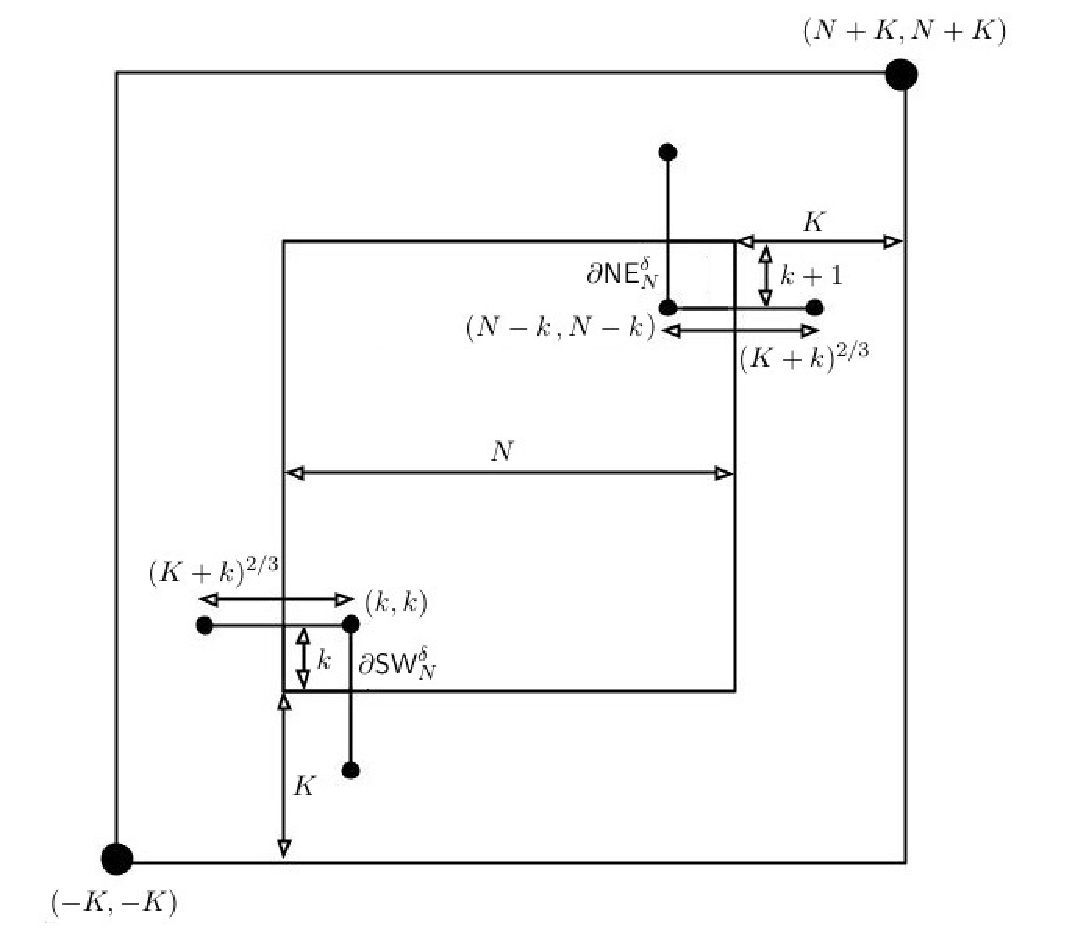}}
\captionsetup{width=\linewidth}
 \caption{The picture depicts the geometric meaning of the quantities $N$, $k = \lfloor N^{1/2} \rfloor + 1$, $K = \lfloor N^{4/5} \rfloor$, and $(K+k)^{2/3}$ that appear in our arguments.}
\label{S3_2}
\end{figure}

Let us set $k = \lfloor N^{1/2} \rfloor + 1$ and for each $x \in [-1 - (K+k)^{-2/3}, 1 + (K+k)^{2/3}]$, such that $x (K+k)^{2/3}$ is an integer, define $s = s(x) = K + k + x(K+k)^{2/3}$ and
$$f_N(x)=(K + k)^{-1/3}\Big(\log  Z(-K,-K; s- K, k)+ h_{\theta}(1)(K + k) + h_\theta'(1) x (K + k)^{2/3}\Big),$$
and then extend $f_N$ to all points $x \in [-1,1]$ by linear interpolation. The function $f_N$ records the (centered and scaled) point-to-point free energy from the point $(-K, -K)$ to the horizontal segment of length $2 (K+k)^{2/3}$ centered at the point $(k,k)$ (half of this segment is drawn in Figure \ref{S3_2}). We have the following relationship between $f_N$ and $\MY^{top, \mathsf{SW}}_N$
$$(K + k)^{-1/3} \MY^{top, \mathsf{SW}}_N = \min_{x \in [-(k-1)(K+k)^{-2/3}, - (K+k)^{-2/3} ]} f_N(x).$$

From Proposition \ref{ThmTight} we know that $f_N$ form a tight $N$-indexed sequence of random continuous functions on $[-1,1]$ (here we also used that $Z(1,1; \cdot , \cdot)$ has the same distribution as $Z(a + 1, b + 1; a+ \cdot , b+ \cdot)$ for any $a, b \in \mathbb{Z}$). Consequently, $(K + k)^{-1/3} \MY^{top, \mathsf{SW}}_N$ also forms a tight $N$-indexed sequence. Here it is important that $K^{2/3}$ (which is the transversal scale in $f_N$) is larger than $k$ so that $k(K+k)^{-2/3} \leq 1$. The latter is true since $K \sim N^{4/5}$, while $k \sim N^{1/2}$. The fact that $(K + k)^{-1/3} \MY^{top, \mathsf{SW}}_N$ is tight implies that $N^{-1/3} \MY^{top, \mathsf{SW}}_N$ weakly converges to zero, since $N^{-1/3} (K + k)^{1/3} \rightarrow 0$. Here it is important that $K/N \rightarrow 0$, which is true as $K \sim N^{4/5}$.

Using that $Z(m_1, n_1; m_2, n_2)$ has the same finite dimensional distribution as $Z(n_1, m_1; n_2, m_2)$ we analogously conclude that $N^{-1/3} \MY^{right, \mathsf{SW}}_N$ weakly converges to $0$ and then so does $N^{-1/3} \MY_N^{\mathsf{SW}}$. Using that $Z(m_1, n_1; m_2, n_2)$ has the same finite dimensional distribution as $Z(-m_2, -n_2; -m_1, -n_1)$ we can repeat the above argument to conclude that $N^{-1/3}\MY_N^{\mathsf{NE}}$ weakly converges to zero. This means that $N^{-1/3} (\MY_N^{\mathsf{SW}} + \MY_N^{\mathsf{NE}})$ converges weakly to zero, which is what was needed to complete $(III)$.

In plain words, the consequence of Proposition \ref{ThmTight} is that both $\MY_N^{\mathsf{SW}}$ and $\MY_N^{\mathsf{NE}}$ are of order $O((K+k)^{1/3})$ and thus do not affect the asymptotics of $\MY_N$ in (\ref{S3YNSE}) because of the $N^{1/3}$ in the denominator. Thus the fluctuations of $\MY_N$ are entirely governed by the first term on the right of (\ref{S3YNSE}), which weakly converges to $F_{\rm GUE}$ as $N \rightarrow \infty$ by Proposition \ref{LGPCT}.

\smallskip
{\bf \raggedleft Step 3.} In this step we prove $(I)$ and $(II)$ in (\ref{QW1}).
From Lemma \ref{LCorners} with $\delta = 1/12$ we have $\lim_{N \rightarrow \infty} \mathbb{P}(E_N^1) = 1$; from Lemma \ref{TL1} (with $a = 1$ and $n = N^2$) we have $\lim_{N \rightarrow \infty} \mathbb{P}(E_N^2) = 1$; and  from Lemma \ref{LFrames} with $\delta = 1/12$ we have $\lim_{N \rightarrow \infty} \mathbb{P}(E_N^3) = 1$. This proves $(II)$.

\smallskip
On the event $E_N$ we know that $E_N^1 \cap E_N^3$ also holds, and thus to prove $(I)$ in (\ref{QW1}) it suffices to prove that for all large enough $N$ we have
\begin{equation}\label{S3RF}
\MY_N \geq N^{1/4} + \max_{\substack{ (m_1,n_1) \in \partial \mathsf{SW}_N^{\delta} \\ (m_2,n_2) \in \partial \mathsf{NE}_N^{\delta}}} \log Z(m_1, n_1; m_2, n_2) + \Psi(\theta/2) \cdot (- m_1 - n_1 + m_2 + n_2 - 2N) .
\end{equation}

Let us fix $(m^*_1,n^*_1) \in \partial \mathsf{SW}_N^{\delta}$ and $(m^*_2,n^*_2) \in \partial \mathsf{NE}_N^{\delta}$ that achieve the maximum in (\ref{S3RF}). By the definition of $Z(-K,-K; N+K, N+K) $ from (\ref{PartitionFunct}) we have
$$Z(-K,-K; N+K, N+K) = \sum\nolimits_{ \pi \in\Pi(-K,-K;N+K,N+K)} \prod_{(i,j) \in \pi} w_{i,j},$$
and by restricting the above sum to paths $\pi$ that pass through $(m^*_1, n^*_1)$ and $(m^*_2, n^*_2)$ we obtain
\begin{equation}\label{ASR}
\begin{split}
& Z(-K,-K; N+K, N+K) \geq Z(-K,-K; m^*_1, n^*_1) \times \\
&Z(m^*_1, n^*_1; m^*_2, n^*_2)  \cdot Z(m^*_2, n^*_2; N+K, N+K) \cdot w_{n^*_1, m^*_1}^{-1} \cdot  w_{n^*_2, m^*_2}^{-1}.
\end{split}
\end{equation}

We now observe the following inequalities, which hold for large enough $N$:
\begin{equation*}
\begin{split}
&N^{1/4} + \log Z(m^*_1, n^*_1; m^*_2, n^*_2) + \Psi(\theta/2) \cdot (- m^*_1 - n^*_1 +  m^*_2 + n^*_2 - 2N) \leq \\
&N^{1/4} + \log Z(-K,-K; N+K, N+K) + \Psi(\theta/2) \cdot (- m^*_1 - n^*_1 +  m^*_2 + n^*_2 - 2N) - \\
& \log  Z(-K, -K; m^*_1, n^*_1)  - \log Z(m^*_2, n^*_2; N+K, N+K) + \log w_{n^*_1, m^*_1} + \log w_{n^*_2, m^*_2} \leq \\
&N^{1/4} + 8 \theta^{-1} \log N +  \log Z(-K,-K; N+K, N+K) + 4K \Psi(\theta/2)   - \MY_N^{\mathsf{SW}}  - \MY_N^{\mathsf{NE}} \leq \MY_N,
\end{split}
\end{equation*}
where in the first inequality we used (\ref{ASR}), in the second inequality we used the definitions of $\MY_N^{\mathsf{NE}}, \MY_N^{\mathsf{SW}}$ from (\ref{S3RVS}) and also that $\log w_{i,j} \leq 4 \theta^{-1} \log N$ since on the event $E_N$, we have $E_N^2$ as well. In the last inequality we used the definition of $\MY_N$ from (\ref{S3YDef}) and the fact that $8 \theta^{-1} \log N  \leq N^{1/4}$ for all large $N$. This establishes (\ref{S3RF}) and hence the proposition.

%
\section{Proof of Theorem \ref{PM2}} \label{Section4} In this section we work in the case $\theta = \theta_c$, where $\theta_c$ is as in (\ref{DefThetaC}). For this choice of $\theta$ we construct a sequence of random variables $\MY_N $ such that $\MY_N \leq \Fe_N$, as in (\ref{Fmax}), and $\MY_N$ are, with high probability, bounded away from zero on scale $N^{1/3} (\log N)^{2/3}$. The precise result is given as Proposition \ref{MainCrit} in Section \ref{Section4.1}. In the same section we use Proposition \ref{MainCrit} along with Lemma \ref{S2S2L2}(which gives an upper bound) to prove Theorem \ref{PM2}. The construction of $\MY_N$ is accomplished in Section \ref{Section4.2} and relies on two key lemmas, Lemma \ref{CloseZ} and Lemma \ref{CloseTW}, whose proofs are given in Sections \ref{Section4.3} and \ref{Section6}, respectively. We continue with the same notation as in Sections \ref{Section1} and \ref{Section2}.

%
\subsection{Lower bounds for $\Fe_N$.} \label{Section4.1} We prove Theorem \ref{PM2} via the following (proved in Section \ref{Section4.2}).
\begin{proposition}\label{MainCrit} Suppose that $\theta = \theta_c$. There exists a sequence of random variables $\MY_N$ such that
\begin{equation}\label{S4MainEq}
\begin{split}
& \MY_N \leq \Fe_N \mbox{ a.s. for all large $N$, and  $\lim_{N \rightarrow \infty} \mathbb{P}\left(\MY_N \geq 20^{-1} \cdot \sigma_{\theta_c}(1) \cdot N^{1/3} (\log N)^{2/3} \right) = 1$, }
\end{split}
\end{equation}
where we recall that $\sigma_{\theta_c}(x)$ is as in (\ref{DefSigmaS2}).
\end{proposition}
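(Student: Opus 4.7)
My plan is to construct $\MY_N$ as the maximum of $\log Z(R_i)$ over a family $\{R_i\}_{i=1}^{K_N}$ of sub-rectangles of $[1,N]^2$, which automatically ensures $\MY_N \leq \Fe_N$ a.s. The target lower bound will come from three ingredients: the vanishing of both $\Psi(\theta_c/2)$ and $h_{\theta_c}(1)$ at criticality (by \eqref{S2SummH}), which places each $\log Z(R_i)$ for a square $M\times M$ rectangle on scale $\sigma_{\theta_c}(1) M^{1/3}$ around zero; the uniform Tracy--Widom comparison of Proposition~\ref{S2LBProp} together with the upper-tail estimate $1-F_{\rm GUE}(y) \gtrsim e^{-(4/3) y^{3/2}}$; and taking a maximum over sufficiently many independent trials, which converts the $3/2$-exponent in the upper tail into a $(\log N)^{2/3}$ gain.

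For the construction, I would take the $R_i$ to be $K_N$ disjoint near-square sub-rectangles of side $M_N$ arranged inside $[1,N]^2$, so that the weights $w_{i,j}$ supporting distinct $R_i$'s are disjoint, making $\{\log Z(R_i)\}$ an independent family. Proposition~\ref{S2LBProp} applied at $\theta=\theta_c$ then provides, for $y \geq 1$,
$$
\mathbb{P}\bigl(\log Z(R_i) \geq \sigma_{\theta_c}(1) M_N^{1/3} y\bigr) \geq (1-F_{\rm GUE}(y)) - C M_N^{-\epsilon_0},
$$
and independence yields
$$
\mathbb{P}\bigl(\MY_N < \sigma_{\theta_c}(1) M_N^{1/3} y\bigr) \leq \exp\bigl(-K_N\bigl[c_0 e^{-(4/3) y^{3/2}} - C M_N^{-\epsilon_0}\bigr]\bigr),
$$
which tends to $0$ provided $K_N \cdot e^{-(4/3) y^{3/2}}\to\infty$ and $K_N M_N^{-\epsilon_0}$ stays bounded. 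Setting $\sigma_{\theta_c}(1) M_N^{1/3} y = 20^{-1}\sigma_{\theta_c}(1) N^{1/3}(\log N)^{2/3}$ and using the TW upper tail gives an explicit relation between $K_N$, $M_N$ and the target prefactor $20^{-1}$.

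The main obstacle --- and where the two Lemmas \ref{CloseZ} and \ref{CloseTW} cited in the section outline should enter --- is balancing $K_N$ and $M_N$ so as to simultaneously satisfy the above parametric inequality and the disjointness constraint $K_N M_N^2 \leq N^2$. A direct attempt with same-size disjoint sub-boxes of arbitrary scale yields only $O(N^{1/3})$ for the optimal value of $M_N^{1/3}(\log K_N)^{2/3}$, with no logarithmic gain. The $(\log N)^{2/3}$ boost should therefore be extracted from a subtler choice of family: for instance, by using rectangles of a variety of near-square aspect ratios (letting the parabolic drift $-M h_{\theta_c}(N/M)$ contribute favorably, using the quadratic expansion of $h_{\theta_c}$ around $1$ from \eqref{S2SummH}), or by exploiting the Gibbsian line-ensemble structure behind Proposition \ref{ThmTight} to obtain ``approximately independent'' trials beyond the strict-disjointness limit. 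Lemma \ref{CloseZ} should provide the requisite comparison between $\log Z$'s of such sub-rectangles, and Lemma \ref{CloseTW} the quantitative Tracy--Widom upper-tail input needed to recover the explicit prefactor $20^{-1}$ --- this last constant arising from the $(4/3)$ in the TW upper-tail exponent combined with the relationship between $K_N$ and $N$ in the maximum.
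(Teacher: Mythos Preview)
Your high-level strategy matches the paper's: build $\MY_N$ as a maximum of many i.i.d.\ copies of $\log Z$ over near-diagonal point-to-point partition functions of linear size, then use the $3/2$ upper-tail exponent of $F_{\rm GUE}$ (via Proposition~\ref{S2LBProp}) to extract the $(\log N)^{2/3}$ gain. You have also correctly identified the central obstruction: disjoint $M_N\times M_N$ squares cannot simultaneously deliver $M_N\sim N$ and polynomially many independent trials.

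The genuine gap is in resolving that obstruction. Neither of your two proposed fixes works. Tilting the aspect ratio away from $1$ is harmful, not helpful: by \eqref{S2SummH} and \eqref{S2SummH2}, $h_{\theta_c}(x)\geq 0$ with equality only at $x=1$, so the centering $-M h_{\theta_c}(N/M)$ is a \emph{penalty} for non-square rectangles. And Proposition~\ref{ThmTight} is a tightness statement for one fixed starting point; it does not manufacture the many independent trials you need.

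The paper's resolution is geometric, and it is precisely what Lemma~\ref{CloseZ} encodes. Rather than disjoint boxes, one uses $k=\lfloor N^{1/8}\rfloor$ disjoint \emph{diagonal hexagonal strips} $H_N^i$ of length $M=\lfloor N/2\rfloor$ and transversal width $K=\lfloor M^{3/4}\rfloor$, shifted vertically by $2K$ from one another (Definition~\ref{DefHex} and \eqref{S4DefPoints}). Each strip has area $\sim N\cdot N^{3/4}$, so $k\sim N^{1/8}$ of them fit inside $\llbracket 1,N\rrbracket^2$. One then takes $Z_N^i = Z(P_N^i;Q_N^i;K-1)$, the partition function \emph{restricted to paths that stay inside $H_N^i$}. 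These are genuinely i.i.d.\ since the strips are disjoint, and each connects points at Euclidean distance $\sim N$, so $\log Z_N^i$ lives on scale $N^{1/3}$. The role of Lemma~\ref{CloseZ} is not a comparison between different sub-rectangles but the statement that restricting paths to the strip costs essentially nothing: because $K\sim N^{3/4}\gg N^{2/3}$ exceeds the transversal fluctuation scale, $Z(1,1;M,M)-Z(1,1;M,M;K-1)$ is exponentially small with high probability. Lemma~\ref{CloseTW} is exactly what you guessed --- the quantitative upper-tail lower bound $\mathbb{P}(\log Z(1,1;M,M)\geq \sigma_{\theta_c}(1)M^{1/3}(\alpha\log M)^{2/3})\geq M^{-\alpha}$ for $\alpha<1/3$ --- and combining it with Lemma~\ref{CloseZ} and independence over the $k\sim N^{1/8}$ strips yields \eqref{S4MainEq} with the constant $20^{-1}\sigma_{\theta_c}(1)$ coming from the choice $\alpha=1/10<1/8$.
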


\begin{proof}[Proof of Theorem \ref{PM2}] From Proposition \ref{S4MainEq},
$\lim_{N \rightarrow \infty} \mathbb{P} \left( \Fe_N \geq 20^{-1} \cdot  \sigma_{\theta_c}(1) \cdot    N^{1/3} (\log N)^{2/3} \right) = 1,$
which establishes the lower bound in Theorem \ref{PM2} with $c_1 = 20^{-1} \cdot  \sigma_{\theta_c}(1)$.

On the other hand, we have by Lemma \ref{S2S2L2} (applied to $a = 5$, $M = N$ and $\theta = \theta_c$) that there exists $D_1 > 0$ such that for all large $N$,
$
\mathbb{P} \left(\log Z (1, 1; m, n) \geq D_1 N^{1/3} (\log N)^{2/3} \right) \leq N^{-5}.
$
Since $Z(m_1,n_1; m_2,n_2)$ has the same distribution as $Z(a + m_1,b + n_1; a+ m_2, b +n_2)$ for any $a,b \in \mathbb{Z}$, we conclude the same with $(1,1;m,n)$ replaced by $(m_1,n_1;m_2,n_2)$ for all $m_1, n_1, m_2, n_2 \in \llbracket 1, N \rrbracket$ with $(m_1,n_1) \leq (m_2, n_2)$
Taking a union bound of these inequalities (note that we have at most $N^4$ possible choices for the quadruple of $m_1, n_1, m_2, n_2$) we conclude that
$$\mathbb{P} \left( \Fe_N \geq D_1 N^{1/3} (\log N)^{2/3}\right) \leq N^{-1},$$
which establishes the upper bound in Theorem \ref{PM2} with $c_2 = D_1$.
\end{proof}

%
\subsection{Proof of Proposition \ref{MainCrit}}\label{Section4.2} In this section we present the proof of Proposition \ref{MainCrit}. We begin this section by introducing some notation and explicitly constructing the variables $\MY_N$ in the statement of Proposition \ref{MainCrit}. Afterwards we explain the main ideas behind our construction, and state two lemmas, which will be used in our analysis. Finally, we show that the $\MY_N$ we construct satisfy the conditions in Proposition \ref{MainCrit}.

\begin{definition}\label{DefHex}
For $P = (x_1, y_1), Q = (x_2, y_2) \in \mathbb{R}^2$ with $x_1 \leq x_2$ and $y_1 \leq y_2$ and $a \in [0, \infty)$ satisfying $\min(x_2 - x_1, y_2 - y_1) \geq a$ we let $H(P;Q;a)$ denote the hexagonal domain in $\mathbb{R}^2$, whose boundary is obtained by straight segments connecting the points
$$(x_1, y_1) \rightarrow (x_1 + a, y_1) \rightarrow (x_2, y_2 - a) \rightarrow (x_2, y_2) \rightarrow (x_2 - a, y_2) \rightarrow (x_1, y_1 + a) \rightarrow (x_1, y_1).$$
The hexagon $H(P;Q;a)$ is illustrated in Figure \ref{S4_1}.
\end{definition}

\begin{figure}[h]
\scalebox{0.55}{\includegraphics{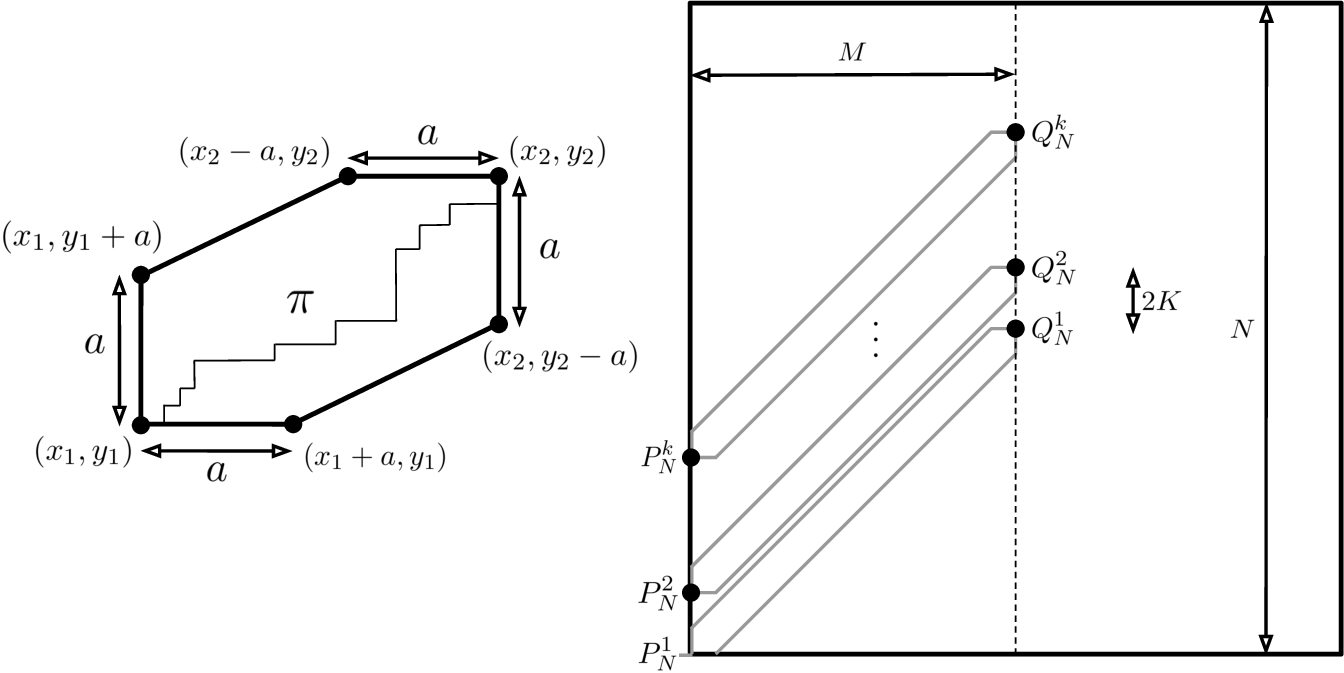}}
\captionsetup{width=\linewidth}
 \caption{The left picture represents the hexagonal domain $H(P;Q;a)$ with $P = (x_1, y_1)$ and $Q = (x_2, y_2)$ and a path $\pi \in \Pi(P; Q; a)$. The right figure depicts the hexagonal domains $H_N^i = H(P_N^i; Q_N^i; K-1)$ for $i = 1, \dots, k$, where $M = \lfloor N/2 \rfloor$, $K =  \lfloor M^{3/4} \rfloor $ and $k = \lfloor N^{1/8} \rfloor$.}
\label{S4_1}
\end{figure}

If $P = (m_1, n_1), Q = (m_2, n_2) \in \mathbb{Z}^2$ are such that $(m_1, n_1) \leq (m_2, n_2)$ and $a \in [0, \infty)$ we let
\begin{equation}\label{PathHex}
\Pi(P; Q; a) = \Pi(m_1, n_1; m_2, n_2; a) := \{ \pi \in \Pi(m_1, n_1; m_2, n_2): V(\pi) \subset H(P;Q; a) \},
\end{equation}
where we recall that $\Pi(m_1, n_1; m_2, n_2)$ was defined in Section \ref{Section1} and was the set of up-right paths connecting $(m_1, n_1)$ and $(m_2, n_2)$, and we have written $V(\pi)$ for the set of vertices in $\mathbb{Z}^2$ that are visited by $\pi$. We also define the {\em modified partition function}
 \begin{equation}\label{PFHex}
Z(P; Q; a) = Z(m_1, n_1; m_2, n_2; a) := \sum\nolimits_{ \pi \in\Pi(m_1,n_1;m_2,n_2; a)} \prod\nolimits_{(i,j) \in \pi} w_{i,j}.
\end{equation}
If $\Pi(m_1,n_1;m_2,n_2; a) = \emptyset$ we adopt the convention that $Z(P; Q; a)  = 0$.

With the above notation in place we can proceed with the construction of $\MY_N$. For $N \in \mathbb{N}$ we denote for simplicity $M = \lfloor N/2 \rfloor$, $K =  \lfloor M^{3/4} \rfloor $ and $k = \lfloor N^{1/8} \rfloor$. We also define the points $P_N^i, Q_N^i$ and hexagonal domains $H_N^i$ for $i = 1, \dots, k$ via
\begin{equation}\label{S4DefPoints}
P_N^i = (1, 1 + 2 K \cdot (i-1)), \hspace{3mm} Q_N^i = (M, M + 2K \cdot (i-1)), \mbox{ and } H_N^i = H(P_N^i; Q_N^i; K-1).
\end{equation}
The latter domains are depicted in Figure \ref{S4_1}. Finally, we define
\begin{equation}\label{S4DefYN}
\MY_N = \max_{i = 1, \dots, k} \log Z_N^i , \mbox{ with }Z_N^i = Z(P_N^i; Q_N^i; K-1).
\end{equation}
This specifies the choice of the $\MY_N$, for which we will establish Proposition \ref{MainCrit}.

\smallskip
Let us explain some of the ideas behind the above choice of $\MY_N$. The idea is to find a large collection of ${k}$  i.i.d. positive random variables ${Z}_N^i$ (${k} \geq N^{\alpha}$ for some $\alpha > 0$ will suffice), such that
\begin{enumerate}
\item $\log {Z}_N^i$ are of order $N^{1/3}$;
\item for large $x$ and some $\gamma > 0$ we have $\mathbb{P}( N^{-1/3} \log {Z}_N^i \geq x) \geq e^{-\gamma x^{3/2}}$;
\item $\Fe_N \geq \log {Z}_{N}^i$ a.s. for all $i = 1, \dots, k$.
\end{enumerate}
Once we have found such a collection, we should be able to show that $\mathsf{Y}_N := \max_{i =1, \dots, k}\log {Z}_{N}^i$ is a lower bound for $\Fe_N$ and $\mathsf{Y}_N$ is bounded away from zero by $c N^{1/3} (\log N)^{2/3}$ with high probability where the constant $c$ would depend on $\alpha$ and $\gamma$ above. The improvement from $N^{1/3}$ to $N^{1/3} (\log N)^{2/3}$ comes from the independence of the ${Z}_i^N$, the fact that ${k} \geq N^{\alpha}$ and the upper tail lower bound in condition (2) above.

Since $\exp(\Fe_N)$ is the maximal point-to-point partition function, it makes sense to have our choice of ${Z}_{N}^i$ be related to some type of partition functions of the polymer. A na\"ive guess is to choose ${Z}_N^i = Z(P_N^i; Q_N^i)$ for $i = 1, \dots, {k}$, where we recall that $Z(P_N^i; Q_N^i)$ are the partition functions from $P_N^i$ to $Q_N^i$. Such a choice clearly satisfies condition (3). In view of Proposition \ref{LGPCT}, we have that $\frac{ \log Z(P_N^i; Q_N^i)}{M^{1/3} \sigma_{\theta_c}(1)}$ weakly converge to $F_{\rm GUE}$ and so condition (1) is also satisfied. One also expects (although it does not follow from weak convergence alone) that condition (2) is satisfied in light of the tail behavior of $F_{\rm GUE}$. Specifically, from \cite[Theorem 1]{DV13} we have that there exists a constant $\lambda > 0$ such that for all $x \geq 1$
\begin{equation}\label{TWLB}
\exp \left( -\frac{4}{3} x^{3/2} +  \lambda \log (x+1) \right) \geq 1 - F_{\rm GUE}(x) \geq \exp \left( -\frac{4}{3} x^{3/2} -  \lambda \log (x+1) \right),
\end{equation}
which suggests that one might be able to establish a similar upper tail exponent of $3/2$ for $\log Z(P_N^i; Q_N^i)$, which would yield (2).

The main issue with the above proposal for ${Z}_N^i$ is that the resulting variables are not independent. Indeed, the $Z(P_N^i; Q_N^i)$ and $Z(P_N^j; Q_N^j)$ for $i \neq j$ are summations over paths that may pass through the same vertices and so the same $w_{m,n}$ appear in the formulas for both partition functions. The key idea is then to restrict the summation from all up-right paths that connect $P_N^i$ to $Q_N^i$ to ones that belong in narrow enough diagonal strips -- these are the $H_N^i$'s. The resulting functions are precisely $Z(P_N^i; Q_N^i; K-1)$ and we take ${Z}_N^i =Z(P_N^i; Q_N^i; K-1)$. Since ${Z}_N^i$ involve $w_{m,n}$ with $(m,n) \in H_N^i$ and the latter are pairwise disjoint we indeed have that ${Z}_N^i$ are i.i.d.

At this point there are two main issues that need to be addressed. In order for our strategy to be successful we must show that the following conditions hold:
\begin{enumerate}[label=\Roman*.]
\item  $Z(P_N^1; Q_N^1; K-1)$ is {\em close} to $Z(P_N^1; Q_N^1)$;
\item  $\log Z(P_N^1; Q_N^1)$ has an upper tail exponent of $3/2$.
\end{enumerate}

Based on a version of the fact that the transversal fluctuations for the log-gamma polymer are of order $N^{2/3}$, we expect that statement I is true provided that $K$ is much bigger than $N^{2/3}$ and that is why we choose $K = \lfloor M^{3/4} \rfloor \sim N^{3/4}$. A precise formulation of statement I is given in Lemma \ref{CloseZ} below. The form of statement II that we establish is given in Lemma \ref{CloseTW} .

\begin{lemma}\label{CloseZ} Let $\theta = \theta_c$ as in (\ref{DefThetaC}). There exists $M_0 \in \mathbb{N}$ such that for $M \geq M_0$ and $K =  \lfloor M^{3/4} \rfloor$
$$\mathbb{P}\left( Z(1,1; M, M ) - Z(1,1; M, M; K-1) \geq e^{-M^{1/2} h''_{\theta_c}(1)/4} \right) \leq  e^{-M^{1/6}},$$
where we recall that $h_{\theta}(x)$ is as in (\ref{HDefLLN}).
\end{lemma}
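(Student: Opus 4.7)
My plan is to perform a union bound over ``exit vertices''. Noting $H((1,1);(M,M);K-1)=\{(i,j)\in\llbracket 1,M\rrbracket^2 : |i-j|\le K-1\}$, a directed up-right path from $(1,1)$ to $(M,M)$ contributes to $\Delta := Z(1,1;M,M)-Z(1,1;M,M;K-1)$ exactly when it visits $\mathsf{Out}:=\{(i,j)\in\llbracket 1,M\rrbracket^2 : |i-j|\ge K\}$. Decomposing each such path at an exit vertex $v\in\mathsf{Out}$ (using $w(\pi)=w(\pi_1)w(\pi_2)/w_v$ when $\pi=\pi_1\cup\pi_2$ is split at $v$) and noting every exiting path is counted at least once yields
\begin{equation*}
\Delta \;\le\; \sum_{v\in\mathsf{Out}} \frac{Z(1,1;v)\,Z(v;M,M)}{w_v}.
\end{equation*}
Since $|\mathsf{Out}|\le M^2$, it suffices to show each summand is at most $\exp(-c_0 M^{1/2})$ for some $c_0>h_{\theta_c}''(1)/4$ off an event whose total probability, after the union bound, is at most $e^{-M^{1/6}}$.

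For $v=(i,j)\in\mathsf{Out}$ set $L:=|i-j|\ge K$, $m_1:=\max(i,j)$, $n_1:=\min(i,j)$, and analogously $(m_2,n_2)$ for $(M-i+1,M-j+1)$; by shift invariance and reflection symmetry of the weights, $Z(1,1;v)\stackrel{d}{=}Z(1,1;m_1,n_1)$ and $Z(v;M,M)\stackrel{d}{=}Z(1,1;m_2,n_2)$. Proposition \ref{LGPCT} identifies the law-of-large-numbers shift $-m_i h_{\theta_c}(n_i/m_i)$, defining the deterministic penalty $P(v):=m_1 h_{\theta_c}(n_1/m_1)+m_2 h_{\theta_c}(n_2/m_2)$. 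Using (\ref{S2SummH}) together with the smoothness, convexity, and unique minimum of $h_{\theta_c}$ at $x=1$, I will show there is a constant $c>0$ --- arbitrarily close to $h_{\theta_c}''(1)/2$ when $n/m$ is near $1$ --- with $m h_{\theta_c}(n/m)\ge c(m-n)^2/m$. Since $m_1+m_2=M+1+L$ and $m_1-n_1=m_2-n_2=L$, the AM--HM inequality gives
\begin{equation*}
P(v)\;\ge\; cL^2\left(\tfrac{1}{m_1}+\tfrac{1}{m_2}\right)\;\ge\;\frac{4cL^2}{M+L+1}\;\ge\;(2h_{\theta_c}''(1)-o(1))M^{1/2}
\end{equation*}
on the ``bulk'' subset $\mathsf{Out}_1:=\{v\in\mathsf{Out}:\min(n_1/m_1,n_2/m_2)\ge\delta\}$ for a small fixed $\delta>0$. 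On $\mathsf{Out}_2:=\mathsf{Out}\setminus\mathsf{Out}_1$ some $n_i/m_i<\delta$ together with $|i-j|\ge K$ forces $m_i\ge K$, giving the much larger penalty $m_i h_{\theta_c}(n_i/m_i)\ge K h_{\theta_c}(\delta)$.

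For $v\in\mathsf{Out}_1$, Proposition \ref{S1LDE1} applied with $x=M^{1/9}$ bounds both $\log Z(1,1;v)+m_1 h_{\theta_c}(n_1/m_1)$ and $\log Z(v;M,M)+m_2 h_{\theta_c}(n_2/m_2)$ by $C(\delta)M^{4/9}=o(M^{1/2})$ off an event of probability $\le C_1 e^{-c_1 K}+C_2 e^{-c_2 M^{1/6}}$. For $v\in\mathsf{Out}_2$, Lemma \ref{S2S2L1} with $\epsilon=h_{\theta_c}(\delta)/4$ gives $\log Z(1,1;v)\le -\tfrac{3}{4}m_i h_{\theta_c}(\delta)\le -c' M^{3/4}$ off probability $\le Be^{-bK}$, dwarfing the target (the companion factor $\log Z(v;M,M)$ is treated analogously or by an even cruder application of the same tools depending on $n_2/m_2$). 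The $w_v^{-1}$ factor is controlled by the doubly-exponential upper tail of $-\log w_v$ extracted in the proof of Lemma \ref{S2S2L1}: $\log w_v^{-1}\le M^{1/6}$ with probability $\ge 1-\exp(-e^{M^{1/6}}/2)$. A union bound over $|\mathsf{Out}|\le M^2$ gives total failure probability $\le e^{-M^{1/6}}$, on whose complement $\Delta\le M^2\exp(-c_0 M^{1/2})\le \exp(-h_{\theta_c}''(1)M^{1/2}/4)$ for all sufficiently large $M$. The principal technical obstacle is the quantitative bookkeeping of constants: the Taylor-based bound $m h_{\theta_c}(n/m)\ge c(m-n)^2/m$ must yield a penalty exceeding $h_{\theta_c}''(1)M^{1/2}/4$ with enough margin to absorb both the $M^{4/9}$-scale fluctuations from Proposition \ref{S1LDE1} and the $2\log M$ loss from the union bound --- this forces a judicious choice of $\delta$ and precise tracking of the higher-order Taylor terms of $h_{\theta_c}$ near $1$.
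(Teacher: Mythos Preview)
Your strategy is essentially the paper's: decompose the difference over exit vertices, harvest a deterministic penalty of order $M^{1/2}$ from $h_{\theta_c}$ via its Taylor expansion at $1$, and control fluctuations using Proposition~\ref{S1LDE1} in the bulk and Lemma~\ref{S2S2L1} in the thin regime. The paper's execution differs in two cosmetic ways. First, it splits each exiting path at the \emph{first} vertex on the boundary $\partial H^{top}\sqcup\partial H^{bot}$ (only $\sim 2M$ vertices), using the adjacent interior vertex $(m^*,n^*)$; this avoids your $w_v^{-1}$ factor entirely and gives a tighter union bound. Second, the paper uses only one side's penalty --- it keeps $(M-n+1)h_{\theta_c}\!\big(\tfrac{M-m+1}{M-n+1}\big)\approx \tfrac{h''_{\theta_c}(1)K^2}{2(M-n+1)}$ and discards the other via $h_{\theta_c}\ge 0$ --- rather than your symmetric AM--HM combination. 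Both routes produce a penalty comfortably exceeding $h''_{\theta_c}(1)M^{1/2}/4$.

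There is one concrete gap in your bookkeeping: with $x=M^{1/9}$, Proposition~\ref{S1LDE1} yields a per-vertex failure probability of the form $C_2 e^{-c_2 M^{1/6}}$, and after the union bound over $|\mathsf{Out}|\le M^2$ vertices you obtain $M^2 C_2 e^{-c_2 M^{1/6}}$. This is $\le e^{-M^{1/6}}$ only if $c_2>1$, but $c_2$ is an unspecified constant depending on $(\theta,\delta)$ and nothing guarantees $c_2>1$. The paper sidesteps this by taking $x=M^{1/8}$, giving exponent $M^{3/16}$; since $3/16>1/6$, any positive $d_1$ eventually beats the target $e^{-M^{1/6}}$ regardless of the polynomial prefactor. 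You can fix your argument by taking $x=M^{\beta}$ for any $\beta\in(1/9,1/6)$: the fluctuation bound $m_i^{1/3}\sigma\cdot x\le C(\delta)M^{1/3+\beta}$ remains $o(M^{1/2})$, while the exponent $c_2 M^{3\beta/2}$ now strictly dominates $M^{1/6}$.
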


\begin{lemma}\label{CloseTW} Let $\theta = \theta_c$ as in (\ref{DefThetaC}). For any $\alpha \in (0, 1/3)$ there is $M_0 \in \mathbb{N}$ such that for $M \geq M_0$
$$\mathbb{P}\left(    \log Z(1,1; M, M) \geq \sigma_{\theta_c}(1) \cdot M^{1/3} \cdot (\alpha \log M)^{2/3}\right) \geq M^{-\alpha},$$
where we recall that $\sigma_{\theta_c}(x)$ is as in (\ref{DefSigmaS2}).
\end{lemma}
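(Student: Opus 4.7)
At the critical parameter $\theta = \theta_c$, the identity $h_{\theta_c}(1) = 0$ from \eqref{S2SummH} collapses the normalization in \eqref{eq:rescaledpartitionfunction} to
$$
\mathcal{F}(M, M) \;=\; \frac{\log Z(1,1;M,M)}{\sigma_{\theta_c}(1)\, M^{1/3}}.
$$
Setting $y_M := (\alpha \log M)^{2/3}$, the assertion of the lemma is equivalent to the upper-tail lower bound
$\mathbb{P}\bigl(\mathcal{F}(M,M) \ge y_M\bigr) \ge M^{-\alpha}$. The plan is to combine the two quantitative inputs already available at this point in the paper: the uniform distributional comparison of $\mathcal{F}(M,M)$ to the Tracy--Widom law $F_{\rm GUE}$ given by Proposition \ref{S2LBProp}, and the explicit right-tail lower bound \eqref{TWLB} for $F_{\rm GUE}$.

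Concretely, fix some $\epsilon_0 \in (0, 1/3)$ to be chosen close to $1/3$. For $M$ large enough that $y_M \ge 1$ (i.e., $M \ge e^{1/\alpha}$), Proposition \ref{S2LBProp} applied with $\delta = 1$ yields
$$
\mathbb{P}\bigl(\mathcal{F}(M,M) \ge y_M\bigr) \;\ge\; 1 - F_{\rm GUE}(y_M) - C_0 M^{-\epsilon_0}.
$$
Since $y_M^{3/2} = \alpha \log M$, substituting into \eqref{TWLB} produces
$$
1 - F_{\rm GUE}(y_M) \;\ge\; M^{-4\alpha/3}\,(y_M + 1)^{-\lambda} \;\ge\; c\,(\log M)^{-2\lambda/3}\, M^{-4\alpha/3}
$$
for some $c = c(\alpha,\lambda) > 0$ and all large $M$. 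Choosing $\epsilon_0 > 4\alpha/3$ (which is available whenever $\alpha < 1/4$; the range $\alpha \in [1/4, 1/3)$ may be reduced to the former by the monotonicity $\mathbb{P}(\mathcal F(M,M) \ge y_M) \ge \mathbb{P}(\mathcal F(M,M) \ge \tilde y_M)$ for $\tilde y_M \ge y_M$ associated with a smaller auxiliary $\alpha$) then makes the error $C_0 M^{-\epsilon_0}$ asymptotically negligible compared to the Tracy--Widom main term.

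The principal obstacle is the final numerical reconciliation: the lower bound emerging naturally from \eqref{TWLB} has exponent $4\alpha/3$, whereas the target $M^{-\alpha}$ has the strictly smaller exponent $\alpha$. This mismatch is the technical heart of the proof and is resolved by a careful constant-chase in which the argument is run with an auxiliary parameter $\alpha' < 3\alpha/4$ playing the role of $\alpha$ in the threshold, so that the factor $M^{-4\alpha'/3}$ dominates $M^{-\alpha}$ and the polylogarithmic correction $(\log M)^{-2\lambda/3}$ is absorbed into a fixed small power of $M$. Verifying that these choices are mutually compatible with $y_M$ being at most the actual threshold required --- and, if needed, sharpening \eqref{TWLB} into a log-gamma specific upper-tail estimate extracted from the contour-integral formula for $\log Z(1,1;M,M)$ underlying the proof of Proposition \ref{S2LBProp} in the appendix --- is the delicate step, while the application of Proposition \ref{S2LBProp} and \eqref{TWLB} themselves is routine.
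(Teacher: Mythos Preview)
Your approach is the paper's approach: rewrite the event in terms of $\mathcal F(M,M)$ using $h_{\theta_c}(1)=0$, apply Proposition~\ref{S2LBProp} at $x=(\alpha\log M)^{2/3}$, and then invoke the Tracy--Widom tail lower bound \eqref{TWLB}. You are also right that this produces a main term of order $M^{-4\alpha/3}$ rather than $M^{-\alpha}$. The paper's proof in fact asserts $\exp(-\tfrac{4\alpha}{3}\log M-\lambda\log(\cdots))\ge 2M^{-\alpha}$, which is false for large $M$ since $M^{-4\alpha/3}=o(M^{-\alpha})$; so the mismatch you flag is genuine and is shared by the paper.

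Your proposed fix, however, does not close the gap. Running the argument with a threshold $(\alpha'\log M)^{2/3}$ for some $\alpha'<3\alpha/4$ \emph{lowers} the threshold, hence \emph{enlarges} the event $\{\mathcal F(M,M)\ge(\alpha'\log M)^{2/3}\}\supset\{\mathcal F(M,M)\ge(\alpha\log M)^{2/3}\}$; the resulting bound $\mathbb P(\mathcal F(M,M)\ge(\alpha'\log M)^{2/3})\gtrsim M^{-4\alpha'/3}\ge M^{-\alpha}$ therefore says nothing about the original (smaller) event. Your parenthetical monotonicity remark for $\alpha\in[1/4,1/3)$ has the same direction error. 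No rearrangement of these ingredients yields the lemma as stated: from \eqref{TWLB} and Proposition~\ref{S2LBProp} one can only get $\mathbb P(\log Z(1,1;M,M)\ge \sigma_{\theta_c}(1)M^{1/3}(\alpha\log M)^{2/3})\ge M^{-\beta}$ for any fixed $\beta>4\alpha/3$ with $\beta<1/3$. That weaker form is what the downstream use in Proposition~\ref{MainCrit} actually needs (one takes $\alpha$ small and requires only that the exponent of $M^{-1}$ be less than $1/8$), so the right repair is to weaken the conclusion, not to adjust the threshold.
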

\begin{remark}
Neither Lemma \ref{CloseZ}, nor Lemma \ref{CloseTW} are optimal though they will suffice for  proving Proposition \ref{MainCrit}. Lemma \ref{CloseZ} is proved in Section \ref{Section4.3} and is essentially a consequence of Proposition \ref{S1LDE1} and Lemma \ref{S2S2L1}. Lemma \ref{CloseTW} is proved below by utilizing Proposition \ref{S2LBProp}.
\end{remark}
\begin{proof}[Proof of Lemma \ref{CloseTW}] Let us fix $\alpha \in (0, 1/3)$ and $\epsilon_0 \in (\alpha, 1/3)$. From (\ref{eq:rescaledpartitionfunction}) and (\ref{S2SummH}) we have
$$ \sigma_{\theta_c}(1) \cdot M^{1/3} \cdot \mathcal{F}(M,M) = \log Z(1,1;M,M),$$
where we used that  $\theta = \theta_c$. The latter and Proposition \ref{S2LBProp} applied to $\theta = \theta_c$, $\delta = 1$ and $\epsilon_0$ as above implies that there exists $M_1, C_1 > 0$ such that for $M \geq M_1$ we have
$$\left| \mathbb{P}\left(  \log Z(1,1; M, M) \geq  \sigma_{\theta_c}(1) \cdot M^{1/3} \cdot (\alpha \log M)^{2/3}\right ) - (1 - F_{\rm GUE}\left( (\alpha \log M)^{2/3}) \right) \right| \leq C_1 M^{-\epsilon_0}.$$
On the other hand, by (\ref{TWLB}) we have for some $\lambda > 0$ and all large $M$
$$1 - F_{\rm GUE}\left( (\alpha \log M)^{2/3}) \right)\geq \exp \left( -\frac{4\alpha}{3}  \log M -  \lambda \log \left( (\alpha  \log M)^{2/3} + 1\right) \right) \geq \frac{2}{M^{\alpha}}.$$
Combining the last two inequalities we see that for all large $M$
$$\mathbb{P}\left(   \log Z(1,1; M, M) \geq  \sigma_{\theta_c}(1) \cdot M^{1/3} \cdot (\alpha \log M)^{2/3} \right )  \geq  2M^{-\alpha} - C_1 M^{-\epsilon_0},$$
which implies the lemma as $\epsilon_0 > \alpha$.
\end{proof}

The above discussion aimed to explain the origin of the choice of $\MY_N$ in (\ref{S4DefYN}). The only quantity we did not discuss is $k = \lfloor N^{1/8} \rfloor$. We make this choice to ensure that $H_N^i$ all belong to $\llbracket 1, N \rrbracket^2$ for $i = 1, \dots, k$. We mention that one could make different choices for $K$ and $k$, which might produce a higher constant than $ 20^{-1} \cdot \sigma_{\theta_c}(1)$ in (\ref{S4MainEq}) -- we have not attempted to optimize over such choices.

\smallskip
We now proceed to show that the $\MY_N$ as in (\ref{S4DefYN}) satisfy Proposition \ref{MainCrit}.
\begin{proof}[Proof of Proposition \ref{MainCrit}]
We prove the proposition with $\MY_N$ as in (\ref{S4DefYN}). We also continue with the same notation as in the beginning of the section. By definition, we have for all large $N$ that $P_N^i, Q_N^i \in \llbracket 1, N \rrbracket^2$ for $i =1, \dots, k$ and so
$$\Fe_N \geq \max_{i = 1, \dots, k }  \log  Z(P_N^i; Q_N^i) \geq \max_{i = 1, \dots, k }  \log  Z(P_N^i; Q_N^i; K-1) = \MY_N,$$
which implies the first part of the proposition.

Let us set $R = 20^{-1} \cdot \sigma_{\theta_c}(1)$ for simplicity. Since $Z(P_N^i; Q_N^i; K-1)$ depends only on $w_{m,n}$ for $(m,n) \in H_N^i$ as in (\ref{S4DefPoints}), which are disjoint for all large $N$, we conclude that $\log  Z(P_N^i; Q_N^i; K-1)$ are i.i.d. random variables with the same distribution as $\log  Z(P_N^1; Q_N^1; K-1)$. The latter implies
\begin{equation}\label{S4DB1}
\begin{split}
&\mathbb{P} \left( \MY_N \geq R N^{1/3} ( \log N)^{2/3} \right) = \mathbb{P} \left( \max_{i = 1, \dots, k }  \log  Z(P_N^i; Q_N^i; K-1) \geq  R N^{1/3} ( \log N)^{2/3}  \right) =\\
& 1 - \mathbb{P} \left( \log  Z(P_N^1; Q_N^1; K-1) <  R N^{1/3}  (\log N)^{2/3}  \right)^k.
\end{split}
\end{equation}
From Lemma \ref{CloseTW} (applied to $\alpha = 1/10$ and $M = \lfloor N/2 \rfloor$) we have for all large $N$ that
$$\mathbb{P} \left( \log  Z(P_N^1; Q_N^1) < 2 R N^{1/3}     (\log N)^{2/3}  \right) \leq 1 - M^{-1/10}.$$
The latter inequality and Lemma \ref{CloseZ} imply that for all large $N$
$$\mathbb{P} \left( \log  Z(P_N^1; Q_N^1; K-1) < R N^{1/3}     (\log N)^{2/3}  \right) \leq 1 - (1/2)M^{-1/10}.$$

Substituting the last inequality into (\ref{S4DB1}) we obtain
$$\mathbb{P} \left( \MY_N \geq R N^{1/3} ( \log N)^{2/3} \right) \geq 1 - \left(1- (1/2) M^{-1/10} \right)^k,$$
which implies the second part of (\ref{S4MainEq}) since $k = \lfloor N^{1/8} \rfloor$ and $M = \lfloor N/2 \rfloor$.

\end{proof}

%
\subsection{Proof of Lemma \ref{CloseZ}}\label{Section4.3} For clarity we split the proof of the lemma into three steps.

\smallskip
{\bf \raggedleft Step 1.} We begin by introducing some notation. Let $H$ denote the hexagonal domain $H(P;Q;K-1)$ for $P = (1,1)$ and $Q = (M, M)$ as in Definition \ref{DefHex}. For $M \geq 2$ we define
\begin{equation*}
\begin{split}
&\partial H^{top} := \{ (m,n) \in \mathbb{Z}^2: (m, n-1) \in H \mbox{ and } n-m = K \},  \\
&\partial H^{bot} := \{ (m,n) \in \mathbb{Z}^2: (m-1, n) \in H \mbox{ and } m - n = K \}.
\end{split}
\end{equation*}
In words, $\partial H^{top}$ is the set of points $\mathbb{Z}^2$ that lie on the segment connecting $(1, K+1)$ and $(M-K, M)$, and $\partial H^{bot}$ is the set of points $\mathbb{Z}^2$ that lie on the segment connecting $(K+1, 1)$ and $(M, M - K)$.

For every $(m,n) \in \partial H^{top} \sqcup \partial H^{bot}$ we let $(m^*, n^*)$ denote $(m, n -1)$ if $(m,n) \in \partial H^{top}$ and $(m-1, n)$ if $(m,n) \in \partial H^{bot}$. We claim that there exist $M_1, D_1, d_1 > 0$ such that if $M \geq M_1$ and $(m,n) \in \partial H^{top} \sqcup \partial H^{bot}$ we have
\begin{equation}\label{S4MR1}
\begin{split}
&\mathbb{P} \left(  E_M \right) \leq D_1 \exp \left( - d_1 M^{3/16} \right), \mbox{ with } \\
& E_M = \left \{ Z(1,1; m^*, n^*) \cdot  Z(m,n; M, M) \geq e^{-M^{1/2} h''_{\theta_c}(1)/3}  \right\}.
\end{split}
\end{equation}
We prove (\ref{S4MR1}) in the steps below. Here we assume its validity and conclude the proof of the lemma.

\smallskip
From the definition of $Z(1,1; M, M; K-1) $ in (\ref{PFHex}) we have that
$$Z(1,1; M, M ) - Z(1,1; M, M; K-1) = \sum_{\pi \in \Pi(m_1,n_1;m_2,n_2) \setminus \Pi(m_1,n_1;m_2,n_2; K-1) } \prod\nolimits_{(i,j) \in \pi} w_{i,j}.$$
Notice that each of the paths $\pi \in \Pi(m_1,n_1;m_2,n_2) \setminus \Pi(m_1,n_1;m_2,n_2; K-1) $ needs to contain at least one vertex in $\partial H^{top} \sqcup \partial H^{bot}$. Splitting the sum over the first vertex $(m,n)$ in $\pi$ that belongs to $\partial H^{top} \sqcup \partial H^{bot}$ we obtain the upper bound
$$Z(1,1; M, M ) - Z(1,1; M, M; K) \leq \sum_{(m,n) \in \partial H^{top} \sqcup \partial H^{bot}} Z(1,1; m^*, n^*) \cdot  Z(m,n; M, M) .$$
The latter inequality and a union bound of (\ref{S4MR1}) gives
\begin{equation*}
\begin{split}
&\mathbb{P} \left( Z(1,1; M, M ) - Z(1,1; M, M; K-1) \geq 2M  e^{-M^{1/2} h''_{\theta_c}(1)/3} \right) \leq 2M D_1 \exp \left( - d_1 M^{3/16} \right),
\end{split}
\end{equation*}
where we used that $|\partial H^{top} \sqcup \partial H^{bot}| \leq 2M$. The latter inequality implies the statement of the lemma, since $h''_{\theta_c}(1) > 0$ by (\ref{S2SummH}) and $3/16 > 1/6$.

\smallskip
{\bf \raggedleft Step 2.} By the distributional equality of $Z(m_1, n_1; m_2, n_2)$ and $Z(n_1, m_1; n_2, m_2)$ it suffices to prove (\ref{S4MR1}) when $(m,n) \in \partial H^{bot}$. Furthermore, by the distributional equality of $Z(m_1, n_1; m_2, n_2)$ and $Z(-m_2, -n_2; -m_1, -n_1)$ we only need to consider the case when $m = K + n$ and $1 \leq n \leq \lfloor M/2 \rfloor - K + 1$ (i.e. the left half of $\partial H^{bot}$). In particular, we have $m^* = m-1$ and $n^* = n$.

In this step we prove (\ref{S4MR1}) when $m = K + n$ and $1 \leq n \leq \lfloor M/2 \rfloor - K + 1$ and $n/(m-1) \geq 1/2$. In this case we have from Proposition \ref{S1LDE1} (applied to $\delta = 1/2$, $x = M^{1/8}$) that there are constants $C_1, c_1 > 0$ such that for all large $M$ we have
\begin{equation}\label{S4PR1}
\begin{split}
&\mathbb{P}(E_M^1) \leq C_1 e^{-c_1 M^{3/16} },  \mbox{ with }  \\
&E_M^1 = \left\{ \frac{\log Z(1,1; m^*, n^*) + (m-1)h_{\theta_c}(n/(m-1)) }{(m-1)^{1/3} \sigma_{\theta_c}(n/(m-1)) }\geq M^{1/8}  \right\},
\end{split}
\end{equation}
where we used that $m^* = m-1$, $n^* = n$, $m \geq K$ and $K  = \lfloor M^{3/4} \rfloor$. By the same proposition we also know that there constants $C_2, c_2 > 0$ such that for all large $M$ we have
\begin{equation}\label{S4PR2}
\begin{split}
&\mathbb{P}(E_M^2) \leq C_2 e^{-c_2 M^{3/16} }, \mbox{ with } \\
& E_M^2 =\left\{ \frac{\log Z(m,n; M, M) + (M - n+1)h_{\theta_c}\left( \frac{M-m+1}{M-n+1} \right) }{(M - n+1)^{1/3} \sigma_{\theta_c}\left( \frac{M-m+1}{M-n+1} \right)  }\geq M^{1/8}  \right\} ,
\end{split}
\end{equation}
where we also used the distributional equality of $Z(m_1, n_1; m_2, n_2)$ and $Z(n_1, m_1; n_2, m_2)$ .

In the remainder of this step we prove that for all $M$ sufficiently large we have that $E_M \subset E_M^1 \cup E_M^2$, which in view of (\ref{S4PR1}) and (\ref{S4PR2}), would imply (\ref{S4MR1}).

\smallskip
We will show that $E_M \subset E_M^1 \cup E_M^2$ by demonstrating that $ ( E_M^1)^c  \cap (E_M^2)^c \subset E^c_M$. The latter statement would follow if we can show that for all large $M$
\begin{equation}\label{S4MR2}
\begin{split}
&M^{1/8} (m-1)^{1/3} \sigma_{\theta_c}(n/(m-1)) - (m-1)h_{\theta_c}(n/(m-1))  + \\
&M^{1/8} (M - n+1)^{1/3} \sigma_{\theta_c}\left( \frac{M-m+1}{M-n+1} \right)- (M - n+1)h_{\theta_c}\left( \frac{M-m+1}{M-n+1} \right) \leq - \frac{M^{1/2} h''_{\theta_c}(1)}{3}.
\end{split}
\end{equation}

We recall from Section \ref{Section2.1} that $\sigma_{\theta_c}(x)$ is a smooth positive function on $(0, \infty)$ and both $\frac{n}{m-1}$ and $\frac{M-m + 1}{M - n +1}$ belong to $[1/2, 1]$. This means that we can find a constant $A_1 > 0$ such that
\begin{equation}\label{FDS1}
\begin{split}
&M^{1/8} (m-1)^{1/3} \sigma_{\theta_c}(n/(m-1)) \leq A_1 M^{11/24} \mbox{ and } \\
&M^{1/8} (M - n+1)^{1/3} \sigma_{\theta_c}\left( \frac{M-m+1}{M-n+1} \right) \leq A_1 M^{11/24},
\end{split}
\end{equation}
where we also used that $m \leq M$ and $M-n + 1 \leq M$.

Using that $m = n + K$, (\ref{S2SummH}) and (\ref{S2SummH2}) we get
\begin{equation}\label{FDS2}
(M - n+1)h_{\theta_c}\left( \frac{M-m+1}{M-n+1} \right) =   \frac{h''_{\theta_c}(1) K^2}{2(M-n+1)} + O(M^{1/4})  \mbox{ and } (m-1)h_{\theta_c}\left(\frac{n}{m-1} \right)  \geq 0.
\end{equation}

Combining (\ref{FDS1}) and (\ref{FDS2}) we get for all large $M$
\begin{equation*}
\begin{split}
&M^{1/8} (m-1)^{1/3} \sigma_{\theta_c}(n/(m-1)) - (m-1)h_{\theta_c}(n/(m-1))  + M^{1/8} (M - n+1)^{1/3} \sigma_{\theta_c}\left( \frac{M-m+1}{M-n+1} \right) \\
&- (M - n+1)h_{\theta_c}\left( \frac{M-m+1}{M-n+1} \right) \leq (2A_1+1) M^{11/24} -  \frac{h''_{\theta_c}(1) K^2}{2(M-n+1)} .
\end{split}
\end{equation*}
Since $K = \lfloor M^{3/4} \rfloor$ and $h''_{\theta_c}(1) > 0$ we see that the last inequality implies (\ref{S4MR2}).

\smallskip
{\bf \raggedleft Step 3.} In this step we prove (\ref{S4MR1}) when $m = K + n$ and $1 \leq n \leq \lfloor M/2 \rfloor - K + 1$ and $n/(m-1) \in (0, 1/2]$. Let us set $\epsilon = \min (1, h_{\theta_c}(1/2))$, which is positive in view of (\ref{S2SummH2}). From Lemma \ref{S2S2L1} we have that there are positive constants $B_1, b_1 > 0$ such that
\begin{equation}\label{S4PR3}
\mathbb{P}(E_M^3) \hspace{-0.5mm} \leq \hspace{-0.5mm} B_1 e^{-b_1 M^{3/4} },  \hspace{-0.5mm}  \mbox{ with } \hspace{-0.5mm} E_M^3 = \left\{ \log Z(1,1; m^*, n^*) + (m-1)\left(h_{\theta_c}\left(\frac{n}{m-1}\right) - \epsilon\right) \geq 0  \right\} \hspace{-0.5mm}.
\end{equation}
In addition, since $1 \leq n \leq \lfloor M/2 \rfloor - K + 1$, we have that (\ref{S4PR2}) also holds for all large enough $M$.

In the remainder of this step we prove that for all $M$ sufficiently large we have that $E_M \subset E_M^2 \cup E_M^3$, which in view of (\ref{S4PR2}) and (\ref{S4PR3}), would imply (\ref{S4MR1}).

\smallskip
We will show that $E_M \subset E_M^2 \cup E_M^3$ by demonstrating that $ ( E_M^2)^c  \cap (E_M^3)^c \subset E^c_M$. The latter statement would follow if we can show that
\begin{equation}\label{S4MR3}
\begin{split}
&- (m-1)(h_{\theta_c}(n/(m-1)) - \epsilon)  + M^{1/8} (M - n+1)^{1/3} \sigma_{\theta_c}\left( \frac{M-m+1}{M-n+1} \right) - \\
& (M - n+1)h_{\theta_c}\left( \frac{M-m+1}{M-n+1} \right) \leq - \frac{M^{1/2} h''_{\theta_c}(1)}{3}.
\end{split}
\end{equation}
In view of (\ref{S2SummH2}) and the definition of $\epsilon$ we have
$$h_{\theta_c}(n/(m-1)) - \epsilon \geq 0.$$
Combining the latter with the second line of (\ref{FDS1}) and the first equality in (\ref{FDS2}) we conclude
\begin{equation*}
\begin{split}
&- (m-1)(h_{\theta_c}(n/(m-1)) - \epsilon)  + M^{1/8} (M - n+1)^{1/3} \sigma_{\theta_c}\left( \frac{M-m+1}{M-n+1} \right) - \\
& (M - n+1)h_{\theta_c}\left( \frac{M-m+1}{M-n+1} \right) \leq (A_1+1) M^{11/24} - \frac{h''_{\theta_c}(1) }{2} \cdot \frac{K^2}{(M-n+1)},
\end{split}
\end{equation*}
which implies (\ref{S4MR3}) as $K = \lfloor M^{3/4} \rfloor$ and $h''_{\theta_c}(1) > 0$. This concludes the proof of the lemma.

%
\section{Proof of Theorem \ref{PM3}} \label{Section5}

In this section we prove Theorem \ref{PM3}. The theorem follows immediately from Lemma \ref{S5LB} (which provides a lower bound) and Lemma \ref{S5UB} (which provides an upper bound). The lower bound relies on the simple observation that $\Fe_N$, as in (\ref{Fmax}), cannot be smaller than the maximum of the variables $\log w_{i,j}$ for $i,j\in \llbracket 1, N\rrbracket$. To prove the upper bound, we will use Lemma \ref{S2S2L2}. In this section we continue with the notation of Sections \ref{Section1} and \ref{Section2}.

\subsection{Lower bound} The following result proves the lower bound in Theorem \ref{PM3} for any $c_1 < 2/\theta$.
\begin{lemma}\label{S5LB}
Fix $\theta>0$. For any $\eps>0$,
$$ \lim_{N\to\infty} \mathbb P\left( \Fe_N \geq \left( 2/\theta  -\eps  \right) \log N\right) =1 $$
\end{lemma}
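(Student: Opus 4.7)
The plan is to use the trivial observation (mentioned in the setup) that $\mathsf{F}_N$ dominates the free energy of a trivial length-one path. More precisely, for any $(i,j)\in \llbracket 1,N\rrbracket^2$ the set $\Pi(i,j;i,j)$ consists of the single degenerate path consisting only of the vertex $(i,j)$, so $Z(i,j;i,j)=w_{i,j}$, and hence from the definition (\ref{Fmax}),
\[
\Fe_N \;\geq\; \max_{(i,j)\in\llbracket 1,N\rrbracket^2} \log w_{i,j}.
\]
So the task reduces to showing that the maximum of $N^2$ i.i.d.\ copies of $\log w_{1,1}$, where $w_{1,1}$ has the inverse-gamma density (\ref{S1invGammaDens}), is at least $(2/\theta-\eps)\log N$ with probability tending to $1$.

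For this I would estimate the upper tail of $w_{1,1}$ from below. Since $e^{-1/x}\geq e^{-1}$ for $x\geq 1$, for any $t\geq 1$,
\[
\mathbb P(w_{1,1}\geq t)=\int_t^\infty \frac{x^{-\theta-1}e^{-1/x}}{\Gamma(\theta)}\,dx \;\geq\; \frac{e^{-1}}{\Gamma(\theta)}\int_t^\infty x^{-\theta-1}\,dx \;=\; \frac{e^{-1}}{\theta\,\Gamma(\theta)}\,t^{-\theta}.
\]
Applied with $t=N^{2/\theta-\eps}$ (which is $\geq 1$ for large $N$), this gives
\[
p_N \;:=\; \mathbb P\!\left(\log w_{1,1}\geq (2/\theta-\eps)\log N\right) \;\geq\; \frac{e^{-1}}{\theta\,\Gamma(\theta)}\,N^{-2+\theta\eps}.
\]

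By independence of the $w_{i,j}$ and the elementary inequality $1-x\leq e^{-x}$,
\[
\mathbb P\!\left(\max_{(i,j)\in\llbracket 1,N\rrbracket^2}\log w_{i,j}<(2/\theta-\eps)\log N\right) = (1-p_N)^{N^2} \leq \exp(-N^2 p_N)\leq \exp\!\left(-\frac{e^{-1}\,N^{\theta\eps}}{\theta\,\Gamma(\theta)}\right),
\]
which tends to $0$ as $N\to\infty$ since $\theta\eps>0$. Combining this with the deterministic inequality $\Fe_N\geq \max_{(i,j)}\log w_{i,j}$ yields the claim. There is essentially no obstacle here beyond a tail computation for the inverse-gamma distribution; the only thing to be careful about is obtaining a clean lower bound on $\mathbb P(w_{1,1}\geq t)$, handled above by the crude but sufficient bound $e^{-1/x}\geq e^{-1}$ for $x\geq 1$.
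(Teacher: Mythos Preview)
Your proof is correct and follows essentially the same approach as the paper: bound $\Fe_N$ below by the maximum of the single-site log-weights, use a lower bound on the inverse-gamma upper tail, and conclude via independence and $1-x\le e^{-x}$. The only cosmetic difference is that the paper uses the slightly sharper $e^{-1/x}\ge e^{-N^{-c}}$ in the tail estimate where you use $e^{-1/x}\ge e^{-1}$; both suffice. (Implicitly both arguments treat the case $\eps<2/\theta$; the case $\eps\ge 2/\theta$ follows trivially by monotonicity.)
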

\begin{proof} Recall the definition of $\Fe_N$ in \eqref{Fmax}.
By restricting the maximum over $(m_1,n_1)=(m_2,n_2)$, we have that  $\Fe_N \geq \log w^{\max}_N$ where $w^{\max}_N=\max_{1\leq m,n\leq N} w_{m,n}$.  The $w_{m,n}$ form a collection of $N^2$ i.i.d. inverse-gamma random variables. Thus, for any $c>0$, we have
\begin{equation}
\mathbb P\left( \Fe_N \geq c \log N\right) \geq \mathbb P\left( \log w^{\max}_N \geq c\log N \right)  =1-  \mathbb P\left(  w_{1,1} \leq  N^c \right)^{N^2} . \label{eq:trivialbound}
\end{equation}
We have
\begin{align*}
\mathbb P\left(  w_{1,1} \leq  N^c \right) &= 1-\frac{1}{\Gamma(\theta)}\int_{N^c}^{\infty} x^{-\theta-1}\exp(-x^{-1})\mathrm dx \leq 1-\frac{\exp\left( -N^{-c}\right)N^{-c\theta}}{\theta \Gamma(\theta)}.
\end{align*}
Using the bound $\log(1-x)\leq - x$ for $x \in (0,1)$, we find that for any $0<c< 2/ \theta$,
$$ \mathbb P\left(  w_{1,1} \leq  N^c \right)^{N^2}\leq \exp\left(- \frac{N^{2-c\theta} e^{-N^{-c}}}{\theta \Gamma(\theta)} \right) \xrightarrow[N\to\infty]{} 0.$$
Combining the last statement with \eqref{eq:trivialbound}, we find that for any $0 < c <2/ \theta$,
$$ \lim_{N\to\infty} \mathbb P\left( \Fe_N \geq c \log N\right) =1,$$
which proves the lemma.
\end{proof}


\subsection{Upper bound} The following result proves the upper bound in Theorem \ref{PM3}.
\begin{lemma}\label{S5UB}
Let $\theta_c$ be as in \eqref{DefThetaC} and fix $\theta > \theta_c$. There exists a constant $c_2$ (depending on $\theta$) with
$$ \lim_{N\to\infty} \mathbb P\left( \Fe_N \leq c_2\log N\right) =1.$$
\end{lemma}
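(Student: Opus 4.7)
The plan is a direct union bound against the uniform tail estimate \eqref{UBPP2} in Lemma \ref{S2S2L2}. Since we already have, for $\theta>\theta_c$ and any $a>0$, a constant $D_2=D_2(\theta,a)$ and $M_2=M_2(\theta,a)$ such that
\[
\mathbb{P}\bigl(\log Z(1,1;m,n)\geq D_2\log M\bigr)\leq M^{-a}
\]
whenever $M\geq M_2$, $M\geq N\geq 1$, $m\in\llbracket 1,M\rrbracket$ and $n\in\llbracket 1,N\rrbracket$, the work left is to transport this bound to an arbitrary pair $(m_1,n_1)\leq (m_2,n_2)$ inside $\llbracket 1,N\rrbracket^2$ and then to sum over all such pairs.

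First, I would invoke the translation invariance
$$Z(m_1,n_1;m_2,n_2)\stackrel{d}{=}Z(1,1;m_2-m_1+1,\,n_2-n_1+1),$$
together with the reflection symmetry $Z(1,1;a,b)\stackrel{d}{=}Z(1,1;b,a)$, so that without loss of generality $m':=m_2-m_1+1\geq n':=n_2-n_1+1$ (otherwise swap the roles of $m'$ and $n'$). Both $m',n'\in\llbracket 1,N\rrbracket$. Now apply Lemma \ref{S2S2L2} (specifically \eqref{UBPP2}) with $a=5$, taking the lemma's $M$ to be our $N$, the lemma's $N$ to be $n'$ (which is $\leq N$), and the points $(m,n)=(m',n')$. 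For all $N\geq M_2$ this yields
\[
\mathbb{P}\bigl(\log Z(m_1,n_1;m_2,n_2)\geq D_2\log N\bigr)\leq N^{-5}.
\]

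Next, I would take a union bound over all quadruples $(m_1,n_1,m_2,n_2)\in\llbracket 1,N\rrbracket^4$ with $(m_1,n_1)\leq(m_2,n_2)$. There are at most $N^4$ of them, so
\[
\mathbb{P}\bigl(\Fe_N\geq D_2\log N\bigr)\leq N^4\cdot N^{-5}=N^{-1}\xrightarrow[N\to\infty]{}0,
\]
which gives the desired conclusion with $c_2:=D_2$.

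There is no real obstacle here, as the main technical content has been absorbed into Lemma \ref{S2S2L2}; the only point requiring (trivial) attention is that the hypothesis of that lemma is formulated with $M\geq N$, which forces the preliminary reduction to $m'\geq n'$ via translation and reflection symmetry before applying the bound.
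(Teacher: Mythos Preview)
Your proposal is correct and follows essentially the same approach as the paper: apply Lemma~\ref{S2S2L2} with $a=5$, use translation invariance to transport the bound to arbitrary $(m_1,n_1)\leq(m_2,n_2)$, and take a union bound over the at most $N^4$ quadruples. The only cosmetic difference is that the paper applies Lemma~\ref{S2S2L2} with the lemma's $M$ and $N$ both equal to the ambient $N$, so the reflection step to arrange $m'\geq n'$ is unnecessary there; your detour through the symmetry is harmless but not needed.
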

\begin{proof} From Lemma \ref{S2S2L2} with $a = 5$, $M = N$ and $\theta$ as in the statement of the present lemma, we know that there are constants $M_2, D_2 > 0$ such that for $N \geq M_2$ and $m,n \in \llbracket 1, N \rrbracket$ we have
\begin{equation*}
\mathbb{P}( \log Z(1,1; m,n) \geq D_2 \log N) \leq N^{-5}.
\end{equation*}
Using the distributional equality of $Z(m_1, n_1; m_2, n_2)$ and $Z(m_1 + a , n_1 + b; m_2 + a, n_2 + b)$ for any $a,b \in \mathbb{Z}$ we conclude that for any $(m_1, n_1), (m_2, n_2) \in \llbracket 1, N \rrbracket^2$ with $(m_1, n_1) \leq (m_2, n_2)$ we have
\begin{equation*}
\mathbb{P}( \log Z(m_1,n_1; m_2,n_2) \geq D_2 \log N) \leq N^{-5}.
\end{equation*}
Taking a union bound of the above (there are at most $N$ choices for $m_1, n_1, m_2, n_2$) we get
$$\mathbb{P}(  \Fe_N \geq D_2 \log N) \leq N^{-1}.$$
The latter implies the statement of the lemma with $c_2 = D_2$.
\end{proof}
\begin{remark} In the case $\theta>3$, the constant $c_2$ in Lemma \ref{S5UB} can be chosen as any constant $c_2>2$. Indeed,
from the definition of $\Fe_N$ and (\ref{S2ExpBound}) we have
	\begin{equation*}
	\begin{split}
	\mathbb E \left[  e^{\Fe_N}\right] \leq   \sum_{\substack{(m_1, n_1) \leq (m_2,n_2)\\  m_1, n_1, m_2, n_2 \in \llbracket 1, N \rrbracket}} \mathbb{E} \left[ Z(m_1, n_1; m_2, n_2) \right] \leq \sum_{\substack{(m_1, n_1) \leq (m_2,n_2)\\  m_1, n_1, m_2, n_2 \in \llbracket 1, N \rrbracket}}  \left( \frac{2}{\theta - 1} \right)^{m_2 + n_2 - m_1 - n_1 +1} .
	\end{split}
	\end{equation*}
	The last inequality implies
	\begin{equation*}
	\begin{split}
	\mathbb E \left[  e^{\Fe_N}\right] \leq  \sum_{m_1 = 1}^N \sum_{n_1 = 1}^N \sum_{m_2 = m_1}^\infty \sum_{n_2 = n_1}^\infty\left( \frac{2}{\theta - 1} \right)^{m_2 + n_2 - m_1 - n_1 +1} = \frac{2( \theta - 1) N^2 }{(\theta - 3)^2},
	\end{split}
	\end{equation*}
	where the last equality used the formula for a geometric series, and it is here we use our assumption that $\theta > 3$. Thus, by Chebyshev's inequality, for any $\eps > 0$ we have
	$$ P\left( \Fe_N \geq (2+\eps)\log N\right) =  \mathbb P\left( e^{\Fe_N} \geq N^{2+\eps} \right) \leq  N^{-2 - \eps} \cdot \mathbb E \left[  e^{\Fe_N} \right] \xrightarrow[N\to\infty]{} 0,$$
	which proves the Lemma \ref{S5UB} for any $c_2 > 2$.
	\label{rem:betterc2}
\end{remark}

%
\section{Proofs of Propositions \ref{PMP1}, \ref{PMP2} and \ref{PMP3}} \label{Section7} In this section we prove Propositions \ref{PMP1}, \ref{PMP2} and \ref{PMP3} about the log-gamma polymer measure from Section \ref{SectionS.1}. Throughout this section we use freely notation from Sections \ref{Section1.1}, \ref{SectionS.1} and \ref{Section2} above.

%
\subsection{Proof of Proposition \ref{PMP1}} Throughout this section we assume that $\theta \in (0, \theta_c)$. 

We recall from (\ref{DefCorners}) for $\delta \in (0,1/3)$ the following corner sets 
\begin{equation}\label{S7DefCorners}
\mathsf{SW}_N^{\delta}: = \mathbb{N}^2 \cap[1, N^{1/3 + 2\delta}]^2 \mbox{ and }\mathsf{NE}_N^{\delta} :=  \mathbb{N}^2 \cap[N - N^{1/3 + 2\delta}, N]^2.
\end{equation}
We claim that for each $\delta \in (0,1/3)$ we have that 
\begin{equation}\label{Red1}
\lim_{N \rightarrow \infty} \mathbb{P} \left( (\XS_N,\YS_N) \in  \mathsf{SW}_N^{\delta}  \right) = 1 \mbox{ and } \lim_{N \rightarrow \infty} \mathbb{P} \left( (\XE_N,\YE_N) \in  \mathsf{NE}_N^{\delta}  \right) = 1.
\end{equation}
We prove (\ref{Red1}) below. For now, we assume its validity and conclude the proof of Proposition \ref{PMP1}. \\

Fix $\epsilon >0$ as in the statement of the proposition. From (\ref{Red1}) with $\delta = \min(1/6,  \epsilon/3)$ we immediately conclude that the first four sequences of variables in (\ref{SubCritAll}) converge to $0$ in probability as $N \rightarrow \infty$. To illustrate our point, let us focus on $\frac{\XS_N }{N^{1/3 + \epsilon}}$ and notice that for any $a > 0$ and $N$ large enough so that $aN^{\epsilon} > N^{2\delta}$
$$\mathbb{P}\left( \frac{|\XS_N| }{N^{1/3 + \epsilon}}  > a \right) = \mathbb{P}\left( \XS_N  > aN^{1/3 + \epsilon} \right) \leq \mathbb{P}\left( \XS_N  > N^{1/3 + 2\delta} \right) \leq  1 - \mathbb{P} \left( (\XS_N,\YS_N) \in  \mathsf{SW}_N^{\delta}  \right).$$
Combining the latter with (\ref{Red1}) and taking $N \rightarrow \infty$ we conclude that $\frac{\XS_N }{N^{1/3 + \epsilon}}$ converges to $0$ in probability. An analogous argument shows that $ \frac{\YS_N }{N^{1/3 + \epsilon}}, \frac{\XE_N - N }{N^{1/3 + \epsilon}}, \hspace{1mm} \frac{\YE_N - N }{N^{1/3 + \epsilon}}$ all converge to $0$ in probability. Since 
$\LP_N  = \XE_N + \YE_N - \YS_N - \YS_N$ we also conclude that 
$$\frac{\LP_N - 2N }{N^{1/3 + \epsilon}} =\frac{\XE_N - N }{N^{1/3 + \epsilon}} +  \frac{\YE_N - N }{N^{1/3 + \epsilon}} -  \frac{\XS_N }{N^{1/3 + \epsilon}} - \frac{\YS_N }{N^{1/3 + \epsilon}}$$
also converges to $0$ in probability. Finally, as $\SP_N = \frac{\YE_N - \YS_N + 1}{\XE_N - \XS_N + 1}$, we have 
$$ \frac{N (\SP_N - 1)}{ N^{1/3 + \epsilon}}  = \frac{(\YE_N - N) - \YS_N - (\XE_N-N) + \XS_N }{N^{1/3 + \epsilon}} \cdot \frac{N^{2/3 - \delta} }{N^{2/3 -\delta}  +  N^{-1/3 - \delta} \left[ (\XE_N - N) -  \XS_N +1 \right]}.$$
From our work so far, the first term in the above product converges to $0$ in probability and the second one (from our work with $\epsilon$ replaced with $\delta$) converges to $1$, and so the product converges to $0$ in probability as desired. This proves that all sequences of variables in (\ref{SubCritAll}) converge to $0$ in probability as $N \rightarrow \infty$.\\

In the remainder of this section we prove (\ref{Red1}). From Definitions \ref{Def1} and \ref{Def2} we know that for each $(m_1, n_1), (m_2, n_2) \in \llbracket 1,N \rrbracket^2$ we have 
\begin{equation}\label{LC0Copy}
\mathbb{P} \left(\XS_N = m_1, \YS_N = n_1, \XE_N = m_2, \YE_N = n_2 \right) = \mathbb{E} \left[ \frac{Z(m_1, n_1; m_2, n_2)}{\ZM} \right].
\end{equation}
Let $S^{\delta}_N$ be the set of quadruples $m_1, n_1, m_2, n_2   \in \llbracket 1, N \rrbracket$ such that $(m_1, n_1) \leq (m_2, n_2)$ and $((m_1, n_1), (m_2, n_2)) \not \in \mathsf{SW}_N^{\delta} \times \mathsf{NE}_N^{\delta}$ and note $|S^{\delta}_N| \leq N^{4}$. To prove (\ref{Red1}) it suffices to show that 
\begin{equation}\label{Red2}
\lim_{N \rightarrow \infty} \mathbb{P} \left( (\XS_N,\YS_N, \XE_N, \YE_N) \in S^{\delta}_N  \right) = 0.
\end{equation}

From (\ref{LC1}) we know that there exists $N_0 > 0$ (depending on $\theta$ and $\delta$) such that for all $N \geq N_0$ and $(m_1, n_1, m_2, n_2) \in S^{\delta}_N$ we have 
\begin{equation}\label{LC1Copy}
\mathbb{P} \left(\log Z (m_1, n_1; m_2, n_2) \geq -2N \Psi(\theta/2) -  N^{1/3} \log N \right) \leq N^{-5}.
\end{equation}
In addition, from Proposition \ref{LGPCT} we know that 
\begin{equation}\label{LC2Copy}
\lim_{N \rightarrow \infty} \mathbb{P} \left(E_N  \right) = 0, \mbox{ where } E_N = \{ \log Z (1, 1; N, N) \leq -2N \Psi(\theta/2) - (1/2) N^{1/3} \log N \}.
\end{equation}
Let us introduce the event
$$F_N = \cup_{(m_1,n_1, m_2,n_2) \in  S^{\delta}_N} \{ \log Z (m_1, n_1; m_2, n_2) \geq -2N \Psi(\theta/2) -  N^{1/3} \log N\}$$
and note that by union bound and (\ref{LC1Copy}) we have for all $N \geq N_0$
\begin{equation}\label{LC3Copy}
\mathbb{P}(F_N) \leq |S^{\delta}_N| \cdot N^{-5} \leq N^{-1}.
\end{equation}

Combining (\ref{LC0Copy}), (\ref{LC2Copy}) and (\ref{LC3Copy}), we get
\begin{equation}\label{LC4Copy}
\begin{split}
&\limsup_{N \rightarrow \infty} \mathbb{P} \hspace{-1mm}\left( (\XS_N,\YS_N, \XE_N, \YE_N) \in S^{\delta}_N  \right) \leq \\
& \limsup_{N \rightarrow \infty} \mathbb{P} \hspace{-1mm} \left( \left\{ (\XS_N,\YS_N, \XE_N, \YE_N) \in S^{\delta}_N \right\} \cap E^c_N \cap F^c_N \right) +  \limsup_{N \rightarrow \infty} \mathbb{P}(E_N) + \limsup_{N \rightarrow \infty} \mathbb{P}(F_N) = \\
& \limsup_{N \rightarrow \infty} \mathbb{P} \left( \left\{ (\XS_N,\YS_N, \XE_N, \YE_N) \in S^{\delta}_N \right\} \cap E^c_N \cap F^c_N \right) =  \\
&\limsup_{N \rightarrow \infty} \mathbb{E} \left[  \frac{\sum_{(m_1,n_1,m_2,n_2) \in S_N^{\delta} } Z(m_1,n_1;m_2;n_2) }{Z_N^w} \cdot {\bf 1}_{E^c_N} \cdot {\bf 1}_{F^c_N}\right].
\end{split}
\end{equation}
On the event $E^c_N$ we have 
$$Z_N^w \geq Z(1,1;N,N) \geq \exp( -2N \Psi(\theta/2) - (1/2)N^{1/3} \log N).$$
and on the event $F_N^c$ we have for $(m_1,n_1,m_2,n_2) \in S_N^{\delta}$
$$Z(m_1,n_1;m_2;n_2)  \leq \exp( -2N \Psi(\theta/2) - N^{1/3} \log N).$$
Combining the latter two statements, we see 
\begin{equation*}
\begin{split}
&\limsup_{N \rightarrow \infty} \mathbb{E} \left[  \frac{\sum_{(m_1,n_1,m_2,n_2) \in S_N^{\delta} } Z(m_1,n_1;m_2;n_2) }{Z_N^w} \cdot {\bf 1}_{E^c_N} \cdot {\bf 1}_{F^c_N}\right] \leq \\
& \limsup_{N \rightarrow \infty} \mathbb{E} \left[  \frac{|S_N^{\delta}| \cdot  \exp( -2N \Psi(\theta/2) - N^{1/3} \log N) }{\exp( -2N \Psi(\theta/2) - (1/2)N^{1/3} \log N)} \cdot {\bf 1}_{E^c_N} \cdot {\bf 1}_{F^c_N}\right] \leq \limsup_{N \rightarrow \infty} N^4 \cdot e^{-(1/2) N^{1/3} \log N} = 0,
\end{split}
\end{equation*}
which together with (\ref{LC4Copy}) establishes (\ref{Red2}).

%
\subsection{Proof of Proposition \ref{PMP2}} Throughout this section we assume that $\theta = \theta_c$. 

From Theorem \ref{PM2} we know that there exists a constant $\rho > 0$ such that 
\begin{equation}\label{OP1}
\lim_{N \rightarrow \infty} \mathbb{P}(A_N) = 0 \mbox{ where } A_N = \{ \mathsf{F}_N \leq \rho N^{1/3} (\log N)^{2/3}\},
\end{equation}
where we recall that $\mathsf{F}_N$ is the maximal point-to-point free energy as in (\ref{Fmax}).

Let $M_1,D_1 > 0$ be as in Lemma \ref{S2S2L2} for $\theta = \theta_c$ and $a = 5$. In addition, suppose that $\delta \in (0,1)$ is small enough so that $\delta^{1/3} D_1 < \rho/4$. We first show that
\begin{equation}\label{OP2}
\limsup_{N \rightarrow \infty} \mathbb{P}(   \LP_N \leq \delta N ) = 0,
\end{equation}
which would imply (\ref{CritLength}) with $c_1 = \delta$ once we recall that $\LP_N \leq 2N-2$ by definition. 

Let $M_2 \in \mathbb{N}$ be sufficiently large so that $N_\delta := \lfloor \delta N \rfloor +1 \geq \max(2,M_1)$ for all $N \geq M_2$. From Lemma \ref{S2S2L2} we have for $N \geq M_2$ and $(m, n) \in \llbracket 1, N_\delta \rrbracket^2$ that 
\begin{equation*}
 \mathbb{P}\left(\log Z(1,1;m,n) \geq D_1 N_{\delta}^{1/3} \cdot (\log N_{\delta})^{2/3}\right)  \leq N_\delta^{-5}.
\end{equation*}
The latter and the tower of inequalities
$$ D_1 N_{\delta}^{1/3} \cdot (\log N_{\delta})^{2/3} \leq 2D_1 \delta^{1/3} N^{1/3} \cdot (\log N_{\delta})^{2/3} \leq (\rho/2) N^{1/3} (\log N_{\delta})^{2/3} \leq (\rho/2) N^{1/3} (\log N)^{2/3},$$
together imply that for $N \geq M_2$ and $(m, n) \in \llbracket 1, N_\delta \rrbracket^2$
\begin{equation}\label{OP3}
 \mathbb{P}\left(\log Z(1,1;m,n) \geq (\rho/2) N^{1/3} \cdot (\log N)^{2/3}\right)  \leq N_\delta^{-5}.
\end{equation}
Let $T^{\delta}_N$ be the set of quadruples $m_1, n_1, m_2, n_2   \in \llbracket 1, N \rrbracket$ such that $(m_1, n_1) \leq (m_2, n_2)$ and $m_2 - m_1 + n_2 - n_1 \leq \lfloor \delta N \rfloor$. Note that $Z(m_1,n_1; m_2, n_2)$ has the same distribution as $Z(m_1 + a,n_1 + b; m_2 + a, n_2 + b)$ for any $a,b \in \mathbb{Z}$ and also that if $(m_1, n_1, m_2, n_2) \in  T^{\delta}_N$, then $(1, 1; m_2 - m_1 +1, n_2 - n_1 + 1) \in \llbracket 1, N_\delta \rrbracket^2$. The latter, (\ref{OP3}) and a union bound imply for $N \geq M_2$
\begin{equation}\label{OP4}
\begin{split}
&\mathbb{P}\left(B_N \right)  \leq N^4 N_\delta^{-5}, \mbox{ and consequently } \lim_{N \rightarrow \infty} \mathbb{P}(B_N) = 0,  \\
&\mbox{ where } B_N = \cup_{(m_1,n_1,m_2, n_2) \in T_N^{\delta}} \left\{\log Z(m_1,n_2;m_2,n_2) \geq (\rho/2) N^{1/3} \cdot (\log N)^{2/3} \right\}.
\end{split}
\end{equation}
We mention that in the first inequality in (\ref{OP4}) we used that $|T_N^{\delta}| \leq N^4$. 

Using the equality of events $\{\LP_N \leq \delta N\} = \{(\XS_N ,\YS_N, \XE_N,\YE_N) \in T_N^{\delta} \}$ and (\ref{LC0Copy}) we see that 
\begin{equation}\label{OP5}
\begin{split}
&\limsup_{N \rightarrow \infty} \mathbb{P}(   \LP_N \leq \delta N ) \leq \limsup_{N \rightarrow \infty} \mathbb{P}(   \{ \LP_N \leq \delta N \} \cap A_N^c \cap B_N^c) +\limsup_{N \rightarrow \infty} \mathbb{P}(A_N) + \mathbb{P}(B_N) =   \\
& \limsup_{N \rightarrow \infty} \mathbb{E} \left[  \frac{\sum_{(m_1,n_1,m_2,n_2) \in T_N^{\delta} } Z(m_1,n_1;m_2;n_2) }{Z_N^w} \cdot {\bf 1}_{A^c_N} \cdot {\bf 1}_{B^c_N}\right].
\end{split}
\end{equation}
In the last equality we used $ \limsup_{N \rightarrow \infty} \mathbb{P}(A_N) + \mathbb{P}(B_N)  = 0$, which holds from (\ref{OP1}) and (\ref{OP4}).

On the event $A^c_N$ we have 
$$Z_N^w \geq Z(1,1;N,N) \geq \exp( \mathsf{F}_N) \geq \exp \left( \rho N^{1/3} (\log N)^{2/3} \right),$$
and on the event $B_N^c$ we have for $(m_1,n_1,m_2,n_2) \in T_N^{\delta}$
$$Z(m_1,n_1;m_2;n_2)  \leq \exp\left( (\rho/2)N^{1/3} (\log N)^{2/3} \right).$$
Combining the latter two statements, we see 
\begin{equation*}
\begin{split}
&\limsup_{N \rightarrow \infty} \mathbb{E} \left[  \frac{\sum_{(m_1,n_1,m_2,n_2) \in T_N^{\delta} } Z(m_1,n_1;m_2;n_2) }{Z_N^w} \cdot {\bf 1}_{A^c_N} \cdot {\bf 1}_{B^c_N}\right] \leq \\
& \limsup_{N \rightarrow \infty} \mathbb{E} \left[  \frac{|T_N^{\delta}| \cdot  \exp\left( (\rho/2)N^{1/3} (\log N)^{2/3} \right) }{\exp \left( \rho N^{1/3} (\log N)^{2/3} \right)} \cdot {\bf 1}_{A^c_N} \cdot {\bf 1}_{B^c_N}\right] \leq \limsup_{N \rightarrow \infty} N^4 \cdot e^{-(\rho/2) N^{1/3} (\log N)^{2/3}} = 0,
\end{split}
\end{equation*}
which together with (\ref{OP5}) establishes (\ref{OP2}).\\

In the remainder of this section we prove that $\frac{N (\SP_N - 1)}{ N^{2/3 + \epsilon}}$ converges to $0$ in probability as $N \rightarrow \infty$. We will prove that for any $\delta_1 \in (0,1/3)$
\begin{equation}\label{UP1}
\lim_{N \rightarrow \infty} \mathbb{P}\left(  S_N \leq \exp( - N^{-1/3 + \delta_1})\right) = 0 \mbox{ and } \lim_{N \rightarrow \infty} \mathbb{P} \left(  S_N \geq \exp(  N^{-1/3 + \delta_1}) \right) = 0.
\end{equation}
Notice that the last two statements with $\delta_1 = \min(\epsilon/2, 1/4)$ imply the weak convergence of $\frac{N (\SP_N - 1)}{ N^{2/3 + \epsilon}}$ to $0$ as $N \rightarrow \infty$. Moreover, since $\SP_N$ and $1/\SP_N$ have the same distribution (this follows from the distributional invariance of the log-gamma polymer upon exchanging the horizontal and vertical axes) it suffices to show the first inequality in (\ref{UP1}). The first inequality in (\ref{UP1}) follows from equations (\ref{UP2}) and (\ref{UP3}), which we state and prove in the following two steps.\\

{\bf \raggedleft Step 1.} In this step we prove that 
\begin{equation}\label{UP2}
\lim_{N \rightarrow \infty} \mathbb{P}\left(  S_N \leq 1/2 \right) = 0.
\end{equation}
Let $\epsilon = \min\left(1,  h_{\theta_c}(1/2)\right)$, which in view of (\ref{S2SummH2}) is a positive number. From Lemma \ref{S2S2L1} we can find positive constants $B, b > 0$ such that for $m, n \in \mathbb{N}$, $m \geq n$
\begin{equation*}
\mathbb{P}(\log Z(1,1;m ,n) \geq m\cdot \epsilon - m \cdot h_{\theta_c}(n/m)) \leq B \exp(-bm).
\end{equation*}
The last equation combined with the definition of $\epsilon$, and (\ref{S2SummH2}) allows us to conclude that for $m \geq n$ and $n/m \in (0,1/2]$
\begin{equation}\label{FK1}
\begin{split}
&B \exp(-bm) \geq \mathbb{P}(\log  Z(1,1;m ,n) \geq m \cdot \epsilon - m\cdot h_{\theta_c}(n/m)) \geq \\
&  \mathbb{P}(\log  Z(1,1;m ,n) \geq m \cdot \epsilon - m \cdot h_{\theta_c}(1/2)) \geq  \mathbb{P}(  Z(1,1;m ,n) \geq 1).
\end{split}
\end{equation}

Let $U_N$ be the set of quadruples $m_1, n_1, m_2, n_2 \in \llbracket 1, N \rrbracket$ such that $(m_1, n_1) \leq (m_2, n_2)$ and $\frac{n_2 - n_1 + 1}{m_2 - m_1 + 1} \in (0,1/2]$. Let $\delta > 0$ be such that (\ref{OP2}) holds (recall that we proved (\ref{OP2}) earlier in this section), and let $T_N^{\delta}$ be as in the paragraph above (\ref{OP4}).
Finally, let $\rho > 0$ and the events $ A_N$ be as in (\ref{OP1}).

Note that $Z(m_1,n_1; m_2, n_2)$ has the same distribution as $Z(m_1 + a,n_1 + b; m_2 + a, n_2 + b)$ for any $a,b \in \mathbb{Z}$ and also that if $(m_1, n_1, m_2, n_2) \in  U_N$, then $m_2 - m_1 + 1 \geq n_2 - n_1 + 1$, $(n_2- n_1+1)/(m_2 - m_1+1) \in (0,1/2]$. The latter and a union bound of (\ref{FK1}) implies that
\begin{equation}\label{FK2}
\begin{split}
&\mathbb{P}(C_N) \leq B N^4 \exp(-\delta b N/2 ) \mbox{, and consequently $\lim_{N \rightarrow \infty} \mathbb{P}(C_N) = 0$,}\\
& \mbox{ where } C_N = \cup_{(m_1,n_1,m_2,n_2) \in U_N \setminus T_N^{\delta}} \{ Z(m_1, n_1; m_2, n_2) \geq 1\}.
\end{split}
\end{equation}
We mention that in the first inequality in (\ref{FK2}) we used that $|U_N\setminus T_N^{\delta}| \leq N^4$ and that for $ (m_1,n_1,m_2,n_2) \in U_N \setminus T_N^{\delta}$ we have $m_2 - m_1 +1 \geq \delta N/2$. 

Using the equality of events $\{S_N \leq 1/2  \} = \{(\XS_N ,\YS_N, \XE_N,\YE_N) \in U_N \}$ and (\ref{LC0Copy}), we see that 
\begin{equation}\label{FK3}
\begin{split}
&\limsup_{N \rightarrow \infty} \mathbb{P}(   S_N \leq 1/2 ) = \limsup_{N \rightarrow \infty} \mathbb{P}(   \{ S_N \leq 1/2 \} \cap A_N^c \cap C_N^c)  =   \\
&   \limsup_{N \rightarrow \infty} \mathbb{E} \left[  \frac{\sum_{(m_1,n_1,m_2,n_2) \in U_N } Z(m_1,n_1;m_2;n_2) }{Z_N^w} \cdot {\bf 1}_{A^c_N} \cdot {\bf 1}_{C^c_N}\right].
\end{split}
\end{equation}
In the first equality we used $ \lim_{N \rightarrow \infty} \mathbb{P}(A_N) +  \mathbb{P}(C_N) = 0$, which holds from (\ref{OP1}) and (\ref{FK2}).

We also know from (\ref{LC0Copy}) and (\ref{OP2}) that 
\begin{equation*}
\begin{split}
&   \lim_{N \rightarrow \infty} \mathbb{E} \left[  \frac{\sum_{(m_1,n_1,m_2,n_2) \in T_N^{\delta} } Z(m_1,n_1;m_2;n_2) }{Z_N^w}\right] =\lim_{N \rightarrow \infty} \mathbb{P}(\LP_N \leq \delta N ) = 0.
\end{split}
\end{equation*}
Combining the above two statements, we get
\begin{equation}\label{FK4}
\begin{split}
&\limsup_{N \rightarrow \infty} \mathbb{P}(   S_N \leq 1/2 ) =   \limsup_{N \rightarrow \infty} \mathbb{E} \left[  \frac{\sum_{(m_1,n_1,m_2,n_2) \in U_N\setminus T_N^{\delta} } Z(m_1,n_1;m_2;n_2) }{Z_N^w} \cdot {\bf 1}_{A^c_N} \cdot {\bf 1}_{C^c_N}\right].
\end{split}
\end{equation}
On the event $A^c_N$ we have 
$$Z_N^w \geq Z(1,1;N,N) \geq \exp( \mathsf{F}_N) \geq \exp \left( \rho N^{1/3} (\log N)^{2/3} \right),$$
and on the event $C_N^c$ we have for $(m_1,n_1,m_2,n_2) \in U_N \setminus T_N^{\delta}$
$$Z(m_1,n_1;m_2;n_2)  \leq 1.$$
Combining the latter statements, we see 
\begin{equation*}
\begin{split}
& \limsup_{N \rightarrow \infty} \mathbb{E} \left[  \frac{\sum_{(m_1,n_1,m_2,n_2) \in U_N \setminus T_N^{\delta} } Z(m_1,n_1;m_2;n_2) }{Z_N^w} \cdot {\bf 1}_{A^c_N} \cdot {\bf 1}_{C^c_N}\right] \leq \\
& \limsup_{N \rightarrow \infty} \mathbb{E} \left[  \frac{|U_N\setminus T_N^{\delta}| }{\exp \left( \rho N^{1/3} (\log N)^{2/3} \right)} \cdot {\bf 1}_{A^c_N} \cdot {\bf 1}_{C^c_N}\right] \leq \limsup_{N \rightarrow \infty} N^4 \cdot e^{-\rho N^{1/3} (\log N)^{2/3}} = 0,
\end{split}
\end{equation*}
which together with (\ref{FK4}) establishes (\ref{UP2}).\\

{\bf \raggedleft Step 2.} In this step we prove that for $\delta_1 \in (0,1/3)$ we have
\begin{equation}\label{UP3}
\lim_{N \rightarrow \infty} \mathbb{P}\left(  1/2 < S_N \leq \exp( - N^{-1/3 + \delta_1}) \right) = 0.
\end{equation}

From Proposition \ref{S1LDE1} (applied to $\delta = 1/2$) there exist constants $D_1, d_1, D_2, d_2 > 0$ such that for all $m,n \in \mathbb{N}$ with $m \geq n$, $n/m \in [1/2,1]$ and $x \geq 0$
\begin{equation}\label{GK1}
\mathbb{P} \left(\log Z(1,1,;m,n) + m h_{\theta_c} (n/m) \geq x m^{1/3} \sigma_{\theta_c}(n/m) \right) \leq D_1 e^{-d_1 m} + D_2 e^{-d_2x^{3/2}}.
\end{equation}

As explained in the line above (\ref{S2SummH2}), we know that $h_{\theta_c}$ is strictly decreasing on $[0, 1]$ and $h_{\theta_c}(1) = 0$. In addition, as explained in Section \ref{Section2.1} (see the discussion after Proposition \ref{ThmTight}), we know that $\sigma_{\theta_c}(y) $ is positive and smooth on $(0,\infty)$. In particular, we can find $\alpha \in (0,1)$ such that 
$$\alpha^{-1} \geq \sigma_{\theta_c}(y) \geq \alpha \mbox{ if } y \in [1/2,1].$$
Setting $x = \frac{m^{2/3} \cdot h_{\theta_c} (n/m) }{\sigma_{\theta_c}(n/m)}$ in (\ref{GK1}), we conclude that for all $m,n \in \mathbb{N}$ with $m \geq n$, $n/m \in [1/2,1]$
\begin{equation}\label{GK2}
\mathbb{P} \left( Z(1,1;m,n) \geq 1  \right) \leq D_1 e^{-d_1 m} + D_2 e^{-d_2 \alpha^{3/2} m  [h_{\theta_c} (n/m)]^{3/2}}.
\end{equation}

Let $\delta > 0$ be such that (\ref{OP2}) holds (recall that we proved (\ref{OP2}) earlier in this section), and let $T_N^{\delta}$ be as in the paragraph above (\ref{OP4}). In addition, let $U_N^{\delta_1}$ be the set of quadruples $m_1, n_1, m_2, n_2   \in \llbracket 1, N \rrbracket$ such that $(m_1, n_1) \leq (m_2, n_2)$ and $\frac{n_2 - n_1 + 1}{m_2 - m_1 + 1} \in \left(1/2,\exp\left(-N^{-1/3 + \delta_1}\right) \right]$. Notice that if $(m_1,n_1, m_2, n_2) \in U_N^{\delta_1}$ we have that $m = m_2 - m_1 + 1$, $n = n_2 - n_1 +1$ satisfy the conditions above (\ref{GK2}). Since $Z(m_1, n_1;m_2, n_2)$ has the same distribution as $Z(1,1;m,n)$ we conclude that (\ref{GK2}) holds with $Z(1,1;m,n)$ replaced with $Z(m_1, n_1;m_2, n_2)$, and $m = m_2 - m_1 + 1$, $n = n_2 - n_1 +1$. Furthermore as $h_{\theta_c}$ is strictly decreasing on $[0, 1]$ we have 
$$h_{\theta_c} (n/m) \geq h_{\theta_c} \left (\exp\left(-N^{-1/3 + \delta_1}\right)  \right) =  \frac{h_{\theta_c}''(1)}{2} \cdot N^{-2/3 + 2\delta_1} + O(N^{-1 + 3\delta_1}),$$
where in the last equality we used (\ref{S2SummH}). This gives for $(m_1, n_2, m_2, n_2) \in U_N^{\delta_1}$ 
\begin{equation}\label{GK3}
\begin{split}
&\mathbb{P} \left( Z(m_1,n_1; m_2,n_2) \geq 1 \right) \leq D_1 e^{-d_1 (m_2 - m_1 + 1)} +  \\
&D_2 \exp \left( -d_2 \alpha^{3/2}  (m_2 - m_1 + 1) \left( h_{\theta_c}''(1) /2  \right)^{3/2} \cdot N^{-1 + 3\delta_1}  + O( (m_2 - m_1 + 1) N^{-4/3 + 4\delta_1} )\right).
\end{split}
\end{equation}
From (\ref{GK3}) we can conclude the existence of constants $D, d> 0$ such that  if $(m_1, n_1,m_2, n_2) \in U_N^{\delta_1} \setminus T_N^{\delta}$  (which implies $N \geq m_2 - m_1 + 1 \geq \delta N/2$) we have
\begin{equation}\label{GK4}
\begin{split}
&\mathbb{P} \left( Z(m_1,n_1; m_2,n_2) \geq 1  \right) \leq D e^{-d N^{3\delta_1}}.
\end{split}
\end{equation}
The latter and a union bound gives 
\begin{equation}\label{GK5}
\begin{split}
&\mathbb{P}(C^{\delta_1}_N) \leq D N^4 \exp(- d N^{3\delta_1} ) \mbox{, and consequently $\lim_{N \rightarrow \infty} \mathbb{P}(C^{\delta_1}_N) = 0$,}\\
& \mbox{ where } C^{\delta_1}_N = \cup_{(m_1,n_1,m_2,n_2) \in U^{\delta_1}_N \setminus T_N^{\delta}} \{ Z(m_1, n_1; m_2, n_2) \geq 1\}.
\end{split}
\end{equation}
From here we can repeat verbatim the argument from Step 1 above, starting from (\ref{FK3}), to conclude (\ref{UP3}). In that argument we only need to replace the events $\{S_N \leq 1/2  \}$ with $\{1/2 < S_N  \leq \exp( - N^{-1/3 + \delta_1} ) \} =  \{(\XS_N ,\YS_N, \XE_N,\YE_N) \in U^{\delta_1}_N \}$, the sets $U_N$ with $U_N^{\delta_1}$, and the events $C_N$ with $C^{\delta_1}_N$.

%
\subsection{Proof of Proposition \ref{PMP3}} In this section we prove Proposition \ref{PMP3}. Proposition \ref{PMP3} contains two types of statements: about the asymptotic distribution of the length $\LP_N$ of a path sampled according to the log-gamma polymer measure, and about the asymptotic distribution of the starting point $(\XS_N, \YS_N)$ of such a path. We will split the proofs of these two types of statements into Sections \ref{PMP3Proof1} and \ref{PMP3Proof2}.  Throughout this section we assume that $\theta > \theta_c$.

%
\subsubsection{Asymptotics of $\LP_N$}\label{PMP3Proof1} In this section we prove (\ref{SuperCritLength}) for all $\theta > \theta_c$ and that $\LP_N$ is tight for $\theta > 3$. For clarity we split the proof into three steps.\\

{\bf \raggedleft Step 1.} In this step we summarize the technical inputs we will need in the other steps and specify the value of $c_2$ in the statement of the proposition. 

From our discussion at the end of Section \ref{Section2.1} we know that $g_{\theta}^{-1}$ is an increasing bijection from $(0,\infty)$ to $(0, \theta)$. Since $\theta > \theta_c$, we can find a (unique) $\phi_c$ (depending on $\theta$) such that $g_{\theta}(\phi_c) = \theta_c/2$. Moreover, since $g^{-1}_{\theta}(1) = \theta/2 > \theta_c/2$, we can conclude that $\phi_c \in (0,1)$. If $h_{\theta}$ is as in (\ref{HDefLLN}), we know from Section \ref{Section2.1} that $h_{\theta}'(x) = \Psi(g_{\theta}^{-1}(x))$ and is a strictly increasing function on $(0,\infty)$. We also know that $h_{\theta}'(\phi_c) = \Psi(\theta_c/2) = 0$ and so $h_{\theta}(x)$ is decreasing on $[0, \phi_c]$ and increasing on $[\phi_c,1]$ so that it attains its minimum on $[0,1]$ at $\phi_c$. We also know that 
\begin{equation}\label{TR1}
\kappa := \min_{[0,1]} h_{\theta}(x) = h_{\theta}(\phi_c) = (\phi_c) \cdot \Psi(\theta_c/2) + \Psi(\theta -\theta_c/2) =  \Psi(\theta -\theta_c/2) > 0,
\end{equation}
where we used (\ref{HDefLLN}) and the fact that $\Psi(\theta_c/2) = 0$, while $\Psi(x) > 0$ for $x > \theta_c/2$.

Let $B, b > 0$ be as in Lemma \ref{S2S2L1} for $\epsilon = \min(\kappa/2,1)$. With this choice of parameters, we define 
\begin{equation}\label{TR2}
c_2 : = \max(8/\kappa, 10/b).
\end{equation}
This is the choice of $c_2$ with which we will prove (\ref{SuperCritLength}) in the next step.\\

The last thing we do in this step is record the following statement
\begin{equation}\label{TR3}
\lim_{N \rightarrow \infty} \mathbb{P} \left( A_N \right) = 0 \mbox{, where } A_N = \left\{ \frac{1}{N^2}\sum_{i,j = 1}^N w_{i,j} \leq \frac{1}{2(\theta -1)}  \right\},
\end{equation}
and $w_{i,j}$ are as in Definition \ref{Def1}. Equation (\ref{TR3}) follows from the Law of large numbers and the fact that $\mathbb{E}[w_{1,1}] = (\theta - 1)^{-1}$.\\

{\bf \raggedleft Step 2.} In this step we prove (\ref{SuperCritLength}). Recall from Step 1, that $B, b > 0$ are as in Lemma \ref{S2S2L1}  for $\epsilon = \min(\kappa/2,1)$. Suppose that $m \geq n \geq 1$ and $m \geq (c_2/2) \cdot \log N$. From Lemma \ref{S2S2L1} we know 
$$B \exp(-bm) \geq \mathbb{P}(\log  Z(1,1;m ,n) \geq m \cdot \epsilon - m\cdot h_{\theta}(n/m)).$$
Since $h_{\theta}(n/m) \geq \kappa$ as in (\ref{TR1}) and $\epsilon \leq \kappa/2$, we conclude that 
$$B \exp(-bm) \geq \mathbb{P}(\log  Z(1,1;m ,n) \geq  - (\kappa/2) m ).$$
Since $m \geq (c_2/2) \cdot \log N$ and $c_2/2 \geq  4/\kappa$ and $c_2/2 \geq 5/b$ by (\ref{TR2}) we conclude that
\begin{equation}\label{TR4}
BN^{-5} \geq \mathbb{P}( Z(1,1;m ,n) \geq  N^{-4}).
\end{equation}
Let $T_N$  be the set of quadruples $m_1, n_1, m_2, n_2   \in \llbracket 1, N \rrbracket$ such that $(m_1, n_1) \leq (m_2, n_2)$ and $m_2 - m_1 + n_2 - n_1 \geq c_2 \log N$. Using that $Z(m_1,n_1; m_2, n_2)$ has the same distribution as $Z(m_1 + a,n_1 + b; m_2 + a, n_2 + b)$ for any $a,b \in \mathbb{Z}$ and also that $Z(1,1;m,n)$ has the same distribution as $Z(1,1;n,m)$, we conclude from (\ref{TR4}) that for $(m_1, n_1, m_2, n_2) \in T_N$ we have
\begin{equation*}
BN^{-5} \geq \mathbb{P}( Z(m_1, n_1; m_2, n_2) \geq  N^{-4}).
\end{equation*}
In particular, taking a union bound and using that $|T_N| \leq N^4$ we conclude 
\begin{equation}\label{TR5}
\lim_{N \rightarrow \infty} \mathbb{P}( B_N) = 0 \mbox{, where } B_N = \cup_{(m_1, n_1, m_2, n_2) \in T_N}  \{  Z(m_1, n_1; m_2, n_2) \geq  N^{-4} \}.
\end{equation}

Using the equality of events $\{\LP_N \geq c_2 \log N\} = \{(\XS_N ,\YS_N, \XE_N,\YE_N) \in T_N\}$ and (\ref{LC0Copy}) we see that 
\begin{equation}\label{TR6}
\begin{split}
&\limsup_{N \rightarrow \infty} \mathbb{P}(   \LP_N \geq c_2 \log N ) = \limsup_{N \rightarrow \infty} \mathbb{P}(   \{ \LP_N \geq c_2 \log N \} \cap A_N^c \cap B_N^c) =   \\
& \limsup_{N \rightarrow \infty} \mathbb{E} \left[  \frac{\sum_{(m_1,n_1,m_2,n_2) \in T_N } Z(m_1,n_1;m_2;n_2) }{Z_N^w} \cdot {\bf 1}_{A^c_N} \cdot {\bf 1}_{B^c_N}\right].
\end{split}
\end{equation}
In the first equality we used $ \lim_{N \rightarrow \infty} \mathbb{P}(A_N) + \mathbb{P}(B_N)  = 0$, which holds from (\ref{TR3}) and (\ref{TR5}).

On the event $A^c_N$ we have 
$$Z_N^w \geq \sum_{i,j = 1}^N w_{i,j} \geq \frac{N^2}{2(\theta -1)},$$
and on the event $B_N^c$ we have for $(m_1,n_1,m_2,n_2) \in  T_N$
$$Z(m_1,n_1;m_2;n_2)  \leq N^{-4}.$$
Combining the latter statements, we see 
\begin{equation*}
\begin{split}
& \limsup_{N \rightarrow \infty} \mathbb{E} \left[  \frac{\sum_{(m_1,n_1,m_2,n_2) \in T_N } Z(m_1,n_1;m_2;n_2) }{Z_N^w} \cdot {\bf 1}_{A^c_N} \cdot {\bf 1}_{B^c_N}\right] \leq \\
& \limsup_{N \rightarrow \infty} \mathbb{E} \left[  \frac{| T_N| \cdot N^{-4}\cdot 2 (\theta-1) }{N^2} \cdot {\bf 1}_{A^c_N} \cdot {\bf 1}_{B^c_N}\right] \leq \limsup_{N \rightarrow \infty} \frac{2 (\theta-1)}{N^2}  = 0,
\end{split}
\end{equation*}
which together with (\ref{TR6}) establishes (\ref{SuperCritLength}).\\

{\bf \raggedleft Step 3.} In this last step we show that $\LP_N$ is tight when $\theta > 3$. \\

Fix $(m_1,n_1) \leq (m_2, n_2)$ with $m_1, n_1, m_2, n_2 \in \mathbb{N}$. Using that $Z(m_1,n_1; m_2, n_2)$ has the same distribution as $Z(m_1 + a,n_1 + b; m_2 + a, n_2 + b)$ for any $a,b \in \mathbb{Z}$ and (\ref{S2ExpBound}) we conclude 
\begin{equation*}
\mathbb{E}\left[ Z(m_1,n_1; m_2, n_2)  \right] \leq \left(\frac{2}{\theta-1} \right)^{m_2 - m_1 + n_2 - n_1 + 1}.
\end{equation*}
The last equation implies that 
\begin{equation}\label{TR7}
\begin{split}
 &\sum_{\substack{(m_1, n_1) \leq (m_2,n_2)\\  m_1, n_1, m_2, n_2 \in \llbracket 1, N \rrbracket}} \mathbb{E}\left[ (m_2 - m_1 + n_2 - n_1) Z(m_1,n_1; m_2, n_2)  \right] \leq \\
& \sum_{\substack{(m_1, n_1) \leq (m_2,n_2)\\  m_1, n_1, m_2, n_2 \in \llbracket 1, N \rrbracket}} (m_2 - m_1 + n_2 - n_1) \left(\frac{2}{\theta-1} \right)^{m_2 - m_1 + n_2 - n_1 + 1} \leq  \lambda_{\theta} N^2, \mbox{ where }\\
& \lambda_{\theta} =  \sum^{\infty}_{a,b = 0} (a+b) \cdot \left(\frac{2}{\theta-1} \right)^{a + b + 1} \in (0,\infty).
\end{split}
\end{equation}
We mention that $\lambda_{\theta} \in (0,\infty)$, since $\theta > 3$ and hence $2 (\theta - 1)^{-1} \in (0,1)$.\\

Let $\epsilon > 0$ be given. We can find $M \in \mathbb{N}$ such that 
\begin{equation}\label{TR8}
\frac{2\lambda_{\theta}(\theta - 1)}{M} < \epsilon.
\end{equation}
Let $T_N^M$ be the set of quadruples $m_1, n_1, m_2, n_2   \in \llbracket 1, N \rrbracket$ such that $(m_1, n_1) \leq (m_2, n_2)$ and $m_2 - m_1 + n_2 - n_1 \geq M$. Using the equality of events $\{\LP_N \geq M \} = \{(\XS_N ,\YS_N, \XE_N,\YE_N) \in T^M_N\}$ and (\ref{LC0Copy}) we see that 
\begin{equation}\label{TR9}
\begin{split}
&\limsup_{N \rightarrow \infty} \mathbb{P}(   \LP_N \geq M ) = \limsup_{N \rightarrow \infty} \mathbb{P}(   \{ \LP_N \geq M \} \cap A_N^c) =   \\
& \limsup_{N \rightarrow \infty} \mathbb{E} \left[  \frac{\sum_{(m_1,n_1,m_2,n_2) \in T^M_N } Z(m_1,n_1;m_2;n_2) }{Z_N^w} \cdot {\bf 1}_{A^c_N} \right].
\end{split}
\end{equation}
where $A_N$ is as in (\ref{TR3}) and we used from the same equation that $ \lim_{N \rightarrow \infty} \mathbb{P}(A_N) = 0$.

On the event $A^c_N$ we have 
$$Z_N^w \geq \sum_{i,j = 1}^N w_{i,j} \geq \frac{N^2}{2(\theta -1)},$$
and so from (\ref{TR9}) we get
$$\limsup_{N \rightarrow \infty} \mathbb{P}(   \LP_N \geq M ) \leq \limsup_{N \rightarrow \infty} \frac{2(\theta -1)}{N^2} \cdot \mathbb{E} \left[ \sum_{(m_1,n_1,m_2,n_2) \in T^M_N} Z(m_1,n_1;m_2;n_2) \right].$$
For $(m_1,n_1,m_2,n_2) \in T^M_N$ we have $m_2 - m_1 + n_2 - n_1 \geq M$ and so 
\begin{equation}\label{TR10}
\begin{split}
&\limsup_{N \rightarrow \infty} \mathbb{P}(   \LP_N \geq M ) \leq \\
& \limsup_{N \rightarrow \infty} \frac{2(\theta -1)}{N^{2} M} \cdot  \sum_{(m_1,n_1,m_2,n_2) \in T^M_N}   \mathbb{E} \left[ (m_2 - m_1 + n_2 - n_1) Z(m_1,n_1;m_2;n_2) \right] \leq \\
& \limsup_{N \rightarrow \infty} \frac{2(\theta -1)}{N^2 M} \cdot   \lambda_{\theta} N^2 = \frac{2\lambda_{\theta}(\theta - 1)}{M} < \epsilon.
\end{split}
\end{equation}
We mention that the second inequality in (\ref{TR10}) used (\ref{TR7}) and in the last inequality we used (\ref{TR8}). As $\epsilon > 0$ was arbitrary we see that (\ref{TR10}) implies that $\LP_N$ is tight.

%
\subsubsection{Asymptotics of $(\XS_N, \YS_N)$}\label{PMP3Proof2}  In this section we prove that for $\theta > \theta_c$ the sequence of random vectors $(N^{-1} \XS_N, N^{-1} \YS_N)$ converges weakly to the uniform measure on $(0,1)^2$. For clarity we split the proof into two steps.\\

{\bf \raggedleft Step 1.} We claim that if $x,y \in [0,1]$ and $\Delta_x, \Delta_y \in (0,1/4]$ are such that $[x, x+ \Delta_x] \times [y, y + \Delta_y] \subseteq [0,1]^2$, then 
\begin{equation}\label{WY1}
\limsup_{N \rightarrow \infty} \mathbb{P} \left( x \leq N^{-1} \XS_N \leq x+ \Delta_x, y \leq N^{-1} \YS_N \leq y+ \Delta_y  \right) \leq \frac{1}{ \left( \lfloor \Delta_x^{-1} \rfloor -3 \right) \cdot \left(\lfloor \Delta_y^{-1} \rfloor -3 \right)}.
\end{equation}
We prove (\ref{WY1}) in the second step. Here we assume its validity and conclude the proof of the proposition.\\

Suppose that $[a, a + \Delta_a] \times [b, b + \Delta_b] \subseteq [0,1]$ and $\Delta_a, \Delta_b > 0$. We fix $A, B \in \mathbb{N}$ such that $A, B \geq 2$ and divide $[a, a + \Delta_a] \times [b, b + \Delta_b]$ into $2^{A+B}$ equal rectangles of side lengths $2^{-A} \Delta_a$ and $2^{-B} \Delta_b$. Explicitly, these rectangles are given by 
$$R_{i,j} = [x_i, x_i + 2^{-A} \Delta_a] \times [y_j, y_j+ 2^{-B} \Delta_b] \mbox{, where } x_i = a + (i-1) \cdot 2^{-A} \Delta_a, \hspace{2mm}  y_j = b + (j-1) \cdot 2^{-B} \Delta_b$$
for $i = 1, \dots, 2^A$ and $j = 1, \dots, 2^B$.
We then have 
\begin{equation}\label{WY2}
\begin{split}
&\limsup_{N \rightarrow \infty} \mathbb{P} \left( (N^{-1} \XS_N, N^{-1} \YS_N) \in  [a, a + \Delta_a] \times [b, b + \Delta_b]  \right) \leq \\
& \sum_{i = 1}^{2^A} \sum_{j = 1}^{2^B} \limsup_{N \rightarrow \infty} \mathbb{P} \left( (N^{-1} \XS_N, N^{-1} \YS_N) \in  R_{i,j} \right) \leq \frac{2^{A+B}}{\left( \lfloor 2^A \Delta_a^{-1} \rfloor -3 \right) \cdot \left(\lfloor 2^B \Delta_b^{-1} \rfloor -3 \right)},
\end{split}
\end{equation}
where in the first inequality we used subadditivity and in the second we used (\ref{WY1}) for the rectangles $R_{i,j}$ (which by construction have side-lengths in $(0,1/4])$.

As (\ref{WY2}) holds for all $A, B \geq 2$ we can let $A, B \rightarrow \infty$ to conclude that 
\begin{equation}\label{WY3}
\begin{split}
&\limsup_{N \rightarrow \infty} \mathbb{P} \left( (N^{-1} \XS_N, N^{-1} \YS_N) \in  [a, a + \Delta_a] \times [b, b + \Delta_b]  \right) \leq \Delta_a \cdot \Delta_b.
\end{split}
\end{equation}

To prove that $(N^{-1} \XS_N, N^{-1} \YS_N)$ converges weakly to the uniform measure on $(0,1)^2$ it suffices to show that for all $(s,t) \in (0,1)^2$ we have 
\begin{equation}\label{WY4}
\begin{split}
&\lim_{N \rightarrow \infty} \mathbb{P} \left( N^{-1} \XS_N \leq s, N^{-1} \YS_N \leq t  \right) = s \cdot t.
\end{split}
\end{equation}
From (\ref{WY3}) applied to $a = 0, \Delta_a = s$, $b = 0, \Delta_b = t$ we see that 
\begin{equation}\label{WY5}
\begin{split}
&\limsup_{N \rightarrow \infty} \mathbb{P} \left( N^{-1} \XS_N \leq s, N^{-1} \YS_N \leq t  \right) \leq s \cdot t.
\end{split}
\end{equation}
On the other hand, 
\begin{equation*}
\begin{split}
&1 - \mathbb{P} \left( N^{-1} \XS_N \leq s, N^{-1} \XS_N \leq t  \right)  \leq  \\
&\mathbb{P} \left( (N^{-1} \XS_N, N^{-1} \YS_N) \in [s, 1] \times [0,1]  \right) +  \mathbb{P} \left( (N^{-1} \XS_N, N^{-1} \YS_N) \in [0, s] \times [t,1]  \right) .
\end{split}
\end{equation*}
Taking $\limsup_{N \rightarrow \infty}$ on both sides and using (\ref{WY5}) we conclude 
\begin{equation}\label{WY6}
\begin{split}
&1 - \liminf_{N \rightarrow \infty} \mathbb{P} \left( N^{-1} \XS_N \leq s, N^{-1} \YS_N \leq t  \right) \leq (1- s) + (1-t) s  \iff \\
& \liminf_{N \rightarrow \infty} \mathbb{P} \left( N^{-1} \XS_N \leq s, N^{-1} \XS_N \leq t  \right)  \geq s\cdot t.
\end{split}
\end{equation}
Equations (\ref{WY5}) and (\ref{WY6}) together imply (\ref{WY4}), which concludes the proof of the proposition.\\

{\bf \raggedleft Step 2.} In this step we prove (\ref{WY1}). We begin by introducing some useful notation. We define
$$A_N^x = \lceil N x \rceil, \hspace{2mm} B_N^x  = \lfloor N (x + \Delta_x) \rfloor, \hspace{2mm} A_N^y = \lceil N y \rceil, \hspace{2mm} B_N^y = \lfloor N(y + \Delta_y) \rfloor,$$
$$ \bar{B}_N^x = \min(B_N^x + \lfloor N^{1/4} \rfloor, N), \hspace{1mm} \bar{B}_N^y = \min(B_N^y + \lfloor N^{1/4} \rfloor, N), \hspace{1mm} D_N^x = \bar{B}_N^x - A_N^x+1, \hspace{1mm}D_N^y = \bar{B}_N^y- A_N^y+1 .$$
We further define $S_N^x$ and $S_N^y$ through
$$ S^{x/y}_N =\{ z \in \mathbb{Z}: N \geq A_N^{x/y} + z \cdot D_N^{x/y} \geq 1 \mbox{ and }  N \geq \bar{B}_N^{x/y} + z \cdot D_N^{x/y} \geq 1 \}.$$
Observe that both $S_N^x$ and $S_N^y$ contain $0$ and constitute a finite sequence of consecutive integers. Denoting the smallest and largest elements in $S^{x/y}_N$ by $P^{x/y}_N$ and $Q^{x/y}_N$, respectively, we have 
$$S_N^{x/y} = \llbracket P^{x/y}_N , Q^{x/y}_N \rrbracket.$$
For $(p,q) \in S_N^x \times S_N^y$ we let 
$$R^{p,q}_N = \{(m,n) \in \mathbb{N}: \bar{B}_N^x + p D_N^x \geq m \geq A_N^x + p D_N^x \mbox{ and } \bar{B}_N^y + q D_N^y \geq n \geq A_N^y + q D_N^y \}.$$

We summarize the following observations, which immediately follow from the above definition
\begin{enumerate}
\item We have the equality of events: 
$$\left\{ (N^{-1} \XS_N, N^{-1} \YS_N) \in  [x, x + \Delta_x] \times [y, y + \Delta_y] \right\} = \left\{(\XS_N, \YS_N) \in \llbracket A_N^x, B_N^x \rrbracket \times \llbracket A_N^y, B_N^y \rrbracket \right\}.$$
\item $R_N^{p,q}$ for $(p,q) \in S_N^x \times S_N^y$ are pairwise disjoint.
\item We have the inclusion of events:
$$ \left\{(\XS_N, \YS_N) \in \llbracket A_N^x, B_N^x \rrbracket \times \llbracket A_N^y, B_N^y \rrbracket , (\XE_N, \YE_N)  \not \in R_N^{0,0} \right\} \subseteq \{\LP_N \geq \lfloor N^{1/4} \rfloor \}.$$
\item We have the inequalities
$$(Q_N^{x/y} + 1) \cdot D_N^{x/y} + \bar{B}_N^{x/y} \geq N+1 \mbox{ and } (P_N^{x/y} - 1) \cdot D_N^{x/y} + A_N^{x/y} \leq 0.$$
\end{enumerate}
We also note that for all large $N$ we have
\begin{equation}\label{WY7}
|S_N^x| \geq \lfloor \Delta_x^{-1} \rfloor - 3 \mbox{ and }|S_N^y| \geq \lfloor \Delta_y^{-1} \rfloor - 3.
\end{equation}
Let us prove the latter briefly, and as the two inequalities are analogous focus only on the first. By definition, we have $|S_N^{x}| = Q_N^x - P_N^x + 1$ and so the fourth observation above gives
$$|S_N^x| \geq \left(\frac{N + 1 - \bar{B}_N^x}{D_N^x} - 1 \right) - \left(1 -  \frac{A_N^x}{ D_N^x} \right) + 1 =  \frac{N+1 - \bar{B}_N^x + A_N^x}{D_N^x} - 1.$$
By definition, we have that 
$$\lim_{N \rightarrow \infty} \frac{N+1 - \bar{B}_N^x + A_N^x}{D_N^x} = \frac{1 - (x+\Delta_x) + x}{\Delta_x} = \frac{1}{\Delta_x} - 1,$$
which together with the last inequality implies the first inequality in (\ref{WY7}) for all large $N$.\\

We now turn to proving (\ref{WY1}). Using the first and third observations above we have
\begin{equation}\label{WY8}
\begin{split}
&\limsup_{N \rightarrow \infty}\mathbb{P} \left( x \leq N^{-1} \XS_N \leq x+ \Delta_x, y \leq N^{-1} \YS_N \leq y+ \Delta_y  \right) = \\
&\limsup_{N \rightarrow \infty}\mathbb{P} \left( (\XS_N, \YS_N) \in \llbracket A_N^x, B_N^x \rrbracket \times \llbracket A_N^y, B_N^y \rrbracket  \right) = \\
&\limsup_{N \rightarrow \infty}\mathbb{P} \left( (\XS_N, \YS_N) \in \llbracket A_N^x, B_N^x \rrbracket \times \llbracket A_N^y, B_N^y \rrbracket , (\XE_N, \YE_N) \in R_N^{0,0}  \right) \leq \\
&\limsup_{N \rightarrow \infty}\mathbb{P} \left( (\XS_N, \YS_N) \in R_N^{0,0} , (\XE_N, \YE_N) \in R_N^{0,0}  \right).
\end{split}
\end{equation}
We mention that in going from the second to the third line we used the third observation above and the fact that 
$$\lim_{N \rightarrow \infty} \mathbb{P}(\LP_N \geq \lfloor N^{1/4} \rfloor) = 0$$
as follows from (\ref{SuperCritLength}), which we proved in the previous section.

We next have from (\ref{LC0Copy}) that
\begin{equation}\label{WY9}
\begin{split}
&\mathbb{P} \left( (\XS_N, \YS_N) \in R_N^{0,0} , (\XE_N, \YE_N) \in R_N^{0,0}  \right)=   \mathbb{E} \left[ \frac{1}{Z_N^w}  \sum_{\substack{(m_1, n_1) \leq (m_2,n_2)\\  (m_1, n_1), (m_2, n_2) \in R_N^{0,0} }} Z(m_1,n_1;m_2;n_2) \right].
\end{split}
\end{equation}

Let us set for $p \in S_N^x$ and $q \in S_N^y$
$$Z^{p,q}_N = \sum_{\substack{(m_1, n_1) \leq (m_2,n_2)\\  (m_1, n_1), (m_2, n_2) \in R_N^{p,q} }} Z(m_1,n_1;m_2;n_2).$$
From the second observation above we know that $R_N^{p,q}$ are pairwise disjoint and so $Z^{p,q}_N$ are independent, but they are also identically distributed as $R_N^{p,q}$ are translates of the same rectangle $R^{0,0}_N$. In addition, by the disjointness of $R_N^{p,q}$ we have that 
$$Z_N^w \geq \sum_{p \in S_N^x, q \in S_N^y} Z_N^{p,q}.$$
Combining the last two observations and (\ref{WY9}) gives 
\begin{equation}\label{WY10}
\begin{split}
&\mathbb{P} \left( (\XS_N, \YS_N) \in R_N^{0,0} , (\XE_N, \YE_N) \in R_N^{0,0}  \right) =   \mathbb{E} \left[ \frac{Z_N^{0,0}}{Z_N^w}  \right] \leq \mathbb{E} \left[ \frac{Z_N^{0,0}}{\sum_{p \in S_N^x, q \in S_N^y} Z_N^{p,q}}  \right]  = \frac{1}{|S_N^x| \cdot |S_N^y|}.
\end{split}
\end{equation}
Combining (\ref{WY7}), (\ref{WY8}) and (\ref{WY10}) proves (\ref{WY1}).

\begin{appendix}
%
\section{Upper tail estimates} \label{Section6} Here we prove Proposition \ref{S2LBProp}. We continue with the same notation as in Sections \ref{Section1} and \ref{Section2.1}.

%
\subsection{Estimates on Fredholm determinants} \label{Section6.1}
 In this section we recall a result from \cite{BCDA}, see Proposition \ref{LGPT1}, which expresses the Laplace transform of $Z(1,1;M,N)$ from Section \ref{Section1} in terms of a {\em Fredholm determinant}. In Proposition \ref{S6Main} we show that this Laplace transform is close to the GUE Tracy-Widom distribution and we use this statement to establish Proposition \ref{S2LBProp}. We record here that for a measurable function $K(x,y)$ on $X \times X$, where $(X, \mu)$ is a measure space, we define
\begin{equation}\label{fredholmDefS6}
\det(I + K)_{L^2(X)} := 1 + \sum_{n = 1}^\infty \frac{1}{n!} \int_X \cdots \int_X \det \left[ K(x_i, x_j)\right]_{i,j = 1}^n \prod_{i = 1}^nd\mu(x_i),
\end{equation}
provided that the above integrals and corresponding sum are absolutely convergent. In this paper we will exclusively work in the case when $X$ is a piecewise smooth contour in $\mathbb{C}$ and $\mu(dz) = \frac{dz}{2\pi \i}$ with $dz$ denoting the usual complex integration. We refer to \cite[Section 2.1]{BCDA} for some basic background on Fredholm determinants.

We begin by introducing some useful notation, see also Figure \ref{S6_1}.
\begin{figure}[h]
\scalebox{0.5}{\includegraphics{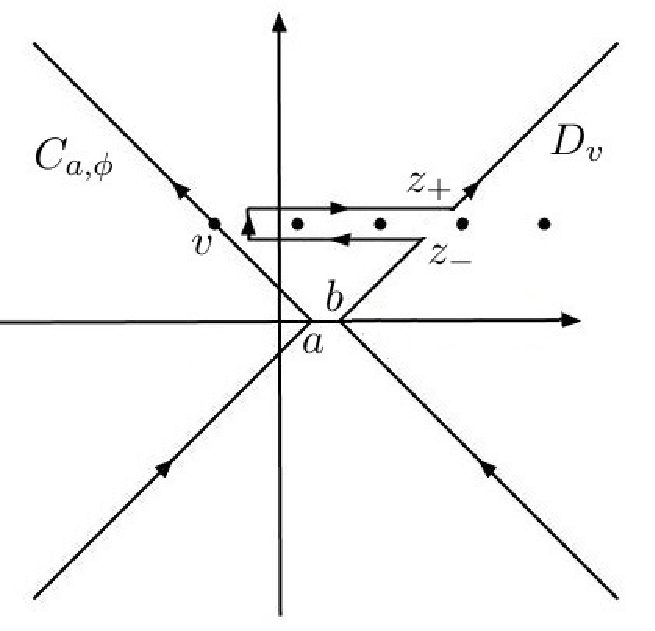}}
\captionsetup{width=\linewidth}
 \caption{The contour $C_{a, \phi}$ for $\phi = 3 \pi/ 4$ and $D_v(b,\pi/4, d)$ with $a, b \in \mathbb{R}$ and $b > a$. The black dots denote the points $v, v+1, v+2, \dots$. }
\label{S6_1}
\end{figure}

\begin{definition}\label{ContV}
For $a \in \mathbb{C}$ and $\phi \in (0, \pi)$ we define the contour $C_{a,\phi}$ to be the union of $\{a + ye^{-\i\phi} \}_{y \in \mathbb{R}^+}$ and $\{a + ye^{\i \phi} \}_{y \in \mathbb{R}^+}$ oriented to have increasing imaginary part.
\end{definition}

\begin{definition}\label{ContE}
Suppose that $a \in \mathbb{C}$, $\phi \in (0, \pi)$, $d > 0$ and $v \in \mathbb{C}$. From this data we construct a contour that consists of two parts. The first, called $D_v^1$, is $C_{a,\phi} \setminus \{z: \Im(z) \in [\Im(v) - d, \Im (v) + d] \}.$ Let $z_-, z_+$ be the points on $C_{a,\phi} $ that have imaginary parts $\Im(v) - d$ and $\Im(v) + d$ respectively. The second part of the contour, called $D_v^2$, consists of straight oriented segments that connect $z_-$ to $v + 2d- \i d$ to $v + 2d + \i d$ to $z_+$. We will denote the resulting contour by $D_{v}(a,\phi, d)$ or just $D_v$ when the other parameters are clear from the context.
\end{definition}

The first major result we require from \cite{BCDA} is the following Laplace transform formula for the partition function $Z(1,1; M, N)$.
\begin{proposition}\cite[Theorem 2.12]{BCDA} \label{LGPT1}
Fix integers $N \geq 9, M \geq 1$ and $\theta > 0$. Let $\theta > b > a > 0 $, $d \in \left(0 , \min \left( 1/4, (b-a)/4\right) \right)$ and $u \in \mathbb{C}$ with $\Re(u)> 0$. Then
\begin{equation}\label{LGPTe1}
\mathbb{E} \left[ e^{-u Z(1,1; M, N)} \right] = \det \left( I + K_u \right)_{L^2(C_{a,3\pi/4})},
\end{equation}
where the operator $K_u$ is defined in terms of its integral kernel
\begin{equation}\label{kernelMain}
K_u(v,v') = \frac{1}{2\pi \i} \int_{D_v}  \frac{\pi }{\sin(\pi (v-w))}  \left( \frac{\Gamma(v)}{\Gamma(w)} \right)^N  \left( \frac{\Gamma(\theta - w)}{\Gamma(\theta - v)} \right)^M \frac{u^{w-v} dw}{w - v'},
\end{equation}
where $C_{a,3\pi/4}$ and $D_v = D_v(b, \pi/4, d)$ are as in Definitions \ref{ContV} and \ref{ContE} .
\end{proposition}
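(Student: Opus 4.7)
The plan is to derive the Laplace transform identity by the Borodin--Corwin methodology for exactly solvable models in the KPZ universality class, specialized to the log-gamma polymer via the Whittaker measure representation of Corwin--O'Connell--Sepp\"al\"ainen--Zygouras. The argument proceeds in three main steps followed by one technical verification.

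\textbf{Step 1: Moment formulas.} First I would establish a $k$-fold nested contour integral representation for the positive integer moments $\mathbb{E}[Z(1,1;M,N)^k]$. This rests on the identification of $Z(1,1;M,N)$ with a functional of a $\operatorname{GL}_M(\mathbb{R})$-Whittaker process, combined with the eigenrelations for the quantum Toda chain, producing a $k$-fold integral of the Sklyanin-type factor $\prod_{i<j}\tfrac{z_i-z_j}{z_i-z_j-1}$ against Gamma-ratio weights of the form $\Gamma(z_i)^{-N}\Gamma(\theta-z_i)^{-M}$ along contours nested to separate the poles at $z_i = z_j+1$ from those at $z_i = 0,-1,\ldots$ and $z_i = \theta, \theta+1,\ldots$. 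Equivalently, the same formula can be obtained as the $q\uparrow 1$ degeneration of the $q$-Whittaker moment formula from the Macdonald process framework.

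\textbf{Step 2: Fredholm identification.} Formally summing against $(-u)^k/k!$ gives $\mathbb{E}[e^{-uZ}]$ on the left. On the right I would apply a Cauchy determinant expansion to the Sklyanin factor (doubling $z_i \mapsto (v_i, w_i)$ with $w_i$ playing the role of auxiliary dual variables), sum over $k$ and interchange with integration. The factor $\pi/\sin(\pi(v-w))$ in (\ref{kernelMain}) arises via the partial-fraction identity $\pi/\sin(\pi z) = \sum_{n\in\mathbb{Z}}(-1)^n/(z-n)$, encoding a geometric-series sum of nested residue contributions; the inner $w$-contour $D_v$ is placed so as to enclose exactly the pole at $w=v$. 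This gives the formal identity $\mathbb{E}[e^{-uZ}] = \det(I+K_u)_{L^2(C_{a,3\pi/4})}$.

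\textbf{Step 3: Analytic justification.} Because the positive moments of $Z$ grow faster than factorially (roughly like $e^{ck^2}$, as one can see from the log-normal tail of $Z$), the series $\sum_k (-u)^k\mathbb{E}[Z^k]/k!$ diverges, and the formal identity of Step 2 must be justified independently. Two standard routes are available: (a) work with a convergent $q$-deformation, e.g.\ the $q$-Whittaker polymer, where the analogue of (\ref{LGPTe1}) is proved by absolutely convergent manipulations, then pass to the limit $q\uparrow 1$; or (b) use the Mellin--Barnes representation $e^{-uZ} = \tfrac{1}{2\pi \i}\int_\gamma \Gamma(s)u^{-s}Z^{-s}\,ds$, analytically continue the $k$-th moment formula to complex $s$ (possible since the negative moments $\mathbb{E}[Z^{-s}]$ are well defined and enjoy good growth control for $\Re(s)>0$), and justify Fubini.

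The main obstacle is combining this analytic continuation with a careful contour deformation. The constraints $\theta > b > a > 0$ and $d \in (0, \min(1/4, (b-a)/4))$ are dictated by the need to place the contours so that they avoid all poles of the integrand and so that $D_v$ encloses only the pole at $w = v$ (and not those at $w = v \pm 1$, nor the poles of the Gamma ratios). The wedge angle $3\pi/4$ for $C_{a,\phi}$ is chosen so that $|\Gamma(v)/\Gamma(w)|^N$ decays super-exponentially along the contours by Stirling; together with the exponential factor $|u^{w-v}|$ coming from $\Re(u)>0$, this ensures $K_u$ is trace class on $L^2(C_{a,3\pi/4})$, so that $\det(I+K_u)$ is well defined. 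The most delicate bookkeeping is to verify that when the nested contours of Step 1 are collapsed down to the pair $(C_{a,3\pi/4}, D_v)$ of Step 2, every residue crossed in the deformation is exactly accounted for by the factor $\pi/\sin(\pi(v-w))$, so that no unwanted boundary or residue terms remain.
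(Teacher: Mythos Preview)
The paper does not prove Proposition \ref{LGPT1}; it is quoted directly from \cite[Theorem 2.12]{BCDA} and used as a black box in Appendix \ref{Section6}. There is therefore no ``paper's own proof'' to compare against.

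That said, your outline is a fair high-level summary of the approach actually taken in \cite{BCDA} (and in the earlier Borodin--Corwin work on which it builds): moment formulas from the Whittaker/Macdonald framework, a formal Cauchy-determinant rearrangement producing the $\pi/\sin(\pi(v-w))$ kernel, and analytic justification by passing through a convergent deformation. One minor imprecision: the contour $D_v$ does not ``enclose'' the pole at $w=v$; rather, the detour in Definition \ref{ContE} is designed so that $D_v$ passes strictly to the right of $v$ but to the left of $v+1$, so that among the poles $\{v+n:n\in\mathbb{Z}\}$ of $\pi/\sin(\pi(v-w))$ only those at $v+1,v+2,\dots$ lie to the right of $D_v$. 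The constraints $d<1/4$ and $d<(b-a)/4$ enforce exactly this separation.
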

\begin{remark} Part of the statement of Proposition \ref{LGPT1} is that $\det \left( I + K_u \right)_{L^2(C_{a,3\pi/4})}$ is well-defined in the sense that the integral in (\ref{kernelMain}) is well-defined for each $v,v' \in C_{a,3\pi/4}$, the resulting function is measurable and the series in (\ref{fredholmDefS6}) is absolutely convergent for $X = C_{a,3\pi/4}$ and $\mu(dz) = \frac{dz}{2\pi \i}$ with $dz$ denoting the usual complex integration.
\end{remark}

We next summarize a certain scaling of $M, N$ and introduce several quantities of interest, which will appear in our analysis.
\begin{definition}\label{DefScaleS6}
Recall that $\Psi(x)$ denotes the digamma function, see (\ref{digammaS1}). Suppose that $M, N \geq 1$ and $\theta > 0$ are given. We let $z_{c}(M,N)$ denote the maximizer of
$$W_{M,N}(x) := N \Psi(x) + M \Psi(\theta - x)$$
on the interval $(0, \theta)$.
 Notice that the above expression converges to $- \infty$ as $x \rightarrow 0+$ or $x \rightarrow \theta-$ and also the function $W_{M,N}(x)$ is strictly concave, hence the maximum exists and is unique. We let $W_{M,N}$ denote $W_{M,N}(z_c)$. It is also worth noting that $z_c=g_{\theta}^{-1}(N/M)$.

For any $\alpha > 0$ we define
$$\tilde\sigma_\alpha := \left( \sum_{n = 0}^\infty \frac{\alpha}{(n+z_c)^3} +  \sum_{n = 0}^\infty \frac{1}{(n+\theta - z_c)^3} \right)^{1/3}.$$

The way we scale the parameters is as follows. We will let $M,N \rightarrow \infty$ while $M\geq N$ and $\alpha = \alpha(M,N) = N/M \geq \delta$, where $\delta > 0$ is fixed.
For a given $x \in \mathbb{R}$ we set
$$u = u(x,M,N):= e^{W_{M,N} - M^{1/3} \tilde\sigma_{\alpha} x}.$$
Notice that if $\alpha \in [\delta, 1]$ and $\theta > 0$ is fixed then $\tilde\sigma_\alpha$ is positive and bounded away from $0$ and $\infty$, see e.g. \cite[(5.4)]{BCDA}. We finally let $K_u$ be as in Proposition \ref{LGPT1} for the parameter $u$ as above, $a = z_c $ and $b,d$ arbitrarily chosen so as to satisfy the conditions of that proposition.
\end{definition}

The main technical result of this section is contained in the following strong comparison of the Fredholm determinant in Proposition \ref{LGPT1} and $F_{\rm GUE}$.
\begin{proposition} \label{S6Main} Assume the same notation and scaling as in Definition \ref{DefScaleS6}. Then for any $\epsilon_0 \in (0,1/3)$ we have
\begin{equation}\label{S6MainEq}
\limsup_{M \rightarrow \infty} \sup_{x \in [0, \infty)} M^{\epsilon_0} \left| \det \left( I + K_u \right)_{L^2(C_{z_c,3\pi/4})} - F_{\rm GUE}(x) \right| = 0.
\end{equation}
\end{proposition}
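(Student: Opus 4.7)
The plan is to establish Proposition \ref{S6Main} via a quantitative steepest descent analysis of the Fredholm determinant in Proposition \ref{LGPT1}, leveraging the fact that under the chosen scaling the kernel $K_u$ converges to the Airy kernel $K_{\rm Ai}$, whose Fredholm determinant on $L^2[x,\infty)$ equals $F_{\rm GUE}(x)$. The critical point $z_c$ is precisely the point at which $W_{M,N}'(z_c)=0$, so that the phase $\exp(W_{M,N}(v)-W_{M,N}(w))$ implicit in the ratios $\Gamma(v)^N\Gamma(\theta-w)^M/\Gamma(w)^N\Gamma(\theta-v)^M$ admits a cubic Taylor expansion around $(z_c,z_c)$. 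Under the rescaling $v=z_c+M^{-1/3}\tilde v/\tilde\sigma_\alpha$, $w=z_c+M^{-1/3}\tilde w/\tilde\sigma_\alpha$, the phase becomes $\tilde w^3/3-\tilde v^3/3+O(M^{-1/3})$, and $u^{w-v}=\exp(-x(\tilde w-\tilde v))$, which assembles exactly into the contour representation of $K_{\rm Ai}$. The contour angles $3\pi/4$ for $v$ and $\pi/4$ for $w$ in Proposition \ref{LGPT1} are chosen precisely so that these are the local steepest descent directions.

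The proof proceeds in three main steps. First, I would deform $C_{z_c,3\pi/4}$ and $D_{z_c}(b,\pi/4,d)$ to local steepest-descent arcs near $z_c$, truncated at a mesoscopic scale $M^{-1/3+\eta}$ for small $\eta>0$, with complementary tails along which $\Re(W_{M,N}(v)-W_{M,N}(w))\leq -cM^{3\eta}$; the contribution of these tails to the kernel (and hence to each term in (\ref{fredholmDefS6})) is bounded by $e^{-cM^{\eta}}$, which is $o(M^{-\epsilon_0})$ for any $\epsilon_0<1/3$. Second, on the local pieces, a fourth-order Taylor expansion of $W_{M,N}$ yields a pointwise kernel comparison $|K_u(v,v')-M^{-1/3}\tilde\sigma_\alpha^{-1}K_{\rm Ai}(\tilde v,\tilde v')|\leq CM^{-2/3+4\eta}$ on rescaled variables, after absorbing the Jacobian. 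Third, the standard Fredholm bound $|\det(I+A)-\det(I+B)|\leq \|A-B\|_1\exp(1+\|A\|_1+\|B\|_1)$ converts this into a determinant comparison; here the exponential decay $e^{-x\Re(\tilde v)}$ on the rescaled $v$-contour provides the integrability required to control the trace norms uniformly in $x\geq 0$.

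The main obstacle is the uniformity in $x\in[0,\infty)$, since the steepest descent rate naturally applies on compact sets. For $x$ in a compact interval $[0,L]$ the argument above gives the rate $M^{-\epsilon_0}$ directly. For large $x$ both $1-F_{\rm GUE}(x)$ and $1-\det(I+K_u)_{L^2(C_{z_c,3\pi/4})}=1-\mathbb{E}[e^{-uZ(1,1;M,N)}]$ must be controlled separately. The classical bound $1-F_{\rm GUE}(x)\leq C\exp(-\tfrac{4}{3}x^{3/2})$ handles the first quantity. For the second, one uses $1-\mathbb{E}[e^{-uZ}]\leq \mathbb{E}[\min(1,uZ)]\leq \mathbb{P}(uZ\geq 1)+\mathbb{E}[uZ\,\mathbf{1}_{uZ<1}]$, where the probability is exactly $\mathbb{P}(\mathcal{F}(M,N)\geq x)$ and is estimated by the moderate deviation bound in Proposition \ref{S1LDE1}, yielding $C_1e^{-c_1 M}+C_2e^{-c_2 x^{3/2}}$. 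Together these force both quantities below $M^{-\epsilon_0}$ once $x\geq A(\log M)^{2/3}$ for some $A>0$, reducing the problem to the compact-$x$ regime already handled.

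A secondary technical point is ensuring all implicit constants are uniform in the ratio $\alpha=N/M\in[\delta,1]$. This is guaranteed by the smooth dependence of $z_c$ on $\alpha$ together with the fact, recalled in Definition \ref{DefScaleS6}, that $\tilde\sigma_\alpha$ stays bounded away from $0$ and $\infty$ on this range; in particular the steepest descent contour data (the angles $\pi/4$ and $3\pi/4$, the scale $d$, and the local bounds on $W_{M,N}'''$) can be chosen in an $\alpha$-uniform manner. Combining the quantitative steepest descent on $[0,A(\log M)^{2/3}]$ with the tail estimates on $[A(\log M)^{2/3},\infty)$ yields \eqref{S6MainEq}.
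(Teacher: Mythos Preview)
Your proposal follows the same steepest-descent skeleton as the paper (localize contours near $z_c$, Taylor expand the phase to cubic order, compare to the Airy kernel), but the execution differs from the paper's in two substantive ways.

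\textbf{Uniformity in $x$.} The paper never splits into small and large $x$. On the rescaled contours $C_{0,3\pi/4}$ (for $\tilde v$) and $\mathcal{D}$ (for $\tilde w$) one has $\Re(\tilde v)\leq 0$ and $\Re(\tilde w)\geq 1$, so $\lvert e^{x(\tilde v-\tilde w)}\rvert\leq 1$ for every $x\geq 0$; this single observation makes all kernel bounds (and hence the whole argument) uniform in $x$ for free, see the derivation of (\ref{ST4}). Your route via Proposition~\ref{S1LDE1} for $x\geq A(\log M)^{2/3}$ is valid, but note that the term $\mathbb{E}[uZ\,\mathbf 1_{uZ<1}]$ is not bounded by $\mathbb{P}(\mathcal F(M,N)\geq x)$ alone; you need a small extra step such as $\mathbb{E}[\min(1,uZ)]\leq \mathbb{P}(uZ\geq t)+t$ with $t=e^{-cM^{1/3}x}$ to close the estimate. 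Also, the factor you wrote as $e^{-x\Re(\tilde v)}$ should be $e^{x\Re(\tilde v)}$ (which is $\leq 1$ on $C_{0,3\pi/4}$); as written it grows in $x$.

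\textbf{Determinant comparison.} The paper does not use the trace-class perturbation bound $\lvert\det(I+A)-\det(I+B)\rvert\leq\|A-B\|_1\exp(1+\|A\|_1+\|B\|_1)$. Instead it expands both Fredholm determinants as series via (\ref{fredholmDefS6}), truncates at $M_1=\lceil\log M/\log\log M\rceil$ terms using the elementary Lemma~\ref{TrKer} (a Hadamard-inequality estimate on the tail of the series), and compares the finitely many remaining integrals term by term; see (\ref{ST7}), (\ref{ST8}) and the key estimate (\ref{BB1}). Your trace-class approach is legitimate in principle, but you would need to actually establish that $K_u$ and the Airy kernel are trace class on these contour $L^2$ spaces with $\|\cdot\|_1$ bounded uniformly in $M$, $x$ and $\alpha\in[\delta,1]$; this typically requires a factorization of the kernel that you have not supplied. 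The paper's series-truncation argument is more elementary and avoids this issue entirely.
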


In the remainder of this section we use Proposition \ref{S6Main} to prove Proposition \ref{S2LBProp}.

\begin{proof}[Proof of Proposition \ref{S2LBProp}] Assume the same notation as in Proposition \ref{LGPT1}, where $a,b,d, u$ are as in Definition \ref{DefScaleS6}. It follows from Proposition \ref{S6Main} that there exists $M_0$ sufficiently large so that for $M \geq M_0$ and $M \geq N \geq \delta M$ and $ y \in [0, \infty)$ we have
\begin{equation*}
\left| \det \left( I + K_u \right)_{L^2(C_{z_c,3\pi/4})} - F_{\rm GUE}(y) \right| \leq M^{-\epsilon_0},
\end{equation*}
where $u =  e^{W_{M,N} - M^{1/3} \tilde\sigma_{\alpha} y }.$ In particular, if for any $x \in [1, \infty)$ we set $x^{\pm} = x \pm M^{-1/3} (\log M)^2$, and $u^{\pm} = e^{W_{M,N} - M^{1/3} \tilde\sigma_{\alpha} x^{\pm}}$ we see that by possibly enlarging $M_0$ (so that $x^- \geq 0$) we obtain
\begin{equation}\label{SRE1}
\left| \det \left( I + K_{u^{\pm}} \right)_{L^2(C_{z_c,3\pi/4})} - F_{\rm GUE}(x^{\pm}) \right| \leq M^{-\epsilon_0}.
\end{equation}

 If we set $f_M(z) = \exp(-e^{\tilde\sigma_{\alpha}  M^{1/3}z}),$ then by Proposition \ref{LGPT1} we know that for all large enough $M$
\begin{equation}\label{SRE2}
\mathbb{E} \left[f_M(\mathcal{F}(M,N) - x^{\pm} )\right] = \mathbb{E} \left[ e^{-u^{\pm} \cdot Z(1,1; M , N)} \right] = \det \left( I + K_{u^{\pm}} \right)_{L^2(C_{z_c,3\pi/4})}.
\end{equation}

We note that by (\ref{SRE1}) and (\ref{SRE2}) we have for all large $M$
\begin{equation*}
\begin{split}
&\mathbb{P} \left( \mathcal{F}(M,N) \leq x  \right)   = \mathbb{E} \left[ {\bf 1} \{ \mathcal{F}(M,N) \leq x \} \right] \geq \mathbb{E} \left[ e^{-e^{\tilde\sigma_{\alpha}  M^{1/3}(\mathcal{F}(M,N) - x^-)}} \cdot {\bf 1} \{ \mathcal{F}(M,N) \leq x \} \right] \geq \\
&  \mathbb{E} \left[ e^{-e^{\tilde\sigma_{\alpha}  M^{1/3}(\mathcal{F}(M,N) - x^- )}} \right] -  e^{-e^{\tilde\sigma_{\alpha}  (\log M)^2}} \geq \det \left( I + K_{u^{-}} \right)_{L^2(C_{z_c,3\pi/4})} - M^{-\epsilon_0}  \geq F_{\rm GUE}(x^{-}) - 2M^{-\epsilon_0}.
\end{split}
\end{equation*}
By the same equations we also have
\begin{equation*}
\begin{split}
&\mathbb{P} \left( \mathcal{F}(M,N) > x  \right)   = \mathbb{E} \left[ {\bf 1} \{ \mathcal{F}(M,N) > x \} \right] \geq \mathbb{E} \left[ \left(1 - e^{-e^{\tilde\sigma_{\alpha}  M^{1/3}(\mathcal{F}(M,N) - x^+)}} \right) \cdot {\bf 1} \{ \mathcal{F}(M,N) > x \} \right] \geq \\
&  1 -  \det \left( I + K_{u^{+}} \right)_{L^2(C_{z_c,3\pi/4})} - \left(1 - e^{-e^{-\tilde\sigma_{\alpha} (\log M)^2}}  \right)  \geq 1 - F_{\rm GUE}(x^{+}) - 2M^{-\epsilon_0}.
\end{split}
\end{equation*}
Combining the last two inequalities we see that
$$ F_{\rm GUE}(x^{-}) - 2M^{-\epsilon_0} \leq \mathbb{P} \left( \mathcal{F}(M,N) \leq x  \right) \leq F_{\rm GUE}(x^{+}) + 2M^{-\epsilon_0},$$
which implies (\ref{S2LowerBound}) once we utilize the fact that $F_{\rm GUE}(x^{\pm}) - F_{\rm GUE}(x) = O(M^{-1/3} (\log M)^2).$ The latter equality follows from the boundedness of $F'_{\rm GUE}(x)$ on $[-1, \infty)$, which for example can be deduced by the log-concavity of $F'_{\rm GUE}(x)$ on $[0, \infty)$, see \cite[Theorem 5.1]{BLS17}.
\end{proof}

%
\subsection{Preliminary results} \label{Section6.2} In this section we summarize some results from \cite{BCDA} and prove Lemma \ref{TrKer}, which will be used in the proof of Proposition \ref{S6Main} in the next section. We begin with some useful notation.

\begin{definition}\label{DefDM}
Define
\begin{equation}\label{GfunS3}
G_{M,N}(z) := N \log \Gamma (z) - M \log \Gamma (\theta - z) - W_{M,N}   z - C_{M,N},
\end{equation}
where $C_{M,N} = N \log \Gamma (z_c) - M \log \Gamma (\theta - z_c) - W_{M,N}  z_c $, and $z_c, W_{M,N}$ are as in Definition \ref{DefScaleS6}.
We also define
$$G_\alpha (z) = M^{-1} G_{M,N}(z) =  \alpha \log \Gamma (z) -  \log \Gamma (\theta - z) - M^{-1} W_{M,N}   z - M^{-1}C_{M,N},$$
where we recall that $\alpha = N/M \in [\delta, 1]$ as in Definition \ref{DefScaleS6}. We define the contour $\mathcal{D}$ to be the contour that consists of the vertical segment connecting $1- \i $ and $1 + \i $ and for $|\Im(z)| \geq 1$ it agrees with $C_{0, \pi/4}$ from Definition \ref{ContV}, see Figure \ref{S6_2}. We also define the contours $D_M$ through $D_M = z_c + \tilde\sigma_\alpha^{-1} M^{-1/3} \cdot \mathcal{D}$. The contours $\mathcal{D}$ and $D_M$ are oriented to have increasing imaginary part.
\end{definition}
\begin{figure}[h]
\scalebox{0.5}{\includegraphics{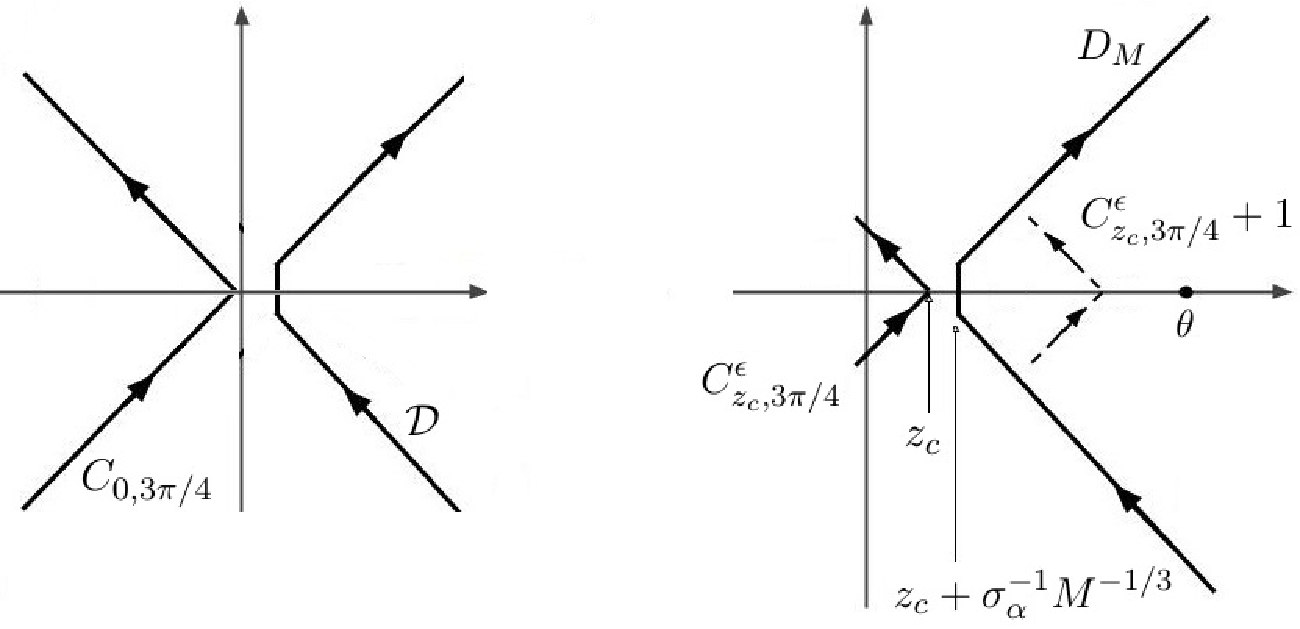}}
\captionsetup{width=\linewidth}
 \caption{The contours $\mathcal{D}$ and $D_M$. }
\label{S6_2}
\end{figure}

We next state several results from \cite{BCDA}.
\begin{lemma}\label{analGS3} \cite[Lemma 3.5]{BCDA} Fix $\theta > 0$, $\delta \in (0,1)$ and assume that $M \geq N \geq 1$, $N/M \in [\delta, 1]$.
There exist constants $C > 0$ and $r > 0$ depending on $\theta$ and $\delta$ such that $G_{\alpha}$ is analytic in the disc $|z - z_c| < r$ and the following hold for $|z - z_c| \leq r$:
\begin{equation}\label{powerGS3}
\begin{split}
&\left|G_\alpha(z)+ (z-z_c)^3\tilde\sigma^3_\alpha/3 \right| \leq C |z-z_c|^4;\\
&\Re[G_\alpha(z) ] \geq (\sqrt{2}/2)^3 |z-z_c|^3\tilde\sigma^3_\alpha/6  \mbox{ when $z \in C_{z_c, \phi}$ with $\phi =\pi/4$; }\\
&\Re[G_\alpha(z) ] \leq -  (\sqrt{2}/2)^3 |z-z_c|^3 \tilde\sigma^3_\alpha/6  \mbox{ when $z \in C_{z_c, \phi}$ with $\phi = 3\pi/4 $ }.
\end{split}
\end{equation}
In the above equations $z_c, \tilde\sigma_\alpha$ are as in Definition \ref{DefScaleS6} and $C_{z_c, \phi}$ is as in Definition \ref{ContV}.
\end{lemma}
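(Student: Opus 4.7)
The plan is to prove all three estimates via a Morse-style Taylor expansion of $G_\alpha$ at $z_c$, after first pinning down a neighborhood and constants that are uniform in $\alpha \in [\delta, 1]$.

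First I would locate $z_c$ uniformly. Differentiating $W_{M,N}$ shows that $z_c$ is the unique solution in $(0,\theta)$ of $\alpha\Psi'(z_c) = \Psi'(\theta - z_c)$, which in the notation of \eqref{DefLittleg} means $z_c = g_\theta^{-1}(1/\alpha)$. Since $\alpha \in [\delta, 1]$ forces $1/\alpha \in [1, 1/\delta]$ and $g_\theta^{-1}$ is continuous, $z_c$ lies in a fixed compact subinterval $[\theta/2, z_{\max}] \subset (0,\theta)$ depending only on $\theta$ and $\delta$. I would choose $r > 0$ (depending only on $\theta, \delta$) small enough that every disc $\{|z - z_c| \leq 2r\}$ sits inside the strip $\{0 < \Re z < \theta\}$; both $\log\Gamma(z)$ and $\log\Gamma(\theta - z)$ are analytic on this strip, so $G_\alpha$ is analytic on the disc, establishing the analyticity claim.

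Next, compute $G_\alpha'(z) = \alpha\Psi(z) + \Psi(\theta-z) - M^{-1}W_{M,N}$, $G_\alpha''(z) = \alpha\Psi'(z) - \Psi'(\theta-z)$, and $G_\alpha'''(z) = \alpha\Psi''(z) + \Psi''(\theta-z)$. The defining relations in Definitions \ref{DefScaleS6} and \ref{DefDM} yield $G_\alpha(z_c) = G_\alpha'(z_c) = G_\alpha''(z_c) = 0$, and the series identity $\Psi''(z) = -2\sum_{n\geq 0}(n+z)^{-3}$ gives $G_\alpha'''(z_c) = -2\tilde\sigma_\alpha^3$. Uniform bounds on $\log\Gamma$ over the compact disc $\{|z - z_c| \leq 2r\}$ translate via Cauchy's estimate into a uniform bound on $|G_\alpha^{(4)}|$ on $\{|z - z_c| \leq r\}$; Taylor's theorem with integral remainder at $z_c$ then delivers the first bound $|G_\alpha(z) + (z-z_c)^3\tilde\sigma_\alpha^3/3| \leq C|z-z_c|^4$ of \eqref{powerGS3}.

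For the contour inequalities, parametrize $z = z_c + t e^{\pm i\phi}$ on $C_{z_c,\phi}$ with $t > 0$, so $\Re[(z-z_c)^3] = |z-z_c|^3 \cos(3\phi)$. When $\phi = \pi/4$ this is $-|z-z_c|^3\sqrt{2}/2$, giving $\Re[-\tilde\sigma_\alpha^3(z-z_c)^3/3] = \tilde\sigma_\alpha^3 |z-z_c|^3 \sqrt{2}/6$; for $\phi = 3\pi/4$ the sign flips. Combining with the $C|z-z_c|^4$ remainder and shrinking $r$ further so that $C|z-z_c| \leq \tilde\sigma_\alpha^3 \sqrt{2}/8$ on the disc absorbs at most three quarters of the cubic term, producing the claimed bounds with constant $(\sqrt{2}/2)^3/6 = \sqrt{2}/24$. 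The main point to watch is uniformity in $\alpha$: one needs $\tilde\sigma_\alpha$ bounded above and away from zero on $[\delta, 1]$, which is immediate from its explicit series representation once $z_c$ is known to range over a compact subset of $(0,\theta)$, so that a single $r$ and a single $C$ serve for all $\alpha$ in the stated range.
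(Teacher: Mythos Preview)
The paper does not prove this lemma; it is quoted without proof from \cite[Lemma 3.5]{BCDA}. Your argument via Taylor expansion at the triple critical point $z_c$ (where $G_\alpha$, $G_\alpha'$, $G_\alpha''$ all vanish by construction and $G_\alpha'''(z_c)=-2\tilde\sigma_\alpha^3$) is the standard route and is essentially correct. One small slip: from $\alpha\Psi'(z_c)=\Psi'(\theta-z_c)$ and the definition $g_\theta(z)=\Psi'(\theta-z)/\Psi'(z)$ you obtain $z_c=g_\theta^{-1}(\alpha)$, not $g_\theta^{-1}(1/\alpha)$; hence for $\alpha\in[\delta,1]$ the compact interval is $[g_\theta^{-1}(\delta),\theta/2]$ rather than $[\theta/2,z_{\max}]$. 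This does not affect the remainder of your argument, since all you need is that $z_c$ ranges over a fixed compact subset of $(0,\theta)$.
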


\begin{lemma}\cite[Lemma 3.10]{BCDA} \label{SumBound} For any $\delta,\Delta > 0$ there exists $M_0 > 0$ such that if $M \geq M_0$
\begin{equation}\label{SumBoundE}
\sum_{n = 1}^\infty   \frac{n^{n/2} \Delta^nM^{n/3}}{n!} \leq e^{\delta M}.
\end{equation}
\end{lemma}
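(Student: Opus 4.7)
The plan is to bound each term of the sum by a simple exponential-type expression using Stirling, then identify the dominant range of $n$ and control head and tail separately with elementary calculus. The bound $e^{\delta M}$ on the right is extremely generous: the actual sum grows only like $\exp(O(M^{2/3}))$, so no sharp tools are needed.

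First, I would apply the Stirling-type inequality $n!\geq (n/e)^n$, valid for all $n\geq 1$, to each summand, obtaining
\begin{equation*}
\frac{n^{n/2}\Delta^n M^{n/3}}{n!}\leq \left(\frac{e\Delta M^{1/3}}{\sqrt{n}}\right)^{\!n}.
\end{equation*}
Setting $a := e\Delta M^{1/3}$, it suffices to estimate $S := \sum_{n=1}^{\infty}(a/\sqrt n)^n$.

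Second, I would split $S$ at the threshold $n_0 := \lceil 4a^2\rceil$. For $n\geq n_0$ we have $a/\sqrt n\leq 1/2$, so the tail is bounded termwise by $2^{-n}$ and contributes at most a constant (indeed at most $2$). For the head $1\leq n< n_0$, I would maximize the continuous function $f(n):=(a/\sqrt n)^n$: differentiating $\log f(n)= n\log a - (n/2)\log n$ gives the critical point $n^\ast = a^2/e$ with value $f(n^\ast)=\exp(a^2/(2e))$. Hence every term in the head is at most $\exp(a^2/(2e))$, and the head contributes at most $n_0 \exp(a^2/(2e))$.

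Third, combining and substituting $a=e\Delta M^{1/3}$ yields
\begin{equation*}
S\leq (4e^2\Delta^2 M^{2/3}+1)\exp\!\left(\tfrac{e}{2}\Delta^2 M^{2/3}\right)+2.
\end{equation*}
The right-hand side grows only like $\exp(O(M^{2/3}))$ as $M\to\infty$, which is $o(e^{\delta M})$ for every fixed $\delta>0$. Choosing $M_0$ large enough to absorb both the polynomial prefactor and the gap between the $M^{2/3}$ and $M$ scales finishes the proof.

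The main ``obstacle'' is essentially cosmetic: picking the cutoff $n_0 \sim a^2$ so that neither the head nor the tail blows up, and recognizing the maximizer $n^\ast = a^2/e$ of $(a/\sqrt n)^n$. The mismatch between the true growth rate $M^{2/3}$ and the allowed growth $\delta M$ means that the crude Stirling estimate is more than sufficient; no refined Laplace-type analysis is required.
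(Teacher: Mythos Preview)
Your argument is correct. The Stirling bound $n!\geq (n/e)^n$, the split at $n_0=\lceil 4a^2\rceil$ with $a=e\Delta M^{1/3}$, and the identification of the maximizer $n^\ast=a^2/e$ giving $f(n^\ast)=\exp(a^2/(2e))$ are all valid, and the resulting bound $\exp(O(M^{2/3}))$ is indeed $o(e^{\delta M})$.

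There is nothing to compare against in the present paper: the lemma is simply quoted from \cite[Lemma 3.10]{BCDA} without proof. Your self-contained argument is an appropriate elementary justification; the only minor cosmetic point is that in the tail you could note $\sum_{n\geq n_0}2^{-n}\leq 1$ rather than $2$, but this is immaterial.
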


\begin{lemma} \label{TrKer} Suppose that $g(v,v',w)$ is a bounded complex-valued measurable function on $C_{0, \phi} \times C_{0, \phi} \times \mathcal{D}$, where $\phi \in [\pi/2, \pi)$, $C_{0, \phi}$ is as in Definition \ref{ContV} and $\mathcal{D}$ is as in Definition \ref{DefDM}. Assume that there are constants $A,B> 0$ such that for some $a, b \geq A$ we have
\begin{equation}\label{S6GBound}
|g(v,v',w)| \leq B \cdot \exp \left( - a |v|^3 - b |w|^3 \right).
\end{equation}
Then the kernel
$K(v,v') = \frac{1}{2\pi \i } \int_{\mathcal{D}} g(v,v',w) dw$
is well-defined on $C_{0, \phi} \times C_{0, \phi}$ and also the Fredholm determinant $\det(I + K)_{L^2(C_{0, \phi})}$ is well-defined. Furthermore, there is a constant $N_0 \geq 1$ (depending on $A,B$) such that for all $N \geq N_0$ we have
\begin{equation}\label{S6TruncBound}
\left| \det(I + K)_{L^2(C_{0, \phi})} - 1- \sum_{n = 1}^N \frac{1}{n!} \int_{C_{0, \phi}} \hspace{-3mm} \cdots \int_{C_{0, \phi}} \hspace{-3mm}  \det \left[ K^N(v_i, v_j) \right]_{i,j = 1}^n \prod_{i = 1}^n\frac{dv_i}{2\pi \i}  \right| \leq e^{-(N/3) \log N},
\end{equation}
where $ K^N(v, v') =  \frac{1}{2\pi \i } \int_{\mathcal{D}} g^N(v,v',w) dw$ with $g^N(v,v',w)  = {\bf 1}\{ \max( |{v}|, |{w}|,|{v}'|)  \leq N \} \cdot g(v,v',w)$.
\end{lemma}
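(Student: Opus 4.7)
The kernel $K$ is super-exponentially decaying in $v$ thanks to the hypothesis on $g$, and so Hadamard's inequality combined with Stirling's formula quickly give absolute convergence of the Fredholm series as well as control of both the series-truncation tail $\sum_{n>N}\tfrac{|D_n|}{n!}$ and the kernel-truncation error $\sum_{n=1}^N \tfrac{|D_n-D_n^N|}{n!}$, where $D_n$ and $D_n^N$ denote the $n$-th coefficients in the series \eqref{fredholmDefS6} for $K$ and $K^N$ respectively.

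For well-definedness and the series tail, since $a,b\geq A>0$ both $\tilde B := B\int_{\mathcal D} e^{-b|w|^3}\,\tfrac{|dw|}{2\pi}$ and $C := \int_{C_{0,\phi}} e^{-a|v|^3}\,\tfrac{|dv|}{2\pi}$ are finite, with bounds depending only on $A$ and $B$. Hence $|K(v,v')|\leq \tilde B\, e^{-a|v|^3}$, and Hadamard's inequality applied to rows yields
$$\big|\det[K(v_i,v_j)]_{i,j=1}^n\big|\;\leq\; n^{n/2}\tilde B^{\,n}\prod_{i=1}^n e^{-a|v_i|^3},$$
so $|D_n|\leq n^{n/2}(\tilde B C)^n$. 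Stirling's formula $n!\geq (n/e)^n$ converts this into $\tfrac{|D_n|}{n!}\leq (e\tilde B C/\sqrt n)^n$, which is summable, so $\det(I+K)_{L^2(C_{0,\phi})}$ is well defined. For $N$ large enough (depending on $A,B$) that $\log(e\tilde B C)\leq \tfrac{1}{6}\log N$, every term with $n>N$ satisfies $\tfrac{|D_n|}{n!}\leq e^{-(n/3)\log n}$, so the sum of such terms is bounded by $\tfrac12\,e^{-(N/3)\log N}$.

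For the kernel-truncation error I split $C_{0,\phi}^{\,n}$ into $R_1 = \{|v_i|\leq N\text{ for all }i\}$ and its complement $R_2$. On $R_1$ the indicator in $g^N$ reduces to $\mathbf{1}\{|w|\leq N\}$, and a direct tail estimate along $\mathcal D$ gives
$$|K(v,v')-K^N(v,v')|\;\leq\; B\,e^{-a|v|^3}\!\int_{w\in\mathcal D,\, |w|>N}\! e^{-b|w|^3}\tfrac{|dw|}{2\pi}\;\leq\;C_1 e^{-a|v|^3}e^{-cN^3}$$
for constants $C_1,c>0$ depending only on $A$ and $B$. The standard determinant perturbation identity (change one row at a time), combined with Hadamard on the remaining rows, yields $|\det[K(v_i,v_j)] - \det[K^N(v_i,v_j)]| \leq n\cdot n^{n/2}\tilde B^{n-1}C_1 e^{-cN^3}\prod_i e^{-a|v_i|^3}$ on $R_1$. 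On $R_2$, some row of $[K^N(v_i,v_j)]_{ij}$ is identically zero (its first argument has modulus exceeding $N$), so $\det[K^N]=0$; only $|\det[K]|$ contributes, and the tail estimate $\int_{|v|>N}e^{-a|v|^3}\tfrac{|dv|}{2\pi}\leq C_1' e^{-c'N^3}$ applied to the offending variable produces a bound of the same shape. Summing over $1\leq n\leq N$ with the same Stirling bookkeeping as above (noting that $\sum_n\tfrac{n^{n/2+1}(\tilde BC)^n}{n!}<\infty$) gives $\sum_{n=1}^N\tfrac{|D_n-D_n^N|}{n!}\leq C_2 e^{-\min(c,c')N^3}\leq\tfrac12 e^{-(N/3)\log N}$ for $N$ large enough, since $N^3$ dominates $N\log N$. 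Adding the series- and kernel-truncation bounds gives \eqref{S6TruncBound}. The whole argument is routine bookkeeping; the only mild care required is arranging constants so that both errors are dominated by the target decay $e^{-(N/3)\log N}$, which is automatic for $N$ large.
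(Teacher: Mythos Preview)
Your proposal is correct and follows essentially the same approach as the paper: Hadamard plus Stirling to control the Fredholm series tail $\sum_{n>N}$, and an $e^{-cN^3}$ bound (coming from the cubic decay of $g$) to control the kernel-truncation error on $n\le N$. The only organizational difference is that the paper expands the determinants over permutations and bounds $(1-\chi_n^\sigma)\prod_i|g(v_i,v_{\sigma(i)},w_i)|$ directly, whereas you split the $v$-domain into $R_1,R_2$ and use a row-telescoping on $R_1$; both routes are standard and yield the same estimates.
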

\begin{proof} The fact that all the Fredholm determinants in the statement of the lemma are well-defined in terms of absolutely convergent series as in (\ref{fredholmDefS6}) follows from Lemmas 2.3 and 2.4 in \cite{BCDA}, whose proofs can be found in \cite[Section 2.5]{ED}. In the remainder we prove (\ref{S6TruncBound}).

In view of (\ref{S6GBound}) we can find a constant $D_1 > 0$ (depending on $A,B$) such that for all $v,v' \in C_{0, \phi}$
$$ \int_{\mathcal{D}} |g(v,v',w)| |dw| \leq D_1 \cdot e^{ - A |v|^3} \mbox{ and so } |K(v,v')| \leq D_1 e^{ - A |v|^3},$$
where $|dw|$ denotes integration with respect to arc length. This and Hadamard's inequality imply
\begin{equation}\label{TRE1}
\begin{split}
&\left| \sum_{n = N+1}^\infty \frac{1}{n!} \int_{C_{0, \phi}} \cdots \int_{C_{0, \phi}} \det \left[ K(v_i, v_j) \right]_{i,j = 1}^n \prod_{i = 1}^n \frac{dv_i}{2\pi \i} \right| \leq  \\
&\sum_{n = N+1}^\infty \frac{n^{n/2}}{n!} \int_{C_{0, \phi}} \cdots \int_{C_{0, \phi}} D_1^n \cdot \prod_{i = 1}^n e^{ - A |v_i|^3}|dv_1| \cdots |dv_n| \leq \sum_{n = N+1}^\infty \frac{(D_1 D_2)^n n^{n/2}}{n!} ,
\end{split}
\end{equation}
where $D_2 = \int_{C_{0, \phi}} e^{ - A |v|^3}|dv|.$ From \cite[(1) and (2)]{Rob} we have for each $n \geq 1$ that
\begin{equation}\label{Stirling}
 n! = \sqrt{2\pi} n^{n+1/2} e^{-n} e^{r_n} \mbox{, where } (12n+1)^{-1} < r_n < (12n)^{-1}.
\end{equation}
Equation (\ref{Stirling}) implies that if we set $a_n = \frac{(D_1 D_2)^n n^{n/2}}{n!}$ we have that $\frac{a_{n+1}}{a_n} \sim \frac{D_1D_2 \sqrt{e}}{\sqrt{n}}$ as $n \rightarrow \infty$ and so for all large $N$ we have
\begin{equation}\label{TRE2}
 \sum_{n = N+1}^\infty \frac{(D_1 D_2)^n n^{n/2}}{n!} \leq  \frac{(D_1 D_2)^N N^{N/2}}{N!} \leq \frac{e^{-(1/3)N \log N}}{2},
\end{equation}
where in the last inequality we used (\ref{Stirling}) again.

For $\sigma \in S_n$ we set $\chi^{\sigma}_n =  \prod_{i =1}^n {\bf 1}\{ \max( |{v}_i|, |{w}_i|,|{v}_{\sigma(i)}|)  \leq N \} $ , $C_1 =  \int_{C_{0, \phi}} e^{ -(A/2)|v|^3} |dv|$ and $C_2 =  \int_{\mathcal{D}} e^{ -(A/2)|w|^3} |dw|.$ By expanding determinants we get

\begin{equation*}
\begin{split}
&\left| \sum_{n = 1}^N \frac{1}{n!} \int_{C_{0, \phi}} \hspace{-3mm}\cdots \int_{C_{0, \phi}} \hspace{-3mm} \det \left[ K(v_i, v_j) \right]_{i,j = 1}^n \prod_{i = 1}^n \frac{dv_i}{2\pi \i} -  \sum_{n = 1}^N \frac{1}{n!} \int_{C_{0, \phi}} \hspace{-3mm} \cdots \int_{C_{0, \phi}} \hspace{-3mm}  \det \left[ K^N(v_i, v_j) \right]_{i,j = 1}^n \prod_{i = 1}^n \frac{dv_i}{2\pi \i}  \right| \leq  \\
& \sum_{n = 1}^N  \sum_{\sigma \in S_n} \frac{1}{n!} \int_{C_{0, \phi}} \cdots \int_{C_{0, \phi}}  \int_{\mathcal{D}} \cdots \int_{\mathcal{D}}(1 -\chi^{\sigma}_n) \cdot  \prod_{i = 1}^n |g(v_i,v_{\sigma(i)},w_i)| \prod_{i = 1}^n |dw_i|  \prod_{i = 1}^n |dv_i| \leq \\
&e^{-(A/2)N^3} \cdot \sum_{n = 1}^N  \int_{C_{0, \phi}} \cdots \int_{C_{0, \phi}}  \int_{\mathcal{D}} \cdots \int_{\mathcal{D}} B^{n} \cdot \prod_{i = 1}^n \exp \left( -(A/2) |v_i|^3 - (A/2)|w_i|^3 \right)  \prod_{i = 1}^n |dw_i|  \prod_{i = 1}^n |dv_i| \leq \\
& e^{-(A/2)N^3} \sum_{n = 1}^N   B^{n} C_1^n C_2^n \leq (1/2)e^{-(1/3)N \log N}.
\end{split}
\end{equation*}
The second inequality above requires some explanation that we give momentarily and the final inequality holds for all large enough $N$. Combining the last inequality with (\ref{TRE1}) and (\ref{TRE2}) we obtain (\ref{S6TruncBound}).

For the second inequality above we note that in order for $(1 -\chi^{\sigma}_n)$ to be non-zero (and hence equal to 1) we must have some $i$ for which either $|v_i|$ or $|w_i|$ exceeds $N$. For that $i$, we can use (\ref{S6GBound}) to show that $|g(v_i,v_{\sigma(i)},w_i)| \leq B e^{-a|v_i|^3-b|w_i|^3}\leq B e^{-(A/2)N^3} e^{-(A/2) |v_i|^3 - (A/2)|w_i|^3}$. This last inequality follows since
$-a|v_i|^3-b|w_i|^3 \leq -A|v_i|^3-A|w_i|^3\leq -(A/2)|v_i|^3-(A/2)|w_i|^3 - (A/2)N^3$, as follows from the fact that $a,b\geq A$ and at least one of $|v_i|$ or $|w_i|$ exceeds $N$.
\end{proof}

%
\subsection{Proof of Proposition \ref{S6Main}} \label{Section6.3} In this section we present the proof of Proposition \ref{S6Main}. For clarity, we split the proof into three steps. In the first step we show that we can truncate and deform the contours in the definition of $\det \left( I + K_u \right)_{L^2(C_{z_c,3\pi/4})}$ to more suitable ones without affecting the value of this Fredholm determinant significantly. In the second step we claim a certain finite sum estimate, see (\ref{BB1}), and deduce the statement of the proposition by combining this statement and Lemma \ref{TrKer}. Equation  (\ref{BB1}) is proved in the third and final step.

\smallskip
{\bf \raggedleft Step 1.} Let $r$ (depending on $\theta$ and $\delta$) be as in Lemma \ref{analGS3} and fix $\epsilon > 0$ such that $\epsilon < \min (r/2, 1/4)$. Let $C^{\epsilon}_{z_c, 3\pi/4}$ denote the portion of $C_{z_c, 3\pi/4}$ inside $B_{\epsilon}(z_c)$ -- the disc of radius $\epsilon$, centered at $z_c$. Repeating verbatim the argument in Step 2 of the proof of \cite[Theorem 1.7]{BCDA} (see (4.4) in that paper) we get that there is a constant $c_1 > 0$ (depending on $\theta, \delta, \epsilon$) such that for all large $M$
\begin{equation}\label{ST1}
 \left| \det (I + K_u)_{L^2(C_{z_c, 3\pi/4})} - \det (I + K_{u})_{L^2(C^{\epsilon}_{z_c, 3\pi/4})} \right|\leq e^{-c_1 M}.
\end{equation}

By Cauchy's theorem we may deform the $D_v(b,d, \pi/4)$ contour in the definition of $K_u(v,v')$ to $D_M$ from Definition \ref{DefDM} without affecting the value of the kernel as long as $M$ is sufficiently large. Indeed, notice that by our choice of $\epsilon \leq 1/4$ we have that $C^{\epsilon}_{z_c, 3\pi/4} + 1$ lies to the right of $D_M$ and so we do not cross any poles while deforming $D_v(b,d, \pi/4)$ to $D_M$, see Figure \ref{S6_2}. The decay estimates necessary to deform the contour near infinity come from \cite[Proposition 2.15]{BCDA} applied to $K = [0,\theta]$, $T =0$ and $\alpha_i = \theta$, $a_j = 0$ and $u$ as in Definition \ref{DefScaleS6}.

We write $D_M = D_M^{\epsilon, 0} \cup D_M^{\epsilon,1}$, where $D_M^{\epsilon, 0}$ is the portion of $D_M$ inside $B_\epsilon(z_c)$  and $D_M^{\epsilon, 1}$ is the portion outside. Define for $\beta \in \{0,1\}$ and $v, v' \in C^{\epsilon}_{z_c, 3\pi/4} $ the kernel
$$K^{\beta}_{u}(v,v') = \frac{1}{2\pi \i}\int_{D_M^{\epsilon, \beta}}\frac{F(v, w)}{w - v' }dw  \mbox{, where } F(v,w) = \frac{\pi}{\sin(\pi (v-w))} \cdot e^{G_{M,N}(v) - G_{M,N}(w)} \cdot e^{M^{1/3} (w- v) \tilde\sigma_{\alpha} x}.$$

From \cite[equation (4.11)]{BCDA} we have that there is a constant $C_2, c_{2} > 0$ (depending on $\theta, \delta, \epsilon$) such that for all large enough $M$
\begin{equation*}
\begin{split}
\left| \int_{C^{\epsilon}_{z_c, 3\pi/4}}  \cdots  \int_{C^{\epsilon}_{z_c, 3\pi/4}}  \det \left[ K^{\beta_i}_u(v_i, v_j) \right]_{i,j = 1}^n  \prod_{i = 1}^n dv_i\right| \leq C_2^{n} n^{n/2} M^{n/3} e^{-2 c_2 M\sum_{i = 1}^n \beta_i}.
\end{split}
\end{equation*}
Using the last inequality, the fact that $K_{u}(v,v') = K^{0}_{u}(v,v') + K^{1}_{u}(v,v') $, the definition of a Fredholm determinant (\ref{fredholmDefS6}), and the linearity of the determinant function we obtain
\begin{equation}\label{ST2}
 \left| \det (I + K_u)_{L^2(C^{\epsilon}_{z_c, 3\pi/4})} -  \det (I + K^0_u)_{L^2(C^{\epsilon}_{z_c, 3\pi/4})} \right| \leq e^{- 2c_2 M} \sum_{n = 1}^\infty \frac{2^n C_2^{n} n^{n/2} M^{n/3}}{n!} \leq e^{-c_2 M},
\end{equation}
where the last inequality used Lemma \ref{SumBound} with $\delta = c_2$ and $\Delta = 2 C_2$ and holds for large $M$.

\smallskip
Equations (\ref{ST1}) and (\ref{ST2}) show that $ \det (I + K_u)_{L^2(C_{z_c, 3\pi/4})} $ is very close to $ \det (I + K^0_u)_{L^2(C^{\epsilon}_{z_c, 3\pi/4})}$ and in the remainder of this step we rewrite $\det (I + K^0_u)_{L^2(C^{\epsilon}_{z_c, 3\pi/4})}$ in a more suitable form for the application of Lemma \ref{TrKer} in the next step.

Applying the change of variables $v_i = \tilde\sigma_{\alpha}^{-1}M^{-1/3} \tilde{v}_i + z_c$ and $w_i = \tilde\sigma_{\alpha}^{-1}M^{-1/3} \tilde{w}_i + z_c$ we can rewrite
\begin{equation}\label{ST3}
\det (I + K^0_u)_{L^2(C^{\epsilon}_{z_c, 3\pi/4})} = \det (I + K_{x,M})_{L^2(C_{0, 3\pi/4})},\quad\textrm{where}\quad K_{x,M}(\tilde{v}, \tilde{v}') =  \frac{1}{2\pi \i } \int_{\mathcal{D}} g_{x,M}(\tilde{v},\tilde{v}',\tilde{w}) d\tilde{w},
\end{equation}
$\mathcal{D}$ is as in Definition \ref{DefDM} and
$$g_{x,M}(\tilde{v},\tilde{v}',\tilde{w})  =  \frac{ {\bf 1}\{ \max( |\tilde{v}|, |\tilde{w}|,|\tilde{v}'|)  \leq \epsilon \tilde\sigma_{\alpha} M^{1/3} \}  \cdot \pi M^{-1/3} \tilde\sigma_{\alpha}^{-1}}{ \sin( \pi M^{-1/3} \tilde\sigma_{\alpha}^{-1} (\tilde{v} - \tilde{w}) )(\tilde{w} - \tilde{v}')} \frac{e^{M G_{\alpha}(\tilde{v} \tilde\sigma_{\alpha}^{-1} M^{-1/3} +z_c) + x \tilde{v}}}{e^{M G_{\alpha}(\tilde{w} \tilde\sigma_{\alpha}^{-1} M^{-1/3} +z_c) + x \tilde{w}}}.$$
In view of Lemma \ref{analGS3} we have that there exists a constant $B > 0$ such that
\begin{equation}\label{ST4}
\left|g_{x,M}(\tilde{v},\tilde{v}',\tilde{w})  \right| \leq B \cdot \exp \left( - (\sqrt{2}/2)^3 |\tilde{v}|^3/6 - (\sqrt{2}/2)^3 |\tilde{w}|^3/6  \right),
\end{equation}
where we used the fact that $|e^{x\tilde{v} - x \tilde{w}}| \leq 1$ (as $x \geq 0$) and $\frac{\pi M^{-1/3} \tilde\sigma_{\alpha}^{-1}}{ \sin( \pi M^{-1/3} \tilde\sigma_{\alpha}^{-1} (\tilde{v} - \tilde{w}) )}$ is bounded, cf. \cite[Lemma 3.9]{BCDA}. Finally, we combine (\ref{ST1}), (\ref{ST2}) and (\ref{ST3}) to get
 \begin{equation}\label{ST5}
\left|\det (I + K_u)_{L^2(C_{z_c, 3\pi/4})}-   \det (I + K_{x,M})_{L^2(C_{0, 3\pi/4})} \right| \leq e^{-c_1 M} + e^{-c_2M}.
\end{equation}

{\bf \raggedleft Step 2.}  We first note that by repeating verbatim the proof of \cite[Lemma C.1]{BCF} we have that
\begin{equation}\label{ST6}
F_{\rm GUE}(x) = \det (I + \tilde{K}_x)_{L^2(C_{0, 3\pi/4})},\quad \textrm{where}\quad \tilde{K}_x(\tilde{v}, \tilde{v'}) = \frac{1}{2\pi \i} \int_{\mathcal{D}} \frac{e^{-\tilde{v}^3/3 + \tilde{w}^3/3 - x \tilde{w} + x \tilde{v}}}{(\tilde{v} - \tilde{w})(\tilde{w} - \tilde{v}')} d\tilde{w}.
\end{equation}

Set $M_1 = \lceil \frac{\log M}{\log \log M} \rceil$ and  $\chi^{\sigma}_n =  \prod_{i = 1}^n {\bf 1}\{ \max( |\tilde{v}_i|, |\tilde{w}_i|,|\tilde{v}_{\sigma(i)}|)  \leq M_1 \}$ for $\sigma \in S_n$. We claim that
\begin{equation}\label{BB1}
\begin{split}
& 0 = \limsup_{M \rightarrow \infty} \sup_{x \geq 0} M^{\epsilon_0} \cdot  \sum_{n = 1}^{M_1} \sum_{\sigma \in S_n} \frac{1}{n!}\int_{C_{0, 3\pi/4}} \cdots \int_{C_{0, 3\pi/4}} \int_{\mathcal{D}} \cdots \int_{\mathcal{D}} \\
& \chi^{\sigma}_n \cdot  \left|  \prod_{i =1}^n g_{x,M}(\tilde{v}_i,\tilde{v}_{\sigma(i)},\tilde{w}_i) -  \prod_{i =1}^n \frac{e^{ -\tilde{v}_i^3/3 + \tilde{w}_i^3/3 - x \tilde{w}_i + x \tilde{v}_i}}{(\tilde{v}_i - \tilde{w}_i)(\tilde{w}_i - \tilde{v}_{\sigma(i)})} \right| \prod_{i = 1}^n |d\tilde{v}_i|\prod_{i = 1}^n |d\tilde{w}_i|.
\end{split}
\end{equation}
We will prove (\ref{BB1}) in the next step. Here we assume its validity and conclude the proof of (\ref{S6MainEq}).

\smallskip
In view of (\ref{ST4}) and Lemma \ref{TrKer} we have for all large $M$ that
\begin{equation}\label{ST7}
\begin{split}
&\Bigg{|}  \det (I + K_{x,M})_{L^2(C_{0, 3\pi/4})} - 1 - \sum_{n = 1}^{M_1} \sum_{\sigma \in S_n} \frac{1}{n!} \int_{C_{0, 3\pi/4}} \hspace{-4mm}\cdots \int_{C_{0, 3\pi/4}} \int_{\mathcal{D}} \cdots \int_{\mathcal{D}}   \\
&  \chi^{\sigma}_n \cdot \prod_{i = 1}^n g_{x,M}(\tilde{v}_i,\tilde{v}_{\sigma(i)},\tilde{w}_i)  \prod_{i = 1}^n \frac{d\tilde{w}_i}{2\pi \i} \prod_{i = 1}^n \frac{d\tilde{v}_i}{2\pi \i} \Bigg{|} \leq e^{-(M_1/3) \log M_1},
\end{split}
\end{equation}
where we expanded the determinants. From (\ref{ST6}) and Lemma \ref{TrKer} we have for all large $M$ that
\begin{equation}\label{ST8}
\begin{split}
&\Bigg{|} F_{\rm GUE}(x) - 1 - \sum_{n = 1}^{M_1}\sum_{\sigma \in S_n} \frac{1}{n!}  \int_{C_{0, 3\pi/4}} \hspace{-4mm} \cdots \int_{C_{0, 3\pi/4}} \int_{\mathcal{D}} \cdots \int_{\mathcal{D}}  \\
&\chi^{\sigma}_n \cdot \prod_{i = 1}^n   \frac{e^{ -\tilde{v}_i^3/3 + \tilde{w}_i^3/3 - x \tilde{w}_i + x \tilde{v}_i}}{(\tilde{v}_i - \tilde{w}_i)(\tilde{w}_i - \tilde{v}_{\sigma(i)})} \prod_{i = 1}^n \frac{d\tilde{w}_i}{2\pi \i} \prod_{i = 1}^n \frac{d\tilde{v}_i}{2\pi \i}  \Bigg{|} \leq e^{-(M_1/3) \log M_1}.
\end{split}
\end{equation}
Combining (\ref{ST7}), (\ref{ST8}) with (\ref{BB1}) and (\ref{ST5}) we conclude (\ref{S6MainEq}).

\smallskip
{\bf \raggedleft Step 3.} In this step we prove (\ref{BB1}). We first note that for all large enough $M$ (so that $M_1 \leq \epsilon \tilde\sigma_{\alpha} M^{1/3}$), each $n \in \{1, \dots, M_1\}$, $\sigma \in S_n$, $\tilde{v_i} \in C_{0, 3\pi/4}$ and $\tilde{w}_i \in \mathcal{D}$ for $i = 1, \dots, n$ we have
\begin{equation*}
\begin{split}
& \chi^{\sigma}_n \cdot  \left|  \prod_{i =1}^n g_{x,M}(\tilde{v}_i,\tilde{v}_{\sigma(i)},\tilde{w}_i) -  \prod_{i =1}^n \frac{e^{ -\tilde{v}_i^3/3 + \tilde{w}_i^3/3 - x \tilde{w}_i + x \tilde{v}_i}}{(\tilde{v}_i - \tilde{w}_i)(\tilde{w}_i - \tilde{v}_{\sigma(i)})} \right| \leq\chi^{\sigma}_n \cdot \prod_{i =1}^n \left|  \frac{e^{ -\tilde{v}_i^3/3 + \tilde{w}_i^3/3}}{(\tilde{v}_i - \tilde{w}_i)(\tilde{w}_i - \tilde{v}_{\sigma(i)})}  \right| \cdot  \\
&  \left|1 -  \prod_{i = 1}^n\frac{ (\tilde{v}_i - \tilde{w}_i)  \pi M^{-1/3} \tilde\sigma_{\alpha}^{-1}}{ \sin( \pi M^{-1/3} \tilde\sigma_{\alpha}^{-1} (\tilde{v}_i - \tilde{w}_i) )} \frac{e^{M G_{\alpha}(\tilde{v} \tilde\sigma_{\alpha}^{-1} M^{-1/3} +z_c) + \tilde{v}_i^3/3 }}{e^{M G_{\alpha}(\tilde{w} \tilde\sigma_{\alpha}^{-1} M^{-1/3} +z_c) + \tilde{w}_i^3/3}} \right|,
\end{split}
\end{equation*}
where we also used that $x \geq 0$. In addition, from (\ref{powerGS3}) we know that for some $D_1 > 0$ we have
$$\left| M G_{\alpha}( z \tilde\sigma_{\alpha}^{-1} M^{-1/3} +z_c) + z^3/3 \right| \leq D_1 M_1^4 M^{-1/3},$$
for all $z \in \mathbb{C}$ such that $|z| \leq M_1$. In addition, for some $D_2 > 0$ and all $\tilde{v} \in C_{0, 3\pi/4}$ and $\tilde{w} \in \mathcal{D}$
$$\left|1 - \frac{ (\tilde{v}- \tilde{w})  \pi M^{-1/3} \tilde\sigma_{\alpha}^{-1}}{ \sin( \pi M^{-1/3} \tilde\sigma_{\alpha}^{-1} (\tilde{v} - \tilde{w}) )} \right| \leq D_2 M_1 M^{-1/3}, \mbox{ provided that $\max( |\tilde{v}|, |\tilde{w}|) \leq M_1$}.$$
Combining the latter inequalities, the fact that $n \leq M_1$ and the fact that $M_1 = \lceil \frac{\log M}{\log \log M} \rceil$ we conclude that there is a constant $D_3 > 0$ such that for all large $M$
\begin{equation*}
\begin{split}
& \chi^{\sigma}_n \cdot  \left|  \prod_{i =1}^n g_{x,M}(\tilde{v}_i,\tilde{v}_{\sigma(i)},\tilde{w}_i) -  \prod_{i =1}^n \frac{e^{ -\tilde{v}_i^3/3 + \tilde{w}_i^3/3 - x \tilde{w}_i + x \tilde{v}_i}}{(\tilde{v}_i - \tilde{w}_i)(\tilde{w}_i - \tilde{v}_{\sigma(i)})} \right| \leq D_3 \cdot \chi^{\sigma}_n \cdot M_1^5 M^{-1/3} \cdot  \\
& \prod_{i =1}^n \left|  \frac{e^{ -\tilde{v}_i^3/3 + \tilde{w}_i^3/3}}{(\tilde{v}_i - \tilde{w}_i)(\tilde{w}_i - \tilde{v}_{\sigma(i)})}  \right| \leq D_3 \cdot \chi^{\sigma}_n \cdot M_1^5 M^{-1/3} \cdot \prod_{i =1}^n \left|  e^{ -\tilde{v}_i^3/3 + \tilde{w}_i^3/3}  \right|  ,
\end{split}
\end{equation*}
where in the last inequality we used that $|\tilde{v} - \tilde{w}| \geq 1$ for $\tilde{v} \in C_{0, 3\pi/4}$ and $\tilde{w} \in \mathcal{D}$. Combining the above estimates we conclude that the right side of (\ref{BB1}) is upper bounded by
$$\limsup_{M \rightarrow \infty}  M^{\epsilon_0} \sum_{n = 1}^{M_1} D_3 M_1^5 M^{-1/3} D_4^n,  $$
where $D_4 = \int_{C_{0, 3\pi/4}} \int_{\mathcal{D}} \left|  e^{ -\tilde{v}^3/3 + \tilde{w}^3/3}  \right| |d\tilde{w}| |d\tilde{w}|$. Since the above sum is at most $D_3 (D_4 + 1)^{M_1} M_1^5 M^{\epsilon_0-1/3}$ and $M_1 = \lceil \frac{\log M}{\log \log M} \rceil$ we see that the limit is $0$, which proves (\ref{BB1}).

\end{appendix}

\bibliographystyle{amsalpha}
\bibliography{PD}

\providecommand{\bysame}{\leavevmode\hbox to3em{\hrulefill}\thinspace}
\providecommand{\MR}{\relax\ifhmode\unskip\space\fi MR }
\providecommand{\MRhref}[2]{%
  \href{http://www.ams.org/mathscinet-getitem?mr=#1}{#2}
}
\providecommand{\href}[2]{#2}
\begin{thebibliography}{SLDO15}

\bibitem[And58]{anderson1958absence}
P.~W. Anderson, \emph{Absence of diffusion in certain random lattices}, Phys.
  Rev. \textbf{109} (1958), no.~5, 1492.

\bibitem[AS65]{AS65}
M.~Abramowitz and I.A. Stegun, \emph{Handbook of mathematical functions},
  Dover, New York, 1965.

\bibitem[AW15]{aizenman2015random}
M.~Aizenman and S.~Warzel, \emph{Random operators}, vol. 168, American
  Mathematical Soc., 2015.

\bibitem[BC14]{BorCor}
A.~Borodin and I.~Corwin, \emph{Macdonald processes}, Probab. Theory Relat.
  Fields \textbf{158} (2014), 225--400.

\bibitem[BC17]{barraquand2017random}
G.~Barraquand and I.~Corwin, \emph{Random-walk in beta-distributed random
  environment}, Probab. Theory Rel. Fields \textbf{167} (2017), no.~3,
  1057--1116.

\bibitem[BCD21]{BCDA}
G.~Barraquand, I.~Corwin, and E.~Dimitrov, \emph{Fluctuations of the log-gamma
  polymer free energy with general parameters and slopes}, Probab. Theo. Rel.
  Fields \textbf{181} (2021), 113--195.

\bibitem[BCD23]{BCDB}
\bysame, \emph{Spatial tightness at the edge of {G}ibbsian line ensembles},
  Comm. Math. Phys. \textbf{397} (2023), 1309--1386.

\bibitem[BCF14]{BCF}
A.~Borodin, I.~Corwin, and P.~L. Ferrari, \emph{Free energy fluctuations for
  directed polymers in random media in 1 + 1 dimension}, Commun. Pure Appl.
  Math. \textbf{67} (2014), 1129--1214.

\bibitem[BCR13]{BCR}
A.~Borodin, I.~Corwin, and D.~Remenik, \emph{Log-gamma polymer free energy
  fluctiations via a {F}redholm determinant identity}, Comm. Math. Phys.
  \textbf{324} (2013), 215--232.

\bibitem[Bil99]{Bill}
P.~Billingsley, \emph{Convergence of probability measures, {S}econd edition},
  Academic Press, New York, 1999.

\bibitem[BLS17]{BLS17}
M.~B{\' o}na, M.-L. Lackner, and B.~E. Sagan, \emph{Longest increasing
  subsequences and log concavity}, Ann. Comb. \textbf{21} (2017), 535--549.

\bibitem[CG20]{10.1215/00127094-2019-0079}
I.~Corwin and P.~Ghosal, \emph{{Lower tail of the KPZ equation}}, Duke Math. J.
  \textbf{169} (2020), no.~7, 1329 -- 1395.

\bibitem[CGH21]{CGH19}
I.~Corwin, P.~Ghosal, and A.~Hammond, \emph{K{P}{Z} equation correlations in
  time}, Ann. Probab. \textbf{49} (2021), 832--876.

\bibitem[Cor21]{corwin2020invariance}
I.~Corwin, \emph{Invariance of polymer partition functions under the geometric
  {R}{S}{K} correspondence}, Adv. Stud. Pure Math. \textbf{87} (2021), 89--136.

\bibitem[COSZ14]{COSZ}
I.~Corwin, N.~O'Connell, T.~Sepp{\"a}l{\"a}inen, and N.~Zygouras,
  \emph{Tropical combinatorics and {W}hittaker functions}, Duke Math. J.
  \textbf{163} (2014), 513--563.

\bibitem[CSS15]{corwin2014strict}
I.~Corwin, T.~Sepp{\"a}l{\"a}inen, and H.~Shen, \emph{The strict-weak lattice
  polymer}, J. Stat. Phys. (2015), 1--27.

\bibitem[dG79]{de1979scaling}
P.~de~Gennes, \emph{Scaling concepts in polymer physics}, Cornell university
  press, 1979.

\bibitem[Dim17]{ED}
E.~Dimitrov, \emph{K{P}{Z} and {A}iry limits of {H}all-{L}ittlewood random
  plane partitions}, Ann. Inst. Henri Poincar{\' e} Probab. Stat. \textbf{54}
  (2017), no.~2, 640--693.

\bibitem[DOV18]{DOV18}
D.~Dauvergne, J.~Ortmann, and B.~Vir{\' a}g, \emph{The directed landscape},
  arXiv:1812.00309.

\bibitem[DV13]{DV13}
L.~Dumaz and B.~Vir{\' a}g, \emph{The right tail exponent of the
  {T}racy-{W}idom $\beta$ distributions}, Ann. Inst. Henri Poincar{\' e}
  Probab. Stat. \textbf{49} (2013), 915--933.

\bibitem[DV21]{dauvergne2021scaling}
D.~Dauvergne and B.~Vir{\' a}g, \emph{The scaling limit of the longest
  increasing subsequence}, arXiv:2104.08210.

\bibitem[Hun08]{hundertmark2008short}
D.~Hundertmark, \emph{A short introduction to {A}nderson localization},
  Analysis and stochastics of growth processes and interface models \textbf{1}
  (2008), 194--219.

\bibitem[JO20]{JonstonOConnell}
S.G.G. Johnston and N.~O'Connell, \emph{Scaling limits for non-intersecting
  polymers and {W}hittaker measures}, J. Stat. Phys. \textbf{179} (2020),
  354--407.

\bibitem[KMT76]{KMT2}
J.~Koml{\' o}s, P.~Major, and G.~Tusn{\' a}dy, \emph{An approximation of
  partial sums of independent {R}{V}'s, and the sample {D}{F} {II}}, Z.
  Wahrsch. Verw. Gebiete \textbf{34} (1976), 33--58.

\bibitem[KQ18]{KQ}
A.~Krishnan and J.~Quastel, \emph{{T}racy-{W}idom fluctuations for
  perturbations of the log-gamma polymer in intermediate disorder}, Ann. Appl.
  Probab. \textbf{28} (2018), no.~6, 3736--3764.

\bibitem[KV19]{kotowski2019tracy}
M.~Kotowski and B.~Vir{\'a}g, \emph{{Tracy--Widom} fluctuations in {2D} random
  {S}chr{\"o}dinger operators}, Comm. Math. Phys. \textbf{370} (2019), no.~3,
  873--893.

\bibitem[LDN91]{le1991statistical}
P.~Le~Doussal and D.~R. Nelson, \emph{Statistical mechanics of directed polymer
  melts}, EuroPhysics Lett. \textbf{15} (1991), no.~2, 161.

\bibitem[Luk69]{Luke}
Y.~Luke, \emph{The special functions and their approximations, {V}olume {I}},
  Academic press, London, 1969.

\bibitem[MZ12]{MZ}
D.~Mason and H.~Zhou, \emph{Quantile coupling inequalities and their
  applications}, Probab. Surveys \textbf{9} (2012), 439--479.

\bibitem[OO15]{o2014tracy}
N.~O'Connell and J.~Ortmann, \emph{{T}racy-{W}idom asymptotics for a random
  polymer model with gamma-distributed weights}, Electron. J. Probab.
  \textbf{20} (2015), no. 25, 1--18.

\bibitem[PSO05]{prior2005conductance}
J.~Prior, A.~M. Somoza, and M.~Ortuno, \emph{Conductance fluctuations and
  single-parameter scaling in two-dimensional disordered systems}, Phys. Rev. B
  \textbf{72} (2005), no.~2, 024206.

\bibitem[Rob55]{Rob}
H.~Robbins, \emph{A remark on {S}tirling's formula}, Amer. Math. Monthly
  \textbf{62} (1955), 26--29.

\bibitem[Sep12]{Sep12}
T.~Sepp{\"a}l{\"a}inen, \emph{Scaling for a one-dimensional directed polymer
  with boundary}, Ann. Probab. \textbf{40} (2012), 19--73.

\bibitem[SLDO15]{somoza2015unbinding}
A.~M. Somoza, P.~Le~Doussal, and M.~Ortu{\~n}o, \emph{Unbinding transition in
  semi-infinite two-dimensional localized systems}, Phys. Rev. B \textbf{91}
  (2015), no.~15, 155413.

\bibitem[SOP07]{somoza2007universal}
A.M. Somoza, M.~Ortuno, and J.~Prior, \emph{Universal distribution functions in
  two-dimensional localized systems}, Phys. Rev. Lett. \textbf{99} (2007),
  no.~11, 116602.

\bibitem[ST17]{scardicchio2017perturbation}
A.~Scardicchio and T.~Thiery, \emph{Perturbation theory approaches to
  {A}nderson and many-body localization: some lecture notes}, arXiv:1710.01234
  (2017).

\bibitem[TW94]{TWPaper}
C.~Tracy and H.~Widom, \emph{Level-spacing distributions and the {A}iry
  kernel}, Commun. Math. Phys. \textbf{159} (1994), 151--174.

\bibitem[Wu20]{Wu19}
X.~Wu, \emph{{D}iscrete {G}ibbsian line ensembles and weak noise scaling for
  directed polymers}, PhD Thesis,
  https://academiccommons.columbia.edu/doi/10.7916/d8-6re1-k703.

\end{thebibliography}

\end{document}